\documentclass[12pt,reqno]{amsart}
\usepackage[margin=1in]{geometry}
\usepackage{graphicx}
\usepackage{amsmath,amsthm,amsfonts,mathrsfs,amssymb,float,color}
\usepackage{graphicx}
\usepackage{enumerate,enumitem}
\usepackage{textcomp}
\usepackage{verbatim}
\usepackage{ upgreek }
\usepackage[T1]{fontenc}
\usepackage[utf8]{inputenc}
\usepackage{hyperref}
\hypersetup{colorlinks=true,citecolor=red,linkcolor=blue}
\usepackage{epsfig,subfigure,fancybox,balance}
\usepackage{diagbox}
\usepackage{youngtab}
\usepackage[numbers]{natbib}
\usepackage{ dsfont }
\usepackage{enumitem}
\newcommand*{\rom}[1]{\expandafter\@slowromancap\romannumeral #1@}

\newcommand{\beq}[1]{\begin{equation} \label{#1}}
\newcommand{\eeq}{\end{equation}}
\newcommand{\bea}{\bed\begin{array}{rl}}
\newcommand{\eea}{\end{array}\eed}
\newcommand{\bed}{\begin{displaymath}}
\newcommand{\eed}{\end{displaymath}}
\newcommand{\barray}{\begin{array}{ll}}
\newcommand{\earray}{\end{array}}

\newcommand{\beqa}[1]{\begin{equation}\label{#1}\barray}
\newcommand{\eeqa}{\earray\end{equation}}

\numberwithin{equation}{section}

\newtheorem{thm}{Theorem}[section]

\definecolor{gray}{rgb}{0.75, 0.75, 0.75}

\newtheorem{theorem}[thm]{Theorem}
\newtheorem{corollary}[thm]{Corollary}
\newtheorem{lemma}[thm]{Lemma}
\newtheorem{proposition}[thm]{Proposition}
\newtheorem{remark}[thm]{Remark}
\newtheorem{example}[thm]{Example}

\allowdisplaybreaks

\newcommand{\R}{\mathbb R}
\newcommand{\E}{\mathbb E}
\newcommand{\Pp}{\mathbb P}
\newcommand{\Ff}{\mathcal F}
\newcommand{\Gg}{\mathcal G}
\newcommand{\Hh}{\mathcal H}
\newcommand{\Law}{\mathcal L}
\newcommand{\pr}{\operatorname{pr}}
\newcommand{\dd}{\,\mathrm d}
\newcommand{\ee}{\mathrm e}
\newcommand{\eps}{\varepsilon}
\newcommand{\norm}[1]{\lVert #1\rVert}
\newcommand{\abs}[1]{\lvert #1\rvert}
\newcommand{\ip}[2]{\langle #1,#2\rangle}
\newcommand{\Wass}{\mathcal W}
\newcommand{\Ptwo}{\mathcal P_2}
\newcommand{\slowspace}{\mathsf E}
\newcommand{\fastspace}{\mathsf H}

\begin{document}
\title{On Fast-Slow Mean-Field Forward-Backward Stochastic Systems}
\author{Yihao Sheng}
\address{Department of Mathematics, University of Connecticut,
Storrs, CT 06269, USA}
\email{yihao.sheng@uconn.edu}

\author{Fuke Wu}
\address{School of Mathematics and Statistics, Huazhong University of Science and Technology,
Wuhan 430074, China}
\email{wufuke@hust.edu.cn}

\author{George Yin}
\address{Department of Mathematics, University of Connecticut,
Storrs, CT 06269, USA}
\email{gyin@uconn.edu}
\date{}

\subjclass[2020]{93E20, 49N80, 60H10, 60H20.}

\keywords{averaging principle; McKean--Vlasov equation; forward--backward stochastic
differential equation; lifted semigroup; Wasserstein distance; mean-field
control; stochastic maximum principle.}

\begin{abstract}
We establish an averaging principle for a class of multiscale mean-field forward-backward stochastic differential equations and identify several novel phenomena that are absent from classical fast-slow systems. In contrast with classical fast-slow systems, the effective dynamics cannot in general be obtained by simply freezing deterministic slow parameters and averaging against the invariant measure of the resulting fast equation. The appropriate averaging object is instead provided by a frozen fast dynamics in a random environment and its associated conditional invariant measures, which retain the coupling between the slow state and its distribution. The forward-backward structure creates a further obstruction: local averaging estimates need not remain stable when propagated over an arbitrary time horizon. We identify a uniform restart stability condition for the averaged system under which this obstruction can be overcome.
Using a joint lifted semigroup for the state-law dynamics, together with a two-scale discretization and a Gordin-type decomposition, we prove strong averaging for both the forward and backward components with optimal convergence rate \(O(\varepsilon^{1/2})\). As an application, we apply the general theory to a class of mean-field stochastic control problems and develop an efficient algorithm for solving such mean-field control problems.
\end{abstract}
\maketitle

\tableofcontents

\section{Introduction}\label{sec:introduction}

Stochastic models in engineering, operations research, economics,
biology, and the physical sciences often describe interactions
between processes evolving on different time scales; see, for example,
\cite{Khasminskii1968,Kushner1984,Kushner1990,sheng2026near,
ShengWuYinZong2026,YinZhang2013} and the references therein.
In a two-time-scale formulation, this separation is represented
by a small parameter \(\varepsilon>0\), with the fast component
evolving on the accelerated scale \(t/\varepsilon\) and the slow
component on the original scale \(t\).
A basic question is whether the slow dynamics admits an effective
description as \(\varepsilon\to0\).
The averaging principle addresses this question by incorporating
the influence of the fast motion through averaged coefficients.
Since the foundational work of Khasminskii \cite{Khasminskii1968},
a variety of methods have been developed for averaging forward
diffusion systems, including time discretization, asymptotic
expansions, and Poisson-equation techniques; see, for example,
\cite{KhasminskiiYin2004,LiuRocknerSunXie2020,
PardouxVeretennikov2001,RocknerXie2021}.
The present paper considers this problem for coupled mean-field
forward--backward stochastic systems, where the averaging procedure
must account for both the forward--backward interaction and the
dependence of the coefficients on the joint law of the slow and
fast variables.

The extension of the averaging principle from purely forward diffusions
to the coupled mean-field forward--backward systems studied here
involves two distinct difficulties. The first stems from the
forward--backward coupling and is already present in systems without
distributional dependence. A coupled forward--backward stochastic
differential equation is a stochastic two-point boundary-value problem:
a perturbation in the forward state propagates forward in time,
whereas a perturbation in the backward state propagates backward and,
through the coupling, feeds back into the forward dynamics.
Consequently, local solvability and stability do not by themselves
guarantee the corresponding properties on an arbitrary fixed time
horizon; see, for example,
\cite{Antonelli1993,Delarue2002,MaWuZhangZhang2015,
PengWu1999,Zhang2017}.

The implications of this difficulty for averaging were investigated
in our companion work \cite{SWY_GlobalFBSDE}, which provides the
starting point for the present analysis.
There we constructed a law-independent example in which the frozen
fast dynamics is exponentially stable and both the original and
formally averaged systems are uniquely solvable for the prescribed
boundary data, yet averaging fails at a critical time horizon.
To overcome this obstruction, we established a global-in-time
averaging principle under a uniformly Lipschitz decoupling-field
condition for the averaged system.
For the averaged mean-field system considered here, we formulate
the corresponding stability requirement directly as a uniform
$L^2$-Lipschitz estimate for the restart map.

Second, the fast coefficients in the present model depend on the joint law of the slow and fast variables. After the slow pair is frozen, it must therefore retain its randomness: replacing it by a deterministic point would destroy the joint law entering the coefficients. The resulting frozen motion is a McKean--Vlasov diffusion evolving in a random static environment. Its nonlinear law flow becomes Markovian only after the current law is incorporated into the state. This motivates the use of a lifted-semigroup argument, in the spirit of
\cite{HongHuLiuYang2026,RenRocknerWang2022}. Unlike the usual lift on \(\R^m\times\Ptwo(\R^m)\), the lift needed here contains the static environment, the fast state, and their current joint law.

The backward counterpart of McKean--Vlasov theory was initiated by
Buckdahn, Djehiche, Li, and Peng~\cite{BuckdahnDjehicheLiPeng2009}, who
introduced mean-field BSDEs through a limit of high-dimensional interacting
forward--backward systems.  The theory was subsequently developed for
coupled mean-field FBSDEs, with particular emphasis on well-posedness,
stability, and their connections with mean field control and mean field
games; see, for example,
\cite{BensoussanYamZhang2015,CarmonaDelarue2015,CarmonaDelarue2018}.
The present work lies at the intersection of these developments and the
classical theory of stochastic averaging: we establish a quantitative
averaging principle for coupled fast--slow mean-field FBSDEs on an
arbitrary fixed time horizon.

For quantitative averaging of forward fast--slow McKean--Vlasov equations,
R\"ockner, Sun, and Xie~\cite{RocknerSunXie2021} established a strong
convergence rate of order \(1/3\) by time discretization and improved it to
the optimal order \(1/2\) by means of a Poisson equation under additional
regularity assumptions. The present result achieves the same optimal
convergence order \(1/2\) for the full mean-field forward--backward
system, simultaneously controlling the slow forward state, the backward
state, and both martingale integrands. Moreover, the argument does not rely
on a Poisson equation and requires neither ellipticity nor spatial or Lions
differentiability of the coefficients, nor any time regularity of the
averaged martingale integrand. Instead, it is based on global Lipschitz and
dissipativity conditions together with the uniform restart stability of the
averaged forward--backward system.

Compared with the extensive theory for forward stochastic systems, averaging principles for backward and forward--backward equations are relatively scarce. Early results were obtained in connection with homogenization and singularly perturbed semilinear equations; see, for example,
\cite{BriandHu1999,PardouxVeretennikov1997}. More recent strong averaging results for multiscale FBSDEs include
\cite{JiLiu2026Potential,JiLiu2026Optimal,ShengWuYinZong2026,XuLian2023}.
In particular, Ji and Liu
\cite{JiLiu2026Potential,JiLiu2026Optimal} obtained the optimal order \(1/2\)
under substantially stronger structural or regularity assumptions. In
\cite{ShengWuYinZong2026}, we studied singularly perturbed FBSDEs arising from
stochastic control and established averaging results together with
applications to near-optimal control problems.

Against this background, the present framework provides a unified treatment of multiscale FBSDEs under substantially weaker structural requirements. In the law-independent case, it encompasses the principal settings considered in
\cite{JiLiu2026Potential,JiLiu2026Optimal,
ShengWuYinZong2026,XuLian2023},
while allowing genuine forward--backward coupling, dependence of the backward driver on the martingale integrand, and degenerate diffusions. Moreover, the optimal convergence order \(1/2\) is obtained without using a Poisson equation.

Our proof builds on the two-scale localization developed in
\cite{SWY_GlobalFBSDE}, with a refined analysis of the fast
fluctuations in the mean-field setting.
A macroscopic partition, chosen independently of \(\eps\), is used
to localize the forward--backward coupling.
Within each macroscopic interval, we introduce a microscopic mesh
of order \(\eps\) and establish a tracking estimate for the
conditional law of the fast component relative to the corresponding
invariant kernels.
This estimate controls the conditional bias, while a
time-inhomogeneous Gordin decomposition controls the remaining
conditionally centered fluctuations across successive microscopic
intervals.
The resulting local estimates are then propagated across the
macroscopic intervals using the restart stability of the averaged
system.

Several genuinely mean-field features require additional care in carrying out this argument. First, the contraction of the frozen equation takes place on a Wasserstein fiber with a fixed slow marginal, rather than on the full Wasserstein space. Second, the equilibrium is a joint law, whose disintegration gives the conditional invariant distributions entering the averaged coefficients. Third, the backward comparison on each macroscopic interval is itself a mean-field BSDE. Its generator is therefore treated as a Lipschitz operator on \(L^2\), rather than as a pointwise Lipschitz driver.

These abstract structures also have a concrete realization in mean-field control. We consider an affine controlled McKean--Vlasov equation with costs depending on the joint law of the state and the control, where the fast equation relaxes the lifted Pontryagin residual. Under suitable convexity and dissipativity conditions, the frozen fast dynamics converges to the optimal feedback control, and the averaged system is precisely the mean-field Pontryagin system.

For this class of problems, the structural assumptions required by the averaging theorem can moreover be verified directly. Strong monotonicity yields well-posedness of the original fast--slow system, while strong convexity of the restarted control problems gives the uniform restart estimate in {\rm(H3)}. The main theorem therefore applies, yielding the optimal convergence rate \(O(\eps^{1/2})\) and showing, in particular, that the fast variable provides a near-optimal control.

The paper is organized as follows. Section~\ref{sec:model} introduces the model, assumptions, and main result. Section~\ref{sec:frozen} develops the frozen McKean--Vlasov theory and the associated joint lifted semigroup. Section~\ref{sec:averaged-preliminaries} establishes the preliminary properties and estimates for the averaged system needed in the subsequent analysis. Section~\ref{sec:proof} contains the global-in-time averaging argument. Section~\ref{sec:consequences} presents several consequences, including a counterexample illustrating the necessity of restart stability and verifiable sufficient conditions for this property. Finally, Section~\ref{sec:mf-control} applies the theory to mean-field stochastic control, establishes well-posedness and averaging for the resulting fast--slow system, shows that the fast variable provides an approximation of the optimal control, and presents a nonquadratic numerical experiment.

\section{Assumptions and Main Result}\label{sec:model}

Let \(\slowspace:=\R^n\times\R^p\) and
\(\fastspace:=\R^m\), equipped with their Euclidean norms.  The norm of a
matrix is the Frobenius norm.  For a Polish space \(E\), let
\(\mathcal B(E)\) denote its Borel sigma-field and \(\mathcal P(E)\) the set
of Borel probability measures on \(E\).  For a Euclidean space \(E\),
\(\Ptwo(E)\) denotes the set of probability measures on \(E\) with finite
second moment, equipped with the \(2\)-Wasserstein distance
\[
 W_2^2(\theta,\theta')
 :=
 \inf_{\Gamma\in\mathcal C(\theta,\theta')}
 \int_{E\times E}\abs{x-x'}^2\,\Gamma(\dd x,\dd x'),
\]
where \(\mathcal C(\theta,\theta')\) is the set of couplings of \(\theta\)
and \(\theta'\).  For a Borel measurable map \(T:E\to F\) between Polish
spaces and \(\mu\in\mathcal P(E)\), we write \(T_\#\mu\) for the push-forward
of \(\mu\) under \(T\), that is,
\[
 (T_\#\mu)(A):=\mu(T^{-1}(A)),
 \qquad A\in\mathcal B(F).
\]
We write \(\Law(Z)\) for the law of a random variable \(Z\).  Whenever
square-integrable random variables \(Z,Z'\) are realized on the same
probability space, we use without further comment
\[
 W_2^2(\Law(Z),\Law(Z'))\le\E\abs{Z-Z'}^2.
\]
For \(\theta\in\Ptwo(E)\), we write
\[
 m_2(\theta):=\bigl(\int_E\abs z^2\,\theta(\dd z)\bigr)^{1/2}.
\]

Let \((\Omega,\Ff,\Pp)\) support a sigma-field \(\Gg_0\) and two independent
Brownian motions \(W^1\) and \(W^2\) of dimensions \(d_1\) and \(d_2\),
respectively.  We assume that $\mathcal G_0$, $W^1$,
and $W^2$ are mutually independent. Moreover, $\mathcal G_0$ is assumed
to be sufficiently rich in the following sense: for every $k\ge 1$ and
every $\mu\in\mathcal P_2(\mathbb R^k)$, there exists a
$\mathcal G_0$-measurable random variable
$\xi\in L^2(\Omega;\mathbb R^k)$ such that
$
 \mathcal L(\xi)=\mu.
$

Let \(\mathbb F=(\Ff_t)_{0\le t\le T}\) be the
usual augmentation of
\[
 \Ff_t^0
 :=
 \Gg_0\vee
 \sigma(W_r^1,W_r^2:0\le r\le t).
\]
For \(s\in[0,T]\), define the completed filtration
\[
 \Ff_t^{s,1}
 :=
 \bigl(
 \Ff_s\vee
 \sigma(W_r^1-W_s^1:s\le r\le t)
 \bigr)^{\Pp},
 \qquad s\le t\le T.
\]
For a Euclidean space \(E\), \(L^2(\Omega;E)\) denotes the space of
square-integrable \(\Ff\)-measurable \(E\)-valued random variables, with norm
\(\|Z\|_{L^2}:=(\E\abs Z^2)^{1/2}\).  For a sub-sigma-field
\(\mathcal G\subset\Ff\), \(L^2(\mathcal G;E)\) denotes its subspace of
\(\mathcal G\)-measurable random variables.  For a filtration
\(\mathbb G=(\mathcal G_t)\), an interval \([a,b]\subset[0,T]\), and a
Euclidean space \(E\), let \(\mathcal S_{\mathbb G}^2(a,b;E)\) denote the
space of continuous \(\mathbb G\)-adapted \(E\)-valued processes \(X\) such
that
$
 \E\big[\sup_{a\le t\le b}\abs{X_t}^2\big]<\infty,
$
and let \(\mathcal H_{\mathbb G}^2(a,b;E)\) denote the space of
\(\mathbb G\)-progressively measurable \(E\)-valued processes \(Z\) such that
\[
 \E\int_a^b\abs{Z_t}^2\,\dd t<\infty.
\]
When the filtration is clear from the context, we simply write
\(\mathcal S^2(a,b;E)\) and \(\mathcal H^2(a,b;E)\).

Let \(E\) and \(F\) be Polish spaces. A Markov kernel from \(E\) to \(F\)
is a map
\[
 K:E\times\mathcal B(F)\to[0,1]
\]
such that \(K(x,\cdot)\) is a probability measure on \(F\) for every
\(x\in E\), and \(x\mapsto K(x,A)\) is Borel measurable for every
\(A\in\mathcal B(F)\). For a Borel measurable function \(\varphi:F\to G\),
where \(G\) is a finite-dimensional Euclidean space, whenever the integral
below is well defined, we write
\[
 K\varphi(x):=\int_F \varphi(y)\,K(x,\dd y),
 \qquad x\in E.
\]
The dual action of \(K\) on probability measures is denoted by \(K^*\):
for \(\mu\in\mathcal P(E)\),
\[
 (K^*\mu)(A):=\int_E K(x,A)\,\mu(\dd x),
 \qquad A\in\mathcal B(F).
\]
Thus \(K^*\mu\in\mathcal P(F)\). Throughout the paper, \(C\) denotes a generic positive constant whose
value may change from line to line and which depends only on fixed model
parameters, but not on \(\eps\). Dependence on additional parameters is
indicated by subscripts; in particular, \(C_T\) may also depend on the
time horizon \(T\).

For \(\eps>0\), consider
\begin{equation}\label{eq:original-mf-system}
\left\{
\begin{aligned}
\dd X_t^\eps
 &=
 b\bigl(
 X_t^\eps,Y_t^\eps,U_t^\eps,
 \Law(X_t^\eps,U_t^\eps,Y_t^\eps)
 \bigr)\dd t+
 \sigma\bigl(
 X_t^\eps,U_t^\eps,\Law(X_t^\eps,U_t^\eps)
 \bigr)\dd W_t^1,
\\
\dd Y_t^\eps
 &=
 \frac1\eps
 h\bigl(
 X_t^\eps,Y_t^\eps,U_t^\eps,
 \Law(X_t^\eps,U_t^\eps,Y_t^\eps)
 \bigr)\dd t+
 \frac1{\sqrt\eps}
 g\bigl(
 X_t^\eps,Y_t^\eps,U_t^\eps,
 \Law(X_t^\eps,U_t^\eps,Y_t^\eps)
 \bigr)\dd W_t^2,
\\
\dd U_t^\eps
 &=
 -F\bigl(
 X_t^\eps,Y_t^\eps,U_t^\eps,V_t^\eps,
 \Law(X_t^\eps,U_t^\eps,Y_t^\eps)
 \bigr)\dd t
+
 V_t^{1,\eps}\dd W_t^1+V_t^{2,\eps}\dd W_t^2,
\\
X_0^\eps&=x,\qquad Y_0^\eps=y,\qquad
U_T^\eps
=
 \beta\bigl(X_T^\eps,\Law(X_T^\eps)\bigr).
\end{aligned}
\right.
\end{equation}

Here \(X^\eps\in\R^n\), \(Y^\eps\in\R^m\),
\(U^\eps\in\R^p\), and
$
 V^\eps=(V^{1,\eps},V^{2,\eps})
 \in\R^{p\times(d_1+d_2)}.
$
The coefficients are of the form
\[
\begin{aligned}
 (b,h,g)&:
 \R^n\times\R^m\times\R^p
 \times\Ptwo(\slowspace\times\fastspace)
 \longrightarrow
 \R^n\times\R^m\times\R^{m\times d_2},
\\
 F&:
 \R^n\times\R^m\times\R^p
 \times\R^{p\times(d_1+d_2)}
 \times\Ptwo(\slowspace\times\fastspace)
 \longrightarrow\R^p,
\\
 \sigma&:
 \R^n\times\R^p\times\Ptwo(\slowspace)
 \longrightarrow\R^{n\times d_1},
 \qquad
 \beta:
 \R^n\times\Ptwo(\R^n)\longrightarrow\R^p.
\end{aligned}
\]

To introduce the averaged system, fix \(\eta\in\Ptwo(\slowspace)\) and set
\[
 {\Ptwo}_\eta
 :=
 \left\{
 \rho\in\Ptwo(\slowspace\times\fastspace):
 (\pr_{\slowspace})_\#\rho=\eta
 \right\},
\]
where \(\pr_{\slowspace}:\slowspace\times\fastspace\to\slowspace\) is the
canonical projection.  Let
\(\Theta=(\Theta^x,\Theta^u)\) be a square-integrable
\(\slowspace\)-valued random variable with \(\Law(\Theta)=\eta\).  Given
\(\rho\in{\Ptwo}_\eta\), choose \((\Theta,Y_0)\) with
\(\Law(\Theta,Y_0)=\rho\) and consider the unscaled frozen fast dynamics
\[
 \dd Y_t
 =
 h\bigl(
 \Theta^x,Y_t,\Theta^u,\Law(\Theta,Y_t)
 \bigr)\dd t
 +
 g\bigl(
 \Theta^x,Y_t,\Theta^u,\Law(\Theta,Y_t)
 \bigr)\dd B_t,
 \qquad t\ge0,
\]
where \(B\) is a \(d_2\)-dimensional Brownian motion independent of
\((\Theta,Y_0)\), and the random environment \(\Theta\) remains frozen in
time.  Thus the joint law \(\Law(\Theta,Y_t)\) evolves inside the fixed
fiber \({\Ptwo}_\eta\).  Section~\ref{sec:frozen} develops this frozen
McKean--Vlasov dynamics in detail.  Under the assumptions below, it admits
a unique invariant joint law
\(\Pi^\eta\in{\Ptwo}_\eta\). We write its disintegration as
\[
 \Pi^\eta(\dd(x,u),\dd z)
 =
 \eta(\dd(x,u))\,\nu^{x,u;\eta}(\dd z),
\]
where \((x,u)\mapsto\nu^{x,u;\eta}\) is a Markov kernel from
\(\slowspace\) to \(\fastspace\), representing the conditional invariant
law of the fast variable given the frozen slow state \((x,u)\).

Define
\begin{align}
 \bar b(x,u,\eta)
 &:=
 \int_{\R^m}
 b(x,z,u,\Pi^\eta)\,\nu^{x,u;\eta}(\dd z),
 \label{eq:averaged-b-definition}\\
 \bar F(x,u,v,\eta)
 &:=
 \int_{\R^m}
 F\bigl(x,z,u,(v,0),\Pi^\eta\bigr)
 \,\nu^{x,u;\eta}(\dd z),
 \label{eq:averaged-F-definition}
\end{align}
where \(v\in\R^{p\times d_1}\) and \((v,0)\) denotes its concatenation with
the zero matrix in \(\R^{p\times d_2}\).

The averaged mean-field FBSDE is
\begin{equation}\label{eq:averaged-mf-system}
\left\{
\begin{aligned}
\dd\bar X_t
 &=
 \bar b\bigl(
 \bar X_t,\bar U_t,\Law(\bar X_t,\bar U_t)
 \bigr)\dd t+
 \sigma\bigl(
 \bar X_t,\bar U_t,\Law(\bar X_t,\bar U_t)
 \bigr)\dd W_t^1,
\\
\dd\bar U_t
 &=
 -\bar F\bigl(
 \bar X_t,\bar U_t,\bar V_t,\Law(\bar X_t,\bar U_t)
 \bigr)\dd t
 +\bar V_t\dd W_t^1,
\\
\bar X_0&=x,
\\
\bar U_T
 &=
 \beta\bigl(\bar X_T,\Law(\bar X_T)\bigr).
\end{aligned}
\right.
\end{equation}

We impose the following assumptions.  

\medskip
\noindent\textbf{(H1) Global Lipschitz continuity.}
There exists $L>0$ such that, for all $x,x'\in\mathbb R^n$, $y,y'\in\mathbb R^m$, $u,u'\in\mathbb R^p$, $v,v'\in\mathbb R^{p\times(d_1+d_2)}$, $\rho,\rho'\in\mathcal P_2(\slowspace\times \fastspace)$, $\eta,\eta'\in\mathcal P_2(\slowspace)$, and $\theta,\theta'\in\mathcal P_2(\mathbb R^n)$,
\begin{align}
 &\abs{b(x,y,u,\rho)-b(x',y',u',\rho')}
 +\abs{h(x,y,u,\rho)-h(x',y',u',\rho')}
 \notag\\
 &\quad+
 \abs{g(x,y,u,\rho)-g(x',y',u',\rho')}
 \le
 L\bigl(
 \abs{x-x'}+\abs{y-y'}+\abs{u-u'}+W_2(\rho,\rho')
 \bigr),
 \label{eq:H1-bhg}\\
 &\abs{F(x,y,u,v,\rho)-F(x',y',u',v',\rho')}
 \notag\\
 &\qquad\qquad\qquad\qquad\qquad \ \le
 L\bigl(
 \abs{x-x'}+\abs{y-y'}+\abs{u-u'}+\abs{v-v'}
 +W_2(\rho,\rho')
 \bigr),
 \label{eq:H1-F}\\
 &\abs{\sigma(x,u,\eta)-\sigma(x',u',\eta')}
 \le
 L\bigl(
 \abs{x-x'}+\abs{u-u'}+W_2(\eta,\eta')
 \bigr),
 \label{eq:H1-sigma}\\
 &\abs{\beta(x,\theta)-\beta(x',\theta')}
 \le
 L\bigl(\abs{x-x'}+W_2(\theta,\theta')\bigr).
 \label{eq:H1-beta}
\end{align}

Consequently, for some \(C>0\),
\begin{align}
 \abs{b(x,y,u,\rho)}^2
 +\abs{h(x,y,u,\rho)}^2
 +\abs{g(x,y,u,\rho)}^2
 &\le
 C\bigl(
 1+\abs x^2+\abs y^2+\abs u^2+m_2(\rho)^2
 \bigr),
 \label{eq:linear-growth-bhg}\\
 \abs{F(x,y,u,v,\rho)}^2
 &\le
 C\bigl(
 1+\abs x^2+\abs y^2+\abs u^2+\abs v^2+m_2(\rho)^2
 \bigr).
 \label{eq:linear-growth-F}
\end{align}
Moreover,
\[
 \abs{\sigma(x,u,\eta)}^2
 \le C\bigl(1+\abs x^2+\abs u^2+m_2(\eta)^2\bigr),
 \qquad
 \abs{\beta(x,\theta)}^2
 \le C\bigl(1+\abs x^2+m_2(\theta)^2\bigr).
\]
\medskip
\noindent\textbf{(H2) Joint dissipativity of the fast equation.}
There exist constants
\[
 \kappa>K_2\ge0,\qquad K_1\ge0,
\]
such that
\begin{align}
 &2\ip{y-y'}{
 h(x,y,u,\rho)-h(x',y',u',\rho')}
 +\abs{
 g(x,y,u,\rho)-g(x',y',u',\rho')
 }^2
 \notag\\
 &\quad\le
 -\kappa\abs{y-y'}^2
 +K_1\bigl(\abs{x-x'}^2+\abs{u-u'}^2\bigr)
 +K_2 W_2^2(\rho,\rho').
 \label{eq:joint-dissipativity}
\end{align}
Throughout the paper,
\begin{equation*}
 \alpha:=\kappa-K_2>0.
\end{equation*}

\medskip
\noindent\textbf{(H3) Uniform restart stability.}
For every sufficiently small \(\eps>0\),
\eqref{eq:original-mf-system} has a unique solution with
\[
 (X^\eps,Y^\eps,U^\eps)
 \in\mathcal S^2(0,T;\R^n\times\R^m\times\R^p),
 \qquad
 V^\eps\in\mathcal H^2(0,T;\R^{p\times(d_1+d_2)}).
\]
Moreover, for every \(s\in[0,T]\) and every
\(\xi\in L^2(\Ff_s;\R^n)\), the averaged equation in
\eqref{eq:averaged-mf-system}, started from
\(\bar X_s^{s,\xi}=\xi\), has a unique solution, adapted to
\((\Ff_t^{s,1})_{s\le t\le T}\), in
\[
 (\bar X^{s,\xi},\bar U^{s,\xi})
 \in\mathcal S^2(s,T;\R^n\times\R^p),
 \qquad
 \bar V^{s,\xi}\in\mathcal H^2(s,T;\R^{p\times d_1}).
\]
There exists
\(L_{\rm rst}>0\), independent of \(s\) and of the laws of the initial
random variables, such that for all
\(\xi,\xi'\in L^2(\Ff_s;\R^n)\),
\begin{equation}\label{eq:restart-lipschitz}
 \norm{\bar U_s^{s,\xi}-\bar U_s^{s,\xi'}}_{L^2}
 \le
 L_{\rm rst}\norm{\xi-\xi'}_{L^2}.
\end{equation}

\begin{remark}\label{rem:H3-operator}
{\rm
In a classical Markovian FBSDE, one typically has a deterministic
decoupling field of the form
$
 U_t=u(t,X_t).
$
For a mean-field FBSDE, the corresponding object generally depends
also on the law of the state, so that one expects
$
 U_t=u\bigl(t,X_t,\Law(X_t)\bigr).
$
Thus, after restarting at time \(s\) from a random variable \(\xi\),
the value at the restart time is in general of the form
$
 \bar U_s^{s,\xi}
 =
 u\bigl(s,\xi,\Law(\xi)\bigr),
$
rather than a function of \((s,\xi)\) alone.  Obtaining
\eqref{eq:restart-lipschitz} from such a representation would require
uniform Lipschitz regularity of \(u\) in both the state and measure
variables.  Instead of imposing the existence and regularity of such a
mean-field decoupling field, {\rm(H3)} directly assumes the required
\(L^2\)-Lipschitz property of the restart map
$
 \xi\longmapsto \bar U_s^{s,\xi}.
$ Later, we give several verifiable sufficient conditions for {\rm(H3)}.
}
\end{remark}

\begin{theorem}\label{thm:main}
Assume {\rm(H1)--(H3)}.  There exist \(C_T<\infty\) and
\(\eps_T>0\), independent of \(\eps\), such that, for
\(0<\eps\le\eps_T\),
\begin{align}
 &\E\sup_{0\le t\le T}\abs{X_t^\eps-\bar X_t}^2
 +\E\sup_{0\le t\le T}\abs{U_t^\eps-\bar U_t}^2
+
 \E\int_0^T
 \bigl(
 \abs{V_t^{1,\eps}-\bar V_t}^2
 +\abs{V_t^{2,\eps}}^2
 \bigr)\dd t
 \notag\\
 &\qquad\qquad\qquad\qquad\qquad\qquad\qquad\qquad\qquad\qquad\qquad\qquad\le
 C_T(1+\abs x^2+\abs y^2)\eps.
 \label{eq:main-estimate}
\end{align}
\end{theorem}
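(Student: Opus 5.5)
The proof follows the two-scale localization strategy outlined in the introduction. Concretely, it has four parts: a frozen-fast-dynamics theory, a local comparison on short macroscopic intervals, a propagation step that uses the restart map of (H3), and a final a priori bound on the remaining quantities.

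\emph{Step 1: frozen fast dynamics.} We first establish the needed properties of the frozen McKean--Vlasov dynamics on the fiber \({\Ptwo}_\eta\).
\begin{itemize}
\item For two solutions driven by the same frozen \(\Theta\) and the same \(B\), It\^o's formula with (H2) gives
\[
\E\abs{Y_t-Y_t'}^2\le \ee^{-\alpha t}\E\abs{Y_0-Y_0'}^2,
\]
using \(W_2^2(\Law(\Theta,Y_t),\Law(\Theta,Y_t'))\le\E\abs{Y_t-Y_t'}^2\).
\item Hence the law flow is a \(W_2\)-contraction on \({\Ptwo}_\eta\), with a unique invariant joint law \(\Pi^\eta\).
\item Combining the \(K_1\)-term of (H2) with (H1), the maps \(\eta\mapsto\Pi^\eta\) and \((x,u,\eta)\mapsto\nu^{x,u;\eta}\) are Lipschitz (coupled in \(W_2\)).
\item Consequently \(\bar b\) and \(\bar F\) are Lipschitz with linear growth, so the averaged system is well defined.
\item We record the joint lifted semigroup on (environment, fast state, current joint law), with exponential mixing of rate \(\alpha\) for Lipschitz test functions.
\end{itemize}

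\emph{Step 2: local comparison on a macroscopic interval.} Fix a partition \(0=t_0<\dots<t_N=T\) with mesh \(\delta\) independent of \(\eps\). On \([t_k,t_{k+1}]\) we compare \((X^\eps,U^\eps,V^\eps)\) with the restarted averaged solution \((\bar X^{t_k,X^\eps_{t_k}},\bar U^{t_k,X^\eps_{t_k}},\bar V^{t_k,X^\eps_{t_k}})\).
\begin{itemize}
\item \emph{Forward difference.} The leading term is the fluctuation
\[
\int \bigl[b(X^\eps,Y^\eps,U^\eps,\Law(X^\eps,U^\eps,Y^\eps))-\bar b(X^\eps,U^\eps,\Law(X^\eps,U^\eps))\bigr]\dd r.
\]
\item \emph{Backward difference.} The analogous term appears with \(F\), together with the mean-field BSDE stability estimate. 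Here the generator is treated as an \(L^2\)-Lipschitz operator, and the \(V^{2,\eps}\) component is controlled by the BSDE energy inequality.
\item \emph{Treating the fluctuation.} Introduce a microscopic mesh of size \(\Delta=c\eps\). On each cell, freeze the slow pair at the left endpoint. This costs \(O(\Delta)\) in \(L^2\) from the slow-variable increments, once we have a priori bounds \(\sup_t\E\abs{Y^\eps_t}^2\le C(1+\abs x^2+\abs y^2)\), which follow from (H2) with the \(1/\eps\) scaling.
\item \emph{Tracking estimate.} Show that the conditional law of \(Y^\eps\) given \(\Ff_{r_j}\) relaxes, on time scale \(\eps\), to \(\nu^{X,U;\eta}\) up to an error governed by the slow drift. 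The conditional bias is then \(O(\eps^{1/2})\) after summation.
\item \emph{Centered part.} Handle the remaining conditionally centered terms with a Gordin-type martingale decomposition across cells. The exponential mixing makes the correctors \(O(\eps)\), so the martingale sum has \(L^2\)-norm \(O(\eps^{1/2})\) rather than \(O(1)\).
\end{itemize}
The result is a local estimate of the form
\[
\E\sup_{[t_k,t_{k+1}]}\abs{X^\eps-\bar X^{t_k,\cdot}}^2+\dots\le C_\delta\,\eps\,(1+\abs x^2+\abs y^2).
\]
It is valid provided \(\delta\) is small enough that the local FBSDE comparison closes by a contraction argument; the required smallness depends on \(L\) and \(L_{\rm rst}\), which is why this restriction is needed.

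\emph{Step 3: propagation via restart stability.} We combine the local estimates backward and forward. On each interval the terminal condition is \(\bar U^{t_{k+1},\cdot}_{t_{k+1}}\) evaluated at two different inputs, and (H3) bounds their \(L^2\)-difference by \(L_{\rm rst}\) times the forward discrepancy. Write \(e_k:=\norm{X^\eps_{t_k}-\bar X_{t_k}}_{L^2}\), and use the flow property \(\bar X_t=\bar X^{t_k,\bar X_{t_k}}_t\). This yields a recursion
\[
e_{k+1}^2\le(1+C\delta)e_k^2+C\eps,
\]
where the constant is uniform in \(k\) because \(L_{\rm rst}\) is independent of \(s\) and of the laws. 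A discrete Gronwall argument over \(N=T/\delta\) steps gives \(\max_k e_k^2\le C_T\eps\).

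\emph{Step 4: remaining quantities.} A final BSDE estimate then yields the sup bounds on \(U\) and the \(V\)-integrals in \eqref{eq:main-estimate}.

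\emph{Main obstacle.} The tracking/Gordin step is the hardest part. The fast conditional laws live in a random environment, and the coefficients depend on the full joint law, so the invariant kernel moves with \(\Law(X^\eps,U^\eps)\). To obtain the optimal rate one must show the per-cell bias is \(O(\eps)\) rather than \(O(\eps^{1/2})\). I would first try to bound the \(W_2\) distance between \(\Law(\Theta,Y^\eps)\) and \(\Pi^{\Law(X^\eps,U^\eps)}\) by a linear ODE inequality with rate \(\alpha/\eps\), using the Lipschitz dependence of \(\Pi^\eta\) on \(\eta\) from Step 1.
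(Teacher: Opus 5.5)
\textbf{Propagation and moments.} In Step~3 you treat the terminal mismatch on $[t_k,t_{k+1}]$ as $\bar U^{t_{k+1},\cdot}_{t_{k+1}}$ evaluated at two inputs. But $U^\eps_{t_{k+1}}$ is not $\bar U^{t_{k+1},X^\eps_{t_{k+1}}}_{t_{k+1}}$: their difference is the grid defect $R^\eps_{t_{k+1}}$ of \eqref{eq:grid-defect}, which is known to vanish only at $t=T$.

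So a local argument on $[t_k,t_{k+1}]$ cannot give a bound of the form $C\eps(1+\abs x^2+\abs y^2)$. It can only give, as in Lemma~\ref{lem:local-forward-backward-comparison}, a right-hand side $C_0\E\abs{R^\eps_{t_{k+1}}}^2+C_1\eps(1+\E\abs{X^\eps_{t_k}}^2+\E\abs{Y^\eps_{t_k}}^2)$. The local $U$-error at the left endpoint is exactly $R^\eps_{t_k}$, so this yields the backward recursion $\E\abs{R^\eps_{t_k}}^2\le C_0\E\abs{R^\eps_{t_{k+1}}}^2+C_1\eps(\cdots)$ started from $R^\eps_T=0$. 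It runs over a number of intervals fixed independently of $\eps$, so $C_0\ge1$ is harmless. This is the genuinely forward--backward step, and your recursion for $e_k$ can only be run after it.

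Relatedly, $\sup_t\E\abs{Y^\eps_t}^2\le C(1+\abs x^2+\abs y^2)$ does not follow from (H2). The Lyapunov inequality bounds it only in terms of $\sup_t(\E\abs{X^\eps_t}^2+\E\abs{U^\eps_t}^2)$. Uniform-in-$\eps$ bounds on $U^\eps$ are not implied by (H1)--(H2) plus well-posedness: in Example~\ref{ex:critical-horizon} with $T=1$, $\E\abs{U^\eps_0}^2\to\infty$. The paper produces these moments together with the errors. It runs a forward recursion for $1+\E\abs{X^\eps_{t_i}}^2+\E\abs{Y^\eps_{t_i}}^2$ containing $\E\abs{R^\eps_{t_{i+1}}}^2$, combines it with the defect bound, and closes by absorption once $C_T\eps\le\frac12$. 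This is where $\eps_T$ comes from; your outline has no such step.

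\textbf{The fluctuation estimate.} Freezing the \emph{original} slow pair and conditioning on $\Ff_{r_j}$ does not give the optimal rate. Because $Y^\eps_{r_j}$ is $\Ff_{r_j}$-measurable, the conditional law restarts as a Dirac mass on every cell. The contraction over a cell of length $c\eps$ gives only a fixed factor independent of $\eps$, so the bias integrand $\E[f(Y^\eps_t)\mid\Ff_{r_j}]-\int f\,\dd\nu^{X^\eps_{r_j},U^\eps_{r_j};\eta}$ stays of order one. Summed cell by cell over $\sim\delta/\eps$ cells this is $O(\delta)$, not $O(\eps^{1/2})$. Extracting cancellation between cells would require a decorrelation argument for the bias sequence itself, which is the original problem again. (With this centering the remainder is already a martingale difference, so Gordin does nothing there.)

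Two other routes also fail. Conditioning on the whole path of $(X^\eps,U^\eps)$ does not work, because that path depends on $W^2$ through $Y^\eps$, so the $W^2$-increments are no longer Brownian under the conditioning. And the $F$-fluctuation carries $V^\eps$ in its $v$-slot, which has no time regularity (it cannot be frozen at cell endpoints) and also depends on $W^2$. Your proposed remedy, an ODE bound on the unconditional $W_2$-distance to $\Pi^{\Law(X^\eps,U^\eps)}$, controls only the measure argument of the coefficients, not the conditional bias.

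The missing idea has three parts:
\begin{enumerate}
\item Use as environment the averaged restart solution $(\bar X,\bar U,\bar V)^{t_k,X^\eps_{t_k}}$. It is measurable with respect to $\Ff_{t_k}\vee\sigma(W^1_t-W^1_{t_k}:t\ge t_k)$, hence independent of the $W^2$-increments on $[t_k,t_{k+1}]$.
\item Run an auxiliary fast process in this environment, frozen on the $\eps$-mesh. The swap costs $C\delta$ times the local sup-errors plus $C\eps$ (Lemma~\ref{lem:auxiliary-fast-comparison}), which is absorbed for small $\delta$.
\item Condition once per macroscopic interval on the whole environment path, including $\bar V$. Then $\bar V$ appears identically in $F$ and $\bar F$ and needs no regularity.
\end{enumerate}
The conditional laws are then transported by the frozen kernels, giving $D_{k+1}\le\ee^{-\alpha/4}D_k+C\,\|\text{environment increment}\|_{L^2}$. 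Hence $\sum_kD_k^2\le C$, and the integrated bias is $O(\eps)$ with a single initial layer per macroscopic interval. The environment-centered remainder is handled by Gordin with respect to the environment joined with the $W^2$-increments (Lemmas~\ref{lem:conditional-equilibrium-tracking}--\ref{lem:maximal-centered-integral}).

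A smaller point in Step~1: synchronous coupling with a common $\Theta$ gives contraction in the fibered metric $\Wass_{2,\eta}$ of Lemma~\ref{lem:fiber-space}, not in $W_2$, since an optimal $W_2$-coupling need not share the environment. The fixed point should therefore be taken in $({\Ptwo}_\eta,\Wass_{2,\eta})$.
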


The following example is a simple adaptation of the classical
Ornstein--Uhlenbeck sharpness example in \cite{Liu2010}, and shows that
the rate in Theorem~\ref{thm:main} cannot be improved under
{\rm(H1)--(H3)}.

\begin{example}
\label{ex:optimality}
{\rm The exponent in Theorem~\ref{thm:main} is optimal within the class covered
by {\rm(H1)--(H3)}.  Indeed, consider the law-independent case with
\(n=p=d_1=d_2=1\) and \(m=2\), and let
\[
 b(x,y,u,\rho)=y_1,\qquad
 h(x,y,u,\rho)=-y,\qquad
 g(x,y,u,\rho)=
 \begin{pmatrix}
  \sqrt{2}\\
  0
 \end{pmatrix},
 \qquad
 \sigma=F=\beta=0.
\]
Starting from \(X_0^\eps=0\) and \(Y_0^\eps=(0,0)\), the corresponding
system takes the explicit form
\[
 \begin{cases}
  \dd X_t^\eps=Y_t^{\eps,1}\dd t,\\[1mm]
  \dd Y_t^{\eps,1}
  =-\dfrac{1}{\eps}Y_t^{\eps,1}\dd t
   +\sqrt{\dfrac{2}{\eps}}\,\dd W_t^2,\\[2mm]
  \dd Y_t^{\eps,2}
  =-\dfrac{1}{\eps}Y_t^{\eps,2}\dd t,\\[2mm]
  \dd U_t^\eps=V_t^\eps\dd W_t^1,\qquad U_T^\eps=0.
 \end{cases}
\]
Hence
\[
 Y_t^{\eps,2}=0,\qquad
 U_t^\eps=V_t^\eps=0, \qquad Y_t^{\eps,1}
 =
 \sqrt{\frac{2}{\eps}}
 \int_0^t \ee^{-(t-s)/\eps}\dd W_s^2.
\]
Moreover, {\rm(H2)} holds with
\(\kappa=2\) and \(K_1=K_2=0\), while
{\rm(H1)} and {\rm(H3)} are immediate.

The frozen fast equation has invariant distribution
$
 N(0,1)\otimes\delta_0.
$
Therefore the averaged drift satisfies
\[
 \bar b
 =
 \int_{\R^2} y_1
 \bigl(N(0,1)\otimes\delta_0\bigr)(\dd y)
 =0,
\]
and consequently
\[
 \bar X_t=0,\qquad
 \bar U_t=\bar V_t=0.
\]
On the other hand, integrating the first equation and using Fubini's
theorem gives
\begin{align*}
 X_T^\eps
 &=
 \int_0^T Y_t^{\eps,1}\dd t=
 \sqrt{\frac{2}{\eps}}
 \int_0^T\int_0^t
 \ee^{-(t-s)/\eps}\dd W_s^2\,\dd t=
 \sqrt{2\eps}\int_0^T
 \bigl(1-\ee^{-(T-s)/\eps}\bigr)\dd W_s^2.
\end{align*}
Thus, by It\^o's isometry,
\begin{align*}
 \E\abs{X_T^\eps-\bar X_T}^2
 &=
 2\eps\int_0^T
 \bigl(1-\ee^{-(T-s)/\eps}\bigr)^2\dd s=
 2T\eps-3\eps^2
 +4\eps^2\ee^{-T/\eps}
 -\eps^2\ee^{-2T/\eps}.
\end{align*}
}
\end{example}

\section{Frozen Mean-Field Dynamics and the Joint Lifted Semigroup}
\label{sec:frozen}

\subsection{The Wasserstein Fiber and Frozen Dynamics}

As will become clear below, when constructing the frozen dynamics, the
distribution of the slow pair is kept fixed, while the dissipativity
condition (H2) effectively compares fast variables evolving in the same frozen
environment. It is therefore natural to use a transport structure that
preserves the slow component and allows transportation only along the
corresponding fast fibers. To describe this structure, we use the fibered
Wasserstein distance studied by Peszek and Poyato~\cite{PeszekPoyato2023}.
More precisely, fix $\eta\in\Ptwo(\slowspace)$. Recall from
Section~\ref{sec:model} that ${\Ptwo}_\eta$ is the set of joint laws on
$\slowspace\times\fastspace$ with slow marginal $\eta$, and let
$\Theta=(\Theta^x,\Theta^u)$ be a square-integrable
$\slowspace$-valued random variable with
$
 \Law(\Theta)=\eta.
$
Here $\Theta$ represents a random realization of the
prescribed slow marginal.

By the disintegration theorem
\cite[Theorem~5.3.1]{AmbrosioGigliSavare},
every $\rho\in\mathcal P_{2,\eta}$ admits an $\eta$-a.e.\ uniquely
determined Borel probability kernel
$
(x,u)\longmapsto \rho^{x,u}\in\mathcal P(\fastspace)
$
such that
\[
\rho(d(x,u),dy)
=
\eta(d(x,u))\,\rho^{x,u}(dy).
\]
Moreover, since $\rho\in\mathcal P_2$, one has
$\rho^{x,u}\in\mathcal P_2(\fastspace)$ for
$\eta$-a.e.\ $(x,u)$. Equivalently, if
$
 \Law(\Theta,Z)=\rho,
$
then
\[
 \Law(Z\mid\Theta)
 =
 \rho^{\Theta^x,\Theta^u}
 \qquad\text{a.s.}
\]

For $\rho,\rho'\in{\Ptwo}_\eta$, set
\begin{equation}\label{eq:fiber-distance}
 \Wass_{2,\eta}^2(\rho,\rho')
 :=
 \int_{\slowspace}
 W_2^2(\rho^{x,u},(\rho')^{x,u})\,
 \eta(\dd(x,u)).
\end{equation}

The following properties can be obtained from the corresponding results
for fibered optimal transport in
\cite[Theorem~2.13, Remark~3.2, and Propositions~3.4, 3.7, and 3.10]{PeszekPoyato2023},
after identifying the base variable $\omega$ therein with $(x,u)$
and the transported variable with $y$.
For the reader's convenience, we include a brief proof.

\begin{lemma}\label{lem:fiber-space}
The value in \eqref{eq:fiber-distance} is independent of the chosen
versions of the disintegrations. Moreover,
\begin{align}
 \Wass_{2,\eta}^2(\rho,\rho')
 &=
 \inf
 \left\{
 \E\abs{Z-Z'}^2:
 \Law(\Theta,Z)=\rho, \ \ 
 \Law(\Theta,Z')=\rho'
 \right\},
 \label{eq:fiber-coupling}
\end{align}
where the infimum is taken over all square-integrable joint
realizations of $(\Theta,Z,Z')$ with
$\Law(\Theta)=\eta$.
In particular,
\begin{equation}\label{eq:ordinary-below-fiber}
 W_2(\rho,\rho')\le\Wass_{2,\eta}(\rho,\rho'),
\end{equation}
and $({\Ptwo}_\eta,\Wass_{2,\eta})$ is a complete metric space.
\end{lemma}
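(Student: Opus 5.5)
Well-definedness comes first. If $\rho^{x,u}$ and $\tilde\rho^{x,u}$ are two versions of the disintegration of $\rho$, they agree for $\eta$-a.e.\ $(x,u)$, so the integrand in \eqref{eq:fiber-distance} changes only on an $\eta$-null set. The integrand must also be measurable: $(x,u)\mapsto W_2^2(\rho^{x,u},(\rho')^{x,u})$ is a composition of a Borel map into $\mathcal P_2(\fastspace)\times\mathcal P_2(\fastspace)$ with the continuous (hence Borel) distance $W_2$. Measurability of the kernel as a map into $\mathcal P(\fastspace)$ is equivalent to the kernel property, and $W_2$ is lower semicontinuous for weak convergence, so this is fine. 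Finiteness follows from $W_2^2(\mu,\mu')\le 2m_2(\mu)^2+2m_2(\mu')^2$, since integrating $m_2(\rho^{x,u})^2$ against $\eta$ gives at most $m_2(\rho)^2$.

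For \eqref{eq:fiber-coupling}, I would prove two inequalities.

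\emph{Lower bound.} Take any realization $(\Theta,Z,Z')$ and let $\gamma^{x,u}$ be a regular conditional law of $(Z,Z')$ given $\Theta=(x,u)$. Its marginals are $\rho^{x,u}$ and $(\rho')^{x,u}$ $\eta$-a.e., so $\gamma^{x,u}$ is a coupling of them. Hence $\E[\abs{Z-Z'}^2\mid\Theta]\ge W_2^2(\rho^{\Theta},(\rho')^{\Theta})$, and taking expectations gives the bound.

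\emph{Upper bound.} This is the main obstacle, since it needs a measurable selection of optimal couplings. By the measurable selection theorem for optimal plans (Villani, \emph{Optimal Transport}, Cor.~5.22; or Ambrosio--Gigli--Savar\'e), there is a Borel map $(x,u)\mapsto\gamma^{x,u}$ with $\gamma^{x,u}$ an optimal coupling of $\rho^{x,u}$ and $(\rho')^{x,u}$. Define $\Gamma(\dd(x,u),\dd z,\dd z')=\eta(\dd(x,u))\gamma^{x,u}(\dd z,\dd z')$. I realize $(\Theta,Z,Z')\sim\Gamma$; the richness of $\Gg_0$ supplies such variables, though any probability space suffices. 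Then $\E\abs{Z-Z'}^2=\Wass_{2,\eta}^2(\rho,\rho')$ and the infimum is attained. If one wants to avoid the selection theorem, an alternative is $\eps$-optimal couplings on a countable partition combined with weak continuity, but I would cite the selection result.

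Inequality \eqref{eq:ordinary-below-fiber} is then immediate. For the optimal realization, $(\Theta,Z)$ and $(\Theta,Z')$ form a coupling of $\rho$ and $\rho'$ with cost $\E\abs{Z-Z'}^2$, because the $\Theta$ components coincide.

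For the metric axioms:
\begin{itemize}
\item Symmetry is clear.
\item Definiteness: $\Wass_{2,\eta}=0$ forces $\rho^{x,u}=(\rho')^{x,u}$ $\eta$-a.e., hence $\rho=\rho'$.
\item Triangle inequality: apply the pointwise $W_2$ triangle inequality and then Minkowski in $L^2(\eta)$.
\end{itemize}

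For completeness, take a $\Wass_{2,\eta}$-Cauchy sequence $(\rho_k)$ and extract a subsequence with $\sum_k\Wass_{2,\eta}(\rho_k,\rho_{k+1})<\infty$. By monotone convergence, $\sum_k W_2(\rho_k^{x,u},\rho_{k+1}^{x,u})<\infty$ for $\eta$-a.e.\ $(x,u)$. Since $(\mathcal P_2(\fastspace),W_2)$ is complete, $\rho_k^{x,u}\to\mu^{x,u}$ in $W_2$ for these $(x,u)$. The limit kernel is measurable as a pointwise limit of measurable maps into a Polish space; on the null set I set it to $\delta_0$.

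Put $\rho:=\eta\otimes\mu$. By Fatou,
\[
\Wass_{2,\eta}^2(\rho_k,\rho)\le\liminf_j\Wass_{2,\eta}^2(\rho_k,\rho_j),
\]
which tends to $0$ as $k\to\infty$ and is finite. This gives $\rho\in\mathcal P_{2,\eta}$, with second moment controlled by that of $\rho_k$. Finally, a Cauchy sequence with a convergent subsequence converges.
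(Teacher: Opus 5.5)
Your proof is correct. The lower bound, the attainment of the infimum through a Borel family of optimal plans (Villani, Cor.~5.22), and the comparison $W_2(\rho,\rho')\le\Wass_{2,\eta}(\rho,\rho')$ follow the paper's argument; only completeness is proved by a genuinely different route.

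\textbf{The paper's completeness proof.} It stays on the probabilistic side. It selects measurable optimal couplings $\Gamma_j^\theta$ between consecutive fibers and glues the laws $Q_j=\eta(\dd\theta)\Gamma_j^\theta$ along their common $(\theta,z_{j+1})$-marginals. An extension argument then realizes $(\Theta,Z_1,Z_2,\ldots)$ with a single frozen $\Theta$. Hence $(Z_j)$ is Cauchy in $L^2$, the limit is directly $\rho=\Law(\Theta,Z)$, and $\Wass_{2,\eta}(\rho_j,\rho)\le\norm{Z_j-Z}_{L^2}$ follows from \eqref{eq:fiber-coupling}.

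\textbf{Your completeness proof.} You argue fiberwise. Summability of $\Wass_{2,\eta}(\rho_k,\rho_{k+1})$ gives $\eta$-a.e.\ summability of the fiber distances. Strictly, this step uses Tonelli together with $\int W_2\,\dd\eta\le(\int W_2^2\,\dd\eta)^{1/2}$, not monotone convergence alone. Completeness of $(\Ptwo(\fastspace),W_2)$ then gives a limit kernel, and Fatou yields both $\eta\otimes\mu\in{\Ptwo}_\eta$ and convergence.

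\textbf{Comparison.} Your route is more elementary: completeness needs no measurable selection, no gluing, and no infinite-product extension. The one extra point is measurability of the limit kernel, which is immediate because $\theta\mapsto\int f\,\dd\mu^\theta=\lim_k\int f\,\dd\rho_k^\theta$ is Borel for every bounded continuous $f$. The paper's route instead stays inside the coupling picture it has just established and produces the limit already with a common environment.

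You also check independence of versions and the metric axioms, in particular the triangle inequality via Minkowski in $L^2(\eta)$. The paper's proof leaves these implicit.
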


\begin{proof}
Consider any coupling in \eqref{eq:fiber-coupling}.  Conditional on
\(\Theta=(x,u)\), the conditional law of \((Z,Z')\) is a coupling of
\(\rho^{x,u}\) and \((\rho')^{x,u}\).  Hence
\[
 \E\bigl[\abs{Z-Z'}^2\mid\Theta\bigr]
 \ge
 W_2^2(\rho^{\Theta^x,\Theta^u},(\rho')^{\Theta^x,\Theta^u})
 \quad\text{a.s.}
\]
Taking expectations and then the infimum over all such couplings gives
\[
 \inf\bigl\{\E\abs{Z-Z'}^2:
 \Law(\Theta,Z)=\rho,\ \Law(\Theta,Z')=\rho'\bigr\}
 \ge \Wass_{2,\eta}^2(\rho,\rho').
\]

Conversely, since the conditional kernels are Borel and the state spaces
are Euclidean, a measurable selection result for optimal transport plans
\cite[Corollary~5.22]{Villani2009} yields a Borel family of optimal
couplings \(\Gamma^{x,u}\) of \(\rho^{x,u}\) and \((\rho')^{x,u}\).
On the rich probability space, realize a random triple
\((\Theta,Z,Z')\) with law
\[
 \eta(\dd(x,u))\,\Gamma^{x,u}(\dd z,\dd z').
\]
This is an admissible coupling in \eqref{eq:fiber-coupling}, with
\[
 \E\abs{Z-Z'}^2
 =
 \Wass_{2,\eta}^2(\rho,\rho').
\]
Together with the preceding inequality, this proves
\eqref{eq:fiber-coupling}.  Moreover, since
\((\Theta,Z)\) and \((\Theta,Z')\) form an ordinary coupling of
\(\rho\) and \(\rho'\), we obtain
\[
 W_2(\rho,\rho')\le \Wass_{2,\eta}(\rho,\rho'),
\]
which proves \eqref{eq:ordinary-below-fiber}.

It remains to prove completeness.  Let \((\rho_j)_{j\ge1}\) be \(\Wass_{2,\eta}\)-Cauchy.  By passing to a
subsequence if necessary, which we still denote by \((\rho_j)\), we may
assume that
\[
 \Wass_{2,\eta}(\rho_{j+1},\rho_j)\le 2^{-j}.
\]

Write \(\theta=(x,u)\).  For each \(j\), Corollary~5.22 of
\cite{Villani2009} yields a Borel family
$
 \Gamma_j^\theta\in
 \Pi\bigl(\rho_j^\theta,\rho_{j+1}^\theta\bigr)
$
of optimal couplings.  Define the probability measure
\[
 Q_j(\dd\theta,\dd z_j,\dd z_{j+1})
 :=
 \eta(\dd\theta)\,
 \Gamma_j^\theta(\dd z_j,\dd z_{j+1}).
\]
Then the \((\theta,z_j)\) and \((\theta,z_{j+1})\) marginals of \(Q_j\)
are \(\rho_j\) and \(\rho_{j+1}\), respectively.  Thus \(Q_j\) may be
viewed as the joint law of a triple
\((\Theta,Z_j,Z_{j+1})\), where \(\Law(\Theta)=\eta\).

Since \(Q_j\) and \(Q_{j+1}\) have the same
\((\theta,z_{j+1})\) marginal \(\rho_{j+1}\), the gluing lemma
\cite[pp.~11--12]{Villani2009} yields a joint law of
\((\Theta,Z_j,Z_{j+1},Z_{j+2})\) whose
\((\Theta,Z_j,Z_{j+1})\) marginal is \(Q_j\) and whose
\((\Theta,Z_{j+1},Z_{j+2})\) marginal is \(Q_{j+1}\).  Iterating this construction and using a standard
extension argument yields a common realization
\[
 (\Theta,Z_1,Z_2,\ldots)
\]
such that, for every \(j\),
\[
 \Law(\Theta,Z_j,Z_{j+1})=Q_j.
\]
Therefore,
\[
 \begin{aligned}
 \E\abs{Z_{j+1}-Z_j}^2
 &=
 \int
 W_2^2\bigl(\rho_j^{x,u},\rho_{j+1}^{x,u}\bigr)
 \,\eta(\dd(x,u))=
 \Wass_{2,\eta}^2(\rho_j,\rho_{j+1})
 \le 2^{-2j}.
 \end{aligned}
\]
For \(k>j\), Minkowski's inequality yields
\[
 \norm{Z_k-Z_j}_{L^2}
 \le\sum_{r=j}^{k-1}2^{-r}.
\]
Thus \((Z_j)_{j\geq 1}\) converges in \(L^2\) to a random variable \(Z\).  Put
\(\rho:=\Law(\Theta,Z)\).  Then
\(\rho\in{\Ptwo}_\eta\), and \eqref{eq:fiber-coupling} gives
\[
 \Wass_{2,\eta}(\rho_j,\rho)
 \le\norm{Z_j-Z}_{L^2}\longrightarrow0
\]
along the selected subsequence.  The Cauchy property then gives convergence
of the original sequence to \(\rho\).  This proves completeness.
\end{proof}

Let \((\Theta,Y_0)\) be square integrable with
\(\Law(\Theta)=\eta\) and
\(\Law(\Theta,Y_0)=\rho\in{\Ptwo}_\eta\).  As introduced in
Section~\ref{sec:model}, the unscaled frozen McKean--Vlasov equation is
\begin{equation}\label{eq:frozen-mf-equation}
\dd Y_t
=
h\bigl(
\Theta^x,Y_t,\Theta^u,\Law(\Theta,Y_t)
\bigr)\dd t
+
g\bigl(
\Theta^x,Y_t,\Theta^u,\Law(\Theta,Y_t)
\bigr)\dd B_t,
\qquad t\ge 0.
\end{equation}
where \(B\) is a \(d_2\)-dimensional Brownian motion independent of
\((\Theta,Y_0)\).  The environment \(\Theta\) is random but does not
evolve in time.

\begin{proposition}\label{prop:frozen-wellposed}
Under {\rm(H1)}, equation \eqref{eq:frozen-mf-equation} has a unique strong
solution satisfying
\begin{equation}\label{second-moment-est}
     \E\sup_{0\le r\le t}\abs{Y_r}^2<\infty
 \quad\text{for every }t<\infty.
\end{equation}
Moreover, for every \(t\ge0\), the joint law \(\Law(\Theta,Y_t)\) is
uniquely determined by the initial joint law
$
 \rho=\Law(\Theta,Y_0).
$
Consequently,
\begin{equation*}
 \mathscr S_t^\eta\rho
 :=
 \Law(\Theta,Y_t)
\end{equation*}
defines a nonlinear semigroup on \({\Ptwo}_\eta\), namely,
\begin{equation}\label{eq:nonlinear-semigroup}
 \mathscr S_{t+s}^\eta
 =
 \mathscr S_t^\eta\mathscr S_s^\eta,
 \qquad s,t\ge0.
\end{equation}
\end{proposition}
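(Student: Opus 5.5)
We prove the three claims in turn. The main tool is a Picard iteration on the flow of joint laws in the fibered space of Lemma~\ref{lem:fiber-space}; uniqueness in law then comes from a pathwise coupling, and the semigroup property from a restart argument.

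\emph{Step 1: existence and uniqueness.} Fix a horizon \(t_0<\infty\). Consider the space \(\mathcal M_{t_0}\) of continuous flows \(\mu=(\mu_r)_{0\le r\le t_0}\) in \(({\Ptwo}_\eta,\Wass_{2,\eta})\), with \(\mu_0=\rho\). For such a flow, the equation
\[
\dd Y_r=h(\Theta^x,Y_r,\Theta^u,\mu_r)\dd r+g(\Theta^x,Y_r,\Theta^u,\mu_r)\dd B_r
\]
is a classical SDE. Its coefficients are Lipschitz in \(Y\), measurable in the frozen \(\Theta\), and have linear growth by (H1). It therefore has a unique strong solution, adapted to the filtration generated by \((\Theta,Y_0)\) and \(B\), with \(\E\sup_{r\le t_0}\abs{Y_r}^2<\infty\) by BDG and Gronwall.

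Set \(\Phi(\mu)_r:=\Law(\Theta,Y_r)\), which lies in \({\Ptwo}_\eta\) since \(\Theta\) does not move. Take two flows \(\mu,\mu'\) and use the same \((\Theta,Y_0,B)\) for both. Itô's formula, (H1), and the bound \(W_2\le\Wass_{2,\eta}\) from \eqref{eq:ordinary-below-fiber} give
\[
\E\abs{Y_r-Y'_r}^2\le C\int_0^r\bigl(\E\abs{Y_s-Y_s'}^2+\Wass_{2,\eta}^2(\mu_s,\mu'_s)\bigr)\dd s .
\]
By \eqref{eq:fiber-coupling}, \(\Wass_{2,\eta}^2(\Phi(\mu)_r,\Phi(\mu')_r)\le\E\abs{Y_r-Y_r'}^2\). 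Hence, under the weighted metric \(\sup_r e^{-\lambda r}\Wass_{2,\eta}(\mu_r,\mu'_r)\) with \(\lambda\) large, \(\Phi\) is a contraction. Equivalently, some iterate \(\Phi^k\) contracts in the sup metric. The space \(\mathcal M_{t_0}\) is complete by Lemma~\ref{lem:fiber-space}, so \(\Phi\) has a unique fixed point. The fixed point yields a solution, and any solution has its law flow equal to a fixed point, which gives uniqueness. The horizons are consistent, so the solution extends to all \(t\ge0\), and the moment bound \eqref{second-moment-est} follows as above.

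\emph{Step 2: the law depends only on \(\rho\).} Let \((\Theta,Y_0,B)\) and \((\tilde\Theta,\tilde Y_0,\tilde B)\) be two realizations with the same law \(\rho\), possibly on different probability spaces. The fixed-point flow \(\mu\) is built from \(\rho\) alone: each Picard iterate \(\Phi(\mu)\) depends on the input only through \(\Law(\Theta,Y_0)\otimes\text{Wiener}\). This holds because, for a given flow, the strong solution is a measurable functional of \((\Theta,Y_0,B)\) (Yamada--Watanabe). Hence both realizations produce the same fixed point \(\mu\), and \(\Law(\Theta,Y_t)=\mu_t\) is determined by \(\rho\). This is the step we expect to be the most delicate. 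The point is that the strong solution is a functional \(\Psi_\mu(\Theta,Y_0,B)\) whose form depends on \(\mu\) but not on the probability space. A careful write-up must state this measurability and justify the gluing of different spaces.

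\emph{Step 3: the semigroup property.} Fix \(s\ge0\) and put \(\hat Y_t:=Y_{s+t}\) and \(\hat B_t:=B_{s+t}-B_s\). Then \(\hat B\) is a Brownian motion independent of \((\Theta,Y_s)\), which is measurable with respect to \(\sigma(\Theta,Y_0,B_r:r\le s)\). The process \(\hat Y\) solves \eqref{eq:frozen-mf-equation} driven by \(\hat B\), with initial pair \((\Theta,Y_s)\) of law \(\mathscr S^\eta_s\rho\). By uniqueness and Step 2,
\[
\Law(\Theta,Y_{s+t})=\mathscr S^\eta_t(\mathscr S^\eta_s\rho),
\]
which is \eqref{eq:nonlinear-semigroup}. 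The identity \(\mathscr S^\eta_0=\mathrm{id}\) is immediate.
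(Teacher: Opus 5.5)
Your proof is correct, but it is organized differently from the paper's. The paper runs a Picard iteration on processes in \(\mathcal S^2(0,\delta;\R^m)\), updating the state and the law argument together on a short interval whose length depends only on \(L\), and then concatenates intervals. It proves that the law flow depends only on \(\rho\) by a separate comparison. Given two solutions, possibly on different spaces, with flows \(\mu^1,\mu^2\), it re-solves the SDEs with these frozen flows on one common space from the same \((\bar\Theta,\bar Y_0,\bar B)\). It identifies their laws with \(\mu^i\) by uniqueness in law for the augmented SDE with \(\dd\Theta=0\), and closes with a synchronous-coupling Gronwall estimate.

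You instead put the fixed point on the law flows themselves, in \(C([0,t_0];({\Ptwo}_\eta,\Wass_{2,\eta}))\) with an exponentially weighted sup metric. This gives existence on any horizon in one step, plus pathwise uniqueness. For a fixed flow, the solution is a universal measurable functional of \((\Theta,Y_0,B)\), whose law is \(\rho\otimes\)Wiener measure. So the map on flows is determined by \(\rho\), and any two candidate law flows are fixed points of the same contraction, which gives law-determinacy at once.

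Both routes rest on the same two ingredients: uniqueness in law for the frozen-flow SDE, and the coupling bound \(\Wass_{2,\eta}^2(\Law(\Theta,Y_r),\Law(\Theta,Y_r'))\le\E\abs{Y_r-Y_r'}^2\). Yours packages existence, uniqueness and law-determinacy into a single contraction. The paper's avoids working on a space of measure flows and makes the comparison of two solutions explicit.

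Two small remarks. The ``gluing of different spaces'' you flag in Step~2 is not needed: you only compare two maps on \(\mathcal M_{t_0}\), each determined by \(\rho\). Your restart argument in Step~3 supplies a proof of the semigroup identity, which the paper only asserts as a consequence. There, note that \(\hat B\) is a Brownian motion for the filtration generated by \((\Theta,Y_0)\) and \(B\) up to time \(s+t\), so pathwise uniqueness identifies \(\hat Y\) with the strong solution started from \((\Theta,Y_s)\).
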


\begin{proof}
Since the measure argument in \eqref{eq:frozen-mf-equation} is the joint law
\(\Law(\Theta,Y_t)\), rather than merely \(\Law(Y_t)\), we give the
fixed point estimate explicitly.  On a fixed interval \([0,\delta]\), set
\(Y^0_t=Y_0\) and, recursively, let \(Y^{j+1}\) solve
\begin{align*}
 Y_t^{j+1}
 &=
 Y_0+
 \int_0^t
 h\bigl(
 \Theta^x,Y_r^j,\Theta^u,\Law(\Theta,Y_r^j)
 \bigr)\dd r+
 \int_0^t
 g\bigl(
 \Theta^x,Y_r^j,\Theta^u,\Law(\Theta,Y_r^j)
 \bigr)\dd B_r.
\end{align*}
The Burkholder--Davis--Gundy inequality, \eqref{eq:H1-bhg}, and the inequality
\[
 W_2^2\bigl(\Law(\Theta,Y_r^j),
 \Law(\Theta,Y_r^{j-1})\bigr)
 \le \E\abs{Y_r^j-Y_r^{j-1}}^2
\]
give
\[
 \E\sup_{0\le t\le\delta}\abs{Y_t^{j+1}-Y_t^j}^2
 \le
 C(\delta+\delta^2)
 \E\sup_{0\le t\le\delta}\abs{Y_t^j-Y_t^{j-1}}^2.
\]
Choose \(\delta>0\), depending only on the Lipschitz constant \(L\) in
{\rm(H1)}, such that
\(C(\delta+\delta^2)<1/2\).  The Picard sequence is Cauchy in
\(\mathcal S^2(0,\delta;\R^m)\), and its limit solves
\eqref{eq:frozen-mf-equation}.  The same estimate, applied to two solutions,
proves uniqueness on \([0,\delta]\).  Iterating this construction gives a
unique solution on every finite interval.  The linear-growth bounds
\eqref{eq:linear-growth-bhg}, It\^o's formula, and Gronwall's inequality
give \eqref{second-moment-est}.

It remains to show that the joint law of the solution is determined by the
initial joint law.  Let
\((\Theta^i,Y^i,B^i)\), \(i=1,2\), be two solutions, possibly defined on
different probability spaces, such that
$
 \Law(\Theta^1,Y_0^1)
 =
 \Law(\Theta^2,Y_0^2)
 =
 \rho,
$
and set
$
 \mu_t^i:=\Law(\Theta^i,Y_t^i).
$
On a common probability space, take
\((\bar\Theta,\bar Y_0)\) with law \(\rho\) and a Brownian motion
\(\bar B\) independent of \((\bar\Theta,\bar Y_0)\).  For \(i=1,2\), let
\(\bar Y^i\) solve
\[
 \dd \bar Y_t^i
 =
 h\bigl(
 \bar\Theta^x,\bar Y_t^i,\bar\Theta^u,\mu_t^i
 \bigr)\dd t
 +
 g\bigl(
 \bar\Theta^x,\bar Y_t^i,\bar\Theta^u,\mu_t^i
 \bigr)\dd \bar B_t,
 \qquad
 \bar Y_0^i=\bar Y_0.
\]
For each fixed deterministic measure flow \((\mu_t^i)_{t\ge0}\), consider
the augmented system
\[
 \dd\Theta_t=0,
 \qquad
 \dd Y_t
 =
 h\bigl(\Theta_t^x,Y_t,\Theta_t^u,\mu_t^i\bigr)\dd t
 +
 g\bigl(\Theta_t^x,Y_t,\Theta_t^u,\mu_t^i\bigr)\dd B_t.
\]
This is a standard SDE with the static component \(\Theta_t\equiv\Theta_0\).
Both \((\Theta^i,Y^i)\) and \((\bar\Theta,\bar Y^i)\) solve this system, and
their initial joint laws coincide:
$
 \Law(\Theta^i,Y_0^i)
 =
 \Law(\bar\Theta,\bar Y_0)
 =
 \rho.
$
Uniqueness in law for the augmented system therefore yields
\[
 \Law(\bar\Theta,\bar Y_t^i)
 =
 \Law(\Theta^i,Y_t^i)
 =
 \mu_t^i,
 \qquad t\ge0.
\]

Set \(\Delta_t=\bar Y_t^1-\bar Y_t^2\).  By the
Burkholder--Davis--Gundy inequality and \eqref{eq:H1-bhg}, for every
\(T<\infty\) and \(t\le T\),
\[
 \E\sup_{0\le s\le t}\abs{\Delta_s}^2
 \le
 C_T\int_0^t
 \left(
 \E\abs{\Delta_r}^2
 +
 W_2^2(\mu_r^1,\mu_r^2)
 \right)\dd r.
\]
Since
$
 \mu_r^i=\Law(\bar\Theta,\bar Y_r^i),
$
we have
\[
 W_2^2(\mu_r^1,\mu_r^2)
 \le
 \E\abs{\bar Y_r^1-\bar Y_r^2}^2.
\]
Hence
\[
 \E\sup_{0\le s\le t}\abs{\Delta_s}^2
 \le
 C_T\int_0^t
 \E\sup_{0\le u\le r}\abs{\Delta_u}^2\dd r.
\]
Gronwall's inequality implies \(\bar Y^1=\bar Y^2\) on \([0,T]\).
Therefore
\[
 \mu_t^1=\mu_t^2,
 \qquad 0\le t\le T.
\]
Since \(T\) is arbitrary, \(\Law(\Theta,Y_t)\) is uniquely determined by
\(\rho\) for every \(t\ge0\).
\end{proof}

\begin{lemma}\label{lem:frozen-lyapunov}
Under {\rm(H1)--(H2)}, there is a constant \(C>0\)
such that every solution of \eqref{eq:frozen-mf-equation} satisfies
\begin{equation}\label{eq:frozen-uniform-moment}
 \sup_{t\ge0}\E\abs{Y_t}^2
 \le
 C\bigl(
 1+\E\abs{\Theta^x}^2+\E\abs{\Theta^u}^2+\E\abs{Y_0}^2
 \bigr).
\end{equation}
\end{lemma}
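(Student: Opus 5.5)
Apply It\^o's formula to \(\abs{Y_t}^2\) and use (H2) to compare the coefficients at the current point with those at a fixed reference point. Writing \(\rho_t:=\Law(\Theta,Y_t)\), take \(y'=0\), \(x'=0\), \(u'=0\) and \(\rho'=\delta_{(0,0,0)}\) in \eqref{eq:joint-dissipativity}. Put
\[
h_0:=h(0,0,0,\delta_0),\qquad g_0:=g(0,0,0,\delta_0).
\]
Then
\[
2\ip{Y_t}{h(\Theta^x,Y_t,\Theta^u,\rho_t)-h_0}+\abs{g(\Theta^x,Y_t,\Theta^u,\rho_t)-g_0}^2
\le -\kappa\abs{Y_t}^2+K_1(\abs{\Theta^x}^2+\abs{\Theta^u}^2)+K_2 W_2^2(\rho_t,\delta_0).
\]
We also have \(W_2^2(\rho_t,\delta_0)=\E\abs{\Theta}^2+\E\abs{Y_t}^2\).

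For the remaining terms, write \(2\ip{Y}{h}+\abs g^2\) as the dissipative part plus \(2\ip{Y}{h_0}+2\ip{g-g_0}{g_0}+\abs{g_0}^2\). Absorb the cross terms by Young's inequality with a small parameter \(\delta\):
\[
2\ip{Y}{h_0}\le \delta\abs Y^2+\delta^{-1}\abs{h_0}^2.
\]
For the other cross term, \(\abs{g-g_0}\le L(\abs{\Theta}+\abs Y+m_2(\rho_t))\) by (H1), so
\[
2\ip{g-g_0}{g_0}\le \delta\abs{g-g_0}^2+\delta^{-1}\abs{g_0}^2\le C\delta\bigl(\abs Y^2+\abs\Theta^2+\E\abs Y^2+\E\abs\Theta^2\bigr)+\delta^{-1}\abs{g_0}^2.
\]
A cleaner route is to use \(\abs{a}^2\le(1+\delta)\abs{a-g_0}^2+(1+\delta^{-1})\abs{g_0}^2\) and accept a \((1+\delta)\) factor in front of the dissipative estimate. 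Either way, take expectations. The stochastic integral is a true martingale, or can be localized, thanks to \eqref{second-moment-est} and linear growth. With \(\phi(t):=\E\abs{Y_t}^2\), this gives
\[
\phi'(t)\le -(\kappa-K_2-C\delta)\phi(t)+C_\delta\bigl(1+\E\abs{\Theta^x}^2+\E\abs{\Theta^u}^2\bigr).
\]
Here \(\phi\) is absolutely continuous, because It\^o's formula gives an integral identity.

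Choose \(\delta\) so that \(C\delta\le\alpha/2\), where \(\alpha=\kappa-K_2>0\). The main step is exactly this: the measure term \(K_2\E\abs{Y_t}^2\) must be absorbed by \(\kappa\) with room to spare for the Young-inequality errors. The strict gap \(\alpha>0\) in (H2) is what makes this possible.

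A comparison (Gronwall) argument in integral form then yields
\[
\phi(t)\le e^{-\alpha t/2}\phi(0)+\frac{2C_\delta}{\alpha}\bigl(1+\E\abs\Theta^2\bigr),
\]
uniformly in \(t\ge0\). This is \eqref{eq:frozen-uniform-moment}.
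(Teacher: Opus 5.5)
Your proof is correct and follows the paper's argument essentially step by step. Like the paper, you use the same reference point $(h_0,g_0)$, absorb the cross terms by Young's inequality with a small parameter (the gap $\alpha=\kappa-K_2$ leaving room), use the identity $W_2^2(\rho_t,\delta_0)=\E\abs{\Theta}^2+\E\abs{Y_t}^2$, and conclude by Gronwall at rate $\alpha/2$; the paper justifies the vanishing of the stochastic integral by localization with stopping times instead of your martingale/BDG remark, which is equivalent. Your aside about accepting a $(1+\delta)$ factor still needs the (H1) bound on $\delta\abs{g-g_0}^2$, so it is not really cleaner, but your main route does not depend on it.
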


\begin{proof}
Write
$
 h_0=h(0,0,0,\delta_{(0,0)}),
 g_0=g(0,0,0,\delta_{(0,0)}),
$
where the Dirac mass is on
\(\slowspace\times\fastspace\).  Applying
\eqref{eq:joint-dissipativity} yields
\begin{align}
 &2\ip y{h(x,y,u,\rho)-h_0}
 +\abs{g(x,y,u,\rho)-g_0}^2
 \le
 -\kappa\abs y^2
 +K_1(\abs x^2+\abs u^2)
 +K_2 W_2^2(\rho,\delta_{(0,0)}).
 \label{eq:lyapunov-first}
\end{align}
For any \(\vartheta>0\), by Young's inequality and \eqref{eq:H1-bhg}, we have
\(2\ip y{h_0}\le \vartheta\abs y^2+C_\vartheta\) and
\[
 2\ip{g(x,y,u,\rho)-g_0}{g_0}
 \le \vartheta\abs{g(x,y,u,\rho)-g_0}^2+C_\vartheta
 \le C\vartheta\bigl(
 \abs x^2+\abs y^2+\abs u^2
 +W_2^2(\rho,\delta_{(0,0)})
 \bigr)+C_\vartheta.
\]
Since
\[
 \abs{g(x,y,u,\rho)}^2
 =
 \abs{g(x,y,u,\rho)-g_0}^2
 +2\ip{g(x,y,u,\rho)-g_0}{g_0}
 +\abs{g_0}^2,
\]
combining these estimates with \eqref{eq:lyapunov-first} gives
\begin{equation}
 2\ip y{h(x,y,u,\rho)}
 +\abs{g(x,y,u,\rho)}^2
 \le
 -(\kappa-C\vartheta)\abs y^2
 +C_\vartheta(1+\abs x^2+\abs u^2)
 +(K_2+C\vartheta)W_2^2(\rho,\delta_{(0,0)}).
 \label{eq:lyapunov-second}
\end{equation}
For
\(\rho=\Law(\Theta,Y_t)\),
\[
 W_2^2(\rho,\delta_{(0,0)})
 =
 \E\abs{\Theta^x}^2+\E\abs{\Theta^u}^2+\E\abs{Y_t}^2.
\]
Choose \(\vartheta>0\) sufficiently small so that
\(\kappa-K_2-C\vartheta\ge\alpha/2\), and set \(c_0:=\alpha/2\).
For
\[
 A_s:=
 2\ip{Y_s}{
 h(\Theta^x,Y_s,\Theta^u,\Law(\Theta,Y_s))}
 +
 \abs{g(\Theta^x,Y_s,\Theta^u,\Law(\Theta,Y_s))}^2,
\]
Taking expectations in \eqref{eq:lyapunov-second} gives
\begin{equation}\label{eqpre}
     \E A_s
 \le
 -c_0\E\abs{Y_s}^2
 +C\bigl(
 1+\E\abs{\Theta^x}^2+\E\abs{\Theta^u}^2
 \bigr).
\end{equation}

Now define
$
 \tau_N:=\inf\{r\ge0:\abs{Y_r}\ge N\}.
$
By It\^o's formula,
\[
 \E\abs{Y_{t\wedge\tau_N}}^2
 =
 \E\abs{Y_0}^2
 +
 \E\int_0^{t\wedge\tau_N} A_s\,\dd s,
\]
since the stopped stochastic integral has zero expectation.
By Proposition~\ref{prop:frozen-wellposed} and the linear-growth bound
\eqref{eq:linear-growth-bhg}, dominated convergence allows
\(N\to\infty\), yielding
\[
 \E\abs{Y_t}^2
 =
 \E\abs{Y_0}^2
 +
 \int_0^t\E A_s\,\dd s.
\]
Using \eqref{eqpre}, we obtain
\[
 \E\abs{Y_t}^2
 \le
 \E\abs{Y_0}^2
 -c_0\int_0^t\E\abs{Y_s}^2\,\dd s
 +Ct\bigl(
 1+\E\abs{\Theta^x}^2+\E\abs{\Theta^u}^2
 \bigr).
\]
Consequently,
\[
 \frac{\dd}{\dd t}\E\abs{Y_t}^2
 \le
 -c_0\E\abs{Y_t}^2
 +C\bigl(
 1+\E\abs{\Theta^x}^2+\E\abs{\Theta^u}^2
 \bigr).
\]
Gronwall's inequality then gives
\begin{equation}\label{eq:frozen-lyapunov-solution}
 \E\abs{Y_t}^2
 \le
 \ee^{-c_0t}\E\abs{Y_0}^2
 +C\bigl(1+\E\abs{\Theta^x}^2+\E\abs{\Theta^u}^2\bigr).
\end{equation}
Taking the supremum in \(t\) gives \eqref{eq:frozen-uniform-moment}.
\end{proof}

\begin{lemma}\label{lem:fiber-contraction}
Under {\rm(H1)--(H2)}, for every
\(\rho,\rho'\in{\Ptwo}_\eta\),
\begin{equation}\label{eq:fiber-contraction}
 \Wass_{2,\eta}^2(
 \mathscr S_t^\eta\rho,\mathscr S_t^\eta\rho')
 \le
 \ee^{-\alpha t}\Wass_{2,\eta}^2(\rho,\rho'),
 \qquad t\ge0.
\end{equation}
\end{lemma}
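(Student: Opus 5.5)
We prove the contraction by a synchronous coupling in a common frozen environment. By Lemma~\ref{lem:fiber-space}, for any \(\delta>0\) we can realize a triple \((\Theta,Y_0,Y_0')\) on the rich probability space with
\[
\Law(\Theta,Y_0)=\rho,\qquad \Law(\Theta,Y_0')=\rho',\qquad \E\abs{Y_0-Y_0'}^2\le \Wass_{2,\eta}^2(\rho,\rho')+\delta .
\]
(An optimal fibered coupling in fact exists, so \(\delta=0\) is possible.) Take a \(d_2\)-dimensional Brownian motion \(B\) independent of \((\Theta,Y_0,Y_0')\). Let \(Y\) and \(Y'\) be the solutions of \eqref{eq:frozen-mf-equation} driven by the same \(B\) and the same \(\Theta\), started from \(Y_0\) and \(Y_0'\) respectively. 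Proposition~\ref{prop:frozen-wellposed} shows that the joint law is determined by the initial joint law. Hence \(\Law(\Theta,Y_t)=\mathscr S^\eta_t\rho\) and \(\Law(\Theta,Y'_t)=\mathscr S^\eta_t\rho'\), and both lie in \({\Ptwo}_\eta\). Applying \eqref{eq:fiber-coupling} at time \(t\) then gives
\[
\Wass_{2,\eta}^2(\mathscr S^\eta_t\rho,\mathscr S^\eta_t\rho')\le \E\abs{Y_t-Y_t'}^2 .
\]
It therefore suffices to show \(\E\abs{Y_t-Y_t'}^2\le \ee^{-\alpha t}\E\abs{Y_0-Y_0'}^2\) and then let \(\delta\downarrow0\).

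Set \(\Delta_t=Y_t-Y_t'\), \(\mu_t=\Law(\Theta,Y_t)\) and \(\mu'_t=\Law(\Theta,Y'_t)\). Apply It\^o's formula to \(\abs{\Delta_t}^2\), first with localization as in the proof of Lemma~\ref{lem:frozen-lyapunov}, and then remove it using the moment bound \eqref{second-moment-est} and linear growth. Since the two slow arguments coincide, the \(K_1\) term in \eqref{eq:joint-dissipativity} vanishes, and we obtain
\[
\frac{\dd}{\dd t}\E\abs{\Delta_t}^2
=\E\Bigl[2\ip{\Delta_t}{h(\Theta^x,Y_t,\Theta^u,\mu_t)-h(\Theta^x,Y_t',\Theta^u,\mu_t')}
+\abs{g(\cdot)-g(\cdot)}^2\Bigr]
\le -\kappa\E\abs{\Delta_t}^2+K_2W_2^2(\mu_t,\mu_t').
\]

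The key observation is that the measure term is controlled by the same synchronous coupling. Because \((\Theta,Y_t)\) and \((\Theta,Y'_t)\) live on one probability space with a common \(\Theta\), we have \(W_2^2(\mu_t,\mu'_t)\le\E\abs{Y_t-Y'_t}^2\). This can also be seen through \eqref{eq:ordinary-below-fiber} combined with \eqref{eq:fiber-coupling}. Substituting gives
\[
\frac{\dd}{\dd t}\E\abs{\Delta_t}^2\le -(\kappa-K_2)\E\abs{\Delta_t}^2=-\alpha\,\E\abs{\Delta_t}^2 ,
\]
and Gronwall's inequality yields \(\E\abs{\Delta_t}^2\le \ee^{-\alpha t}\E\abs{\Delta_0}^2\). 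Together with the first paragraph this proves \eqref{eq:fiber-contraction}.

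The only delicate step is the It\^o/differential inequality. The expression \(t\mapsto\E\abs{\Delta_t}^2\) is absolutely continuous, with derivative given a.e.\ by the expected drift term. This follows as in Lemma~\ref{lem:frozen-lyapunov}, from \(\E\sup_{r\le t}\abs{Y_r}^2<\infty\) and the linear-growth bound \eqref{eq:linear-growth-bhg}, which justify dominated convergence after localization. To avoid differentiating, one can instead work with the integral form \(\E\abs{\Delta_t}^2\le\E\abs{\Delta_0}^2-\alpha\int_0^t\E\abs{\Delta_s}^2\dd s\). Starting from any \(s\) in place of \(0\), this gives the integral inequality on every \([s,t]\), and that implies exponential decay. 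Equivalently, one can apply It\^o's formula to \(\ee^{\alpha t}\abs{\Delta_t}^2\) directly, which gives the bound in one step.
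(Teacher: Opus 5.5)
Your proposal is correct and follows essentially the same route as the paper's proof. Both arguments take a near-optimal fibered coupling from Lemma~\ref{lem:fiber-space} and drive the two frozen equations synchronously in the common environment \(\Theta\), so the \(K_1\) term drops out and the \(K_2\) term is absorbed via \(W_2^2\le\E\abs{D_t}^2\). Both then apply Gronwall and return to \(\Wass_{2,\eta}\) through \eqref{eq:fiber-coupling} before letting the slack go to zero; your added justification of the differential inequality (localization, or It\^o's formula applied to \(\ee^{\alpha t}\abs{\Delta_t}^2\)) is a welcome refinement of a step the paper states without detail.
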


\begin{proof}
By Lemma~\ref{lem:fiber-space}, for any \(r>0\) we may realize
\((\Theta,Y_0,Y_0')\) so that the slow pair is the same,
\[
 \Law(\Theta,Y_0)=\rho,\qquad
 \Law(\Theta,Y_0')=\rho',
\]
and
\[
 \E\abs{Y_0-Y_0'}^2
 \le\Wass_{2,\eta}^2(\rho,\rho')+r.
\]
Drive the two frozen equations with the same Brownian motion.  Set
\(D_t=Y_t-Y_t'\).  It\^o's formula and
\eqref{eq:joint-dissipativity} give
\begin{align*}
 \frac{\dd}{\dd t}\E\abs{D_t}^2
 &\le
 -\kappa\E\abs{D_t}^2
 +K_2
 W_2^2(
 \mathscr S_t^\eta\rho,\mathscr S_t^\eta\rho').
\end{align*}
Since the two frozen equations use the same environment \(\Theta\),
the joint law of
$
 \bigl((\Theta,Y_t),(\Theta,Y_t')\bigr)
$
is a coupling of
\(\mathscr S_t^\eta\rho\) and
\(\mathscr S_t^\eta\rho'\).  Therefore,
\[
 W_2^2(
 \mathscr S_t^\eta\rho,\mathscr S_t^\eta\rho')
 \le
 \E\abs{(\Theta,Y_t)-(\Theta,Y_t')}^2
 =
 \E\abs{D_t}^2.
\]
Thus
\[
 \frac{\dd}{\dd t}\E\abs{D_t}^2
 \le-\alpha\E\abs{D_t}^2,
\]
and Gronwall's inequality gives
\[
 \E\abs{D_t}^2
 \le\ee^{-\alpha t}
 \bigl(\Wass_{2,\eta}^2(\rho,\rho')+r\bigr).
\]
Moreover, since
$
 \Law(\Theta,Y_t)=\mathscr S_t^\eta\rho,
 \Law(\Theta,Y_t')=\mathscr S_t^\eta\rho',
$
by
\eqref{eq:fiber-coupling}, we have
$
 \Wass_{2,\eta}^2(
 \mathscr S_t^\eta\rho,\mathscr S_t^\eta\rho')
 \le \E\abs{D_t}^2.
$
Letting \(r\downarrow0\) proves \eqref{eq:fiber-contraction}.
\end{proof}

\begin{remark}
{\rm The fiber metric is essential here because the dissipativity in
{\rm(H2)} acts on the fast variables under the same frozen environment.
Indeed, the coupling representation in
\eqref{eq:fiber-coupling} allows the two dynamics to be realized with a
common environment \(\Theta\), so that the slow-variable contribution in
{\rm(H2)} vanishes.  In contrast, an ordinary \(W_2\)-coupling of two
joint laws in \({\Ptwo}_\eta\) may transport mass between different
environment fibers, and therefore need not preserve this contraction
mechanism.  Thus \(\Wass_{2,\eta}\) is the natural metric for the frozen
dynamics.}
\end{remark}

\subsection{Invariant Laws and Stability}

In this subsection, we study the long-time behavior of the frozen dynamics. We first establish the existence and uniqueness of its invariant law, together with exponential convergence in the Wasserstein distance and suitable moment bounds, and then derive the stability properties of the invariant law needed in the subsequent averaging analysis.

\begin{proposition}\label{prop:joint-invariant}
Under {\rm(H1)--(H2)}, for every
\(\eta\in\Ptwo(\slowspace)\), there exists a unique
\(\Pi^\eta\in{\Ptwo}_\eta\) such that
\begin{equation}\label{eq:joint-invariance}
 \mathscr S_t^\eta\Pi^\eta=\Pi^\eta,
 \qquad t\ge0.
\end{equation}
Moreover, for every \(\rho\in{\Ptwo}_\eta\),
\begin{align}
 \Wass_{2,\eta}^2(
 \mathscr S_t^\eta\rho,\Pi^\eta)
 &\le
 \ee^{-\alpha t}\Wass_{2,\eta}^2(\rho,\Pi^\eta),
 \label{eq:joint-invariant-convergence}\\
 \int_{\slowspace\times\fastspace}\abs y^2
 \,\Pi^\eta(\dd(x,u),\dd y)
 &\le C\bigl(1+m_2(\eta)^2\bigr).
 \label{eq:joint-invariant-moment}
\end{align}
\end{proposition}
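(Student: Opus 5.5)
Since the fiber $({\Ptwo}_\eta,\Wass_{2,\eta})$ is complete (Lemma~\ref{lem:fiber-space}) and each $\mathscr S_t^\eta$ is a strict contraction on it with constant $\ee^{-\alpha t/2}$ (Lemma~\ref{lem:fiber-contraction}), I will get the invariant law from the Banach fixed point theorem plus the semigroup property.

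First, I check that $\mathscr S_t^\eta$ maps ${\Ptwo}_\eta$ into itself. The environment is frozen, so the slow marginal of $\Law(\Theta,Y_t)$ stays equal to $\eta$. Moreover $\E\abs{Y_t}^2<\infty$ by Proposition~\ref{prop:frozen-wellposed}.

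Next, I fix $t_0=1$. Banach's theorem gives a unique fixed point $\Pi^\eta\in{\Ptwo}_\eta$ of $\mathscr S_1^\eta$. For any $t\ge0$, the semigroup law \eqref{eq:nonlinear-semigroup} gives
\[
\mathscr S_1^\eta(\mathscr S_t^\eta\Pi^\eta)=\mathscr S_t^\eta\mathscr S_1^\eta\Pi^\eta=\mathscr S_t^\eta\Pi^\eta .
\]
So $\mathscr S_t^\eta\Pi^\eta$ is also a fixed point of $\mathscr S_1^\eta$, and uniqueness yields \eqref{eq:joint-invariance}. Uniqueness of the invariant law in ${\Ptwo}_\eta$ follows the same way: any $\tilde\Pi$ invariant for all $t$ is in particular fixed by $\mathscr S_1^\eta$. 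The convergence estimate \eqref{eq:joint-invariant-convergence} is then Lemma~\ref{lem:fiber-contraction} applied with $\rho'=\Pi^\eta$, using $\mathscr S_t^\eta\Pi^\eta=\Pi^\eta$.

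For the moment bound \eqref{eq:joint-invariant-moment}, I take $(\Theta,Y_0)$ with law $\Pi^\eta$. Invariance gives $\E\abs{Y_t}^2=\E\abs{Y_0}^2=M$ for all $t$. Inserting this into \eqref{eq:frozen-lyapunov-solution} from the proof of Lemma~\ref{lem:frozen-lyapunov} gives
\[
M\le \ee^{-c_0t}M+C\bigl(1+m_2(\eta)^2\bigr),
\]
since $\E\abs{\Theta^x}^2+\E\abs{\Theta^u}^2=m_2(\eta)^2$. Because $M<\infty$, letting $t\to\infty$ gives $M\le C(1+m_2(\eta)^2)$.

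The uniform-in-time bound \eqref{eq:frozen-uniform-moment} by itself does not suffice here, because it carries $\E\abs{Y_0}^2$ on the right-hand side. The decaying form \eqref{eq:frozen-lyapunov-solution} is what removes the dependence on the initial moment, and the constant $C$ obtained there depends only on the model parameters, not on $\eta$.

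The only delicate point is the contraction step: Lemma~\ref{lem:fiber-contraction} contracts in the fiber metric $\Wass_{2,\eta}$, not in $W_2$. Completeness must therefore be used for $\Wass_{2,\eta}$ itself, which Lemma~\ref{lem:fiber-space} provides. With that in place, the fixed-point argument goes through.
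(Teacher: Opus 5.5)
Your proposal is correct and follows essentially the same route as the paper. Both arguments use Banach's fixed-point theorem for $\mathscr S_{t_0}^\eta$ on the complete space $({\Ptwo}_\eta,\Wass_{2,\eta})$, then the semigroup property to get invariance for all $t$, then Lemma~\ref{lem:fiber-contraction} with $\rho'=\Pi^\eta$ for the convergence estimate, and finally the decaying bound \eqref{eq:frozen-lyapunov-solution} applied to the stationary solution for the moment estimate. The only cosmetic difference is that you let $t\to\infty$ in that last bound, whereas the paper evaluates it at $t=1$ and moves the term $\ee^{-c_0}\int\abs y^2\,\Pi^\eta$ to the left-hand side.
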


\begin{proof}
Fix \(t_0>0\).  Lemma~\ref{lem:fiber-contraction} shows that
\(\mathscr S_{t_0}^\eta\) is a strict contraction on the complete metric
space \(({\Ptwo}_\eta,\Wass_{2,\eta})\), with contraction constant
\(\ee^{-\alpha t_0/2}<1\).  Banach's fixed-point theorem gives a unique
\(\Pi^\eta\in{\Ptwo}_\eta\) satisfying
\(\mathscr S_{t_0}^\eta\Pi^\eta=\Pi^\eta\). For \(s\ge0\), the nonlinear semigroup property \eqref{eq:nonlinear-semigroup} gives
\[
 \mathscr S_{t_0}^\eta
 \bigl(\mathscr S_s^\eta\Pi^\eta\bigr)
 =
 \mathscr S_s^\eta
 \bigl(\mathscr S_{t_0}^\eta\Pi^\eta\bigr)
 =
 \mathscr S_s^\eta\Pi^\eta.
\]
Thus \(\mathscr S_s^\eta\Pi^\eta\) is another fixed point of
\(\mathscr S_{t_0}^\eta\), and uniqueness implies
\(\mathscr S_s^\eta\Pi^\eta=\Pi^\eta\).  This proves
\eqref{eq:joint-invariance}.  Taking \(\rho'=\Pi^\eta\) in
\eqref{eq:fiber-contraction} proves
\eqref{eq:joint-invariant-convergence}.

Finally, start \eqref{eq:frozen-mf-equation} from \(\Pi^\eta\).
Its law is stationary.  In particular,
$
 \Law(\Theta,Y_t)=\Pi^\eta.
$
Hence the law of \(Y_t\) is the \(y\)-marginal of \(\Pi^\eta\), and
therefore
$
 \E\abs{Y_t}^2
 =
 \int\abs y^2\,\Pi^\eta(\dd(x,u),\dd y).
$
Applying
\eqref{eq:frozen-lyapunov-solution} at time \(1\) gives
\[
 \int\abs y^2\,\Pi^\eta(\dd(x,u),\dd y)
 \le
 \ee^{-c_0}
 \int\abs y^2\,\Pi^\eta(\dd(x,u),\dd y)
 +C(1+m_2(\eta)^2).
\]
Move the first term on the right to the left to obtain
\eqref{eq:joint-invariant-moment}.
\end{proof}

For fixed \(\eta\in\Ptwo(\slowspace)\) and
\((x,u)\in\slowspace\), consider the following non-McKean--Vlasov SDE:
\begin{equation}\label{eq:conditional-frozen-sde}
 \dd Z_t
 =
 h(x,Z_t,u,\Pi^\eta)\dd t
 +g(x,Z_t,u,\Pi^\eta)\dd B_t.
\end{equation}

\begin{lemma}\label{lem:conditional-invariant}
Under {\rm(H1)--(H2)}, for every
\(\eta\in\Ptwo(\slowspace)\) and \((x,u)\in\slowspace\),
equation \eqref{eq:conditional-frozen-sde} has a unique invariant
probability measure, denoted by \(\nu^{x,u;\eta}\).  The map
\[
 (x,u)\longmapsto\nu^{x,u;\eta}
\]
is Borel measurable as a map from \(\slowspace\) to
\(\Ptwo(\fastspace)\), and
\begin{equation}\label{eq:conditional-invariant-moment}
 \int_{\R^m}\abs z^2\,\nu^{x,u;\eta}(\dd z)
 \le
 C\bigl(
 1+\abs x^2+\abs u^2+m_2(\eta)^2
 \bigr).
\end{equation}
Furthermore,
\begin{equation}\label{eq:joint-disintegration}
 \Pi^\eta(\dd(x,u),\dd y)
 =
 \eta(\dd(x,u))\,\nu^{x,u;\eta}(\dd y).
\end{equation}
\end{lemma}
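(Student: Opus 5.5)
The plan is to treat \eqref{eq:conditional-frozen-sde}, for fixed \((x,u)\) and fixed \(\eta\), as a classical time-homogeneous SDE with Lipschitz coefficients. The measure argument \(\Pi^\eta\) is deterministic and frozen, so no McKean--Vlasov structure remains. Applying \eqref{eq:joint-dissipativity} with \((x,u,\rho)=(x',u',\rho')\) gives pathwise dissipativity
\[
2\ip{z-z'}{h(x,z,u,\Pi^\eta)-h(x,z',u,\Pi^\eta)}+\abs{g(x,z,u,\Pi^\eta)-g(x,z',u,\Pi^\eta)}^2\le-\kappa\abs{z-z'}^2 .
\]
Synchronous coupling then yields \(W_2^2(P_t^*\mu,P_t^*\mu')\le \ee^{-\kappa t}W_2^2(\mu,\mu')\) on \(\Ptwo(\fastspace)\) for the Markov semigroup \(P_t=P_t^{x,u;\eta}\). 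The Banach fixed-point argument of Proposition~\ref{prop:joint-invariant}, applied to \(P_{t_0}^*\) on the complete space \((\Ptwo(\fastspace),W_2)\), gives a unique invariant measure in \(\Ptwo\), denoted \(\nu^{x,u;\eta}\).

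To get uniqueness among all probability measures, not just those in \(\Ptwo\), I would use the same coupling pointwise: \(\E\abs{Z_t^z-Z_t^{z'}}^2\le \ee^{-\kappa t}\abs{z-z'}^2\). Hence for any invariant \(\mu\) and any bounded Lipschitz \(\varphi\), \(\int P_t\varphi\,d\mu-\int P_t\varphi\,d\nu\to0\) by dominated convergence, so \(\mu=\nu\).

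For the moment bound \eqref{eq:conditional-invariant-moment}, I would redo the Lyapunov computation of Lemma~\ref{lem:frozen-lyapunov} pointwise. From \eqref{eq:lyapunov-second} with \(\rho=\Pi^\eta\),
\[
2\ip z{h}+\abs g^2\le-(\kappa-C\vartheta)\abs z^2+C(1+\abs x^2+\abs u^2)+C\,W_2^2(\Pi^\eta,\delta_{(0,0)}).
\]
By \eqref{eq:joint-invariant-moment}, \(W_2^2(\Pi^\eta,\delta_{(0,0)})\le C(1+m_2(\eta)^2)\). Itô's formula then gives \(\E\abs{Z_t}^2\le \ee^{-ct}\abs{z}^2+C(1+\abs x^2+\abs u^2+m_2(\eta)^2)\). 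Since \(\nu^{x,u;\eta}=\lim_t P_t^*\delta_0\) in \(W_2\), letting \(t\to\infty\) gives the bound. Only \(\kappa>0\) is needed here, not \(\alpha\).

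For measurability, the map \((x,u)\mapsto\nu^{x,u;\eta}\) is the \(W_2\)-limit of \((x,u)\mapsto \Law(Z_t^{x,u,0})\). Each of these maps is continuous, indeed Lipschitz on bounded time intervals, by standard SDE stability in the parameters \((x,u)\) using \eqref{eq:H1-bhg}. Convergence is pointwise (in fact locally uniform, using the contraction rate and the moment bound). So the limit is Borel as a map into \(\Ptwo(\fastspace)\). This also shows that \((x,u,A)\mapsto\nu^{x,u;\eta}(A)\) is a Markov kernel.

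The main step is the disintegration \eqref{eq:joint-disintegration}. I would define \(\tilde\Pi:=\eta(\dd(x,u))\nu^{x,u;\eta}(\dd y)\), which lies in \({\Ptwo}_\eta\) by the moment bound, and show \(\mathscr S_t^\eta\tilde\Pi=\tilde\Pi\); uniqueness in Proposition~\ref{prop:joint-invariant} then gives \(\tilde\Pi=\Pi^\eta\). The difficulty is that the frozen McKean--Vlasov equation \eqref{eq:frozen-mf-equation} uses the current law \(\Law(\Theta,Y_t)\), whereas \eqref{eq:conditional-frozen-sde} uses \(\Pi^\eta\).

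I would avoid this circularity as follows. Start \eqref{eq:frozen-mf-equation} from \(\Pi^\eta\), so that \(\Law(\Theta,Y_t)\equiv\Pi^\eta\) and \(Y\) solves the linear (non-McKean--Vlasov) equation with frozen measure \(\Pi^\eta\). Conditioning on \(\Theta\), which is independent of \(B\), shows that \(\Pi^{\eta,\Theta}\) is invariant for \(P_t^{\Theta;\eta}\) almost surely. To make this rigorous, one checks the invariance relation \(\int P_t\varphi\,d\Pi^{\theta}=\int\varphi\,d\Pi^\theta\) for a countable convergence-determining family of \(\varphi\) and rational \(t\), for \(\eta\)-a.e. \(\theta\), and extends by continuity in \(t\). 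The uniqueness of the invariant measure proved above then gives \(\Pi^{\eta,\theta}=\nu^{\theta;\eta}\) for \(\eta\)-a.e. \(\theta\), which is \eqref{eq:joint-disintegration}. This direction bypasses \(\tilde\Pi\) entirely.
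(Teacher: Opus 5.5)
Your proposal is correct and follows essentially the paper's route. Both arguments use synchronous-coupling contraction and a Banach fixed point in $(\Ptwo(\fastspace),W_2)$, a Lyapunov bound combined with $m_2(\Pi^\eta)^2\le C(1+m_2(\eta)^2)$, and the same disintegration argument: start the frozen equation from $\Pi^\eta$, condition on $\Theta$, check invariance of the conditional kernel on a countable determining class at rational times, then extend by continuity in $t$. The differences are minor improvements. Your dominated-convergence argument gives uniqueness among all probability measures, whereas the paper's Banach step literally gives it only within $\Ptwo$; that suffices for the disintegration, since the conditional kernels of $\Pi^\eta$ are a.e.\ in $\Ptwo$. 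Your measurability argument, via pointwise $W_2$-limits of the continuous maps $(x,u)\mapsto\Law(Z_t^{x,u,0})$, also avoids the paper's forward reference to Lemma~\ref{lem:conditional-invariant-stability}.
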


\begin{proof}
For any square-integrable initial random variables \(Z_0\) and \(Z_0'\),
let \(Z\) and \(Z'\) be the corresponding solutions of
\eqref{eq:conditional-frozen-sde}, started from \(Z_0\) and \(Z_0'\),
respectively, and driven by the same Brownian motion \(B\).
Since the two equations have the same frozen pair \((x,u)\) and the same
fixed measure parameter \(\Pi^\eta\), \eqref{eq:joint-dissipativity} gives
\begin{equation}\label{eq:ordinary-contraction}
 \E\abs{Z_t-Z_t'}^2
 \le \ee^{-\kappa t}\E\abs{Z_0-Z_0'}^2.
\end{equation}
Apply
\eqref{eq:joint-dissipativity} to
\((x,z,u,\Pi^\eta)\) and
\((x,0,u,\Pi^\eta)\), Young's inequality and
\eqref{eq:H1-bhg} give constants \(c_1,C>0\) such that
\begin{align}
 2\ip z{h(x,z,u,\Pi^\eta)}
 +\abs{g(x,z,u,\Pi^\eta)}^2
 \le
 -c_1\abs z^2
 +C\bigl(
 1+\abs x^2+\abs u^2+m_2(\Pi^\eta)^2
 \bigr).
 \label{eq:ordinary-lyapunov}
\end{align}
Since the \((x,u)\)-marginal of \(\Pi^\eta\) is \(\eta\), its second moment satisfies
\[
 m_2(\Pi^\eta)^2
 =
 m_2(\eta)^2+\int\abs y^2\,\Pi^\eta(\dd(x,u),\dd y)
 \le C(1+m_2(\eta)^2),
\]
where the last inequality follows from \eqref{eq:joint-invariant-moment}. Let \(Q_t^{x,u;\eta}\) be the transition semigroup of
\eqref{eq:conditional-frozen-sde}; thus, if
\(Z^{x,u;\eta,y}\) denotes the solution started from \(y\), then, for every
bounded Borel measurable function \(\varphi:\fastspace\to\mathbb R\),
$
 Q_t^{x,u;\eta}\varphi(y)
 =
 \E\bigl[\varphi(Z_t^{x,u;\eta,y})\bigr].
$
Its dual action on probability measures is denoted by
\(Q_t^{x,u;\eta,*}\); equivalently, if \(Z_0\sim\zeta\), then
$
 Q_t^{x,u;\eta,*}\zeta=\Law(Z_t).
$ 
The linear
growth of the coefficients implies that
\(Q_t^{x,u;\eta,*}\) maps \(\Ptwo(\fastspace)\) into itself. For \(\zeta,\zeta'\in\Ptwo(\fastspace)\), choose an optimal coupling
\((Z_0,Z_0')\) of \((\zeta,\zeta')\), whose existence follows from
\cite[Theorem~4.1]{Villani2009}, and let the corresponding solutions
be driven by the same Brownian motion.  Then
\((Z_t,Z_t')\) is a coupling of
\(Q_t^{x,u;\eta,*}\zeta\) and \(Q_t^{x,u;\eta,*}\zeta'\).
Hence, by \eqref{eq:ordinary-contraction},
\begin{equation*}
 W_2^2\bigl(
 Q_t^{x,u;\eta,*}\zeta,
 Q_t^{x,u;\eta,*}\zeta'
 \bigr)
 \le
 \ee^{-\kappa t}W_2^2(\zeta,\zeta'),
 \qquad \zeta,\zeta'\in\Ptwo(\fastspace).
\end{equation*}
Since \((\Ptwo(\fastspace),W_2)\) is complete,
\(Q_1^{x,u;\eta,*}\) has, by Banach's fixed-point theorem, a unique fixed
point; call it \(\nu^{x,u;\eta}\).  For \(t\ge0\), the semigroup property
shows that
\[
 Q_1^{x,u;\eta,*}
 Q_t^{x,u;\eta,*}\nu^{x,u;\eta}
 =
 Q_t^{x,u;\eta,*}
 Q_1^{x,u;\eta,*}\nu^{x,u;\eta}
 =Q_t^{x,u;\eta,*}\nu^{x,u;\eta}.
\]
Uniqueness of the fixed point of \(Q_1^{x,u;\eta,*}\) therefore yields
\(Q_t^{x,u;\eta,*}\nu^{x,u;\eta}=\nu^{x,u;\eta}\), so this measure is
invariant for every time \(t\).

By the same localization argument as in the proof of
Lemma~\ref{lem:frozen-lyapunov}, It\^o's formula,
\eqref{eq:ordinary-lyapunov}, and Gronwall's inequality give
\begin{align*}
 \E\abs{Z_t}^2
 &\le
 \ee^{-c_1t}\E\abs{Z_0}^2
 +\frac C{c_1}(1-\ee^{-c_1t})
 \bigl(1+\abs x^2+\abs u^2+m_2(\Pi^\eta)^2\bigr).
\end{align*}
Starting from the invariant measure and taking \(t=1\) in this inequality
gives
\[
 (1-\ee^{-c_1})
 \int\abs z^2\,\nu^{x,u;\eta}(\dd z)
 \le
 \frac C{c_1}(1-\ee^{-c_1})
 \bigl(1+\abs x^2+\abs u^2+m_2(\Pi^\eta)^2\bigr),
\]
which, together with the preceding bound on \(m_2(\Pi^\eta)\), proves
\eqref{eq:conditional-invariant-moment}.

It remains to verify that these invariant measures form a Borel kernel.
Let \(Z^{x,u}\) and \(Z^{x',u'}\) start from zero and be driven by the same
Brownian motion.  Applying \eqref{eq:joint-dissipativity} with the same
joint measure \(\Pi^\eta\) gives
\[
 \frac{\dd}{\dd t}\E\abs{Z_t^{x,u}-Z_t^{x',u'}}^2
 \le
 -\kappa\E\abs{Z_t^{x,u}-Z_t^{x',u'}}^2
 +K_1\bigl(\abs{x-x'}^2+\abs{u-u'}^2\bigr).
\]
Gronwall's inequality yields
\begin{align*}
 W_2^2\bigl(\Law(Z_t^{x,u}),\Law(Z_t^{x',u'})\bigr)
 &\le
 \frac{K_1}{\kappa}(1-\ee^{-\kappa t})
 \bigl(\abs{x-x'}^2+\abs{u-u'}^2\bigr).
\end{align*}
The asserted Borel measurability will follow from
Lemma~\ref{lem:conditional-invariant-stability} below, which in fact
shows that, for fixed \(\eta\), the map
$
 (x,u)\longmapsto \nu^{x,u;\eta}
$
is Lipschitz continuous from \(\slowspace\) into
\((\Ptwo(\fastspace),W_2)\).

We now identify the disintegration of \(\Pi^\eta\).  Let
\(k^{x,u}\) be a Borel disintegration kernel such that
\[
 \Pi^\eta(\dd(x,u),\dd y)
 =
 \eta(\dd(x,u))\,k^{x,u}(\dd y).
\]
Start \eqref{eq:frozen-mf-equation} from \(\Pi^\eta\).  By
\eqref{eq:joint-invariance}, its joint law remains \(\Pi^\eta\); hence the
law appearing in its coefficients is the fixed measure \(\Pi^\eta\).
For \(\eta\)-almost every \((x,u)\), let
\(\Pp^{x,u}\) be a regular conditional probability given
\(\Theta=(x,u)\).  We now consider \eqref{eq:frozen-mf-equation} under $\Pp^{x,u}$. Under \(\Pp^{x,u}\), \(B\) remains a
\(d_2\)-dimensional Brownian motion and \(Y_0\) has law \(k^{x,u}\).
Since \eqref{eq:joint-invariance} gives
\(\Law(\Theta,Y_t)=\Pi^\eta\) for every \(t\ge0\), the measure argument
in \eqref{eq:frozen-mf-equation} is fixed at \(\Pi^\eta\).  Hence,
under \(\Pp^{x,u}\),
\[
 \dd Y_t
 =
 h(x,Y_t,u,\Pi^\eta)\dd t
 +
 g(x,Y_t,u,\Pi^\eta)\dd B_t,
\]
and therefore
$
 \Law_{\Pp^{x,u}}(Y_t)
 =
 Q_t^{x,u;\eta,*}k^{x,u}.
$

Let \(\mathcal A\) be the countable \(\pi\)-system of half-open rectangles
in \(\R^m\) with rational endpoints.  As
\(\sigma(\mathcal A)=\mathcal B(\R^m)\), the class \(\mathcal A\) is
measure determining; see, e.g., \cite[Lemma~1.17]{Kallenberg2021}.
The stationarity
\[
 \Law(\Theta,Y_q)=\Law(\Theta,Y_0)=\Pi^\eta
\]
implies that, for every bounded Borel function \(\psi\) on \(\slowspace\),
\(A\in\mathcal A\), and \(q\in\mathbb Q_+\),
\[
 \E\bigl[\psi(\Theta)\mathbf 1_A(Y_q)\bigr]
 =
 \E\bigl[\psi(\Theta)\mathbf 1_A(Y_0)\bigr].
\]
Using
\[
 \Law(Y_q\mid\Theta=(x,u))
 =
 Q_q^{x,u;\eta,*}k^{x,u},
 \qquad
 \Law(Y_0\mid\Theta=(x,u))
 =
 k^{x,u},
\]
we obtain
\[
 0
 =
 \int_{\slowspace}
 \psi(x,u)
 \Bigl(
 Q_q^{x,u;\eta,*}k^{x,u}(A)
 -
 k^{x,u}(A)
 \Bigr)
 \eta(\dd(x,u)).
\]
By the arbitrariness of \(\psi\), for every
\((q,A)\in\mathbb Q_+\times\mathcal A\), the identity
$
 Q_q^{x,u;\eta,*}k^{x,u}(A)
 =
 k^{x,u}(A)
$
holds for \(\eta\)-almost every \((x,u)\). By countability of
\(\mathbb Q_+\times\mathcal A\), these identities hold
simultaneously outside a single \(\eta\)-null set.  The
measure-determining property of \(\mathcal A\) then gives
\[
 Q_q^{x,u;\eta,*}k^{x,u}
 =
 k^{x,u},
 \qquad q\in\mathbb Q_+.
\]
For arbitrary \(t\ge0\), choose \(q_j\in\mathbb Q_+\) such that
\(q_j\to t\).  The path continuity of solutions to
\eqref{eq:conditional-frozen-sde} implies that, for every
\(\varphi\in C_b(\R^m)\),
\[
 Q_{q_j}^{x,u;\eta}\varphi(y)
 \longrightarrow
 Q_t^{x,u;\eta}\varphi(y),
 \qquad y\in\R^m.
\]
By bounded convergence and the invariance at rational times,
\[
 \int_{\R^m}Q_t^{x,u;\eta}\varphi(y)\,k^{x,u}(\dd y)
 =
 \lim_{j\to\infty}
 \int_{\R^m}Q_{q_j}^{x,u;\eta}\varphi(y)\,k^{x,u}(\dd y)
 =
 \int_{\R^m}\varphi(y)\,k^{x,u}(\dd y).
\]
Therefore
\[
 Q_t^{x,u;\eta,*}k^{x,u}=k^{x,u},
 \qquad t\ge0.
\]
Hence \(k^{x,u}\) is invariant for
\eqref{eq:conditional-frozen-sde}; uniqueness gives
\[
 k^{x,u}=\nu^{x,u;\eta}
 \qquad
 \text{for \(\eta\)-almost every \((x,u)\)}.
\]
Together with the Borel measurability supplied by
Lemma~\ref{lem:conditional-invariant-stability}, this yields
\eqref{eq:joint-disintegration}.
\end{proof}

\begin{lemma}\label{lem:joint-invariant-stability}
Under {\rm(H1)--(H2)}, there exists \(C>0\) such that, for all
\(\eta,\eta'\in\Ptwo(\slowspace)\),
\begin{equation}\label{eq:joint-invariant-stability}
 W_2(\Pi^\eta,\Pi^{\eta'})
 \le C W_2(\eta,\eta').
\end{equation}
\end{lemma}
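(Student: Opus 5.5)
We plan a synchronous-coupling argument that runs both frozen dynamics in stationarity. First we fix an optimal coupling \((\Theta,\Theta')\) of \(\eta\) and \(\eta'\), so that \(\E\abs{\Theta-\Theta'}^2=W_2^2(\eta,\eta')\); this exists by \cite[Theorem~4.1]{Villani2009} and the richness of \(\Gg_0\). We then need fast initial data \(Y_0,Y_0'\), independent of a common Brownian motion \(B\), with \(\Law(\Theta,Y_0)=\Pi^\eta\) and \(\Law(\Theta',Y_0')=\Pi^{\eta'}\). To obtain them, we take the Borel kernels \(\nu^{\cdot;\eta}\) and \(\nu^{\cdot;\eta'}\) from Lemma~\ref{lem:conditional-invariant}, let \(Y_0\) and \(Y_0'\) be conditionally independent given \((\Theta,\Theta')\) with conditional laws \(\nu^{\Theta;\eta}\) and \(\nu^{\Theta';\eta'}\), and realize the whole construction on the rich space. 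The dependence between \(Y_0\) and \(Y_0'\) is irrelevant, because the stationary start will be forgotten exponentially fast.

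Next, we let \(Y\) and \(Y'\) solve \eqref{eq:frozen-mf-equation} in environments \(\Theta\) and \(\Theta'\), driven by the same \(B\). By Proposition~\ref{prop:joint-invariant} the laws are stationary, so \(\Law(\Theta,Y_t)=\Pi^\eta\) and \(\Law(\Theta',Y_t')=\Pi^{\eta'}\) for all \(t\ge0\). We set \(D_t=Y_t-Y_t'\) and apply It\^o's formula together with \eqref{eq:joint-dissipativity}, comparing \((\Theta^x,Y_t,\Theta^u,\Pi^\eta)\) with \((\Theta'^x,Y'_t,\Theta'^u,\Pi^{\eta'})\). This gives
\[
 \frac{\dd}{\dd t}\E\abs{D_t}^2
 \le -\kappa\E\abs{D_t}^2
 +K_1\E\abs{\Theta-\Theta'}^2
 +K_2W_2^2(\Pi^\eta,\Pi^{\eta'}).
\]
Since \((\Theta,Y_t)\) and \((\Theta',Y_t')\) form a coupling of the two invariant laws,
\[
 W_2^2(\Pi^\eta,\Pi^{\eta'})\le \E\abs{\Theta-\Theta'}^2+\E\abs{D_t}^2 .
\]
Substituting this bound yields
\[
 \frac{\dd}{\dd t}\E\abs{D_t}^2\le -\alpha\E\abs{D_t}^2+(K_1+K_2)W_2^2(\eta,\eta').
\]
The derivative step is justified by the same localization used in Lemma~\ref{lem:frozen-lyapunov}.

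Gronwall's inequality then gives
\[
 \E\abs{D_t}^2\le \ee^{-\alpha t}\E\abs{D_0}^2+\frac{K_1+K_2}{\alpha}W_2^2(\eta,\eta').
\]
Here \(\E\abs{D_0}^2\) is finite by \eqref{eq:joint-invariant-moment}. Using the coupling bound once more at time \(t\),
\[
 W_2^2(\Pi^\eta,\Pi^{\eta'})\le \Bigl(1+\frac{K_1+K_2}{\alpha}\Bigr)W_2^2(\eta,\eta')+\ee^{-\alpha t}\E\abs{D_0}^2 .
\]
Letting \(t\to\infty\) proves \eqref{eq:joint-invariant-stability} with \(C^2=1+(K_1+K_2)/\alpha\).

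The main obstacle is the measure term. It enters the dissipativity inequality with coefficient \(K_2\), and it has to be absorbed through the coupling bound. This absorption works only because stationarity freezes the measure argument: the same \(W_2(\Pi^\eta,\Pi^{\eta'})\) appears at every time and is bounded by the current coupling, so \(\kappa>K_2\) closes the estimate. The other delicate point is building the joint initial realization so that the pair \((Y_0,Y_0')\) is independent of \(B\) and the two stationary laws are correct. This rests on the Borel measurability of the kernels, which the paper supplies through Lemma~\ref{lem:conditional-invariant-stability}. If we want to avoid relying on that lemma, we can instead obtain \(Y_0\) and \(Y_0'\) by the gluing lemma applied to \(\Pi^\eta\), the coupling of \((\Theta,\Theta')\), and \(\Pi^{\eta'}\).
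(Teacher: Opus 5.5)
Your proposal is correct and follows the paper's proof essentially step for step. The shared steps are: an optimal coupling of $(\Theta,\Theta')$, conditionally independent stationary fast initial data, synchronous driving, absorption of the $K_2W_2^2(\Pi^\eta,\Pi^{\eta'})$ term via the stationary coupling bound, Gronwall, and $t\to\infty$, which gives the same constant $1+(K_1+K_2)/\alpha$.

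One point to fix: the paper proves Lemma~\ref{lem:conditional-invariant-stability} using the present lemma. So you should not cite it for the Borel measurability of $\nu^{\cdot;\eta}$. Instead, take any Borel disintegration kernel of $\Pi^\eta$ from the disintegration theorem, as the paper does, or use your gluing alternative.
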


\begin{proof}
Choose an optimal coupling
\((\Theta,\Theta')\) of \(\eta\) and \(\eta'\), so that
$
 \E\abs{\Theta-\Theta'}^2
 =
 W_2^2(\eta,\eta').
$
Let \(\Gamma\) denote the law of \((\Theta,\Theta')\), and write the
disintegrations
\[
 \Pi^\eta(\dd\theta,\dd y)
 =
 \eta(\dd\theta)k^\theta(\dd y),
 \qquad
 \Pi^{\eta'}(\dd\theta',\dd y')
 =
 \eta'(\dd\theta')k'^{\theta'}(\dd y').
\]
Define a probability measure on
\(\slowspace^2\times\fastspace^2\) by
\[
 \widehat\Gamma(
 \dd\theta,\dd\theta',\dd y,\dd y')
 :=
 \Gamma(\dd\theta,\dd\theta')\,
 k^\theta(\dd y)\,
 k'^{\theta'}(\dd y').
\]
Realize
\((\Theta,\Theta',Y_0,Y_0')\) with law \(\widehat\Gamma\).  Since the
marginals of \(\Gamma\) are \(\eta\) and \(\eta'\), respectively,
\[
 \Law(\Theta,Y_0)=\Pi^\eta,\qquad
 \Law(\Theta',Y_0')=\Pi^{\eta'}.
\]
Drive the two frozen equations synchronously.  Both marginal joint laws
remain stationary. Condition \eqref{eq:joint-dissipativity} gives
\begin{align}
 \frac{\dd}{\dd t}\E\abs{Y_t-Y_t'}^2
 &\le
 -\kappa\E\abs{Y_t-Y_t'}^2
 +K_1W_2^2(\eta,\eta')
 +K_2 W_2^2(\Pi^\eta,\Pi^{\eta'}).
 \label{eq:pi-stability-first}
\end{align}
Since
$
 \Law(\Theta,Y_t)=\Pi^\eta,
 \Law(\Theta',Y_t')=\Pi^{\eta'},
$
the pair
\(
((\Theta,Y_t),(\Theta',Y_t'))
\)
defines a coupling of \(\Pi^\eta\) and \(\Pi^{\eta'}\).  Hence,
using the optimality of the initial coupling
\((\Theta,\Theta')\),
\begin{equation}\label{eq:pi-stability-coupling}
 W_2^2(\Pi^\eta,\Pi^{\eta'})
 \le
 \E\abs{\Theta-\Theta'}^2+\E\abs{Y_t-Y_t'}^2
 =
 W_2^2(\eta,\eta')+\E\abs{Y_t-Y_t'}^2.
\end{equation}
Substitution into \eqref{eq:pi-stability-first} gives
\[
 \frac{\dd}{\dd t}\E\abs{Y_t-Y_t'}^2
 \le
 -\alpha\E\abs{Y_t-Y_t'}^2
 +(K_1+K_2)W_2^2(\eta,\eta').
\]
Therefore
\[
 \E\abs{Y_t-Y_t'}^2
 \le
 \ee^{-\alpha t}\E\abs{Y_0-Y_0'}^2
 +\frac{K_1+K_2}{\alpha}
 (1-\ee^{-\alpha t})W_2^2(\eta,\eta').
\]
Let \(t\to\infty\) in \eqref{eq:pi-stability-coupling}.  We obtain
\[
 W_2^2(\Pi^\eta,\Pi^{\eta'})
 \le
 \bigl(1+\frac{K_1+K_2}{\alpha}\bigr)
 W_2^2(\eta,\eta'),
\]
which proves the result.
\end{proof}

\begin{lemma}\label{lem:conditional-invariant-stability}
Under {\rm(H1)--(H2)}, there exists \(C>0\) such that, for all
\((x,u),(x',u')\in\slowspace\) and
\(\eta,\eta'\in\Ptwo(\slowspace)\),
\begin{align}
 W_2\bigl(
 \nu^{x,u;\eta},\nu^{x',u';\eta'}
 \bigr)
 \le
 C\bigl(
 \abs{x-x'}+\abs{u-u'}+W_2(\eta,\eta')
 \bigr).
 \label{eq:conditional-invariant-stability}
\end{align}
\end{lemma}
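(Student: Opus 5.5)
We would prove \eqref{eq:conditional-invariant-stability} by a synchronous coupling of the two non-McKean--Vlasov equations \eqref{eq:conditional-frozen-sde}, one with parameters \((x,u,\Pi^\eta)\) and one with \((x',u',\Pi^{\eta'})\). The measure arguments are now fixed deterministic measures, so the mean-field feedback of \eqref{eq:frozen-mf-equation} plays no role. The only place \(\eta\) enters is through \(\Pi^\eta\), and Lemma~\ref{lem:joint-invariant-stability} already controls that.

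Start both equations from the same point, say \(Z_0=Z_0'=0\), and drive them by the same Brownian motion \(B\). Put \(D_t=Z_t-Z_t'\). Applying It\^o's formula and the joint dissipativity \eqref{eq:joint-dissipativity} to the pairs \((x,Z_t,u,\Pi^\eta)\) and \((x',Z_t',u',\Pi^{\eta'})\) gives
\[
 \frac{\dd}{\dd t}\E\abs{D_t}^2
 \le
 -\kappa\E\abs{D_t}^2
 +K_1\bigl(\abs{x-x'}^2+\abs{u-u'}^2\bigr)
 +K_2W_2^2(\Pi^\eta,\Pi^{\eta'}).
\]
The stochastic integral is handled by the same localization as in Lemma~\ref{lem:frozen-lyapunov}, using the moment bounds available for the non-McKean--Vlasov equation. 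Gronwall's inequality then yields
\[
 \sup_{t\ge0}\E\abs{D_t}^2\le \kappa^{-1}\bigl(K_1(\abs{x-x'}^2+\abs{u-u'}^2)+K_2W_2^2(\Pi^\eta,\Pi^{\eta'})\bigr).
\]
Substituting \(W_2(\Pi^\eta,\Pi^{\eta'})\le CW_2(\eta,\eta')\) from Lemma~\ref{lem:joint-invariant-stability} gives
\[
 W_2^2\bigl(Q_t^{x,u;\eta,*}\delta_0,\;Q_t^{x',u';\eta',*}\delta_0\bigr)\le C\bigl(\abs{x-x'}^2+\abs{u-u'}^2+W_2^2(\eta,\eta')\bigr)
\]
for all \(t\ge0\).

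Next let \(t\to\infty\). The proof of Lemma~\ref{lem:conditional-invariant} shows that \(Q_t^{x,u;\eta,*}\) contracts in \(W_2\) at rate \(\ee^{-\kappa t/2}\) with fixed point \(\nu^{x,u;\eta}\). Hence
\[
 W_2\bigl(Q_t^{x,u;\eta,*}\delta_0,\nu^{x,u;\eta}\bigr)\le \ee^{-\kappa t/2}\,m_2(\nu^{x,u;\eta})\to0,
\]
where \(m_2(\nu^{x,u;\eta})\) is finite by \eqref{eq:conditional-invariant-moment}. The same holds for the primed parameters. The triangle inequality for \(W_2\) then gives \eqref{eq:conditional-invariant-stability} in the limit.

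This argument does not use the Borel measurability of \((x,u)\mapsto\nu^{x,u;\eta}\), which is only completed by invoking the present lemma. It uses only the existence, uniqueness, moment bound and contraction parts of Lemma~\ref{lem:conditional-invariant}, so there is no circularity. The only genuinely nontrivial input is the stability of \(\Pi^\eta\) in \(\eta\). Unlike Lemma~\ref{lem:joint-invariant-stability}, this step needs no self-consistent absorption of \(K_2\) into \(\kappa\), because the measure parameters are frozen, so I expect no real obstacle. The Lipschitz continuity for fixed \(\eta\) then gives the asserted measurability as a map into \((\Ptwo(\fastspace),W_2)\).
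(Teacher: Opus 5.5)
Your proof is correct and is essentially the paper's: a synchronous coupling of the two frozen non-McKean--Vlasov equations, then (H2), Lemma~\ref{lem:joint-invariant-stability}, Gronwall, and $t\to\infty$. The only difference is that the paper starts the pair from an arbitrary coupling of $\nu^{x,u;\eta}$ and $\nu^{x',u';\eta'}$, so by stationarity $(Z_t,Z_t')$ is a coupling of the two invariant laws for every $t$; this makes your extra step (starting from $\delta_0$, then using the $W_2$-contraction and the triangle inequality) unnecessary.
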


\begin{proof}
Let \((Z_0,Z_0')\) be an arbitrary coupling of
\(\nu^{x,u;\eta}\) and \(\nu^{x',u';\eta'}\), and drive the corresponding
solutions of \eqref{eq:conditional-frozen-sde} with the same Brownian
motion.  Each marginal remains stationary.  By
\eqref{eq:joint-dissipativity},
\[
 \frac{\dd}{\dd t}\E\abs{Z_t-Z_t'}^2
 \le
 -\kappa\E\abs{Z_t-Z_t'}^2
 +K_1(\abs{x-x'}^2+\abs{u-u'}^2)
 +K_2 W_2^2(\Pi^\eta,\Pi^{\eta'}).
\]
Lemma~\ref{lem:joint-invariant-stability} gives
\[
 \frac{\dd}{\dd t}\E\abs{Z_t-Z_t'}^2
 \le
 -\kappa\E\abs{Z_t-Z_t'}^2
 +C\bigl(
 \abs{x-x'}^2+\abs{u-u'}^2+W_2^2(\eta,\eta')
 \bigr).
\]
Gronwall's inequality gives
\[
 \E\abs{Z_t-Z_t'}^2
 \le
 \ee^{-\kappa t}\E\abs{Z_0-Z_0'}^2
 +C(1-\ee^{-\kappa t})
 \bigl(
 \abs{x-x'}^2+\abs{u-u'}^2+W_2^2(\eta,\eta')
 \bigr).
\]
Since both marginals remain stationary,
\((Z_t,Z_t')\) is a coupling of
\(\nu^{x,u;\eta}\) and \(\nu^{x',u';\eta'}\) for every \(t\ge0\).
Therefore,
$
 W_2^2(
 \nu^{x,u;\eta},\nu^{x',u';\eta'})
 \le
 \E\abs{Z_t-Z_t'}^2.
$
Letting \(t\to\infty\) yields
\begin{align*}
 W_2^2(
 \nu^{x,u;\eta},\nu^{x',u';\eta'})
 \le
 C\bigl(
 \abs{x-x'}^2+\abs{u-u'}^2+W_2^2(\eta,\eta')
 \bigr).
\end{align*}
This proves
\eqref{eq:conditional-invariant-stability}.
\end{proof}

\subsection{The Joint Lifted Semigroup and Averaged Coefficients}

Fix \(\eta\in\Ptwo(\slowspace)\) and
\(\rho\in{\Ptwo}_\eta\).  For
\((x,u,y)\in\slowspace\times\fastspace\), let
\(Y_t^{x,u,y,\rho}\) solve
\begin{equation}\label{eq:pointwise-fast-equation}
\left\{
\begin{aligned}
\dd Y_t^{x,u,y,\rho}
 &=
 h\bigl(
 x,Y_t^{x,u,y,\rho},u,\mathscr S_t^\eta\rho
 \bigr)\dd t+
 g\bigl(
 x,Y_t^{x,u,y,\rho},u,\mathscr S_t^\eta\rho
 \bigr)\dd B_t,
\\
Y_0^{x,u,y,\rho}&=y.
\end{aligned}
\right.
\end{equation}
For a measurable function
\(\Phi=\Phi(x,u,y,\rho)\), define
\begin{equation}\label{eq:lifted-semigroup-definition}
 \widetilde P_t\Phi(x,u,y,\rho)
 :=
 \E\bigl[
 \Phi\bigl(
 x,u,Y_t^{x,u,y,\rho},\mathscr S_t^\eta\rho
 \bigr)
 \bigr].
\end{equation}
Note that the slow marginal \(\eta\) is recovered from \(\rho\); hence it is not an
additional component of the lifted state.

\begin{remark}
{\rm The auxiliary equation above separates the pointwise fast dynamics from the
nonlinear evolution of the population law.  Indeed, once \(\rho\) is fixed,
\(\mathscr S_t^\eta\rho\) is a prescribed law flow, and
\(Y_t^{x,u,y,\rho}\) evolves as an ordinary SDE driven by this flow.
Enlarging the state to include \(\rho\) then allows one to recover a
Markov semigroup structure.  This construction is in the spirit of the
splitting device for McKean--Vlasov equations in
\cite{BuckdahnLiPengRainer2017} and the lifted semigroup approach in
\cite{HongHuLiuYang2026}.}
\end{remark}

\begin{lemma}\label{lem:lifted-semigroup-property}
Under {\rm(H1)}, the family \((\widetilde P_t)_{t\ge0}\) satisfies
\[
 \widetilde P_{t+s}
 =
 \widetilde P_s\widetilde P_t,
 \qquad s,t\ge0.
\]
\end{lemma}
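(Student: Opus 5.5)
The plan is to prove the semigroup identity through a flow (cocycle) property of the pointwise fast equation \eqref{eq:pointwise-fast-equation}, combined with the nonlinear semigroup property \eqref{eq:nonlinear-semigroup}, and then to apply the Markov property of a time-inhomogeneous SDE. Fix \((x,u,y,\rho)\) and set \(\rho_s:=\mathscr S_s^\eta\rho\). The key observation concerns the shifted process \(\widehat Y_r:=Y_{s+r}^{x,u,y,\rho}\), \(r\ge0\). It solves
\[
 \dd\widehat Y_r=h\bigl(x,\widehat Y_r,u,\mathscr S_{s+r}^\eta\rho\bigr)\dd r+g\bigl(x,\widehat Y_r,u,\mathscr S_{s+r}^\eta\rho\bigr)\dd\widehat B_r,
\]
where \(\widehat B_r:=B_{s+r}-B_s\). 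By \eqref{eq:nonlinear-semigroup}, the law flow is \(\mathscr S_{s+r}^\eta\rho=\mathscr S_r^\eta\rho_s\). This is exactly the flow driving \(Y^{x,u,\cdot,\rho_s}\). Note that \(\rho_s\in{\Ptwo}_\eta\) has the same slow marginal \(\eta\), so the lifted construction is consistent.

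Next I invoke the Markov property for the ordinary, time-inhomogeneous SDE with the deterministic measure flow \(r\mapsto\mathscr S_r^\eta\rho_s\). Its coefficients are Lipschitz in the state by (H1), and they are continuous in time because \(t\mapsto\mathscr S_t^\eta\rho\) is \(W_2\)-continuous, which follows from the \(L^2\)-continuity of \(t\mapsto Y_t\) in Proposition~\ref{prop:frozen-wellposed}. Pathwise uniqueness then gives \(\widehat Y_r=Y_r^{x,u,\,Y_s^{x,u,y,\rho},\,\rho_s}\), where the right-hand side is the solution driven by \(\widehat B\) and started from the \(\Ff_s^B\)-measurable value \(Y_s^{x,u,y,\rho}\). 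Since \(\widehat B\) is independent of \(\Ff_s^B\), conditioning on \(\Ff_s^B\) yields
\[
 \E\bigl[\Phi(x,u,Y_{s+t}^{x,u,y,\rho},\mathscr S_t^\eta\rho_s)\mid\Ff_s^B\bigr]=\widetilde P_t\Phi\bigl(x,u,Y_s^{x,u,y,\rho},\rho_s\bigr).
\]
This identity uses the standard freezing lemma for a measurable functional \(G(z,\omega)=\E[\Phi(x,u,Y_t^{x,u,z,\rho_s},\mathscr S_t^\eta\rho_s)]\) evaluated at an independent argument.

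Taking expectations gives
\[
 \widetilde P_{s+t}\Phi(x,u,y,\rho)=\E\bigl[\widetilde P_t\Phi(x,u,Y_s^{x,u,y,\rho},\mathscr S_s^\eta\rho)\bigr]=\widetilde P_s(\widetilde P_t\Phi)(x,u,y,\rho),
\]
which is the claim. To be careful with integrability, I would first argue for bounded measurable \(\Phi\), or \(\Phi\ge0\), and extend by monotone class if needed.

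The main technical point is the measurability of \(z\mapsto Y_t^{x,u,z,\rho_s}\) jointly in \((z,\omega)\), which is needed to justify the freezing/conditioning step. I would handle it via the continuous dependence on initial data: the Lipschitz coefficients give \(\E|Y_t^{z}-Y_t^{z'}|^2\le C_t|z-z'|^2\), so a continuous (in \(z\)) version exists. Alternatively, I would use the standard Yamada–Watanabe representation \(Y^{z}=\mathcal F(z,\widehat B)\) with \(\mathcal F\) measurable. Everything else is routine bookkeeping.
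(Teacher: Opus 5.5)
Your proposal is correct and follows essentially the same route as the paper. You shift the pointwise fast equation by \(s\), use \(\mathscr S_{s+r}^\eta\rho=\mathscr S_r^\eta\rho_s\) to identify the shifted process as the solution started from \(Y_s^{x,u,y,\rho}\) with parameter \(\rho_s\), invoke strong uniqueness and the independence of \(\widehat B\) from \(\sigma(B_r:r\le s)\) to get the conditional identity, and conclude with the tower property. Your extra remarks make explicit what the paper leaves implicit: joint measurability in the initial point, and restricting first to bounded or nonnegative \(\Phi\). For the measurability, the Yamada--Watanabe functional representation is the cleaner option, since a second-moment bound alone does not yield a continuous version in \(z\) when \(m\ge2\).
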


\begin{proof}
Fix \(s,t\ge0\), and set
\[
 \rho_s:=\mathscr S_s^\eta\rho,
 \qquad
 \widehat Y_r:=Y_{s+r}^{x,u,y,\rho},
 \qquad
 \widehat B_r:=B_{s+r}-B_s,
 \quad 0\le r\le t.
\]
By \eqref{eq:nonlinear-semigroup},
\[
 \mathscr S_{s+r}^\eta\rho
 =
 \mathscr S_r^\eta\rho_s,
 \qquad r\ge0.
\]
Hence, by \eqref{eq:pointwise-fast-equation},
\begin{align*}
 \widehat Y_r
 &=
 Y_s^{x,u,y,\rho}
 +\int_0^r
 h\bigl(
 x,\widehat Y_q,u,\mathscr S_q^\eta\rho_s
 \bigr)\dd q
+
 \int_0^r
 g\bigl(
 x,\widehat Y_q,u,\mathscr S_q^\eta\rho_s
 \bigr)\dd\widehat B_q .
\end{align*}
Let
$
 \mathcal B_s:=\sigma(B_r:0\le r\le s)^{\Pp},
$
then \(Y_s^{x,u,y,\rho}\) is \(\mathcal B_s\)-measurable, and
\((\widehat B_r)_{r\ge0}\) is independent of \(\mathcal B_s\).
Thus strong uniqueness for \eqref{eq:pointwise-fast-equation} yields
\[
 \E\bigl[
 \Phi\bigl(
 x,u,Y_{s+t}^{x,u,y,\rho},
 \mathscr S_{s+t}^\eta\rho
 \bigr)
 \,\bigm|\,\mathcal B_s
 \bigr]
 =
 \widetilde P_t\Phi(x,u,z,\rho_s)
 \bigr|_{z=Y_s^{x,u,y,\rho}}
 \quad\text{a.s.}
\]
Therefore, by the tower property and
\eqref{eq:lifted-semigroup-definition},
\[
 \widetilde P_{t+s}\Phi(x,u,y,\rho)
 =
 \E\bigl[
 \widetilde P_t\Phi\bigl(
 x,u,Y_s^{x,u,y,\rho},\rho_s
 \bigr)
 \bigr]
 =\widetilde P_s(\widetilde P_t\Phi)(x,u,y,\rho).
\]
Hence
\[
 \widetilde P_{t+s}
 =
 \widetilde P_s\widetilde P_t .
\]
\end{proof}

The limiting projection associated with the lifted semigroup is defined by
\[
 \mathcal Q^\eta\Phi(x,u)
 :=
 \int_{\R^m}
 \Phi(x,u,z,\Pi^\eta)\,\nu^{x,u;\eta}(\dd z).
\]

\begin{lemma}
\label{lem:lifted-mixing}
Under {\rm(H1)--(H2)}, fix
\(\eta\in\Ptwo(\slowspace)\). Suppose that, uniformly in \((x,u)\),
\begin{equation}\label{eq:Phi-lipschitz}
 \abs{\Phi(x,u,y,\rho)-\Phi(x,u,y',\rho')}
 \le
 L_\Phi\bigl(\abs{y-y'}+W_2(\rho,\rho')\bigr)
\end{equation}
for all \(y,y'\in\fastspace\) and
\(\rho,\rho'\in{\Ptwo}_\eta\). Then, for every
\((x,u,y)\in\slowspace\times\fastspace\),
\(\rho\in{\Ptwo}_\eta\), and \(t\ge0\),
\begin{equation}
 \abs{
 \widetilde P_t\Phi(x,u,y,\rho)
 -\mathcal Q^\eta\Phi(x,u)}
 \le
 C L_\Phi\ee^{-\alpha t/2}
 \bigl(
 1+\abs x+\abs u+\abs y+m_2(\rho)
 \bigr).
 \label{eq:lifted-mixing}
\end{equation}
\end{lemma}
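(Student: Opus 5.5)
Since \(\mathcal Q^\eta\Phi(x,u)=\E[\Phi(x,u,Z,\Pi^\eta)]\) with \(Z\sim\nu^{x,u;\eta}\), I will write the difference in \eqref{eq:lifted-mixing} as a difference of two expectations and bound it by a synchronous coupling. Concretely, let \(Z_0\sim\nu^{x,u;\eta}\) be independent of \(B\). Let \(Z_t\) solve \eqref{eq:conditional-frozen-sde} from \(Z_0\), so that \(\Law(Z_t)=\nu^{x,u;\eta}\) for all \(t\) by Lemma~\ref{lem:conditional-invariant}. Using \eqref{eq:Phi-lipschitz}, and noting that \(\mathscr S_t^\eta\rho,\Pi^\eta\in{\Ptwo}_\eta\), we get
\[
 \abs{\widetilde P_t\Phi(x,u,y,\rho)-\mathcal Q^\eta\Phi(x,u)}
 \le L_\Phi\Bigl(\bigl(\E\abs{Y_t^{x,u,y,\rho}-Z_t}^2\bigr)^{1/2}+W_2(\mathscr S_t^\eta\rho,\Pi^\eta)\Bigr).
\]

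The law term is handled first. Combining \eqref{eq:ordinary-below-fiber} with \eqref{eq:joint-invariant-convergence} gives \(W_2(\mathscr S_t^\eta\rho,\Pi^\eta)\le \ee^{-\alpha t/2}\Wass_{2,\eta}(\rho,\Pi^\eta)\). By \eqref{eq:fiber-coupling}, using a conditionally independent coupling given \(\Theta\), we have \(\Wass_{2,\eta}(\rho,\Pi^\eta)\le m_2(\rho)+m_2(\Pi^\eta)\). Then \eqref{eq:joint-invariant-moment}, together with \(m_2(\eta)\le m_2(\rho)\), yields \(\Wass_{2,\eta}(\rho,\Pi^\eta)\le C(1+m_2(\rho))\).

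For the pathwise term, I will apply It\^o's formula to \(D_t=Y_t^{x,u,y,\rho}-Z_t\), driven by the same \(B\). Here (H2) is used with identical \((x,u)\) but different measures \(\mathscr S_t^\eta\rho\) and \(\Pi^\eta\), which gives
\[
 \frac{\dd}{\dd t}\E\abs{D_t}^2\le-\kappa\E\abs{D_t}^2+K_2W_2^2(\mathscr S_t^\eta\rho,\Pi^\eta)\le -\kappa\E\abs{D_t}^2+K_2\ee^{-\alpha t}\Wass_{2,\eta}^2(\rho,\Pi^\eta).
\]
As usual, the stochastic integral is removed by localization, using moment bounds as in Lemma~\ref{lem:frozen-lyapunov}. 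Since \(\kappa>\alpha\), Gronwall gives
\[
 \E\abs{D_t}^2\le \ee^{-\kappa t}\E\abs{y-Z_0}^2+\frac{K_2}{\kappa-\alpha}\ee^{-\alpha t}\Wass_{2,\eta}^2(\rho,\Pi^\eta).
\]
If \(K_2=0\) the second term is simply absent. The convolution bound \(\int_0^t\ee^{-\kappa(t-s)}\ee^{-\alpha s}\dd s\le \ee^{-\alpha t}/(\kappa-\alpha)\) holds, and \(\kappa-\alpha=K_2\), so the constant is in fact \(1\). Finally, \eqref{eq:conditional-invariant-moment} bounds \(\E\abs{y-Z_0}^2\le C(1+\abs y^2+\abs x^2+\abs u^2+m_2(\eta)^2)\). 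Taking square roots, using \(\ee^{-\kappa t/2}\le \ee^{-\alpha t/2}\), and collecting terms gives \eqref{eq:lifted-mixing}.

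The main point needing care is the coupling of the two measure arguments. The pointwise equation \eqref{eq:pointwise-fast-equation} is driven by the flow \(\mathscr S_t^\eta\rho\), not by \(\Pi^\eta\), so the comparison produces the forcing term \(K_2W_2^2\). Its exponential decay comes from the fiber contraction of Lemma~\ref{lem:fiber-contraction}. The rate \(\alpha/2\), rather than \(\kappa/2\), enters exactly through this term. Everything else is routine.
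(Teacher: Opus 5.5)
Your proof is correct and follows essentially the same route as the paper. Both arguments use a synchronous coupling of \(Y^{x,u,y,\rho}\) with a stationary solution of \eqref{eq:conditional-frozen-sde}, control the \(K_2W_2^2\) forcing from {\rm(H2)} via \eqref{eq:ordinary-below-fiber} and \eqref{eq:joint-invariant-convergence}, bound \(\Wass_{2,\eta}(\rho,\Pi^\eta)\) through a product (conditionally independent) coupling, and close with Gronwall using the same \(K_2=0\) versus \(K_2>0\) case split; the only differences are cosmetic, such as bounding \(\E\abs{D_t}\) by its \(L^2\) norm and making the localization explicit.
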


\begin{proof}
On the same filtered probability space, let \(\bar Z\) be driven by the
same Brownian motion \(B\) and solve
\[
 \dd\bar Z_t
 =
 h(x,\bar Z_t,u,\Pi^\eta)\dd t
 +g(x,\bar Z_t,u,\Pi^\eta)\dd B_t,
 \qquad
 \Law(\bar Z_0)=\nu^{x,u;\eta}.
\]
Then \(\bar Z\) is stationary.  Put
\(D_t=Y_t^{x,u,y,\rho}-\bar Z_t\).  By
\eqref{eq:joint-dissipativity},
\begin{equation}\label{eq:lifted-mixing-differential}
 \frac{\dd}{\dd t}\E\abs{D_t}^2
 \le
 -\kappa\E\abs{D_t}^2
 +K_2 W_2^2(\mathscr S_t^\eta\rho,\Pi^\eta).
\end{equation}
Equations \eqref{eq:ordinary-below-fiber} and
\eqref{eq:joint-invariant-convergence} imply
\begin{equation}\label{eq:law-mixing-bound}
 W_2^2(\mathscr S_t^\eta\rho,\Pi^\eta)
 \le
 \ee^{-\alpha t}
 \Wass_{2,\eta}^2(\rho,\Pi^\eta).
\end{equation}
Moreover, using the product coupling
\(\rho^{x,u}\otimes\nu^{x,u;\eta}\) on each fiber and
\eqref{eq:joint-invariant-moment}, we obtain
\begin{equation*}
 \Wass_{2,\eta}^2(\rho,\Pi^\eta)
 \le C\bigl(1+m_2(\rho)^2\bigr).
\end{equation*}

If \(K_2=0\), \eqref{eq:lifted-mixing-differential} directly gives
\(\E\abs{D_t}^2\le\ee^{-\kappa t}\E\abs{D_0}^2\), and
\(\alpha=\kappa\).  If \(K_2>0\), then
\(\kappa-\alpha=K_2\), and Gronwall's inequality gives
\begin{align*}
 \E\abs{D_t}^2
 &\le
 \ee^{-\kappa t}\E\abs{D_0}^2
 +K_2\Wass_{2,\eta}^2(\rho,\Pi^\eta)
 \int_0^t\ee^{-\kappa(t-r)}\ee^{-\alpha r}\dd r
\\
 &=
 \ee^{-\kappa t}\E\abs{D_0}^2
 +\bigl(\ee^{-\alpha t}-\ee^{-\kappa t}\bigr)
 \Wass_{2,\eta}^2(\rho,\Pi^\eta).
\end{align*}
Both cases therefore imply
\begin{align*}
 \E\abs{D_t}^2
 \le
 C\ee^{-\alpha t}
 \bigl(
 1+\abs x^2+\abs u^2+\abs y^2+m_2(\rho)^2
 \bigr),
\end{align*}
where \eqref{eq:conditional-invariant-moment} yields the required bound for
\(\E\abs{\bar Z_0}^2\).
Using \eqref{eq:Phi-lipschitz}, the stationary law of \(\bar Z_t\), and
\eqref{eq:law-mixing-bound},
\[
 \abs{
 \widetilde P_t\Phi(x,u,y,\rho)
 -\mathcal Q^\eta\Phi(x,u)}
\le
 L_\Phi\E\abs{D_t}
 +L_\Phi W_2(\mathscr S_t^\eta\rho,\Pi^\eta).
\]
which proves \eqref{eq:lifted-mixing}.
\end{proof}

\begin{lemma}\label{lem:averaged-coefficients}
Under {\rm(H1)--(H2)}, the functions in
\eqref{eq:averaged-b-definition} and
\eqref{eq:averaged-F-definition} are Borel measurable and satisfy
\begin{align}
 \abs{\bar b(x,u,\eta)-\bar b(x',u',\eta')}
 &\le
 C\bigl(
 \abs{x-x'}+\abs{u-u'}+W_2(\eta,\eta')
 \bigr),
 \label{eq:averaged-b-lipschitz}\\
 \abs{\bar F(x,u,v,\eta)-\bar F(x',u',v',\eta')}
 &\le
 C\bigl(
 \abs{x-x'}+\abs{u-u'}+\abs{v-v'}
 +W_2(\eta,\eta')
 \bigr),
 \label{eq:averaged-F-lipschitz}\\
 \abs{\bar b(x,u,\eta)}^2
 &\le
 C\bigl(1+\abs x^2+\abs u^2+m_2(\eta)^2\bigr),
 \label{eq:averaged-b-growth}\\
 \abs{\bar F(x,u,v,\eta)}^2
 &\le
 C\bigl(1+\abs x^2+\abs u^2+\abs v^2+m_2(\eta)^2\bigr).
 \label{eq:averaged-F-growth}
\end{align}
\end{lemma}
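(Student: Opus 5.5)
The plan is a coupling argument. Each averaged coefficient is an integral of a Lipschitz function against the conditional invariant law \(\nu^{x,u;\eta}\) and the joint invariant law \(\Pi^\eta\). We therefore couple the two conditional invariant measures optimally and invoke Lemma~\ref{lem:conditional-invariant-stability} and Lemma~\ref{lem:joint-invariant-stability}.

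\emph{Step 1 (measurability).} By Lemma~\ref{lem:conditional-invariant-stability}, the map \((x,u,\eta)\mapsto\nu^{x,u;\eta}\) is Lipschitz, hence continuous, into \((\Ptwo(\fastspace),W_2)\). By Lemma~\ref{lem:joint-invariant-stability}, \(\eta\mapsto\Pi^\eta\) is continuous.

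The integrand \(z\mapsto b(x,z,u,\Pi^\eta)\) is Lipschitz in \(z\) with linear growth, and it depends continuously on \((x,u,\eta)\). Since \(W_2\)-convergence implies convergence of integrals of functions with at most quadratic growth, \(\bar b\) is continuous. The same holds for \(\bar F\), so both are Borel. In fact this step also follows from the Lipschitz bound proved next.

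\emph{Step 2 (Lipschitz bounds).} Fix \((x,u,\eta)\) and \((x',u',\eta')\). Take an optimal coupling \((Z,Z')\) of \(\nu^{x,u;\eta}\) and \(\nu^{x',u';\eta'}\). Then
\[
 \bar b(x,u,\eta)-\bar b(x',u',\eta')
 =\E\bigl[b(x,Z,u,\Pi^\eta)-b(x',Z',u',\Pi^{\eta'})\bigr].
\]
By \eqref{eq:H1-bhg}, its absolute value is at most
\[
 L\bigl(\abs{x-x'}+\abs{u-u'}+\E\abs{Z-Z'}+W_2(\Pi^\eta,\Pi^{\eta'})\bigr).
\]
Bound \(\E\abs{Z-Z'}\le W_2(\nu^{x,u;\eta},\nu^{x',u';\eta'})\) using \eqref{eq:conditional-invariant-stability}. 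Bound \(W_2(\Pi^\eta,\Pi^{\eta'})\le CW_2(\eta,\eta')\) using \eqref{eq:joint-invariant-stability}. This gives \eqref{eq:averaged-b-lipschitz}.

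For \(\bar F\), the same argument works with \eqref{eq:H1-F}. The extra term is \(\abs{(v,0)-(v',0)}=\abs{v-v'}\), since the Frobenius norm of the padded matrix is unchanged. This gives \eqref{eq:averaged-F-lipschitz}.

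\emph{Step 3 (growth).} By Jensen's inequality and \eqref{eq:linear-growth-bhg},
\[
 \abs{\bar b(x,u,\eta)}^2\le\int\abs{b(x,z,u,\Pi^\eta)}^2\nu^{x,u;\eta}(\dd z)
 \le C\Bigl(1+\abs x^2+\abs u^2+\int\abs z^2\nu^{x,u;\eta}(\dd z)+m_2(\Pi^\eta)^2\Bigr).
\]
Apply \eqref{eq:conditional-invariant-moment} to the \(\nu\)-moment. For the last term use \(m_2(\Pi^\eta)^2=m_2(\eta)^2+\int\abs y^2\,\dd\Pi^\eta\le C(1+m_2(\eta)^2)\), which follows from \eqref{eq:joint-invariant-moment}. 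This yields \eqref{eq:averaged-b-growth}. The bound \eqref{eq:averaged-F-growth} follows identically from \eqref{eq:linear-growth-F}.

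\emph{Main obstacle.} None of the steps is deep. The only subtle point is that the \(W_2(\Pi^\eta,\Pi^{\eta'})\) term must be controlled by \(W_2(\eta,\eta')\), and that is exactly what Lemma~\ref{lem:joint-invariant-stability} supplies.
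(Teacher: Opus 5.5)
Your proposal is correct and follows the paper's proof essentially line for line. The paper also couples \(\nu^{x,u;\eta}\) and \(\nu^{x',u';\eta'}\) optimally and applies \eqref{eq:H1-bhg}/\eqref{eq:H1-F} together with Lemmas~\ref{lem:joint-invariant-stability} and~\ref{lem:conditional-invariant-stability}, deduces Borel measurability from the resulting continuity, and proves the growth bounds by Jensen's inequality combined with \eqref{eq:joint-invariant-moment} and \eqref{eq:conditional-invariant-moment}.
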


\begin{proof}
By \eqref{eq:linear-growth-bhg}, \eqref{eq:linear-growth-F},
\eqref{eq:joint-invariant-moment}, and
\eqref{eq:conditional-invariant-moment}, the integrals in
\eqref{eq:averaged-b-definition} and
\eqref{eq:averaged-F-definition} are finite.

Let \((Z,Z')\) be an optimal coupling of
\(\nu^{x,u;\eta}\) and \(\nu^{x',u';\eta'}\).  Then
\[
 \E\abs{Z-Z'}
 \le
 \bigl(\E\abs{Z-Z'}^2\bigr)^{1/2}
 =
 W_2\bigl(
 \nu^{x,u;\eta},\nu^{x',u';\eta'}
 \bigr).
\]
Hence, by \eqref{eq:averaged-b-definition},
\begin{align*}
 &\abs{\bar b(x,u,\eta)-\bar b(x',u',\eta')}
\\
 &\quad\le
 \E\abs{
 b(x,Z,u,\Pi^\eta)
 -
 b(x',Z',u',\Pi^{\eta'})}
\\
 &\quad\le
 L\bigl(
 \abs{x-x'}+\abs{u-u'}
 +W_2\bigl(
 \nu^{x,u;\eta},\nu^{x',u';\eta'}
 \bigr)
 +W_2(\Pi^\eta,\Pi^{\eta'})
 \bigr)
\\
 &\quad\le
 C\bigl(
 \abs{x-x'}+\abs{u-u'}+W_2(\eta,\eta')
 \bigr),
\end{align*}
where the last inequality follows from
\eqref{eq:joint-invariant-stability} and
\eqref{eq:conditional-invariant-stability}.  This proves
\eqref{eq:averaged-b-lipschitz}.

Similarly, by \eqref{eq:averaged-F-definition},
\begin{align*}
 &\abs{\bar F(x,u,v,\eta)-\bar F(x',u',v',\eta')}
\\
 &\quad\le
 L\bigl(
 \abs{x-x'}+\abs{u-u'}+\abs{v-v'}
 +W_2\bigl(
 \nu^{x,u;\eta},\nu^{x',u';\eta'}
 \bigr)
 +W_2(\Pi^\eta,\Pi^{\eta'})
 \bigr)
\\
 &\quad\le
 C\bigl(
 \abs{x-x'}+\abs{u-u'}+\abs{v-v'}
 +W_2(\eta,\eta')
 \bigr),
\end{align*}
which proves \eqref{eq:averaged-F-lipschitz}.  In particular,
\(\bar b\) and \(\bar F\) are continuous, hence Borel measurable.

Finally, Jensen's inequality and \eqref{eq:linear-growth-bhg} give
\begin{align*}
 \abs{\bar b(x,u,\eta)}^2
 &\le
 \int_{\R^m}
 \abs{b(x,z,u,\Pi^\eta)}^2
 \,\nu^{x,u;\eta}(\dd z)
\\
 &\le
 C\bigl(
 1+\abs x^2+\abs u^2
 +m_2(\Pi^\eta)^2
 +\int_{\R^m}\abs z^2
 \,\nu^{x,u;\eta}(\dd z)
 \bigr).
\end{align*}
Since \(\Pi^\eta\in{\Ptwo}_\eta\),
\[
 m_2(\Pi^\eta)^2
 =
 m_2(\eta)^2
 +
 \int_{\slowspace\times\fastspace}
 \abs y^2\,\Pi^\eta(\dd(x,u),\dd y),
\]
and therefore
\eqref{eq:joint-invariant-moment} and
\eqref{eq:conditional-invariant-moment} yield
\eqref{eq:averaged-b-growth}.

Likewise, Jensen's inequality and \eqref{eq:linear-growth-F} give
\begin{align*}
 \abs{\bar F(x,u,v,\eta)}^2
 &\le
 \int_{\R^m}
 \abs{
 F\bigl(x,z,u,(v,0),\Pi^\eta\bigr)
 }^2
 \,\nu^{x,u;\eta}(\dd z)
\\
 &\le
 C\bigl(
 1+\abs x^2+\abs u^2+\abs v^2
 +m_2(\Pi^\eta)^2
 +\int_{\R^m}\abs z^2
 \,\nu^{x,u;\eta}(\dd z)
 \bigr),
\end{align*}
and the same two moment estimates prove
\eqref{eq:averaged-F-growth}.
\end{proof}

\begin{remark}
\label{rem:random-environment-freezing}
{\rm At the end of this section, we explain why the frozen equation is formulated as in \eqref{eq:frozen-mf-equation}, rather than in the classical form. To make the idea clear and for comparison with existing works, in this remark we mainly consider the forward mean-field equation. In a fully coupled
mean-field problem, freezing the slow variables at deterministic parameters
and then averaging with respect to the invariant law of the resulting fast
McKean--Vlasov equation can lead to an incorrect effective dynamics.  This
phenomenon was pointed out by Hou, Li, and Xie
\cite{HouLiXie2024}.  The underlying issue is that freezing removes the
temporal evolution of the slow variables on the fast time scale, but it does
not remove their randomness across the population.

We first illustrate the obstruction by an explicit scalar example.
Fix $q\in(0,1)$ and let $\xi$ satisfy
\[
 \Pp(\xi=1)=\Pp(\xi=-1)=\frac12.
\]
Consider
\begin{equation}\label{eq:naive-freezing-example}
\left\{
\begin{aligned}
 \dd X_t^\eps
 &=m_t^\eps\dd t,
 \qquad X_0^\eps=\xi,
 \\
 \dd Y_t^\eps
 &=\frac1\eps
 \bigl(-Y_t^\eps+X_t^\eps+q m_t^\eps\bigr)\dd t
 +\sqrt{\frac2\eps}\,\dd W_t,
 \qquad Y_0^\eps=0,
\end{aligned}
\right.
\end{equation}
where $m_t^\eps:=\E Y_t^\eps$.  If
$n_t^\eps:=\E X_t^\eps$, then
\[
 \frac{\dd}{\dd t}n_t^\eps=m_t^\eps,
 \qquad
 \eps\frac{\dd}{\dd t}m_t^\eps
 =n_t^\eps-(1-q)m_t^\eps.
\]
Since $n_0^\eps=m_0^\eps=0$, one has
$n_t^\eps=m_t^\eps=0$ for every $t\ge0$.  Consequently,
$X_t^\eps=\xi$ for every $\eps>0$. Conditional on $\xi=x$, we have
\[
 \Law(Y_t^\eps\mid\xi=x)
 =\mathcal N\bigl(x(1-\ee^{-t/\eps}),
                  1-\ee^{-2t/\eps}\bigr),
\]
where $\mathcal N(\mu,\sigma^2)$ denotes the normal distribution with mean $\mu$ and variance $\sigma^2$. Thus, for every $t>0$,
\[
 \Law(Y_t^\eps)
 \longrightarrow
 \frac12\mathcal N(-1,1)+\frac12\mathcal N(1,1).
\]
Now apply the naive deterministic freezing procedure.  For fixed $x$, the
corresponding frozen McKean--Vlasov equation would be
\[
 \dd Z_s^x
 =\bigl(-Z_s^x+x+q\E Z_s^x\bigr)\dd s
 +\sqrt2\,\dd B_s.
\]
Its unique invariant law is
\[
 \zeta^x=\mathcal N\big(\frac{x}{1-q},1\big).
\]
Since the slow drift in \eqref{eq:naive-freezing-example} is the mean of the
fast law, this procedure would produce the averaged drift
\[
 \bar b_{\rm naive}(x)
 =\int_{\R}z\,\zeta^x(\dd z)
 =\frac{x}{1-q},
\]
and hence the candidate limit
\[
 \dd\bar X_t=\frac{\bar X_t}{1-q}\dd t,
 \qquad \bar X_0=\xi.
\]
This gives $\bar X_t=\ee^{t/(1-q)}\xi$, whereas the exact system satisfies
$X_t^\eps=\xi$ for every $\eps>0$.  The naive averaged equation is therefore
wrong even in this linear Gaussian example.

The correct freezing keeps the slow state random but constant on the fast
time scale.  In the preceding example, let
$\Theta\sim\frac12\delta_{-1}+\frac12\delta_1$ and consider
\[
 \dd Z_s
 =\bigl(-Z_s+\Theta+q\E Z_s\bigr)\dd s
 +\sqrt2\,\dd B_s,
 \qquad \Theta\ \hbox{constant in }s.
\]
Since
\(\E\Theta=0\), the equilibrium mean satisfies
\[
 \E Z_s=\E\Theta+q\E Z_s=q\E Z_s,
\]
and hence \(\E Z_s=0\). Consequently, the mean-field term vanishes at
equilibrium, and the frozen equation reduces to
\[
 \dd Z_s=(-Z_s+\Theta)\dd s+\sqrt2\,\dd B_s.
\]
Conditioning on \(\Theta=x\), we obtain the Ornstein--Uhlenbeck equation
\[
 \dd Z_s=-(Z_s-x)\dd s+\sqrt2\,\dd B_s,
\]
whose invariant law is \(\mathcal N(x,1)\). Therefore, the invariant joint
law of \((\Theta,Z)\) is
\[
 \Pi
 =
 \frac12\delta_{-1}\otimes\mathcal N(-1,1)
 +
 \frac12\delta_1\otimes\mathcal N(1,1).
\]
Its fast marginal is
$
 \Pi_Z
 =
 \frac12\mathcal N(-1,1)
 +
 \frac12\mathcal N(1,1),
$
which has mean zero. Consequently, the averaged slow drift is zero, in
agreement with the exact dynamics. The difference between the two
procedures is that the deterministic frozen equation replaces the whole
population by a single slow fiber $x$, whereas the random frozen environment
keeps all slow fibers, with their correct population weights, while making
each individual slow environment constant in the fast time variable.

This observation also clarifies why deterministic freezing remains valid in the McKean--Vlasov averaging framework when the fast coefficients are independent of the fast population law, see \cite{HongLiSun2025,RocknerSunXie2021}. More precisely, for
\[
 \dd Y_t^\eps
 =\frac1\eps
 H\bigl(X_t^\eps,\Law(X_t^\eps),Y_t^\eps\bigr)\dd t
 +\frac1{\sqrt\eps}
 G\bigl(X_t^\eps,\Law(X_t^\eps),Y_t^\eps\bigr)\dd W_t,
\]
fixing a slow law $\mu$ and a slow state $x$ gives
\[
 \dd Z_s^{x,\mu}
 =H(x,\mu,Z_s^{x,\mu})\dd s
 +G(x,\mu,Z_s^{x,\mu})\dd B_s.
\]
No information about the fast dynamics in the other slow fibers is required
in order to determine $Z^{x,\mu}$.  Equivalently, if one first freezes a
random environment $\Theta\sim\mu$ and then conditions on $\Theta=x$, the
resulting fiber equation is already closed.  Thus the random-environment
formulation decomposes into independent deterministic fibers, and one may
work directly with the parameters $(x,\mu)$ without losing any information.
This provides the justification for the validity of the standard deterministic freezing procedure in this setting.

The situation is fundamentally different when the fast coefficients depend on the fast population law. In the setting of \cite{HouLiXie2024}, where the
coefficients depend separately on the slow and fast marginals, the correct
frozen object is a coupled family indexed by the slow state.  More precisely, for each \((x,\mu)\), the frozen dynamics take the form
\begin{equation}\label{eq:HLX-family-schematic}
\left\{
\begin{aligned}
 \dd Z_s^{x,\mu}
 &=
 H\big(
 x,\mu,Z_s^{x,\mu},
 \int_{\R^n}
 \Law(Z_s^{\widetilde x,\mu})\,\mu(\dd\widetilde x)
 \big)\dd s
 +
 G\big(
 x,\mu,Z_s^{x,\mu},
 \int_{\R^n}
 \Law(Z_s^{\widetilde x,\mu})\,\mu(\dd\widetilde x)
 \big)\dd B_s,
 \\
 Z_0^{x,\mu}&=y_0.
\end{aligned}
\right.
\end{equation}
The law entering the coefficient is therefore not the law of a single fiber
$Z^{x,\mu}$, but the mixture of all conditional fast laws over the slow
population.  It should be emphasized that the proof in \cite{HouLiXie2024} does not proceed directly with the original nonlinear system. Instead, the authors introduce a Picard-type iteration in which the measure arguments at the $n$th step are determined by the $(n-1)$th step. For each fixed $n$ the
resulting system is a classical, linear but non-autonomous SDE.  They first
establish averaging and distributional limits for these non-autonomous
systems, obtain estimates uniform in $n$, and then pass to the limit
$n\to\infty$; see \cite{HouLiXie2024}.

Our formulation incorporates the same population consistency into a single joint-law dynamics and, at the same time, allows the coefficients to depend on the full joint law rather than only on separate marginals. Namely, for a prescribed slow marginal $\eta$, we freeze a random slow
environment $\Theta=(\Theta^x,\Theta^u)\sim\eta$ and consider
$
 \rho_t:=\Law(\Theta,Y_t)\in{\Ptwo}_\eta.
$
By disintegrating $\rho_t$ with respect to its slow marginal $\eta$, we may
write
$
 \rho_t(\dd(x,u),\dd y)
 =\eta(\dd(x,u))\,\rho_t^{x,u}(\dd y),
$
where $\rho_t^{x,u}$ denotes the conditional law of the fast variable on the
slow fiber $(x,u)$. Consequently,
\begin{equation}\label{eq:random-environment-fiber-representation}
\rho_t
=\int_{\slowspace}
\delta_{(x,u)}\otimes\rho_t^{x,u}
\eta(\dd(x,u)).
\end{equation}
Thus, a single joint law records simultaneously the slow population, the
conditional fast law on each slow fiber, and the dependence between the slow
and fast variables. Its fast marginal is given by
$$
 (\pr_{\fastspace})_\#\rho_t
 =\int_{\slowspace}\rho_t^{x,u}\,\eta(\dd(x,u)).
$$

Hence, if the coefficients depend on $\rho_t$ only through its slow and fast
marginals, the joint-law formulation \eqref{eq:random-environment-fiber-representation} reduces exactly to
the mixture structure in \eqref{eq:HLX-family-schematic}.  If there is no
dependence on the fast law at all, the fibers decouple further and one
recovers the classical deterministic freezing described above.  In the
present paper, however, $b,h,g$, and $F$ may depend on the full joint law of
the slow and fast variables.  Separate slow and fast marginals do not retain
this information, whereas $\rho_t=\Law(\Theta,Y_t)$ does.

This viewpoint leads directly to the invariant joint law $\Pi^\eta$ and its
disintegration
\[
 \Pi^\eta(\dd(x,u),\dd z)
 =\eta(\dd(x,u))\,\nu^{x,u;\eta}(\dd z),
\]
which in turn yields \eqref{eq:averaged-b-definition} and
\eqref{eq:averaged-F-definition}.  In contrast with the non-autonomous
iteration of \cite{HouLiXie2024}, we analyze the nonlinear frozen dynamics
directly on the Wasserstein fiber ${\Ptwo}_\eta$ and do not introduce an outer
sequence of classical SDEs to linearize the mean-field interaction. Moreover, the argument does not rely on a Poisson equation and does not
require ellipticity or Lions differentiability with respect to the measure
variable, thereby allowing degenerate diffusion coefficients. Such a
framework is particularly important for the mean-field control application
developed later in the paper.}
\end{remark}

\section{Mean-Field BSDE Estimates}
\label{sec:averaged-preliminaries}

\subsection{{\it A Priori} Estimates for an
\texorpdfstring{\(L^2\)}{L2}-Operator Driver}

We first establish an {\it a priori} estimate for BSDEs whose drivers act on square-integrable random variables rather than pointwise on their values. This form arises naturally from the mean-field dependence of the averaged driver and will be used both to derive moment bounds for the averaged flow and to control the local backward comparison. Since the relevant Lipschitz continuity holds in the \(L^2\)-norm rather than pointwise in \(\omega\), standard Lipschitz BSDE estimates cannot be applied directly.

\begin{lemma}\label{lem:operator-cancellation}
Let \(\widetilde W\) be a \(d\)-dimensional \(\mathbb F\)-Brownian motion,
let \(\xi\in L^2(\Ff_b;\R^p)\), and let
\(r=(r_t)_{a\le t\le b}\) be an \(\R^p\)-valued progressively measurable
process satisfying
\[
 \E\Bigl(\int_a^b\abs{r_s}\dd s\Bigr)^2<\infty.
\]
For \(s\in[a,b]\), let
\[
 \Psi_s:
 L^2(\Ff_s;\R^p)\times L^2(\Ff_s;\R^{p\times d})
 \longrightarrow L^2(\Ff_s;\R^p)
\]
satisfy \(\Psi_s(0,0)=0\). Assume that
\(s\mapsto\Psi_s(P_s,Q_s)\) is progressively measurable whenever
$
 (P,Q)\in
 \mathcal S^2(a,b;\R^p)\times
 \mathcal H^2(a,b;\R^{p\times d}),
$
and that, for all
\(p,p'\in L^2(\Ff_s;\R^p)\) and
\(q,q'\in L^2(\Ff_s;\R^{p\times d})\),
\begin{equation}\label{eq:operator-lipschitz}
 \norm{\Psi_s(p,q)-\Psi_s(p',q')}_{L^2}
 \le
 L\bigl(
 \norm{p-p'}_{L^2}+\norm{q-q'}_{L^2}
 \bigr).
\end{equation}
Suppose that
$
 (P,Q)\in
 \mathcal S^2(a,b;\R^p)\times
 \mathcal H^2(a,b;\R^{p\times d})
$
satisfies
\[
 P_t
 =
 \xi+\int_t^b\bigl[\Psi_s(P_s,Q_s)+r_s\bigr]\dd s
 -\int_t^b Q_s\dd\widetilde W_s.
\]
Then
\begin{equation}\label{eq:operator-cancellation}
 \E\sup_{a\le t\le b}\abs{P_t}^2
 +\E\int_a^b\abs{Q_t}^2\dd t
 \le
 C_{L,b-a}\Bigl[
 \E\abs{\xi}^2
 +\E\sup_{a\le t\le b}
 \Bigl|\int_t^b r_s\dd s\Bigr|^2
 \Bigr].
\end{equation}
Moreover, for every \(h_0>0\),
$
 \sup_{0\le b-a\le h_0}C_{L,b-a}<\infty.
$
\end{lemma}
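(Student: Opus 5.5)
The difficulty is that \(r\) is only controlled through \(\E\sup_t|\int_t^b r_s\dd s|^2\), not in \(\mathcal H^2\). Pairing \(P\) with \(r\) in It\^o's formula would therefore bring in a term \(\E\int\ip{P_s}{r_s}\dd s\) that cannot be bounded by the right-hand side of \eqref{eq:operator-cancellation}. I would remove \(r\) by an adapted shift.

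\textbf{Step 1: the shift.} Set \(R_t:=\int_t^b r_s\dd s\) and \(\Gamma_t:=\E[\xi+R_t\mid\Ff_t]\). Then \(\Gamma_t=M_t-\int_a^t r_s\dd s\), where \(M_t:=\E[\xi+R_a\mid\Ff_t]\) is a square-integrable martingale, because \(\E|R_a|^2<\infty\). By martingale representation (possibly after enlarging by an orthogonal martingale part \(N^\perp\) if \(\mathbb F\) is larger than the filtration of \(\widetilde W\)),
\[
M_t=M_a+\int_a^t N_s\dd\widetilde W_s+N_t^\perp .
\]
Put \(\widetilde P:=P-\Gamma\) and \(\widetilde Q:=Q-N\). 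Then \(\widetilde P_b=0\) and
\[
\widetilde P_t=\int_t^b \Psi_s\bigl(\widetilde P_s+\Gamma_s,\ \widetilde Q_s+N_s\bigr)\dd s-\int_t^b\widetilde Q_s\dd\widetilde W_s-(N_b^\perp-N_t^\perp).
\]
The orthogonal part only adds a nonnegative term on the left-hand side of It\^o's formula, so it is harmless. The forcing \(r\) has now been absorbed entirely into \(\Gamma\) and \(N\).

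\textbf{Step 2: bounds on the shift.} By Jensen and Doob,
\[
\E\sup_t|\Gamma_t|^2\le C\bigl(\E|\xi|^2+\E\sup_t|R_t|^2\bigr),
\]
since \(|\Gamma_t|\le\E[|\xi|+\sup_u|R_u|\mid\Ff_t]\). By It\^o's isometry, \(\E\int_a^b|N_s|^2\dd s\le\E|M_b|^2\le 2\E|\xi|^2+2\E|R_a|^2\). Both quantities are therefore bounded by the right-hand side of \eqref{eq:operator-cancellation}.

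\textbf{Step 3: energy estimate for \((\widetilde P,\widetilde Q)\).} Apply It\^o's formula to \(|\widetilde P_t|^2\) and take expectations; the stochastic integrals have zero mean after the usual localization. Write \(\phi(s):=\norm{\widetilde P_s}_{L^2}\). The driver term is bounded only in norm, using Cauchy--Schwarz in \(L^2(\Omega)\), the operator Lipschitz bound \eqref{eq:operator-lipschitz} and \(\Psi_s(0,0)=0\):
\[
\E|\widetilde P_t|^2+\E\int_t^b|\widetilde Q_s|^2\dd s\le 2L\int_t^b\phi(s)\bigl(\phi(s)+\norm{\Gamma_s}_{L^2}+\norm{\widetilde Q_s}_{L^2}+\norm{N_s}_{L^2}\bigr)\dd s .
\]
Young's inequality absorbs \(\tfrac12\E\int|\widetilde Q|^2\), and Gronwall on \([a,b]\) gives
\[
\sup_t\E|\widetilde P_t|^2+\E\int_a^b|\widetilde Q|^2\dd s\le C_{L,b-a}\Bigl(\E\sup_t|\Gamma_t|^2+\E\int_a^b|N|^2\dd s\Bigr).
\]
The constant is of the form \(e^{C(b-a)}\) and is thus uniform over \(b-a\le h_0\).

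\textbf{Step 4: supremum in time.} Projecting the equation onto \(\Ff_t\) gives \(\widetilde P_t=\E\bigl[\int_t^b\Psi_s\dd s\mid\Ff_t\bigr]\). Hence \(\sup_t|\widetilde P_t|\le\sup_t\E\bigl[\int_a^b|\Psi_s|\dd s\mid\Ff_t\bigr]\). Doob's inequality and Cauchy--Schwarz in time then give
\[
\E\sup_t|\widetilde P_t|^2\le 4(b-a)\int_a^b\E|\Psi_s|^2\dd s,
\]
and the right-hand side is controlled by Steps 2--3 via \(\norm{\Psi_s}_{L^2}\le L(\norm{\widetilde P_s+\Gamma_s}_{L^2}+\norm{\widetilde Q_s+N_s}_{L^2})\).

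\textbf{Step 5: conclusion.} Recombine \(P=\widetilde P+\Gamma\) and \(Q=\widetilde Q+N\) to obtain \eqref{eq:operator-cancellation}, with constants depending only on \(L\) and on an upper bound for \(b-a\).

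The main obstacle is Step 1, namely finding the right adapted substitute for the non-adapted \(R_t\) so that \(r\) disappears from the pairing with \(P\). Once that is done, the only other point needing care is that all Lipschitz estimates must be made at the level of \(L^2\)-norms, never pointwise in \(\omega\), which Step 3 does throughout.
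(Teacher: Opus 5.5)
Your proof is correct, but it removes \(r\) with a different shift than the paper uses.

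\textbf{The paper's route.} The paper uses the adapted, pathwise shift \(K_t:=\int_a^t r_s\dd s\) and sets \(\widehat P:=P+K\). This moves \(r\) into two places: the terminal value \(\xi+K_b\), and the value of the shifted driver at the origin, \(\widehat\Psi_s(0,0)=\Psi_s(-K_s,0)\), whose \(L^2\)-norm is at most \(L\norm{K_s}_{L^2}\). Write \(R_{a,b}:=\sup_t\abs{\int_t^b r_s\dd s}\). Since \(\abs{K_b}\le R_{a,b}\) and \(\sup_t\abs{K_t}\le 2R_{a,b}\), a weighted energy estimate followed by BDG finishes the proof.

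\textbf{Your route.} Your shift \(\Gamma_t=\E[\xi+\int_t^b r_s\dd s\mid\Ff_t]\) is the solution of the driverless BSDE. The remainder therefore has zero terminal value and \(r\) disappears completely. The price is a martingale representation (a Kunita--Watanabe decomposition, with an orthogonal part in a general filtration) and Doob's inequality to control \(\Gamma\) and \(N\).

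\textbf{Comparison.} The paper's route is more elementary: it needs no representation theorem and no orthogonal component, and it applies verbatim to any \((P,Q)\) satisfying the equation. Your Step 4 is a tidy alternative to the paper's BDG bound for the supremum: you write \(\widetilde P_t=\E[\int_t^b\Psi_s(P_s,Q_s)\dd s\mid\Ff_t]\) and then apply Doob.

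\textbf{A harmless slip.} From \(d\widetilde P=-\Psi_s\dd s+\widetilde Q_s\dd\widetilde W_s-dN^\perp_s\), the orthogonal term enters as \(+(N_b^\perp-N_t^\perp)\), not with a minus sign. This does not affect the quadratic-variation argument in Step 3.
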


\begin{proof}
Set \(K_t:=\int_a^t r_s\dd s\) and \(\widehat P_t:=P_t+K_t\). Then
\begin{equation}\label{eq:shifted-operator-bsde}
 \widehat P_t
 =
 \xi+K_b
 +\int_t^b\widehat\Psi_s(\widehat P_s,Q_s)\dd s
 -\int_t^b Q_s\dd\widetilde W_s,
\end{equation}
where \(\widehat\Psi_s(p,q):=\Psi_s(p-K_s,q)\). By
\eqref{eq:operator-lipschitz},
\begin{align}
 \norm{\widehat\Psi_s(p,q)-\widehat\Psi_s(p',q')}_{L^2}
 \le
 L\bigl(
 \norm{p-p'}_{L^2}
 +\norm{q-q'}_{L^2}
 \bigr),
 \label{eq:shifted-operator-lipschitz}\\
 \norm{\widehat\Psi_s(0,0)}_{L^2}
 =
 \norm{\Psi_s(-K_s,0)-\Psi_s(0,0)}_{L^2}
 \le L\norm{K_s}_{L^2}.
 \label{eq:shifted-origin}
\end{align}

We next derive the corresponding \(L^2\)-operator BSDE estimate. For any $\gamma>0$,
applying It\^o's formula to
\(\ee^{\gamma t}\abs{\widehat P_t}^2\), taking expectations, and using
\eqref{eq:shifted-operator-lipschitz}, we obtain
\begin{align*}
 &\ee^{\gamma t}\E\abs{\widehat P_t}^2
 +\E\int_t^b\ee^{\gamma s}
 \bigl(
 \gamma\abs{\widehat P_s}^2+\abs{Q_s}^2
 \bigr)\dd s
\\
 &\quad\le
 \ee^{\gamma b}\E\abs{\xi+K_b}^2
 +C_L\int_t^b\ee^{\gamma s}
 \E\abs{\widehat P_s}^2\dd s
 +\frac12\E\int_t^b\ee^{\gamma s}\abs{Q_s}^2\dd s
 +\int_t^b\ee^{\gamma s}
 \norm{\widehat\Psi_s(0,0)}_{L^2}^2\dd s .
\end{align*}
Choosing \(\gamma>C_L\) yields
\begin{equation}\label{eq:operator-energy}
 \sup_{a\le t\le b}\E\abs{\widehat P_t}^2
 +\E\int_a^b\abs{Q_s}^2\dd s
 \le
 C_{L,b-a}
 \Bigl[
 \E\abs{\xi+K_b}^2
 +\int_a^b\norm{\widehat\Psi_s(0,0)}_{L^2}^2\dd s
 \Bigr].
\end{equation}
Moreover, by \eqref{eq:shifted-operator-bsde}, the
Burkholder--Davis--Gundy inequality, and
\eqref{eq:shifted-operator-lipschitz},
\begin{align*}
 \E\sup_{a\le t\le b}\abs{\widehat P_t}^2
 &\le
 C\E\abs{\xi+K_b}^2
 +C(b-a)\int_a^b
 \norm{\widehat\Psi_s(\widehat P_s,Q_s)}_{L^2}^2\dd s
 +C\E\int_a^b\abs{Q_s}^2\dd s
\\
 &\le
 C_{L,b-a}
 \Bigl[
 \E\abs{\xi+K_b}^2
 +\int_a^b\norm{\widehat\Psi_s(0,0)}_{L^2}^2\dd s
 \Bigr],
\end{align*}
where the last inequality follows from \eqref{eq:operator-energy}. Hence
\begin{equation}\label{eq:shifted-final-estimate}
 \E\sup_{a\le t\le b}\abs{\widehat P_t}^2
 +\E\int_a^b\abs{Q_s}^2\dd s
 \le
 C_{L,b-a}
 \Bigl[
 \E\abs{\xi+K_b}^2
 +\int_a^b\norm{\widehat\Psi_s(0,0)}_{L^2}^2\dd s
 \Bigr].
\end{equation}

Set
$
 R_{a,b}:=
 \sup_{a\le t\le b}
 \bigl|\int_t^b r_s\dd s\bigr|.
$
Since \(\abs{K_b}\le R_{a,b}\) and
\[
 \sup_{a\le t\le b}\abs{K_t}
 \le
 \abs{K_b}
 +\sup_{a\le t\le b}\Bigl|\int_t^b r_s\dd s\Bigr|
 \le 2R_{a,b},
\]
\eqref{eq:shifted-origin} gives
\[
 \int_a^b\norm{\widehat\Psi_s(0,0)}_{L^2}^2\dd s
 \le 4L^2(b-a)\E R_{a,b}^2,
 \qquad
 \E\abs{\xi+K_b}^2
 \le 2\E\abs{\xi}^2+2\E R_{a,b}^2.
\]
Finally, since \(P_t=\widehat P_t-K_t\) and
\(\E\sup_{a\le t\le b}\abs{K_t}^2\le4\E R_{a,b}^2\),
\eqref{eq:shifted-final-estimate} yields
\eqref{eq:operator-cancellation}. The preceding estimates also show that
\(C_{L,b-a}\) is locally bounded in \(b-a\).
\end{proof}

\subsection{Flow Property and Moment Estimates}

We next establish two basic properties of the averaged system: the restart
flow property and uniform moment estimates. These estimates will be used
repeatedly in the subsequent analysis.

\begin{lemma}\label{lem:restart-flow}
Assume {\rm(H3)}. Let \(s\in[0,T]\) and
\(\xi\in L^2(\Ff_s;\R^n)\), and let
\((\bar X^{s,\xi},\bar U^{s,\xi},\bar V^{s,\xi})\) be the averaged solution
started from \((s,\xi)\). Then, for every \(t\in[s,T]\),
\begin{equation}\label{eq:restart-flow}
 \bar U_t^{s,\xi}
 =
 \bar U_t^{\,t,\bar X_t^{s,\xi}}
 \quad\text{a.s.}
\end{equation}
\end{lemma}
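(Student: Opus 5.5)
The flow property will follow from the uniqueness part of (H3), applied at the intermediate time \(t\) with initial datum \(\zeta:=\bar X_t^{s,\xi}\). The plan is to show that the restriction of the solution started at \((s,\xi)\) to \([t,T]\) is an admissible solution of the averaged system started at \((t,\zeta)\), and then to identify the two.

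First, \(\zeta\in L^2(\Ff_t;\R^n)\), because \(\bar X^{s,\xi}\in\mathcal S^2(s,T;\R^n)\) is adapted to \((\Ff_r^{s,1})\) and \(\Ff_t^{s,1}\subset\Ff_t\). So (H3) yields a unique solution \((\bar X^{t,\zeta},\bar U^{t,\zeta},\bar V^{t,\zeta})\) on \([t,T]\), adapted to \((\Ff_r^{t,1})_{t\le r\le T}\).

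Second, set \((\widetilde X_r,\widetilde U_r,\widetilde V_r):=(\bar X_r^{s,\xi},\bar U_r^{s,\xi},\bar V_r^{s,\xi})\) for \(r\in[t,T]\). Subtracting the integral equations at times \(r\) and \(t\) shows:
\begin{itemize}
\item \(\widetilde X_r=\zeta+\int_t^r\bar b(\widetilde X_q,\widetilde U_q,\Law(\widetilde X_q,\widetilde U_q))\dd q+\int_t^r\sigma(\cdots)\dd W_q^1\);
\item \(\widetilde U\) satisfies the backward equation on \([t,T]\) with terminal value \(\beta(\widetilde X_T,\Law(\widetilde X_T))\).
\end{itemize}
The mean-field arguments \(\Law(\widetilde X_q,\widetilde U_q)\) are the same laws in both descriptions, since they are laws of the same random variables. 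The integrability requirements \(\mathcal S^2\) and \(\mathcal H^2\) on \([t,T]\) are inherited from \([s,T]\).

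The remaining requirement is adaptedness. For \(r\ge t\) we have \(\Ff_r^{s,1}=(\Ff_s\vee\sigma(W^1_q-W^1_s:s\le q\le r))^{\Pp}\). This is contained in \((\Ff_t\vee\sigma(W^1_q-W^1_t:t\le q\le r))^{\Pp}=\Ff_r^{t,1}\), because \(\Ff_s\vee\sigma(W^1_q-W^1_s:q\le t)\subset\Ff_t\). Hence \(\widetilde X,\widetilde U\) are \((\Ff_r^{t,1})\)-adapted. For \(\widetilde V\), progressive measurability with respect to the smaller filtration implies it with respect to the larger one. The stochastic integrals against \(W^1\) are unchanged, since \(W^1\) is a Brownian motion for both filtrations.

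By the uniqueness assertion in (H3), \(\widetilde U_r=\bar U_r^{t,\zeta}\) for \(r\in[t,T]\) a.s. Evaluating at \(r=t\) gives \eqref{eq:restart-flow}.

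The main obstacle is this adaptedness and filtration bookkeeping. Uniqueness in (H3) is stated within the class adapted to \((\Ff_r^{t,1})\), so one must check that the restricted process genuinely belongs to that class, including the martingale-integrand identification. One must also check that the Brownian increments \(W^1_\cdot-W^1_t\) are independent of \(\Ff_t\), so that \(W^1\) remains a Brownian motion in \((\Ff_r^{t,1})\). This holds by the independence of \(\Gg_0\), \(W^1\), and \(W^2\).
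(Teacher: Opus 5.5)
Your proposal is correct and follows the paper's proof. You restrict the solution started from \((s,\xi)\) to \([t,T]\), note that it solves the averaged system restarted from \(\bar X_t^{s,\xi}\) with the same unconditional law terms and is adapted to \((\Ff_r^{t,1})\), and then apply the uniqueness in {\rm(H3)} and evaluate at \(r=t\); your explicit check of the inclusion \(\Ff_r^{s,1}\subset\Ff_r^{t,1}\) supplies a detail that the paper only asserts.
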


\begin{proof}
Fix \(t\in[s,T]\). On \([t,T]\), the restricted process $ \bigl(
 \bar X_q^{s,\xi},
 \bar U_q^{s,\xi},
 \bar V_q^{s,\xi}
 \bigr)_{t\le q\le T}$ satisfies
\begin{equation}\label{eq:restricted-averaged-system}
\left\{
\begin{aligned}
\dd\bar X_q^{s,\xi}
 &=
 \bar b\bigl(
 \bar X_q^{s,\xi},
 \bar U_q^{s,\xi},
 \Law(\bar X_q^{s,\xi},\bar U_q^{s,\xi})
 \bigr)\dd q
 +\sigma\bigl(
 \bar X_q^{s,\xi},
 \bar U_q^{s,\xi},
 \Law(\bar X_q^{s,\xi},\bar U_q^{s,\xi})
 \bigr)\dd W_q^1,\\
\dd\bar U_q^{s,\xi}
 &=
 -\bar F\bigl(
 \bar X_q^{s,\xi},
 \bar U_q^{s,\xi},
 \bar V_q^{s,\xi},
 \Law(\bar X_q^{s,\xi},\bar U_q^{s,\xi})
 \bigr)\dd q
 +\bar V_q^{s,\xi}\dd W_q^1,
 \qquad
 \bar U_T^{s,\xi}
 =
 \beta\bigl(
 \bar X_T^{s,\xi},
 \Law(\bar X_T^{s,\xi})
 \bigr).
\end{aligned}
\right.
\end{equation}
Thus it satisfies the averaged equation on \([t,T]\) with initial variable
\(\bar X_t^{s,\xi}\). Notice that all law terms in
\eqref{eq:restricted-averaged-system} are the unconditional laws of the
restricted solution itself, as required by the restarted McKean--Vlasov
equation. Moreover, it is adapted to
\[
 \bigl(
 \Ff_t\vee
 \sigma(W_r^1-W_t^1:t\le r\le q)
 \bigr)^{\Pp},
 \qquad q\in[t,T].
\]
Hence, by uniqueness in {\rm(H3)},
\[
 \bar U_q^{s,\xi}
 =
 \bar U_q^{\,t,\bar X_t^{s,\xi}},
 \qquad q\in[t,T],
 \quad\text{a.s.}
\]
Taking \(q=t\) gives \eqref{eq:restart-flow}. 

\end{proof}

\begin{lemma}\label{lem:restart-origin-bound}
Under {\rm(H1)--(H3)},
\begin{equation}\label{eq:restart-origin-bound}
 \sup_{0\le s\le T}
 \norm{\bar U_s^{s,0}}_{L^2}
 \le C_T.
\end{equation}
Consequently,
\begin{equation}\label{eq:restart-linear-growth}
 \norm{\bar U_s^{s,\xi}}_{L^2}
 \le C_T\bigl(1+\norm{\xi}_{L^2}\bigr),
 \qquad
 s\in[0,T].
\end{equation}
\end{lemma}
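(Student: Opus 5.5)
The second estimate \eqref{eq:restart-linear-growth} is immediate from the first together with \eqref{eq:restart-lipschitz}. Taking \(\xi'=0\) gives
\[
 \norm{\bar U_s^{s,\xi}}_{L^2}\le \norm{\bar U_s^{s,0}}_{L^2}+L_{\rm rst}\norm{\xi}_{L^2}.
\]
So the whole task is the uniform bound \eqref{eq:restart-origin-bound} on \(\bar U_s^{s,0}\) over \(s\in[0,T]\).

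\textbf{Step 1: a coupled a priori estimate.} Fix \(s\) and write \((X,U,V)=(\bar X^{s,0},\bar U^{s,0},\bar V^{s,0})\) on \([s,T]\).

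For the forward equation, use the growth bounds \eqref{eq:averaged-b-growth} and the linear growth of \(\sigma\), together with \(m_2(\Law(X_t,U_t))^2=\E\abs{X_t}^2+\E\abs{U_t}^2\). Itô/BDG and Gronwall give
\[
 \sup_{s\le t\le T}\E\abs{X_t}^2\le C_T\Bigl(1+\int_s^T\E\abs{U_r}^2\dd r\Bigr).
\]

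The backward equation is not circular only if we use restart stability. Lemma~\ref{lem:restart-flow} gives \(U_t=\bar U_t^{t,X_t}\). Comparing with the restart from \(0\) at time \(t\) via \eqref{eq:restart-lipschitz}, we get
\[
 \norm{U_t}_{L^2}\le \norm{\bar U_t^{t,0}}_{L^2}+L_{\rm rst}\norm{X_t}_{L^2}.
\]
Set \(M:=\sup_{0\le t\le T}\norm{\bar U_t^{t,0}}_{L^2}\). This is finite for each fixed \(s\), but we have not yet shown it is finite uniformly. The forward estimate then becomes
\[
 \E\abs{X_t}^2\le C_T\Bigl(1+(T-s)M^2+\int_s^t\E\abs{X_r}^2\dd r\Bigr),
\]
which does not close unless \(M\) is already under control.

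\textbf{Step 2: short-interval BSDE estimate.} To control \(M\), I would work on a small interval \([s,s+\delta]\) with \(\delta\) depending only on the constants.

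The BSDE for \(U\) on \([s,s+\delta]\) has terminal value \(U_{s+\delta}=\bar U_{s+\delta}^{s+\delta,X_{s+\delta}}\). By \eqref{eq:restart-lipschitz} and the flow property, its \(L^2\) norm is at most \(M+L_{\rm rst}\norm{X_{s+\delta}}_{L^2}\).

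The driver \(\bar F\) is an \(L^2\)-operator driver, Lipschitz by \eqref{eq:averaged-F-lipschitz}, with inputs \(X_r\) and the law. Applying Lemma~\ref{lem:operator-cancellation} with \(\Psi\) and the forward terms placed into \(r\) gives
\[
 \norm{U_s}_{L^2}^2\le C\bigl(M^2+\sup_{[s,s+\delta]}\E\abs{X}^2+\delta\bigr).
\]
Here the constant is uniform for \(\delta\le h_0\). The forward estimate on \([s,s+\delta]\), with \(U\) bounded by \(M\) plus \(L_{\rm rst}\norm{X}\), gives \(\sup\E\abs X^2\le C\delta(1+M^2)\).

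This alone only reproduces \(M\) with a constant larger than one. So instead I would run the argument backward from the terminal time.

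\textbf{Step 3: backward induction over a fixed partition.} Define
\[
 M(\tau):=\sup_{\tau\le t\le T}\norm{\bar U_t^{t,0}}_{L^2}.
\]
Near \(T\), the bound \(\norm{\bar U_T^{T,0}}_{L^2}=\abs{\beta(0,\delta_0)}\) is a constant.

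For \(t\in[\tau-\delta,\tau]\), apply Lemma~\ref{lem:operator-cancellation} on \([t,\tau]\). The terminal value satisfies \(\norm{U_\tau}_{L^2}\le M(\tau)+L_{\rm rst}\norm{X_\tau}_{L^2}\), and the forward bound on \([t,\tau]\) gives \(\norm{X}^2\le C\delta(1+\sup_{[t,\tau]}\norm{U}^2)\). The BSDE estimate with small \(\delta\) then yields
\[
 \sup_{[t,\tau]}\norm{U}^2\le C\bigl(M(\tau)^2+1\bigr)+C\delta\sup_{[t,\tau]}\norm{U}^2.
\]
Absorbing the last term gives \(M(\tau-\delta)^2\le C(1+M(\tau)^2)\). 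Here the constants do not depend on \(\tau\).

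Iterating over the \(T/\delta\) steps gives a bound \(C_T\).

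\textbf{Main obstacle.} The step I expect to be delicate is the absorption. All constants must be uniform in the restart time and in the initial law, which (H3) supplies through \(L_{\rm rst}\). The same applies to the operator-driver constant \(C_{L,\delta}\), which Lemma~\ref{lem:operator-cancellation} supplies.
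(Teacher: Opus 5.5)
Your Step 3 is correct and is essentially the paper's argument: the flow property plus \eqref{eq:restart-lipschitz} bound the terminal value of the short-interval BSDE by $\norm{\bar U_\tau^{\tau,0}}_{L^2}+L_{\rm rst}\norm{X_\tau}_{L^2}$; you then combine the short-time forward estimate with Lemma~\ref{lem:operator-cancellation}, absorb for a small step, and iterate backward from $\bar U_T^{T,0}=\beta(0,\delta_0)$ over $O(T/h_0)$ steps, with \eqref{eq:restart-linear-growth} following from \eqref{eq:restart-lipschitz} as you say. The only cosmetic difference is that you propagate the running supremum $M(\tau)$ instead of the paper's two-point inequality $\norm{\bar U_a^{a,0}}_{L^2}^2\le C(1+\norm{\bar U_b^{b,0}}_{L^2}^2)$ along a partition starting at $s$, and Steps 1--2 of your write-up are unnecessary detours.
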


\begin{proof}
Fix \(0\le a<b\le T\) with \(b-a\le1\). Let
\((\bar X^{a,0},\bar U^{a,0},\bar V^{a,0})\)
denote the averaged solution on \([a,T]\) with deterministic initial
condition \(0\) at time \(a\). On \([a,b]\), abbreviate its restriction by
\((X,U,V)\).

The forward equation, the Burkholder--Davis--Gundy inequality, and the
growth estimates in Lemma~\ref{lem:averaged-coefficients} give
\begin{align*}
 \E\sup_{a\le t\le b}\abs{X_t}^2
 &\le
 C\E\Bigl(\int_a^b
 \abs{\bar b(X_t,U_t,\Law(X_t,U_t))}\dd t\Bigr)^2
 +C\E\int_a^b
 \abs{\sigma(X_t,U_t,\Law(X_t,U_t))}^2\dd t
\\
 &\le
 C(b-a)\Bigl(
 1+\E\sup_{a\le t\le b}\abs{X_t}^2
 +\E\sup_{a\le t\le b}\abs{U_t}^2
 \Bigr).
\end{align*}
Choose \(h_0\in(0,1]\) sufficiently small. Then, for \(b-a\le h_0\),
\begin{equation}\label{eq:restart-growth-forward}
 \E\sup_{a\le t\le b}\abs{X_t}^2
 \le
 C(b-a)\Bigl(
 1+\E\sup_{a\le t\le b}\abs{U_t}^2
 \Bigr).
\end{equation}

By Lemma~\ref{lem:restart-flow},
$
 U_b=\bar U_b^{\,b,X_b}.
$
Hence, by \eqref{eq:restart-lipschitz},
\[
 \E\abs{U_b}^2
 \le
 2L_{\rm rst}^2\E\abs{X_b}^2
 +2\norm{\bar U_b^{b,0}}_{L^2}^2.
\]
To estimate the BSDE on \([a,b]\), for \(t\in[a,b]\),
\(p\in L^2(\Ff_t;\R^p)\), and
\(q\in L^2(\Ff_t;\R^{p\times d_1})\), define
\[
 \Psi_t(p,q)
 :=
 \bar F(X_t,p,q,\Law(X_t,p))
 -\bar F(X_t,0,0,\Law(X_t,0)),
 \qquad
 r_t:=\bar F(X_t,0,0,\Law(X_t,0)).
\]
Using the coupling induced by
\(\bigl((X_t,p),(X_t,p')\bigr)\), we have
\[
 W_2^2(\Law(X_t,p),\Law(X_t,p'))
 \le
 \E\abs{p-p'}^2.
\]
Lemma~\ref{lem:averaged-coefficients} shows that \(\Psi_t(0,0)=0\),
that \eqref{eq:operator-lipschitz} holds with a uniform constant, and that
\[
 \E\abs{r_t}^2\le C(1+\E\abs{X_t}^2).
\]
Therefore, Lemma~\ref{lem:operator-cancellation} and Cauchy--Schwarz inequality yield
\begin{align*}
 &\E\sup_{a\le t\le b}\abs{U_t}^2
 +\E\int_a^b\abs{V_t}^2\dd t
\le
 C\E\abs{U_b}^2
 +C\E\sup_{a\le r\le b}
 \Bigl|\int_r^b r_t\dd t\Bigr|^2
\\
 &\quad\le
 C\Bigl[
 \E\abs{U_b}^2
 +(b-a)\int_a^b\E\abs{r_t}^2\dd t
 \Bigr]
\le
 C\Bigl[
 1+\E\sup_{a\le t\le b}\abs{X_t}^2
 +\norm{\bar U_b^{b,0}}_{L^2}^2
 \Bigr].
\end{align*}
Combining this estimate with \eqref{eq:restart-growth-forward}, and
decreasing \(h_0\) if necessary, gives
\[
 \E\sup_{a\le t\le b}\abs{X_t}^2
 +\E\sup_{a\le t\le b}\abs{U_t}^2
 +\E\int_a^b\abs{V_t}^2\dd t
 \le
 C\Bigl(
 1+\norm{\bar U_b^{b,0}}_{L^2}^2
 \Bigr),
 \qquad b-a\le h_0.
\]
Since \(U_a=\bar U_a^{a,0}\),
\[
 \norm{\bar U_a^{a,0}}_{L^2}^2
 \le
 C\bigl(
 1+\norm{\bar U_b^{b,0}}_{L^2}^2
 \bigr).
\]
Partition \([a,T]\) into at most \(1+T/h_0\) subintervals and iterate this
inequality backward. At the terminal time,
\[
 \bar U_T^{T,0}=\beta(0,\delta_0),
\]
which is finite by {\rm(H1)}. This proves
\eqref{eq:restart-origin-bound}. Finally,
\eqref{eq:restart-lipschitz} gives
\[
 \norm{\bar U_s^{s,\xi}}_{L^2}
 \le
 L_{\rm rst}\norm{\xi}_{L^2}
 +\norm{\bar U_s^{s,0}}_{L^2},
\]
and hence \eqref{eq:restart-linear-growth}.
\end{proof}

\begin{lemma}\label{lem:averaged-moment}
Under {\rm(H1)--(H3)}, for every \(s\in[0,T]\) and
\(\xi\in L^2(\Ff_s;\R^n)\),
\begin{align}
 &\E\sup_{s\le t\le T}\abs{\bar X_t^{s,\xi}}^2
 +\E\sup_{s\le t\le T}\abs{\bar U_t^{s,\xi}}^2
 +\E\int_s^T\abs{\bar V_t^{s,\xi}}^2\dd t
 \le
 C_T\bigl(1+\E\abs\xi^2\bigr).
 \label{eq:averaged-moment}
\end{align}
\end{lemma}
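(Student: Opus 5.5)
The strategy is to decouple the system using the restart map. Lemma~\ref{lem:restart-flow} and \eqref{eq:restart-linear-growth} from Lemma~\ref{lem:restart-origin-bound} give, for every \(t\in[s,T]\),
\[
 \norm{\bar U_t^{s,\xi}}_{L^2}
 =\norm{\bar U_t^{\,t,\bar X_t^{s,\xi}}}_{L^2}
 \le C_T\bigl(1+\norm{\bar X_t^{s,\xi}}_{L^2}\bigr).
\]
This pointwise-in-time \(L^2\) bound replaces a decoupling field. It lets us close the forward estimate on the whole interval \([s,T]\) without any smallness condition. The backward component is then treated afterwards, as a mean-field BSDE with a known forward input.

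\textbf{Step 1 (forward moments).} Write \(X=\bar X^{s,\xi}\) and \(U=\bar U^{s,\xi}\). Apply It\^o's formula to \(\abs{X_t}^2\) and use the growth bounds \eqref{eq:averaged-b-growth} and (H1) for \(\sigma\), noting that \(m_2(\Law(X_t,U_t))^2=\E\abs{X_t}^2+\E\abs{U_t}^2\). This gives
\[
 \E\abs{X_t}^2\le \E\abs\xi^2+C\int_s^t\bigl(1+\E\abs{X_r}^2+\E\abs{U_r}^2\bigr)\dd r .
\]
Inserting the restart bound above, \(\E\abs{U_r}^2\le C_T(1+\E\abs{X_r}^2)\), and applying Gronwall yields \(\sup_{s\le t\le T}\E\abs{X_t}^2\le C_T(1+\E\abs\xi^2)\). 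Consequently \(\sup_t\E\abs{U_t}^2\le C_T(1+\E\abs\xi^2)\) as well. The BDG inequality applied to the forward equation then upgrades this to \(\E\sup_t\abs{X_t}^2\le C_T(1+\E\abs\xi^2)\); this uses only the time-integrated second moments of \(X\) and \(U\).

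\textbf{Step 2 (backward component).} On \([s,T]\), apply Lemma~\ref{lem:operator-cancellation} with the same splitting as in the proof of Lemma~\ref{lem:restart-origin-bound}:
\[
 \Psi_t(p,q)=\bar F(X_t,p,q,\Law(X_t,p))-\bar F(X_t,0,0,\Law(X_t,0)),
 \qquad r_t=\bar F(X_t,0,0,\Law(X_t,0)).
\]
Here \(\Psi\) is \(L^2\)-Lipschitz with \(\Psi_t(0,0)=0\). Since \(T-s\le T\), the constant \(C_{L,T-s}\) is bounded uniformly in \(s\). The terminal value satisfies \(\E\abs{\beta(X_T,\Law(X_T))}^2\le C(1+\E\abs{X_T}^2)\). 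The source term satisfies \(\E\sup_t\abs{\int_t^T r\,\dd r}^2\le T\int_s^T C(1+\E\abs{X_r}^2)\dd r\). Together with Step~1, these give the bounds on \(\E\sup_t\abs{U_t}^2\) and \(\E\int_s^T\abs{\bar V_t}^2\dd t\).

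\textbf{Main obstacle.} The delicate point is Step~1. A naive a priori estimate on the full coupled FBSDE would require a small time horizon. The coupling is broken only by the restart identity \(U_t=\bar U_t^{t,X_t}\) combined with the linear growth \eqref{eq:restart-linear-growth}, whose constant is uniform in \(t\). One must check that \(X_t\in L^2(\Ff_t)\), so that (H3) and Lemma~\ref{lem:restart-flow} apply at every \(t\). A priori finiteness of \(\sup_t\E\abs{X_t}^2\), which is needed for Gronwall, follows from the membership \(X\in\mathcal S^2\) in (H3).
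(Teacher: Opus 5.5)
Your proposal is correct and follows essentially the same route as the paper: you use the restart identity together with the uniform linear growth of the restart map to get $\E\abs{\bar U_t^{s,\xi}}^2\le C_T(1+\E\abs{\bar X_t^{s,\xi}}^2)$, which closes the forward estimate by Gronwall on all of $[s,T]$, and then you bound $(\bar U,\bar V)$ with Lemma~\ref{lem:operator-cancellation} and the splitting from Lemma~\ref{lem:restart-origin-bound}. The only cosmetic difference is that you first run Gronwall on $\E\abs{\bar X_t^{s,\xi}}^2$ via It\^o's formula and then upgrade to the supremum with BDG, whereas the paper applies the BDG supremum estimate and Gronwall directly to $\E\sup_{s\le r\le t}\abs{\bar X_r^{s,\xi}}^2$.
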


\begin{proof}
For each fixed \(t\in[s,T]\), Lemma~\ref{lem:restart-flow},
\eqref{eq:restart-lipschitz}, and
\eqref{eq:restart-origin-bound} imply
\begin{align*}
 \E\abs{\bar U_t^{s,\xi}}^2
 &=
 \E\abs{\bar U_t^{\,t,\bar X_t^{s,\xi}}}^2
 \le
 C_T\bigl(1+\E\abs{\bar X_t^{s,\xi}}^2\bigr).
\end{align*}
The forward SDE estimate and the Lipschitz and growth properties of
\(\bar b\) and \(\sigma\) give, for \(q\in[s,T]\),
\begin{align*}
 \E\sup_{s\le t\le q}\abs{\bar X_t^{s,\xi}}^2
 &\le
 C\bigl(1+\E\abs\xi^2\bigr)
 +C\int_s^q
 \bigl(
 \E\abs{\bar X_t^{s,\xi}}^2
 +\E\abs{\bar U_t^{s,\xi}}^2
 \bigr)\dd t
\\
 &\le
 C_T\bigl(1+\E\abs\xi^2\bigr)
 +C_T\int_s^q
 \E\sup_{s\le r\le t}\abs{\bar X_r^{s,\xi}}^2\dd t.
\end{align*}
Gronwall's inequality yields
\begin{equation}\label{eq:averaged-X-moment}
 \E\sup_{s\le t\le T}\abs{\bar X_t^{s,\xi}}^2
 \le C_T(1+\E\abs\xi^2).
\end{equation}

For the backward equation, use on \([s,T]\) the \(L^2\)-operator
decomposition from the proof of Lemma~\ref{lem:restart-origin-bound}.
The terminal condition satisfies
\[
 \E\abs{\beta(\bar X_T^{s,\xi},\Law(\bar X_T^{s,\xi}))}^2
 \le C\bigl(1+\E\abs{\bar X_T^{s,\xi}}^2\bigr),
\]
and
\[
 \E\abs{
 \bar F(\bar X_t^{s,\xi},0,0,\Law(\bar X_t^{s,\xi},0))
 }^2
 \le C\bigl(1+\E\abs{\bar X_t^{s,\xi}}^2\bigr).
\]
Lemma~\ref{lem:operator-cancellation} and
\eqref{eq:averaged-X-moment} therefore give
\[
 \E\sup_{s\le t\le T}\abs{\bar U_t^{s,\xi}}^2
 +\E\int_s^T\abs{\bar V_t^{s,\xi}}^2\dd t
 \le C_T(1+\E\abs\xi^2).
\]
Together with \eqref{eq:averaged-X-moment}, this proves the lemma.
\end{proof}

\subsection{Regularity and Stability of the Restarted Averaged System}
We next establish the integrated time regularity of the averaged solution and
the stability of restarted averaged flows with respect to their initial data.
These estimates will be used in the local-to-global error analysis in section \ref{sec:local-to-global}.
\begin{lemma}\label{lem:integrated-regularity}
Assume {\rm(H1)--(H3)}. For every \(s\in[0,T]\),
\(\xi\in L^2(\Ff_s;\R^n)\), and \(\delta\in(0,1]\), let
$
 s=r_0<r_1<\cdots<r_N=T
$
be any deterministic partition satisfying
$
 \max_k(r_{k+1}-r_k)\le\delta,
$
and set \(\pi_\delta(t)=r_k\) on \([r_k,r_{k+1})\). Then
\begin{align}
 &\int_s^T
 \E\bigl[
 \abs{\bar X_t^{s,\xi}
 -\bar X_{\pi_\delta(t)}^{s,\xi}}^2
 +\abs{\bar U_t^{s,\xi}
 -\bar U_{\pi_\delta(t)}^{s,\xi}}^2
 \bigr]\dd t
 \le
 C_T\delta\bigl(1+\E\abs\xi^2\bigr).
 \label{eq:integrated-regularity}
\end{align}
\end{lemma}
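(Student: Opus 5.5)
The estimate splits into a forward part, handled pathwise from the SDE, and a backward part. For the backward part we cannot rely on any time regularity of \(\bar V\), so the natural tool is the restart flow of Lemma~\ref{lem:restart-flow} together with the restart Lipschitz bound \eqref{eq:restart-lipschitz}.

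\emph{Forward component.} For \(t\in[r_k,r_{k+1})\) write
\[
\bar X_t^{s,\xi}-\bar X_{r_k}^{s,\xi}=\int_{r_k}^t\bar b\,\dd q+\int_{r_k}^t\sigma\,\dd W_q^1 .
\]
By Cauchy--Schwarz, It\^o's isometry and the growth bounds \eqref{eq:averaged-b-growth} and \eqref{eq:H1-sigma}, we get
\[
\E\abs{\bar X_t-\bar X_{r_k}}^2\le C(\delta+1)(t-r_k)\Bigl(1+\E\sup_{s\le q\le T}\bigl(\abs{\bar X_q}^2+\abs{\bar U_q}^2\bigr)\Bigr).
\]
Lemma~\ref{lem:averaged-moment} bounds the last factor by \(C_T(1+\E\abs\xi^2)\). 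Integrating over \([s,T]\) and using \(t-r_k\le\delta\) gives the \(X\)-part of \eqref{eq:integrated-regularity}.

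\emph{Backward component.} This is the main point. Fix \(t\in[r_k,r_{k+1})\) and write \(\bar X_q=\bar X^{s,\xi}_q\) and \(\bar U_q=\bar U^{s,\xi}_q\). I would use the backward equation on \([r_k,t]\):
\[
\bar U_{r_k}-\bar U_t=\int_{r_k}^t\bar F\,\dd q-\int_{r_k}^t\bar V_q\,\dd W_q^1 .
\]
A direct \(L^2\) estimate gives
\[
\E\abs{\bar U_t-\bar U_{r_k}}^2\le 2\delta\,\E\int_{r_k}^{r_{k+1}}\abs{\bar F}^2\dd q+2\,\E\int_{r_k}^{t}\abs{\bar V_q}^2\dd q .
\]
Integrating in \(t\) over \([r_k,r_{k+1})\) bounds the second term by \(2\delta\,\E\int_{r_k}^{r_{k+1}}\abs{\bar V_q}^2\dd q\). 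Summing over \(k\) then yields
\[
C\delta\,\E\int_s^T\bigl(\abs{\bar F}^2+\abs{\bar V}^2\bigr)\dd q .
\]
By \eqref{eq:averaged-F-growth} and Lemma~\ref{lem:averaged-moment}, this is at most \(C_T\delta(1+\E\abs\xi^2)\).

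So the integrated form makes the \(\bar V\) contribution harmless: the stochastic integral is only squared over an interval of length at most \(\delta\), and then integrated in time. Fubini turns \(\int_{r_k}^{r_{k+1}}\int_{r_k}^t\abs{\bar V_q}^2\dd q\,\dd t\) into at most \(\delta\int_{r_k}^{r_{k+1}}\abs{\bar V_q}^2\dd q\).

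One point needs checking. The growth bound for \(\bar F\) involves \(m_2(\Law(\bar X_q,\bar U_q))^2\), which is bounded by \(\E\abs{\bar X_q}^2+\E\abs{\bar U_q}^2\), so it is controlled by the same moment estimate. With that, the argument needs neither \eqref{eq:restart-lipschitz} nor any continuity of \(\bar V\) beyond \(\bar V\in\mathcal H^2\).

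If one wanted pointwise-in-\(t\) rather than integrated bounds, the \(\bar V\) term would be the obstacle, and one would instead compare \(\bar U_t=\bar U_t^{t,\bar X_t}\) with \(\bar U_{r_k}\) via \eqref{eq:restart-lipschitz}. For the integrated statement as given, the elementary Fubini argument above suffices, so I expect no genuine obstacle beyond verifying the uniformity of the constants in \(s\) and in the partition. Both are uniform because all bounds depend only on \(T\), \(L\) and the constant \(C_T\) of Lemma~\ref{lem:averaged-moment}.
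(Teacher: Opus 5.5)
Your proposal is correct and follows essentially the same route as the paper: Cauchy--Schwarz and It\^o's isometry on each subinterval, then Fubini in $t$ so that the $\bar V$ term picks up only a factor $\delta$, and finally the linear growth of $\bar b,\sigma,\bar F$ together with Lemma~\ref{lem:averaged-moment}. One small correction to your closing remark: \eqref{eq:restart-lipschitz} is not used directly, but the argument still relies on it indirectly, because Lemma~\ref{lem:averaged-moment} is proved through Lemmas~\ref{lem:restart-flow} and \ref{lem:restart-origin-bound}, which depend on {\rm(H3)}.
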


\begin{proof}
For \(t\in[r_k,r_{k+1}]\),
\[
 \bar X_t^{s,\xi}-\bar X_{r_k}^{s,\xi}
 =
 \int_{r_k}^t
 \bar b\bigl(
 \bar X_q^{s,\xi},
 \bar U_q^{s,\xi},
 \Law(\bar X_q^{s,\xi},\bar U_q^{s,\xi})
 \bigr)\dd q
 +\int_{r_k}^t
 \sigma\bigl(
 \bar X_q^{s,\xi},
 \bar U_q^{s,\xi},
 \Law(\bar X_q^{s,\xi},\bar U_q^{s,\xi})
 \bigr)
 \dd W_q^1.
\]
Cauchy--Schwarz and It\^o's isometry give
\[
 \begin{aligned}
 \E\abs{\bar X_t^{s,\xi}-\bar X_{r_k}^{s,\xi}}^2
 &\le
 2(t-r_k)\int_{r_k}^t
 \E\abs{
 \bar b\bigl(
 \bar X_q^{s,\xi},
 \bar U_q^{s,\xi},
 \Law(\bar X_q^{s,\xi},\bar U_q^{s,\xi})
 \bigr)
 }^2\dd q\\
 &\quad+
 2\int_{r_k}^t
 \E\abs{
 \sigma\bigl(
 \bar X_q^{s,\xi},
 \bar U_q^{s,\xi},
 \Law(\bar X_q^{s,\xi},\bar U_q^{s,\xi})
 \bigr)
 }^2\dd q.
 \end{aligned}
\]
Integrating in \(t\) and using Fubini,
\[
 \begin{aligned}
 \int_{r_k}^{r_{k+1}}
 \E\abs{\bar X_t^{s,\xi}-\bar X_{r_k}^{s,\xi}}^2\dd t\le
 2\delta^2\int_{r_k}^{r_{k+1}}
 \E\abs{
 \bar b\bigl(
 \bar X_q^{s,\xi},
 \bar U_q^{s,\xi},
 \Law(\bar X_q^{s,\xi},\bar U_q^{s,\xi})
 \bigr)
 }^2\dd q\\
 \qquad+
 2\delta\int_{r_k}^{r_{k+1}}
 \E\abs{
 \sigma\bigl(
 \bar X_q^{s,\xi},
 \bar U_q^{s,\xi},
 \Law(\bar X_q^{s,\xi},\bar U_q^{s,\xi})
 \bigr)
 }^2\dd q.
 \end{aligned}
\]
Summing in \(k\), using \(\delta<1\), the linear-growth estimates,
and Lemma~\ref{lem:averaged-moment}, we obtain
\begin{equation}\label{eq:integrated-X-regularity}
 \int_s^T
 \E\abs{
 \bar X_t^{s,\xi}
 -\bar X_{\pi_\delta(t)}^{s,\xi}
 }^2\dd t
 \le C_T\delta(1+\E\abs\xi^2).
\end{equation}

For the backward component, the equation gives
\[
 \bar U_t^{s,\xi}-\bar U_{r_k}^{s,\xi}
 =
 -\int_{r_k}^t
 \bar F\bigl(
 \bar X_q^{s,\xi},
 \bar U_q^{s,\xi},
 \bar V_q^{s,\xi},
 \Law(\bar X_q^{s,\xi},\bar U_q^{s,\xi})
 \bigr)\dd q
 +\int_{r_k}^t\bar V_q^{s,\xi}\dd W_q^1.
\]
Cauchy--Schwarz for the time integral, It\^o's isometry for the
stochastic integral, and Fubini's theorem give
\[
 \begin{aligned}
 &\int_{r_k}^{r_{k+1}}
 \E\abs{
 \bar U_t^{s,\xi}-\bar U_{r_k}^{s,\xi}
 }^2\dd t\\
 &\quad\le
 2\delta^2\int_{r_k}^{r_{k+1}}
 \E\abs{
 \bar F\bigl(
 \bar X_q^{s,\xi},
 \bar U_q^{s,\xi},
 \bar V_q^{s,\xi},
 \Law(\bar X_q^{s,\xi},\bar U_q^{s,\xi})
 \bigr)
 }^2\dd q
 +2\delta\int_{r_k}^{r_{k+1}}
 \E\abs{\bar V_q^{s,\xi}}^2\dd q.
 \end{aligned}
\]
The linear growth of \(\bar F\) and
\eqref{eq:averaged-moment} imply
\begin{equation}\label{eq:integrated-U-regularity}
 \int_s^T
 \E\abs{
 \bar U_t^{s,\xi}
 -\bar U_{\pi_\delta(t)}^{s,\xi}
 }^2\dd t
 \le C_T\delta(1+\E\abs\xi^2).
\end{equation}
Combining \eqref{eq:integrated-X-regularity} and
\eqref{eq:integrated-U-regularity} yields
\eqref{eq:integrated-regularity}. This completes the proof.
\end{proof}

\begin{lemma}
\label{lem:averaged-flow-stability}
Assume {\rm(H1)--(H3)}.
Let two averaged solutions be restarted at the same time \(s\), on the same
probability space and with the same \(W^1\), from
\(\xi,\xi'\in L^2(\Ff_s;\R^n)\).  Then
\begin{align}
 &\E\sup_{s\le t\le T}
 \abs{\bar X_t^{s,\xi}-\bar X_t^{s,\xi'}}^2
 +\E\sup_{s\le t\le T}
 \abs{\bar U_t^{s,\xi}-\bar U_t^{s,\xi'}}^2
+
 \E\int_s^T
 \abs{\bar V_t^{s,\xi}-\bar V_t^{s,\xi'}}^2\dd t
 \le
 C_T\E\abs{\xi-\xi'}^2.
 \label{eq:averaged-flow-stability}
\end{align}
\end{lemma}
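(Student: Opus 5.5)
The plan mirrors the proof of Lemma~\ref{lem:averaged-moment}, now applied to differences, with (H3) and the restart flow property used to decouple forward and backward parts. Write \(\Delta X_t:=\bar X_t^{s,\xi}-\bar X_t^{s,\xi'}\), and define \(\Delta U,\Delta V\) analogously.

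\emph{Step 1: pointwise-in-time control of \(\Delta U\) by \(\Delta X\).} For each fixed \(t\in[s,T]\), Lemma~\ref{lem:restart-flow} gives \(\bar U_t^{s,\xi}=\bar U_t^{\,t,\bar X_t^{s,\xi}}\) and \(\bar U_t^{s,\xi'}=\bar U_t^{\,t,\bar X_t^{s,\xi'}}\). Both \(\bar X_t^{s,\xi}\) and \(\bar X_t^{s,\xi'}\) lie in \(L^2(\Ff_t;\R^n)\). Hence \eqref{eq:restart-lipschitz} yields
\[
\E\abs{\Delta U_t}^2\le L_{\rm rst}^2\,\E\abs{\Delta X_t}^2 .
\]
This is the key use of (H3): it turns the two-point boundary problem into a forward problem.

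\emph{Step 2: forward Gronwall.} Subtract the forward equations. The Lipschitz bounds \eqref{eq:averaged-b-lipschitz} and \eqref{eq:H1-sigma}, together with
\[
W_2^2\bigl(\Law(\bar X_t^{s,\xi},\bar U_t^{s,\xi}),\Law(\bar X_t^{s,\xi'},\bar U_t^{s,\xi'})\bigr)\le \E\abs{\Delta X_t}^2+\E\abs{\Delta U_t}^2
\]
(the synchronous coupling on the common space with the same \(W^1\)), are combined with Burkholder--Davis--Gundy. This gives
\[
\E\sup_{s\le r\le q}\abs{\Delta X_r}^2\le C\E\abs{\xi-\xi'}^2+C_T\int_s^q\bigl(\E\abs{\Delta X_t}^2+\E\abs{\Delta U_t}^2\bigr)\dd t .
\]
Inserting Step 1 and applying Gronwall yields \(\E\sup_{s\le t\le T}\abs{\Delta X_t}^2\le C_T\E\abs{\xi-\xi'}^2\). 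Step 1 then bounds \(\sup_t\E\abs{\Delta U_t}^2\) by the same quantity.

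\emph{Step 3: backward estimate with the sup inside.} The difference \((\Delta U,\Delta V)\) solves a BSDE on \([s,T]\) with terminal value
\[
\beta\bigl(\bar X_T^{s,\xi},\Law(\bar X_T^{s,\xi})\bigr)-\beta\bigl(\bar X_T^{s,\xi'},\Law(\bar X_T^{s,\xi'})\bigr),
\]
whose \(L^2\) norm is at most \(C\norm{\Delta X_T}_{L^2}\) by \eqref{eq:H1-beta}. As in Lemma~\ref{lem:restart-origin-bound}, define
\[
\Psi_t(p,q):=\bar F\bigl(\bar X_t^{s,\xi},\bar U_t^{s,\xi'}+p,\bar V_t^{s,\xi'}+q,\Law(\bar X_t^{s,\xi},\bar U_t^{s,\xi'}+p)\bigr)-\bar F\bigl(\bar X_t^{s,\xi},\bar U_t^{s,\xi'},\bar V_t^{s,\xi'},\Law(\bar X_t^{s,\xi},\bar U_t^{s,\xi'})\bigr),
\]
so that \(\Psi_t(0,0)=0\). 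Take as the remainder
\[
r_t:=\bar F\bigl(\bar X_t^{s,\xi},\bar U_t^{s,\xi'},\bar V_t^{s,\xi'},\Law(\bar X_t^{s,\xi},\bar U_t^{s,\xi'})\bigr)-\bar F\bigl(\bar X_t^{s,\xi'},\bar U_t^{s,\xi'},\bar V_t^{s,\xi'},\Law(\bar X_t^{s,\xi'},\bar U_t^{s,\xi'})\bigr).
\]
By \eqref{eq:averaged-F-lipschitz}, \(\Psi_t\) satisfies \eqref{eq:operator-lipschitz} with a uniform constant, and \(\abs{r_t}\le C(\abs{\Delta X_t}+\norm{\Delta X_t}_{L^2})\). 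By Cauchy--Schwarz, \(\E\sup_t\abs{\int_t^T r\,\dd q}^2\le C_T\E\sup_t\abs{\Delta X_t}^2\). Lemma~\ref{lem:operator-cancellation} on \([s,T]\), with constant \(C_{L,T}\), then gives
\[
\E\sup_{s\le t\le T}\abs{\Delta U_t}^2+\E\int_s^T\abs{\Delta V_t}^2\dd t\le C_T\E\abs{\xi-\xi'}^2 ,
\]
and combining with Step 2 proves \eqref{eq:averaged-flow-stability}.

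The expected obstacle is Step 1: each restarted value \(\bar U_t^{t,\cdot}\) is defined through its own McKean--Vlasov solution with unconditional law. One must therefore confirm that the two restarted solutions are the ones to which \eqref{eq:restart-lipschitz} applies, on the same space with the same \(W^1\) increments after \(t\). This follows from uniqueness in (H3), as in Lemma~\ref{lem:restart-flow}, applied separately to each initial datum at time \(t\). Measurability of \(\Psi_t\) along processes is routine.
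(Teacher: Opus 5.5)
Your proposal is correct and follows essentially the same route as the paper's proof. The paper also bounds $\E\abs{\Delta U_t}^2$ by $L_{\rm rst}^2\E\abs{\Delta X_t}^2$ via Lemma~\ref{lem:restart-flow} and \eqref{eq:restart-lipschitz}, closes the forward estimate by Burkholder--Davis--Gundy and Gronwall, and then treats the difference BSDE with the same splitting into $\Psi_t$ and $r_t$ through Lemma~\ref{lem:operator-cancellation}.
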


\begin{proof}
Put
\[
 \Delta X_t=\bar X_t^{s,\xi}-\bar X_t^{s,\xi'},\quad
 \Delta U_t=\bar U_t^{s,\xi}-\bar U_t^{s,\xi'},\quad
 \Delta V_t=\bar V_t^{s,\xi}-\bar V_t^{s,\xi'}.
\]
For every fixed \(t\), Lemma~\ref{lem:restart-flow} and
\eqref{eq:restart-lipschitz} give
\begin{equation}\label{eq:flow-stability-U-by-X}
 \E\abs{\Delta U_t}^2
 \le L_{\rm rst}^2\E\abs{\Delta X_t}^2.
\end{equation}
Hence
\begin{align}
 &W_2^2\bigl(
 \Law(\bar X_t^{s,\xi},\bar U_t^{s,\xi}),
 \Law(\bar X_t^{s,\xi'},\bar U_t^{s,\xi'})
 \bigr)
\le
 \E\abs{\Delta X_t}^2+\E\abs{\Delta U_t}^2
 \le
 (1+L_{\rm rst}^2)\E\abs{\Delta X_t}^2.
 \label{eq:flow-stability-law}
\end{align}
Using \eqref{eq:averaged-b-lipschitz},
\eqref{eq:H1-sigma}, the Burkholder--Davis--Gundy inequality, and
\eqref{eq:flow-stability-U-by-X}--\eqref{eq:flow-stability-law}, we obtain
for \(q\in[s,T]\),
\begin{align*}
 \E\sup_{s\le t\le q}\abs{\Delta X_t}^2
 &\le
 C\E\abs{\xi-\xi'}^2
 +C\int_s^q
 \E\bigl(\abs{\Delta X_t}^2+\abs{\Delta U_t}^2\bigr)\dd t
\\
 &\qquad\qquad\qquad\qquad+
 C\int_s^q
 W_2^2\bigl(
 \Law(\bar X_t^{s,\xi},\bar U_t^{s,\xi}),
 \Law(\bar X_t^{s,\xi'},\bar U_t^{s,\xi'})
 \bigr)\dd t
\\
 &\le
 C\E\abs{\xi-\xi'}^2
 +C\int_s^q
 \E\sup_{s\le r\le t}\abs{\Delta X_r}^2\dd t.
\end{align*}
Gronwall's inequality gives
\begin{equation}\label{eq:flow-stability-X}
 \E\sup_{s\le t\le T}\abs{\Delta X_t}^2
 \le C_T\E\abs{\xi-\xi'}^2.
\end{equation}

Now we derive the backward stability estimate. By
\eqref{eq:H1-beta},
\begin{align}
 &\E\bigl|
 \beta(\bar X_T^{s,\xi},\Law(\bar X_T^{s,\xi}))
 -\beta(\bar X_T^{s,\xi'},\Law(\bar X_T^{s,\xi'}))
 \bigr|^2
\le4L^2\E\abs{\Delta X_T}^2.
 \label{eq:flow-stability-terminal}
\end{align}

For \(P\in L^2(\Ff_t;\R^p)\) and
\(Q\in L^2(\Ff_t;\R^{p\times d_1})\), define
\[
\begin{aligned}
 \Psi_t(P,Q)
 &:=
 \bar F\bigl(
 \bar X_t^{s,\xi},
 \bar U_t^{s,\xi'}+P,
 \bar V_t^{s,\xi'}+Q,
 \Law(\bar X_t^{s,\xi},\bar U_t^{s,\xi'}+P)
 \bigr)
 -
 \bar F\bigl(
 \bar X_t^{s,\xi},
 \bar U_t^{s,\xi'},
 \bar V_t^{s,\xi'},
 \Law(\bar X_t^{s,\xi},\bar U_t^{s,\xi'})
 \bigr),
\end{aligned}
\]
and
\[
\begin{aligned}
 r_t
 &:=
 \bar F\bigl(
 \bar X_t^{s,\xi},
 \bar U_t^{s,\xi'},
 \bar V_t^{s,\xi'},
 \Law(\bar X_t^{s,\xi},\bar U_t^{s,\xi'})
 \bigr)
 -
 \bar F\bigl(
 \bar X_t^{s,\xi'},
 \bar U_t^{s,\xi'},
 \bar V_t^{s,\xi'},
 \Law(\bar X_t^{s,\xi'},\bar U_t^{s,\xi'})
 \bigr).
\end{aligned}
\]
Then the difference BSDE has the form required in
Lemma~\ref{lem:operator-cancellation}, with
\[
 \Delta U_t
 =
 \Delta U_T
 +\int_t^T
 \bigl[\Psi_q(\Delta U_q,\Delta V_q)+r_q\bigr]\dd q
 -\int_t^T\Delta V_q\dd W_q^1.
\]
By \eqref{eq:averaged-F-lipschitz} and the coupling inequality,
\[
 \norm{\Psi_t(P,Q)-\Psi_t(P',Q')}_{L^2}
 \le
 C\bigl(
 \norm{P-P'}_{L^2}
 +\norm{Q-Q'}_{L^2}
 \bigr),
\]
and
\[
 \E\abs{r_t}^2
 \le
 C\E\abs{\Delta X_t}^2.
\]
Hence Lemma~\ref{lem:operator-cancellation},
\eqref{eq:flow-stability-terminal}, and
\eqref{eq:flow-stability-X} yield
\begin{align}
 &\E\sup_{s\le t\le T}\abs{\Delta U_t}^2
 +\E\int_s^T\abs{\Delta V_t}^2\dd t
 \le
 C_T\Big[
 \E\abs{\Delta X_T}^2
 +\E\sup_{s\le t\le T}
 \big|\int_t^T r_q\dd q\big|^2
 \Big]
 \notag\\
 &\quad\le
 C_T\Big[
 \E\abs{\Delta X_T}^2
 +\int_s^T\E\abs{\Delta X_q}^2\dd q
 \Big]
 \le
 C_T\E\abs{\xi-\xi'}^2.
 \label{eq:flow-stability-backward}
\end{align}
Combining \eqref{eq:flow-stability-X} and
\eqref{eq:flow-stability-backward} yields
\eqref{eq:averaged-flow-stability}.
\end{proof}

\section{Proof of the Global-In-Time Averaging Principle}
\label{sec:proof}

\subsection{Macroscopic and Microscopic Localization}

Before entering the proof, we first outline the main strategy. In contrast to the classical Khasminskii scheme, the argument relies on two distinct time discretizations. A macroscopic partition, chosen independently of \(\eps\), is used to localize the forward--backward coupling. Within each macroscopic interval, we then introduce a uniform microscopic mesh with step size of order \(\eps\). Along this microscopic mesh, the conditional law of the fast component is shown to track the successive invariant kernels, while a time-inhomogeneous Gordin decomposition controls the accumulated centered fluctuations over the whole macroscopic interval. The following subsections develop these ingredients and assemble them into the proof.

Let \(h_0\in(0,1]\), to be chosen after the local estimates have been
derived.  Choose an integer \(M\), independent of \(\eps\), such that
\begin{equation}\label{eq:macro-partition}
 0=t_0<t_1<\cdots<t_M=T,\qquad
 t_i=i\ell,\qquad
 \ell:=\frac{T}{M}<h_0.
\end{equation}
Write \(I_i=[t_i,t_{i+1}]\).  At the macroscopic grid points define
\begin{equation}\label{eq:grid-defect}
 R_{t_i}^\eps
 :=
 U_{t_i}^\eps-\bar U_{t_i}^{\,t_i,X_{t_i}^\eps}.
\end{equation}
Only grid-point defects are used; no joint measurability in \(t\) of a
lifted decoupling operator is needed.  Since
\[
 \bar U_T^{\,T,\xi}=\beta(\xi,\Law(\xi)),
\]
the terminal conditions in the original and averaged systems give
\begin{equation*}
 R_T^\eps
 =
 \beta(X_T^\eps,\Law(X_T^\eps))
 -\beta(X_T^\eps,\Law(X_T^\eps))
 =0.
\end{equation*}

Fix \(i\in\{0,\ldots,M-1\}\).  Let
\[
 (\widetilde X^i,\widetilde U^i,\widetilde V^i)
 :=
 (\bar X^{\,t_i,X_{t_i}^\eps},
  \bar U^{\,t_i,X_{t_i}^\eps},
  \bar V^{\,t_i,X_{t_i}^\eps})
\]
be the averaged restart solution, and retain its restriction to \(I_i\).
Then
\begin{equation}\label{eq:local-initial-match}
 \widetilde X_{t_i}^i=X_{t_i}^\eps,\qquad
 \widetilde U_{t_i}^i=\bar U_{t_i}^{\,t_i,X_{t_i}^\eps},
\end{equation}
and Lemma~\ref{lem:restart-flow} gives, for every fixed \(t\in I_i\),
\begin{equation}\label{eq:local-restart-flow}
 \widetilde U_t^i
 =
 \bar U_t^{\,t,\widetilde X_t^i}
 \quad\text{a.s.}
\end{equation}
Lemma~\ref{lem:averaged-moment} gives
\begin{align}
 \E\sup_{t\in I_i}
 \bigl(\abs{\widetilde X_t^i}^2+\abs{\widetilde U_t^i}^2\bigr)
 +\E\int_{I_i}\abs{\widetilde V_t^i}^2\dd t
 &\le C_T\bigl(1+\E\abs{X_{t_i}^\eps}^2+\E\abs{Y_{t_i}^\eps}^2\bigr).
 \label{eq:local-averaged-moment}
\end{align}

For sufficiently small \(\eps\), assume \(\eps<\ell\wedge1\).  Let
\[
 N_i:=\lceil\frac{\ell}{\eps}\rceil,
 \qquad
 \delta_\eps:=\frac{\ell}{N_i},
 \qquad
 s_{i,k}:=t_i+k\delta_\eps,
 \quad0\le k\le N_i,
\]
and let
\[
 J_{i,k}:=[s_{i,k},s_{i,k+1}),\qquad
 \pi_\eps(t):=s_{i,k}\quad\text{for }t\in J_{i,k}.
\]
Then
\begin{equation}\label{eq:number-blocks}
 N_i\le1+\frac{\ell}{\eps},
 \qquad
 \frac\eps2\le\delta_\eps\le\eps,
 \qquad
 \abs{J_{i,k}}=\delta_\eps.
\end{equation}
Lemma~\ref{lem:integrated-regularity}, applied to the local averaged flow,
implies
\begin{align}
 \int_{I_i}
 \E\bigl[
 \abs{\widetilde X_t^i
 -\widetilde X_{\pi_\eps(t)}^i}^2
 +\abs{\widetilde U_t^i
 -\widetilde U_{\pi_\eps(t)}^i}^2
 \bigr]\dd t
 &\le C_T\delta_\eps
 \bigl(1+\E\abs{X_{t_i}^\eps}^2
 +\E\abs{Y_{t_i}^\eps}^2\bigr)
 \notag\\
 &\le C_T\eps
 \bigl(1+\E\abs{X_{t_i}^\eps}^2
 +\E\abs{Y_{t_i}^\eps}^2\bigr).
 \label{eq:local-freezing-modulus}
\end{align}

For \(0\le k<N_i\), put
\[
 \eta_{i,k}
 :=
 \Law(\widetilde X_{s_{i,k}}^i,\widetilde U_{s_{i,k}}^i).
\]
On \(I_i\), define a continuous process
\(\widehat Y^{\eps,i}\) by
\(\widehat Y_{t_i}^{\eps,i}=Y_{t_i}^\eps\) and, on each
\(J_{i,k}\),
\begin{equation}\label{auxiliary-0}
\begin{aligned}
\dd\widehat Y_t^{\eps,i}
 &=
 \frac1\eps
 h\bigl(
 \widetilde X_{s_{i,k}}^i,\widehat Y_t^{\eps,i},\widetilde U_{s_{i,k}}^i,
 \Law((\widetilde X_{s_{i,k}}^i,\widetilde U_{s_{i,k}}^i),\widehat Y_t^{\eps,i})
 \bigr)\dd t
\\
 &\quad+
 \frac1{\sqrt\eps}
 g\bigl(
 \widetilde X_{s_{i,k}}^i,\widehat Y_t^{\eps,i},\widetilde U_{s_{i,k}}^i,
 \Law((\widetilde X_{s_{i,k}}^i,\widetilde U_{s_{i,k}}^i),\widehat Y_t^{\eps,i})
 \bigr)\dd W_t^2.
\end{aligned}
\end{equation}

We begin by deriving estimates for this auxiliary process and quantifying its deviation from the original one.

\begin{lemma}
\label{lem:auxiliary-fast-comparison}
Assume {\rm(H1)--(H3)}. Then, for every \(0\le i<M\),
\begin{equation}\label{eq:Yhat-moment}
 \sup_{t\in I_i}\E\abs{\widehat Y_t^{\eps,i}}^2
 \le C\bigl(
 1+\E\abs{X_{t_i}^\eps}^2+\E\abs{Y_{t_i}^\eps}^2
 \bigr),
\end{equation}
and
\begin{equation}\label{eq:fast-comparison-integrated}
 \int_{I_i}\E\abs{Y_t^\eps-\widehat Y_t^{\eps,i}}^2\dd t
 \le
 C\ell\bigl(
 \E\sup_{t\in I_i}\abs{X_t^\eps-\widetilde X_t^i}^2
 +\E\sup_{t\in I_i}\abs{U_t^\eps-\widetilde U_t^i}^2
 \bigr)
 +C\delta_\eps
 \bigl(
 1+\E\abs{X_{t_i}^\eps}^2+\E\abs{Y_{t_i}^\eps}^2
 \bigr).
\end{equation}
\end{lemma}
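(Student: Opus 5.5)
Both estimates follow from Itô's formula on the scaled fast dynamics, with the dissipativity (H2) supplying a contraction at rate $\kappa/\eps$. This rate beats every perturbation of order $1/\eps$ that appears.

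\emph{Moment bound \eqref{eq:Yhat-moment}.} On each block $J_{i,k}$ the process $\widehat Y^{\eps,i}$ solves the frozen McKean--Vlasov equation \eqref{eq:frozen-mf-equation}, run in the time variable $t/\eps$, with random environment $\Theta=(\widetilde X^i_{s_{i,k}},\widetilde U^i_{s_{i,k}})$. The joint law inside the coefficients is exactly $\Law(\Theta,\widehat Y_t^{\eps,i})$, so \eqref{eq:lyapunov-second} applies with $\rho$ equal to this joint law. We obtain, on $J_{i,k}$,
\[
\frac{\dd}{\dd t}\E\abs{\widehat Y_t^{\eps,i}}^2\le -\frac{c_0}{\eps}\E\abs{\widehat Y_t^{\eps,i}}^2+\frac C\eps\bigl(1+\E\abs{\widetilde X^i_{s_{i,k}}}^2+\E\abs{\widetilde U^i_{s_{i,k}}}^2\bigr).
\]
The localization is the same as in Lemma~\ref{lem:frozen-lyapunov}. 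The source term is bounded uniformly in $k$ by \eqref{eq:local-averaged-moment}. Since the process is continuous across block endpoints, the differential inequality holds a.e.\ on all of $I_i$, and Gronwall gives $\E\abs{\widehat Y_t^{\eps,i}}^2\le \E\abs{Y^\eps_{t_i}}^2+C(1+\E\abs{X^\eps_{t_i}}^2+\E\abs{Y^\eps_{t_i}}^2)$. (The factor $C_T$ from \eqref{eq:local-averaged-moment} is harmless; the constant may depend on $T$.)

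\emph{Comparison \eqref{eq:fast-comparison-integrated}.} Put $D_t=Y^\eps_t-\widehat Y^{\eps,i}_t$, so $D_{t_i}=0$, and drive both processes by the same $W^2$. For $t\in J_{i,k}$, apply Itô's formula to $\abs{D_t}^2$ and use \eqref{eq:joint-dissipativity}, comparing $(X^\eps_t,U^\eps_t,\Law(X^\eps_t,U^\eps_t,Y^\eps_t))$ with $(\widetilde X^i_{s_{i,k}},\widetilde U^i_{s_{i,k}},\Law(\widetilde X^i_{s_{i,k}},\widetilde U^i_{s_{i,k}},\widehat Y_t^{\eps,i}))$. This yields
\[
\frac{\dd}{\dd t}\E\abs{D_t}^2\le\frac1\eps\Bigl(-\kappa\E\abs{D_t}^2+(K_1+K_2)\,\E\bigl(\abs{X^\eps_t-\widetilde X^i_{s_{i,k}}}^2+\abs{U^\eps_t-\widetilde U^i_{s_{i,k}}}^2\bigr)+K_2\E\abs{D_t}^2\Bigr).
\]
Here the Wasserstein term was bounded by the natural synchronous coupling. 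Writing $\Delta_t:=\E\abs{X^\eps_t-\widetilde X^i_{\pi_\eps(t)}}^2+\E\abs{U^\eps_t-\widetilde U^i_{\pi_\eps(t)}}^2$, we get $\frac{\dd}{\dd t}\E\abs{D_t}^2\le-\frac\alpha\eps\E\abs{D_t}^2+\frac C\eps\Delta_t$. Integrating over $I_i$ and using $D_{t_i}=0$ gives
\[
\frac\alpha\eps\int_{I_i}\E\abs{D_t}^2\dd t\le \frac C\eps\int_{I_i}\Delta_t\dd t,
\]
so $\int_{I_i}\E\abs{D_t}^2\dd t\le C\int_{I_i}\Delta_t\dd t$, without any convolution. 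Next split $\Delta_t$ with the triangle inequality into $\E\abs{X^\eps_t-\widetilde X^i_t}^2+\E\abs{U^\eps_t-\widetilde U^i_t}^2$ and the freezing modulus. The first part integrates to at most $\ell$ times the two sup-norms on the right of \eqref{eq:fast-comparison-integrated}. The second part is at most $C_T\delta_\eps(1+\E\abs{X^\eps_{t_i}}^2+\E\abs{Y^\eps_{t_i}}^2)$ by \eqref{eq:local-freezing-modulus}, since $\delta_\eps\ge\eps/2$.

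\emph{Main obstacle.} The delicate step is the Wasserstein term. The fast coefficients in the original system depend on $\Law(X^\eps_t,U^\eps_t,Y^\eps_t)$, while the auxiliary ones depend on a joint law with a different slow marginal. So the fiber contraction of Lemma~\ref{lem:fiber-contraction} is unavailable. Instead I would bound $W_2^2$ by the same-$\omega$ coupling of the two triples. The resulting $K_2\E\abs{D_t}^2$ is then absorbed using $\kappa>K_2$, i.e.\ $\alpha>0$, and the slow discrepancies are charged to $\Delta_t$ with constant $K_1+K_2$. The remaining rigor concerns are routine: localization for Itô's formula, and the a.e.\ differentiability of $t\mapsto\E\abs{D_t}^2$ across block boundaries, which holds by continuity of $D$.
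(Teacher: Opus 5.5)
Your proposal is correct and follows the paper's proof essentially step by step. For \eqref{eq:Yhat-moment} you use the same rescaled Lyapunov inequality from Lemma~\ref{lem:frozen-lyapunov}, concatenated across the continuous block endpoints; for the comparison you use the same synchronous It\^o estimate with (H2), bound the $W_2$ term by the same-$\omega$ coupling, absorb it via $\alpha=\kappa-K_2$, and then split through $(\widetilde X_t^i,\widetilde U_t^i)$ and \eqref{eq:local-freezing-modulus}. The only cosmetic difference is that you integrate the differential inequality over $I_i$ and drop $\E\abs{D_{t_{i+1}}}^2\ge0$, whereas the paper uses variation of constants and Tonelli; both yield the same factor $C/\alpha$.
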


\begin{proof}
For \(t\in J_{i,k}\), Lemma~\ref{lem:frozen-lyapunov}, after the time
rescaling \(r=t/\eps\), gives
\begin{align*}
 \frac{\dd}{\dd t}\E\abs{\widehat Y_t^{\eps,i}}^2
 &\le
 -\frac{c_0}{\eps}\E\abs{\widehat Y_t^{\eps,i}}^2
 +\frac C\eps
 \bigl(
 1+\E\abs{\widetilde X_{s_{i,k}}^i}^2
 +\E\abs{\widetilde U_{s_{i,k}}^i}^2
 \bigr).
\end{align*}
By \eqref{eq:local-averaged-moment},
\[
 1+\E\abs{\widetilde X_{s_{i,k}}^i}^2
 +\E\abs{\widetilde U_{s_{i,k}}^i}^2
 \le
 C\bigl(
 1+\E\abs{X_{t_i}^\eps}^2
 +\E\abs{Y_{t_i}^\eps}^2
 \bigr),
\]
uniformly in \(k\). Since \(\widehat Y^{\eps,i}\) is continuous at the microscopic grid
points, the variation-of-constants estimate can be iterated across the
successive blocks. The exponential factors then concatenate, yielding,
for every \(t\in I_i\),
\begin{align*}
 \E\abs{\widehat Y_t^{\eps,i}}^2
 &\le
 \ee^{-c_0(t-t_i)/\eps}\E\abs{Y_{t_i}^\eps}^2
 +\frac{C\bigl(1+\E\abs{X_{t_i}^\eps}^2+\E\abs{Y_{t_i}^\eps}^2\bigr)}{\eps}
 \int_{t_i}^t\ee^{-c_0(t-r)/\eps}\dd r
\\
 &\le C\bigl(1+\E\abs{X_{t_i}^\eps}^2+\E\abs{Y_{t_i}^\eps}^2\bigr).
\end{align*}
This proves \eqref{eq:Yhat-moment}.

Set
$
 D_t^{\eps,i}:=Y_t^\eps-\widehat Y_t^{\eps,i}.
$
For \(t\in J_{i,k}\), It\^o's formula and
\eqref{eq:joint-dissipativity} yield
\begin{align}
 \frac{\dd}{\dd t}\E\abs{D_t^{\eps,i}}^2
 &\le
 -\frac{\kappa}{\eps}\E\abs{D_t^{\eps,i}}^2
 +\frac{K_1}{\eps}
 \E\bigl[
 \abs{X_t^\eps-\widetilde X_{s_{i,k}}^i}^2
 +\abs{U_t^\eps-\widetilde U_{s_{i,k}}^i}^2
 \bigr]
 \notag\\
 &\quad+
 \frac{K_2}{\eps}
 W_2^2\bigl(
 \Law((X_t^\eps,U_t^\eps),Y_t^\eps),
 \Law((\widetilde X_{s_{i,k}}^i,\widetilde U_{s_{i,k}}^i),\widehat Y_t^{\eps,i})
 \bigr).
 \label{eq:fast-comparison-first}
\end{align}
Using \(\alpha=\kappa-K_2\) in
\eqref{eq:fast-comparison-first} gives
\begin{align*}
 \frac{\dd}{\dd t}\E\abs{D_t^{\eps,i}}^2
 &\le
 -\frac{\alpha}{\eps}\E\abs{D_t^{\eps,i}}^2
 +\frac C\eps
 \E\bigl[
 \abs{X_t^\eps-\widetilde X_{\pi_\eps(t)}^i}^2
 +\abs{U_t^\eps-\widetilde U_{\pi_\eps(t)}^i}^2
 \bigr].
\end{align*}
Since \(D_{t_i}^{\eps,i}=0\), variation of constants gives
\[
 \E\abs{D_t^{\eps,i}}^2
 \le
 \frac C\eps\int_{t_i}^t
 \ee^{-\alpha(t-r)/\eps}
 \E\bigl[
 \abs{X_r^\eps-\widetilde X_{\pi_\eps(r)}^i}^2
 +\abs{U_r^\eps-\widetilde U_{\pi_\eps(r)}^i}^2
 \bigr]\dd r.
\]
Integrating in \(t\), applying Tonelli's theorem, and using
$
 \frac1\eps\int_r^{t_{i+1}}
 \ee^{-\alpha(t-r)/\eps}\dd t\le\frac1\alpha,
$
we obtain
\begin{align}
 \int_{I_i}\E\abs{D_t^{\eps,i}}^2\dd t
 &\le C\int_{I_i}
 \E\bigl[
 \abs{X_t^\eps-\widetilde X_{\pi_\eps(t)}^i}^2
 +\abs{U_t^\eps-\widetilde U_{\pi_\eps(t)}^i}^2
 \bigr]\dd t.
 \label{eq:fast-comparison-integrated-first}
\end{align}
For \(t\in J_{i,k}\),
\begin{align*}
 \E\bigl[
 \abs{X_t^\eps-\widetilde X_{\pi_\eps(t)}^i}^2
 +\abs{U_t^\eps-\widetilde U_{\pi_\eps(t)}^i}^2
 \bigr]
 &\le
 2\E\bigl[
 \abs{X_t^\eps-\widetilde X_t^i}^2
 +\abs{U_t^\eps-\widetilde U_t^i}^2
 \bigr]
\\
 &\quad+
 2\E\bigl[
 \abs{\widetilde X_t^i
 -\widetilde X_{\pi_\eps(t)}^i}^2
 +\abs{\widetilde U_t^i
 -\widetilde U_{\pi_\eps(t)}^i}^2
 \bigr].
\end{align*}
Substitution into \eqref{eq:fast-comparison-integrated-first}, followed by
\eqref{eq:local-freezing-modulus}, proves
\eqref{eq:fast-comparison-integrated}.
\end{proof}

\subsection{Centered Coefficients and Conditional Equilibrium Tracking}

In this subsection, we introduce the centered coefficients, whose estimates
form a key ingredient in the proof of the averaging principle. We first
establish pointwise second-moment bounds for these coefficients. We then
estimate the discrepancy between the conditional law of the auxiliary fast
process and the corresponding conditional invariant kernel \eqref{eq:integrated-equilibrium-tracking}. This conditional
equilibrium tracking estimate will be used in the next subsection to control
the conditional bias of the centered coefficients.

For \(t\in J_{i,k}\), abbreviate the auxiliary joint law by
\begin{equation*}
 \widehat\rho_t^{i,k}
 :=
 \Law((\widetilde X_{s_{i,k}}^i,\widetilde U_{s_{i,k}}^i),\widehat Y_t^{\eps,i}).
\end{equation*}
Define
\begin{align*}
 \mathfrak b_t^{i,k}
 &:=
 b(\widetilde X_{s_{i,k}}^i,\widehat Y_t^{\eps,i},\widetilde U_{s_{i,k}}^i,
 \widehat\rho_t^{i,k})
 -\bar b(\widetilde X_{s_{i,k}}^i,\widetilde U_{s_{i,k}}^i,\eta_{i,k}),
 \\
 \mathfrak F_t^{i,k}
 &:=
 F(\widetilde X_{s_{i,k}}^i,\widehat Y_t^{\eps,i},\widetilde U_{s_{i,k}}^i,
 (\widetilde V_t^i,0),\widehat\rho_t^{i,k})
-
 \bar F(\widetilde X_{s_{i,k}}^i,\widetilde U_{s_{i,k}}^i,
 \widetilde V_t^i,\eta_{i,k}).
\end{align*}

\begin{lemma}
\label{lem:centered-coefficient-moment}
Assume {\rm(H1)--(H3)}. Then, for every \(0\le i<M\),
\(0\le k<N_i\), and almost every \(t\in J_{i,k}\),
\begin{equation}\label{eq:centered-second-moment}
 \E\abs{\mathfrak b_t^{i,k}}^2
 +\E\abs{\mathfrak F_t^{i,k}}^2
 \le
 C\bigl(
 1+\E\abs{X_{t_i}^\eps}^2
 +\E\abs{Y_{t_i}^\eps}^2
 \bigr).
\end{equation}
\end{lemma}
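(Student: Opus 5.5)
The plan is to bound both centered coefficients pointwise in \(\omega\) by quantities that do not involve \(\widetilde V_t^i\). The mechanism is that \(F\) and \(\bar F\) are evaluated at the \emph{same} control argument \((\widetilde V_t^i,0)\), so the Lipschitz bound \eqref{eq:H1-F} with \(v=v'\) removes \(\widetilde V_t^i\) entirely. Fix \(i,k\) and a \(t\in J_{i,k}\) at which \(\widetilde V_t^i\) is defined; this is why the statement holds only for almost every \(t\). Abbreviate \(x:=\widetilde X_{s_{i,k}}^i\), \(u:=\widetilde U_{s_{i,k}}^i\), \(v:=\widetilde V_t^i\), \(\eta:=\eta_{i,k}\), \(y:=\widehat Y_t^{\eps,i}\), \(\rho:=\widehat\rho_t^{i,k}\).

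\textbf{Step 1 (pointwise bound).} By \eqref{eq:averaged-F-definition}, \(\mathfrak F_t^{i,k}=\int_{\R^m}\bigl[F(x,y,u,(v,0),\rho)-F(x,z,u,(v,0),\Pi^\eta)\bigr]\nu^{x,u;\eta}(\dd z)\). Apply \eqref{eq:H1-F} with identical \(x,u,v\) in both arguments. This gives
\[
 \abs{\mathfrak F_t^{i,k}}
 \le L\Bigl(\abs y+\int_{\R^m}\abs z\,\nu^{x,u;\eta}(\dd z)+W_2(\rho,\Pi^\eta)\Bigr).
\]
Using \eqref{eq:averaged-b-definition} and \eqref{eq:H1-bhg}, the same bound holds for \(\abs{\mathfrak b_t^{i,k}}\).

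\textbf{Step 2 (square and take expectations).} Jensen's inequality, \eqref{eq:conditional-invariant-moment}, and the triangle inequality \(W_2^2(\rho,\Pi^\eta)\le 2m_2(\rho)^2+2m_2(\Pi^\eta)^2\) give
\[
 \E\abs{\mathfrak F_t^{i,k}}^2
 \le C\bigl(1+\E\abs y^2+\E\abs x^2+\E\abs u^2+m_2(\eta)^2+m_2(\rho)^2+m_2(\Pi^\eta)^2\bigr).
\]
The measure terms are controlled as follows.
\begin{itemize}
\item \(m_2(\eta)^2=\E\abs x^2+\E\abs u^2\).
\item \(m_2(\rho)^2=m_2(\eta)^2+\E\abs y^2\).
\item \(m_2(\Pi^\eta)^2\le C(1+m_2(\eta)^2)\) by \eqref{eq:joint-invariant-moment}.
\end{itemize}
Hence the right-hand side is at most \(C(1+\E\abs x^2+\E\abs u^2+\E\abs y^2)\), and the same estimate holds for \(\mathfrak b_t^{i,k}\).

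\textbf{Step 3 (conclude).} Bound \(\E\abs{\widehat Y_t^{\eps,i}}^2\) by \eqref{eq:Yhat-moment}. Bound \(\E\abs{\widetilde X_{s_{i,k}}^i}^2+\E\abs{\widetilde U_{s_{i,k}}^i}^2\) by \eqref{eq:local-averaged-moment}, which is uniform in \(k\). Together these give \eqref{eq:centered-second-moment}.

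The only delicate point is Step 1. One must not split \(F\) as \(F(\cdot,v)-F(\cdot,0)\); that would reintroduce a term proportional to \(\abs{\widetilde V_t^i}\) which cannot be bounded pointwise in \(t\). Keeping the common \(v\) inside a single Lipschitz comparison avoids this. Everything else is routine moment bookkeeping.
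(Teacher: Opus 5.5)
Your proposal is correct and follows essentially the same route as the paper. You write each centered coefficient as an integral against $\nu^{x,u;\eta}$, so the common argument $(\widetilde V_t^i,0)$ drops out of the Lipschitz bound, and then you control the remaining terms with \eqref{eq:Yhat-moment}, \eqref{eq:conditional-invariant-moment}, \eqref{eq:joint-invariant-moment}, and \eqref{eq:local-averaged-moment}; your only addition is writing out the second-moment bookkeeping for $W_2(\widehat\rho_t^{i,k},\Pi^{\eta_{i,k}})$ through $m_2(\widehat\rho_t^{i,k})$ and $m_2(\Pi^{\eta_{i,k}})$, which the paper leaves implicit.
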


\begin{proof}
We first prove a pointwise \(L^2\) bound that does not require any
pointwise estimate for \(\widetilde V^i\).  By
\eqref{eq:averaged-F-definition},
\begin{align*}
 \mathfrak F_t^{i,k}
 &=
 \int_{\R^m}
 \bigl[
 F(\widetilde X_{s_{i,k}}^i,\widehat Y_t^{\eps,i},\widetilde U_{s_{i,k}}^i,
 (\widetilde V_t^i,0),\widehat\rho_t^{i,k})
 -F(\widetilde X_{s_{i,k}}^i,z,\widetilde U_{s_{i,k}}^i,
 (\widetilde V_t^i,0),\Pi^{\eta_{i,k}})
 \bigr]
 \nu^{\widetilde X_{s_{i,k}}^i,\widetilde U_{s_{i,k}}^i;\eta_{i,k}}(\dd z).
\end{align*}
The same value of \(\widetilde V_t^i\) occurs in both terms and cancels in
the Lipschitz estimate.  Hence
\[
 \abs{\mathfrak F_t^{i,k}}
 \le
 L\int_{\R^m}
 \abs{\widehat Y_t^{\eps,i}-z}
 \nu^{\widetilde X_{s_{i,k}}^i,\widetilde U_{s_{i,k}}^i;\eta_{i,k}}(\dd z)
 +LW_2(\widehat\rho_t^{i,k},\Pi^{\eta_{i,k}}).
\]
\begin{samepage}
Likewise, the definition of \(\bar b\) gives the exact representation
\begin{align*}
 \mathfrak b_t^{i,k}
 &=
 \int_{\R^m}\bigl[
 b(\widetilde X_{s_{i,k}}^i,\widehat Y_t^{\eps,i},\widetilde U_{s_{i,k}}^i,
 \widehat\rho_t^{i,k})
 -b(\widetilde X_{s_{i,k}}^i,z,\widetilde U_{s_{i,k}}^i,\Pi^{\eta_{i,k}})
 \bigr]
 \nu^{\widetilde X_{s_{i,k}}^i,\widetilde U_{s_{i,k}}^i;\eta_{i,k}}(\dd z),
\end{align*}
\end{samepage}
and hence
\[
 \abs{\mathfrak b_t^{i,k}}
 \le
 L\int_{\R^m}\abs{\widehat Y_t^{\eps,i}-z}
 \nu^{\widetilde X_{s_{i,k}}^i,\widetilde U_{s_{i,k}}^i;\eta_{i,k}}(\dd z)
 +LW_2(\widehat\rho_t^{i,k},\Pi^{\eta_{i,k}}).
\]
Equations
\eqref{eq:Yhat-moment},
\eqref{eq:conditional-invariant-moment},
\eqref{eq:joint-invariant-moment}, and
\eqref{eq:local-averaged-moment} give
\eqref{eq:centered-second-moment}.
\end{proof}

We now introduce the random environment with respect to which the subsequent
conditional argument will be carried out. Its choice is dictated by two
competing requirements: it must contain enough information to make the
unfrozen process \(\widetilde V^i\) measurable, while revealing no future
increment of \(W^2\), so that the conditional independence needed below is
preserved. To make the construction transparent, we formulate it on path
space and first introduce the following Polish space.
\[
 \mathsf K_i
 :=
 \R^m\times C(I_i;\slowspace)
 \times L^2(I_i;\R^{p\times d_1})
\]
and the random element
\begin{equation*}
 \mathbf E_i
 :=
 \bigl(
 Y_{t_i}^\eps,
 (\widetilde X_t^i,\widetilde U_t^i)_{t\in I_i},
 (\widetilde V_t^i)_{t\in I_i}
 \bigr),
 \qquad
 \mathcal K_i:=\sigma(\mathbf E_i).
\end{equation*}
Here and below, as usual, \(\widetilde V^i\) also denotes its
\(L^2(I_i;\R^{p\times d_1})\)-equivalence class.  Whenever pointwise values
are needed, we fix once and for all a jointly measurable representative;
such a choice may be obtained from Lebesgue differentiation of the local
averages on \(L^2(I_i)\).  Thus \(\widetilde V^i\) may be taken
\(\mathcal K_i\otimes\mathcal B(I_i)\)-measurable.

The random element \(\mathbf E_i\) is measurable with respect to
$
 \Ff_{t_i}\vee
 \sigma\bigl(W_t^1-W_{t_i}^1:\ t\in I_i\bigr).
$
Hence
\[
 \mathcal K_i
 \subset
 \Ff_{t_i}\vee
 \sigma\bigl(W_t^1-W_{t_i}^1:\ t\in I_i\bigr).
\]
Therefore,
$
 \sigma\bigl(W_t^2-W_{t_i}^2:\ t\in I_i\bigr)
$ is independent of $
 \mathcal K_i.$

Let \(\Lambda_i:=\Law(\mathbf E_i)\).  The path
\((\widehat Y_t^{\eps,i})_{t\in I_i}\) belongs to \(C(I_i;\R^m)\).
Since \(\mathsf K_i\) and \(C(I_i;\R^m)\) are Polish spaces, there exists a
regular conditional probability kernel
\[
 (e,A)\longmapsto Q_i^e(A),
 \qquad
 e\in\mathsf K_i,\quad
 A\in\mathcal B(C(I_i;\R^m)),
\]
such that
\[
 \Pp\bigl(\mathbf E_i\in B,
 (\widehat Y_t^{\eps,i})_{t\in I_i}\in A\bigr)
 =
 \int_B Q_i^e(A)\,\Lambda_i(\dd e),
 \qquad
 B\in\mathcal B(\mathsf K_i).
\]
Thus \(Q_i^e\) is a regular conditional law of the entire path
\((\widehat Y_t^{\eps,i})_{t\in I_i}\) given \(\mathbf E_i=e\).  For
\(t\in I_i\), set
\begin{equation*}
 \lambda_{i,t}^e
 :=
 \bigl(\gamma\mapsto\gamma(t)\bigr)_\#Q_i^e.
\end{equation*}
Equivalently,
\[
 \lambda_{i,t}^{\mathbf E_i}(A)
 =
 \E\big[
 \mathbf 1_{\{\widehat Y_t^{\eps,i}\in A\}}
 \,|\,\mathcal K_i
 \big]
 \quad\text{a.s.},
 \qquad A\in\mathcal B(\R^m).
\]
The kernel may be chosen jointly measurable in \((e,t)\). The conditional laws \(\lambda_{i,t}^e\) introduced below are used solely
for the conditional mixing argument and do not replace the unconditional
law appearing in the auxiliary equation \eqref{auxiliary-0}.

For
\[
 e=(y,(x(\cdot),u(\cdot)),v)\in\mathsf K_i,
\]
define
\[
 P_{i,k}(e)
 :=
 \bigl(x(s_{i,k}),u(s_{i,k})\bigr),
 \qquad 0\le k<N_i.
\]
Thus
\[
 P_{i,k}(\mathbf E_i)
 =
 (\widetilde X_{s_{i,k}}^i,\widetilde U_{s_{i,k}}^i)
 \qquad\text{a.s.}
\]
For \(0\le k<N_i\) and \(t\in[s_{i,k},s_{i,k+1}]\), set
\begin{equation}\label{eq:Dik-process-definition}
 D_{i,k}(t)^2
 :=
 \int_{\mathsf K_i}W_2^2\bigl(
 \lambda_{i,t}^e,
 \nu^{P_{i,k}(e);\eta_{i,k}}
 \bigr)\Lambda_i(\dd e).
\end{equation}

By definition, \(D_{i,k}(t)\) measures the averaged Wasserstein
discrepancy between the conditional law of the auxiliary fast process
and the corresponding conditional invariant kernel. The following lemma
shows that, over each macroscopic interval \(I_i\), both this conditional
discrepancy and the discrepancy between the auxiliary joint law
\(\widehat\rho_t^{i,k}\) and the corresponding joint equilibrium measure
\(\Pi^{\eta_{i,k}}\) admit a uniform integrated estimate.

\begin{lemma}
\label{lem:conditional-equilibrium-tracking}
Assume {\rm(H1)--(H3)}. Then, for every \(0\le i<M\),
\begin{equation}\label{eq:integrated-equilibrium-tracking}
\begin{aligned}
\sum_{k=0}^{N_i-1}\int_{J_{i,k}}
\bigl[
D_{i,k}(t)^2
+
W_2^2\bigl(
\widehat\rho_t^{i,k},\Pi^{\eta_{i,k}}
\bigr)
\bigr]\dd t
&\le
2\sum_{k=0}^{N_i-1}
D_{i,k}(s_{i,k})^2
\int_{s_{i,k}}^{s_{i,k+1}}
\ee^{-\alpha(t-s_{i,k})/\eps}\dd t
\\
&\le
\frac{2\eps}{\alpha}
\sum_{k=0}^{N_i-1}
D_{i,k}(s_{i,k})^2
\le
C_T\eps
\bigl(
1+\E\abs{X_{t_i}^\eps}^2
+\E\abs{Y_{t_i}^\eps}^2
\bigr).
\end{aligned}
\end{equation}
\end{lemma}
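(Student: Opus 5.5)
Fix $i$ and a block $J_{i,k}$, and work conditionally on $\mathcal K_i$. The plan has three steps: a per-block contraction in the fibered metric, an $O(1)$ bound on the initial discrepancies, and integration in time.

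\textbf{Step 1 (conditional dynamics on a block).} On $J_{i,k}$ the environment $\Theta:=(\widetilde X^i_{s_{i,k}},\widetilde U^i_{s_{i,k}})$ is $\mathcal K_i$-measurable and frozen. Since $W^2$-increments on $I_i$ are independent of $\mathcal K_i$, under $Q_i^e$ the process $\widehat Y^{\eps,i}$ solves on $J_{i,k}$ the SDE
\[
\dd Y=\eps^{-1}h(P_{i,k}(e),Y,\widehat\rho_t^{i,k})\,\dd t+\eps^{-1/2}g(P_{i,k}(e),Y,\widehat\rho_t^{i,k})\,\dd W^2 .
\]
Here $\widehat\rho_t^{i,k}$ is deterministic and equals $\mathscr S^{\eta_{i,k}}_{(t-s_{i,k})/\eps}\widehat\rho_{s_{i,k}}^{i,k}$. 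On a common space, couple $\widehat Y$ synchronously with a stationary process $\bar Z$ solving \eqref{eq:conditional-frozen-sde} at parameter $P_{i,k}(e)$ with law $\nu^{P_{i,k}(e);\eta_{i,k}}$. Choose the initial coupling conditionally optimal, using the measurable selection of optimal plans from Lemma~\ref{lem:fiber-space}. As in the proof of Lemma~\ref{lem:lifted-mixing}, \eqref{eq:joint-dissipativity} then gives
\[
\frac{\dd}{\dd t}\E|\widehat Y_t-\bar Z_t|^2\le-\frac\kappa\eps\E|\widehat Y_t-\bar Z_t|^2+\frac{K_2}\eps W_2^2(\widehat\rho_t^{i,k},\Pi^{\eta_{i,k}}).
\]
Because $\Law(\Theta,\bar Z_t)=\Pi^{\eta_{i,k}}$, we have $W_2^2(\widehat\rho_t,\Pi)\le\Wass^2_{2,\eta_{i,k}}(\widehat\rho_t,\Pi)\le\E|\widehat Y_t-\bar Z_t|^2$. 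Hence $\E|\widehat Y_t-\bar Z_t|^2\le \ee^{-\alpha(t-s_{i,k})/\eps}D_{i,k}(s_{i,k})^2$.

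\textbf{Step 2 (integrating over the block).} The quantity $\E|\widehat Y_t-\bar Z_t|^2$ dominates both $D_{i,k}(t)^2$, since it is a conditional coupling averaged over $\Lambda_i$, and $W_2^2(\widehat\rho_t^{i,k},\Pi^{\eta_{i,k}})$. The sum of the two is therefore bounded by $2\ee^{-\alpha(t-s_{i,k})/\eps}D_{i,k}(s_{i,k})^2$. Integrating over $J_{i,k}$ and summing over $k$ gives the first two inequalities in \eqref{eq:integrated-equilibrium-tracking}, with $\int_0^\infty \ee^{-\alpha r/\eps}\dd r=\eps/\alpha$.

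\textbf{Step 3 (bounding the sum, the main obstacle).} It remains to show $\sum_{k<N_i}D_{i,k}(s_{i,k})^2\le C_T(1+\E|X^\eps_{t_i}|^2+\E|Y^\eps_{t_i}|^2)$ with $N_i\sim\ell/\eps$. A crude bound $D_{i,k}(s_{i,k})^2\le C$ only gives $C/\eps$, so contraction must be carried across blocks. By the triangle inequality,
\[
D_{i,k+1}(s_{i,k+1})\le D_{i,k}(s_{i,k+1}^-)+\Big(\int W_2^2\big(\nu^{P_{i,k}(e);\eta_{i,k}},\nu^{P_{i,k+1}(e);\eta_{i,k+1}}\big)\,\Lambda_i(\dd e)\Big)^{1/2}.
\]
Lemmas~\ref{lem:conditional-invariant-stability} and~\ref{lem:joint-invariant-stability} bound the jump term squared by
\[
a_k:=C\,\E\bigl(|\Delta\widetilde X|^2+|\Delta\widetilde U|^2\bigr)\big|_{s_{i,k}}^{s_{i,k+1}},
\]
using $W_2^2(\eta_{i,k},\eta_{i,k+1})\le\E|\Delta|^2$. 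Step~1 gives $D_{i,k}(s_{i,k+1}^-)^2\le q\,D_{i,k}(s_{i,k})^2$ with $q=\ee^{-\alpha\delta_\eps/\eps}\le \ee^{-\alpha/2}<1$. Young's inequality then yields $d_{k+1}\le q' d_k+C a_k$ for some $q'<1$, and summing the geometric recursion gives
\[
\sum_k d_k\le C\Big(d_0+\sum_k a_k\Big).
\]
Here $d_0\le C(1+\E|X_{t_i}^\eps|^2+\E|Y^\eps_{t_i}|^2)$ by the moment bounds \eqref{eq:conditional-invariant-moment} and \eqref{eq:local-averaged-moment}. The delicate point is controlling $\sum_k a_k$ by $O(1)$ rather than $O(1/\eps)$, because a sum of squared increments over a mesh of size $\eps$ is not automatically bounded. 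My plan here is to control the increments through the integrated-regularity estimate, Lemma~\ref{lem:integrated-regularity}, and the moment bound \eqref{eq:local-averaged-moment}, bounding the blockwise increments by quadratic variation and drift contributions $C\int_{J_{i,k}}(1+\E|\widetilde X|^2+\E|\widetilde U|^2+\E|\widetilde V|^2)\dd t$. I expect this step to need the most care, especially the $\widetilde U$ increments, which involve $\widetilde V$.
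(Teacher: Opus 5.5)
Your proposal is correct and follows essentially the same route as the paper: a conditionally optimal initial coupling with a stationary synchronous copy driven by the fixed measure $\Pi^{\eta_{i,k}}$, exponential decay at rate $\alpha/\eps$ on each block via (H2) after absorbing the $K_2$ term, and a geometric recursion across blocks using Lemma~\ref{lem:conditional-invariant-stability}. For the step you flag, the statement of Lemma~\ref{lem:integrated-regularity} is only a time-averaged bound and is not what you need; what closes the argument is the per-block It\^o-isometry/Cauchy--Schwarz bound on the grid-endpoint increments. Its sum over $k$ is $O(1)$ because the drift part carries an extra factor $\delta_\eps$ and the martingale part, including $\widetilde V^i$, sums to $\E\int_{I_i}(1+\abs{\widetilde X_t^i}^2+\abs{\widetilde U_t^i}^2+\abs{\widetilde V_t^i}^2)\dd t$, which is controlled by \eqref{eq:local-averaged-moment}. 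This is exactly the paper's \eqref{eq:environment-endpoint-energy}.
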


\begin{proof}
We first derive a bound for the cumulative squared increments of the slow environment at the microscopic grid points. The SDE and BSDE on \(I_i\), the Cauchy--Schwarz and It\^o isometries,
the linear-growth estimates for the averaged coefficients, and
\eqref{eq:local-averaged-moment} give
\begin{align}
 \sum_{k=0}^{N_i-1}
 \E\bigl[
 \abs{\widetilde X_{s_{i,k+1}}^i-\widetilde X_{s_{i,k}}^i}^2
 +\abs{\widetilde U_{s_{i,k+1}}^i-\widetilde U_{s_{i,k}}^i}^2
 \bigr]
 &\le
 C\delta_\eps\int_{I_i}
 \E\bigl(
 1+\abs{\widetilde X_t^i}^2
 +\abs{\widetilde U_t^i}^2
 +\abs{\widetilde V_t^i}^2
 \bigr)\dd t
 \notag\\
 &\quad+
 C\int_{I_i}
 \E\bigl(
 1+\abs{\widetilde X_t^i}^2
 +\abs{\widetilde U_t^i}^2
 +\abs{\widetilde V_t^i}^2
 \bigr)\dd t
 \notag\\
 &\le C_T\bigl(1+\E\abs{X_{t_i}^\eps}^2+\E\abs{Y_{t_i}^\eps}^2\bigr).
 \label{eq:environment-endpoint-energy}
\end{align}

By \cite[Corollary~5.22]{Villani2009}, we may choose, for
\(\Lambda_i\)-a.e.\ \(e\in\mathsf K_i\), an optimal coupling
\[
 \Gamma_{i,k}^e
 \in
 \mathcal C\bigl(
 \lambda_{i,s_{i,k}}^e,
 \nu^{P_{i,k}(e);\eta_{i,k}}
 \bigr)
\]
such that
\[
 \int_{\R^m\times\R^m}|y-z|^2\,
 \Gamma_{i,k}^e(\dd y,\dd z)
 =
 W_2^2\bigl(
 \lambda_{i,s_{i,k}}^e,
 \nu^{P_{i,k}(e);\eta_{i,k}}
 \bigr).
\]
By measurable disintegration, there exists a probability kernel
\(K_{i,k}(e,y,\dd z)\), jointly measurable in \((e,y)\), such that
\[
\Gamma_{i,k}^e(\dd y,\dd z)
=
\lambda_{i,s_{i,k}}^e(\dd y)K_{i,k}(e,y,\dd z).
\]
By the randomization lemma \cite[Lemma 4.22]{Kallenberg2021}, after adjoining an independent
\(U_{i,k}\sim{\rm Unif}[0,1]\), there is a measurable map
\(\Phi_{i,k}\) such that
\[
Z_{s_{i,k}}
=
\Phi_{i,k}
\bigl(\mathbf E_i,\widehat Y_{s_{i,k}}^{\eps,i},U_{i,k}\bigr)
\]
satisfies
\[
\Law\bigl(
(\widehat Y_{s_{i,k}}^{\eps,i},Z_{s_{i,k}})
\mid \mathbf E_i=e
\bigr)
=
\Gamma_{i,k}^e
\]
for \(\Lambda_i\)-a.e. \(e\).
Equivalently,
\begin{equation}\label{weq0}
     \E\!\bigl[
 |\widehat Y_{s_{i,k}}^{\eps,i}-Z_{s_{i,k}}|^2
 \,\bigm|\,\mathbf E_i=e
 \bigr]
 =
 W_2^2\!\bigl(
 \lambda_{i,s_{i,k}}^e,
 \nu^{P_{i,k}(e);\eta_{i,k}}
 \bigr).
\end{equation}
Starting from the above coupled initial pair
\(
(\widehat Y_{s_{i,k}}^{\eps,i},Z_{s_{i,k}})
\),
let \(Z\) evolve synchronously with \(\widehat Y^{\eps,i}\) on
\(J_{i,k}\):
\[
\left\{
\begin{aligned}
d\widehat Y_t^{\eps,i}
&=
\frac1\eps
h\bigl(
\widetilde X_{s_{i,k}}^i,\widehat Y_t^{\eps,i},\widetilde U_{s_{i,k}}^i,
\widehat\rho_t^{i,k}
\bigr)\dd t
+
\frac1{\sqrt\eps}
g\bigl(
\widetilde X_{s_{i,k}}^i,\widehat Y_t^{\eps,i},\widetilde U_{s_{i,k}}^i,
\widehat\rho_t^{i,k}
\bigr)\dd W_t^2,
\\
dZ_t
&=
\frac1\eps
h\bigl(
\widetilde X_{s_{i,k}}^i,Z_t,\widetilde U_{s_{i,k}}^i,
\Pi^{\eta_{i,k}}
\bigr)\dd t
+
\frac1{\sqrt\eps}
g\bigl(
\widetilde X_{s_{i,k}}^i,Z_t,\widetilde U_{s_{i,k}}^i,
\Pi^{\eta_{i,k}}
\bigr)\dd W_t^2,
\qquad t\in J_{i,k}.
\end{aligned}
\right.
\]
Since
\[
 (P_{i,k})_\#\Lambda_i=\eta_{i,k},
 \qquad
 \mathcal L\bigl(
 Z_{s_{i,k}}\mid\mathbf E_i=e
 \bigr)
 =
 \nu^{P_{i,k}(e);\eta_{i,k}},
\]
we have
\[
\begin{aligned}
\mathcal L\bigl(
(\widetilde X_{s_{i,k}}^i,\widetilde U_{s_{i,k}}^i),Z_{s_{i,k}}
\bigr)(\dd p,\dd z)
&=
\int_{\mathsf K_i}
\delta_{P_{i,k}(e)}(\dd p)\,
\nu^{P_{i,k}(e);\eta_{i,k}}(\dd z)\,
\Lambda_i(\dd e)
\\
&=
\eta_{i,k}(\dd p)\,
\nu^{p;\eta_{i,k}}(\dd z)
=
\Pi^{\eta_{i,k}}(\dd p,\dd z).
\end{aligned}
\]
Since
$
 \sigma\bigl(
 W_t^2-W_{s_{i,k}}^2:\ t\in J_{i,k}
 \bigr)
$ is independent of $
 \sigma\bigl(
 \mathbf E_i,\widehat Y_{s_{i,k}}^{\eps,i},U_{i,k}
 \bigr),
$
for \(\Lambda_i\)-a.e.\ \(e\), let
\(\Pp^e:=\Pp(\,\cdot\,\mid\mathbf E_i=e)\).
Since the future \(W^2\)-increments are independent of
\(\sigma(\mathbf E_i)\), under \(\Pp^e\),
\[
 B_r^{i,k}
 :=
 W_{s_{i,k}+r}^2-W_{s_{i,k}}^2,
 \qquad r\ge0,
\]
is a Brownian motion, independent of \(Z_{s_{i,k}}\).
Hence, with \(\bar Z_r:=Z_{s_{i,k}+r}\), under $\Pp^e$, $\bar Z$ solves
\[
 d\bar Z_r
 =
 \frac1\eps
 h\bigl(P_{i,k}(e),\bar Z_r,\Pi^{\eta_{i,k}}\bigr)\dd r
 +
 \frac1{\sqrt\eps}
 g\bigl(P_{i,k}(e),\bar Z_r,\Pi^{\eta_{i,k}}\bigr)\dd B_r^{i,k},
\]
with
$
 \mathcal L_{\Pp^e}(\bar Z_0)
 =
 \nu^{P_{i,k}(e);\eta_{i,k}}.
$
Hence, writing $Q_t^{p;\eta,*}$ for the frozen transition semigroup,
\[
\begin{aligned}
\mathcal L(Z_t\mid\mathbf E_i=e)
&=
Q_{(t-s_{i,k})/\eps}^{P_{i,k}(e);\eta_{i,k},*}
\nu^{P_{i,k}(e);\eta_{i,k}}
=
\nu^{P_{i,k}(e);\eta_{i,k}},
\\
\mathcal L\bigl(
(\widetilde X_{s_{i,k}}^i,\widetilde U_{s_{i,k}}^i),Z_t
\bigr)
&=
\eta_{i,k}(\dd p)\,
\nu^{p;\eta_{i,k}}(\dd z)
=
\Pi^{\eta_{i,k}}(\dd p,\dd z),
\qquad t\in J_{i,k}.
\end{aligned}
\]
Note that
\[
 \mathcal L\bigl(
 (\widetilde X_{s_{i,k}}^i,\widetilde U_{s_{i,k}}^i),
 \widehat Y_t^{\eps,i}
 \bigr)
 =
 \widehat\rho_t^{i,k}.
\]
Hence
\[
 W_2^2\bigl(
 \widehat\rho_t^{i,k},\Pi^{\eta_{i,k}}
 \bigr)
 \le
 \E\bigl|
 \widehat Y_t^{\eps,i}-Z_t
 \bigr|^2,
\]
and Assumption {\rm(H2)} gives
\[
\begin{aligned}
\frac{\dd}{\dd t}
\E\bigl|\widehat Y_t^{\eps,i}-Z_t\bigr|^2
&\le
-\frac{\kappa}{\eps}
\E\bigl|\widehat Y_t^{\eps,i}-Z_t\bigr|^2
+
\frac{K_2}{\eps}
W_2^2\bigl(
\widehat\rho_t^{i,k},\Pi^{\eta_{i,k}}
\bigr)
\\
&\le
-\frac{\alpha}{\eps}
\E\bigl|\widehat Y_t^{\eps,i}-Z_t\bigr|^2.
\end{aligned}
\]
Moreover, by \eqref{weq0},
\[
 \E\bigl|
 \widehat Y_{s_{i,k}}^{\eps,i}-Z_{s_{i,k}}
 \bigr|^2
 =
 D_{i,k}(s_{i,k})^2.
\]
Since, conditionally on \(\mathbf E_i=e\),
\((\widehat Y_t^{\eps,i},Z_t)\) is a coupling of
\(\lambda_{i,t}^e\) and
\(\nu^{P_{i,k}(e);\eta_{i,k}}\), we have
\[
 D_{i,k}(t)^2
 \le
 \E\bigl|
 \widehat Y_t^{\eps,i}-Z_t
 \bigr|^2
 \le
 \ee^{-\alpha(t-s_{i,k})/\eps}D_{i,k}(s_{i,k})^2.
\]
Thus
\begin{equation}\label{eq:block-equilibrium-tracking}
 D_{i,k}(t)
 \le
 \ee^{-\alpha(t-s_{i,k})/(2\eps)}D_{i,k}(s_{i,k}).
\end{equation}
Furthermore, integrating optimal conditional couplings at time \(t\)
over \(\Lambda_i\) gives
\begin{equation}\label{eq:block-joint-law-tracking}
 W_2^2\bigl(
 \widehat\rho_t^{i,k},\Pi^{\eta_{i,k}}
 \bigr)
 \le
 \int_{\mathsf K_i}
 W_2^2\bigl(
 \lambda_{i,t}^e,
 \nu^{P_{i,k}(e);\eta_{i,k}}
 \bigr)\Lambda_i(\dd e)
 =
 D_{i,k}(t)^2.
\end{equation}

At \(s_{i,k+1}\), the triangle inequality and
Lemma~\ref{lem:conditional-invariant-stability} yield
\[
\begin{aligned}
D_{i,k+1}(s_{i,k+1})
&\le
D_{i,k}(s_{i,k+1})
+
\bigl[
\int_{\mathsf K_i}
W_2^2\bigl(
\nu^{P_{i,k}(e);\eta_{i,k}},
\nu^{P_{i,k+1}(e);\eta_{i,k+1}}
\bigr)\Lambda_i(\dd e)
\bigr]^{1/2}
\\
&\le
\ee^{-\alpha\delta_\eps/(2\eps)}D_{i,k}(s_{i,k})
+
C\bigl[
\bigl(
\int_{\mathsf K_i}
|P_{i,k+1}(e)-P_{i,k}(e)|^2\Lambda_i(\dd e)
\bigr)^{1/2}
+
W_2(\eta_{i,k},\eta_{i,k+1})
\bigr].
\end{aligned}
\]
By the definition of \(P_{i,k}\),
\begin{align*}
 \int_{\mathsf K_i}
 |P_{i,k+1}(e)-P_{i,k}(e)|^2\Lambda_i(\dd e)
 &=
 \E\bigl[
 \abs{\widetilde X_{s_{i,k+1}}^i-\widetilde X_{s_{i,k}}^i}^2
 +\abs{\widetilde U_{s_{i,k+1}}^i-\widetilde U_{s_{i,k}}^i}^2
 \bigr],
 \\
 W_2^2(\eta_{i,k},\eta_{i,k+1})
 &\le
 \E\bigl[
 \abs{\widetilde X_{s_{i,k+1}}^i-\widetilde X_{s_{i,k}}^i}^2
 +\abs{\widetilde U_{s_{i,k+1}}^i-\widetilde U_{s_{i,k}}^i}^2
 \bigr].
\end{align*}
Therefore
\begin{align}
 D_{i,k+1}(s_{i,k+1})
 &\le
 \ee^{-\alpha\delta_\eps/(2\eps)}D_{i,k}(s_{i,k})
 +C\Bigl(\E\bigl[
 \abs{\widetilde X_{s_{i,k+1}}^i-\widetilde X_{s_{i,k}}^i}^2
 +\abs{\widetilde U_{s_{i,k+1}}^i-\widetilde U_{s_{i,k}}^i}^2
 \bigr]\Bigr)^{1/2}
 \notag\\
 &\le
 \ee^{-\alpha/4}D_{i,k}(s_{i,k})
 +C\Bigl(\E\bigl[
 \abs{\widetilde X_{s_{i,k+1}}^i-\widetilde X_{s_{i,k}}^i}^2
 +\abs{\widetilde U_{s_{i,k+1}}^i-\widetilde U_{s_{i,k}}^i}^2
 \bigr]\Bigr)^{1/2}.
 \label{eq:block-tracking-recursion}
\end{align}
Squaring \eqref{eq:block-tracking-recursion} and choosing $\theta>0$ such that
$(1+\theta)\ee^{-\alpha/2}<1$, we obtain
\[
D_{i,k+1}(s_{i,k+1})^2
\le
(1+\theta)\ee^{-\alpha/2}D_{i,k}(s_{i,k})^2
+C\E\bigl[
\abs{\widetilde X_{s_{i,k+1}}^i-\widetilde X_{s_{i,k}}^i}^2
+\abs{\widetilde U_{s_{i,k+1}}^i-\widetilde U_{s_{i,k}}^i}^2
\bigr].
\]
Summing over $k$ and absorbing the first term on the right, and then using
\eqref{eq:environment-endpoint-energy} together with the moment estimates and
the invariant-kernel moment bound \eqref{eq:conditional-invariant-moment}, we obtain
\begin{align*}
 \sum_{k=0}^{N_i-1}D_{i,k}(s_{i,k})^2
 &\le
 C D_{i,0}(s_{i,0})^2
 +C\sum_{k=0}^{N_i-2}\E\bigl[
 \abs{\widetilde X_{s_{i,k+1}}^i-\widetilde X_{s_{i,k}}^i}^2
 +\abs{\widetilde U_{s_{i,k+1}}^i-\widetilde U_{s_{i,k}}^i}^2
 \bigr]
 \\
 &\le C_T\bigl(1+\E\abs{X_{t_i}^\eps}^2+\E\abs{Y_{t_i}^\eps}^2\bigr).
\end{align*}
Finally, by \eqref{eq:block-equilibrium-tracking} and \eqref{eq:block-joint-law-tracking},
the preceding bound proves \eqref{eq:integrated-equilibrium-tracking}.
\end{proof}

\subsection{Conditional Bias and the Gordin Decomposition}

We now prove the \(O(\eps)\) maximal bounds
\eqref{eq:centered-forward-integral}--\eqref{eq:centered-backward-integral}
for the centered coefficients. To this end, we decompose each centered
coefficient into two parts: a conditional bias and a conditionally centered
fluctuation. The former arises from the fact that, under the frozen random
environment, the auxiliary fast dynamics has not yet reached the corresponding
conditional equilibrium. This term is controlled by
Lemma~\ref{lem:conditional-equilibrium-tracking}. To handle the remaining
conditionally centered fluctuation, inspired by the Gordin
martingale--coboundary decomposition
\cite{Gordin1969,GordinPeligrad2011}, we introduce a Gordin-type
decomposition that separates it into a martingale term and a boundary
corrector.

Let \(k(t)\) be the block index containing \(t\), and let
\(\mathfrak a_t^i\) denote either
\(\mathfrak b_t^{i,k(t)}\) or
\(\mathfrak F_t^{i,k(t)}\).

\begin{lemma}
\label{lem:maximal-centered-integral}
Assume {\rm(H1)--(H3)}. For either choice of \(\mathfrak a^i\),
\begin{equation}\label{eq:centered-forward-integral}
 \E\sup_{r\in I_i}
 \bigl|
 \int_{t_i}^r
 \mathfrak a_t^{i,k(t)}\dd t
 \bigr|^2
 \le C\eps \bigl(1+\E\abs{X_{t_i}^\eps}^2+\E\abs{Y_{t_i}^\eps}^2\bigr),
\end{equation}
and
\begin{equation}\label{eq:centered-backward-integral}
 \E\sup_{r\in I_i}
 \bigl|
 \int_r^{t_{i+1}}
 \mathfrak a_t^{i,k(t)}\dd t
 \bigr|^2
 \le C\eps \bigl(1+\E\abs{X_{t_i}^\eps}^2+\E\abs{Y_{t_i}^\eps}^2\bigr).
\end{equation}
\end{lemma}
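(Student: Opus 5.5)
Decompose the centered coefficient relative to the random environment $\mathcal K_i$. For $t\in J_{i,k}$, write
\[
 \mathfrak a_t^{i,k}=\mathfrak m_t^{i,k}+\bigl(\mathfrak a_t^{i,k}-\mathfrak m_t^{i,k}\bigr),
 \qquad
 \mathfrak m_t^{i,k}:=\E\bigl[\mathfrak a_t^{i,k}\mid\mathcal K_i\bigr].
\]
Here $\mathfrak m^{i,k}$ is the conditional bias and the remainder is conditionally centered.

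\emph{Bias term.} Because $\widetilde X_{s_{i,k}}^i,\widetilde U_{s_{i,k}}^i,\widetilde V_t^i,\eta_{i,k}$ and $\widehat\rho_t^{i,k}$ are $\mathcal K_i$-measurable or deterministic, $\mathfrak m_t^{i,k}$ is an integral against the conditional law $\lambda_{i,t}^{\mathbf E_i}$ minus the same integrand against $\nu^{P_{i,k}(\mathbf E_i);\eta_{i,k}}$. The two integrands differ only through $\widehat\rho_t^{i,k}$ versus $\Pi^{\eta_{i,k}}$, and the $v$-arguments cancel exactly as in Lemma~\ref{lem:centered-coefficient-moment}. Kantorovich duality then gives
\[
 |\mathfrak m_t^{i,k}|\le L\,W_1\bigl(\lambda_{i,t}^{\mathbf E_i},\nu^{P_{i,k}(\mathbf E_i);\eta_{i,k}}\bigr)+L\,W_2\bigl(\widehat\rho_t^{i,k},\Pi^{\eta_{i,k}}\bigr).
\]
Using $\sup_r|\int_{t_i}^r\mathfrak m|\le\int_{I_i}|\mathfrak m|$ alone would only give $\ell\int\E|\mathfrak m|^2\lesssim\eps$. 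That is sufficient here, but it is worth noting the sharper estimate. By \eqref{eq:block-equilibrium-tracking}, $\int_{J_{i,k}}|\mathfrak m_t|\,\dd t\le C\eps\,(D^e_{i,k}(s_{i,k})+D_{i,k}(s_{i,k}))$ fibrewise. Cauchy--Schwarz over $k$, together with $\sum_k D_{i,k}(s_{i,k})^2\le C_T(\cdots)$ from the proof of Lemma~\ref{lem:conditional-equilibrium-tracking}, gives
\[
 \E\Bigl(\int_{I_i}|\mathfrak m_t|\,\dd t\Bigr)^2\le C\eps^2N_i\sum_k D_{i,k}(s_{i,k})^2\le C\eps\,(1+\E|X_{t_i}^\eps|^2+\E|Y_{t_i}^\eps|^2).
\]
This bound is uniform in $r$, so it controls both \eqref{eq:centered-forward-integral} and \eqref{eq:centered-backward-integral} for the bias part.

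\emph{Fluctuation term.} Set $\zeta_k:=\int_{J_{i,k}}(\mathfrak a_t-\mathfrak m_t)\,\dd t$ and use the filtration $\mathcal G_k:=\mathcal K_i\vee\sigma(W^2_r-W^2_{t_i}:r\le s_{i,k})$. Under $\Pp(\cdot\mid\mathcal K_i)$, $\widehat Y^{\eps,i}$ is Markov in the lifted sense of Lemma~\ref{lem:lifted-semigroup-property} with a deterministic law flow. For $j>k$, the mixing estimate of Lemma~\ref{lem:lifted-mixing} (rescaled by $1/\eps$, and combined with the synchronous coupling used in Lemma~\ref{lem:conditional-equilibrium-tracking}) gives
\[
 \bigl\|\E[\zeta_j\mid\mathcal G_{k+1}]\bigr\|_{L^2}\le C\eps\,e^{-c(j-k-1)}\,\Xi,
 \qquad \Xi^2:=1+\E|X_{t_i}^\eps|^2+\E|Y_{t_i}^\eps|^2 .
\]
The point is that two copies started from different conditional states at $s_{k+1}$ contract at rate $e^{-\alpha\delta_\eps/(2\eps)}\le e^{-\alpha/4}$ per block, while the bias-type residual has already been removed. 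Define the Gordin corrector $G_k:=\sum_{j\ge k}\E[\zeta_j\mid\mathcal G_k]$, which satisfies $\|G_k\|_{L^2}\le C\eps\,\Xi$ by geometric summation. Then
\[
 \zeta_k=M_{k+1}-M_k+G_k-G_{k+1},
\]
where $M_{k+1}-M_k:=G_{k+1}-\E[G_{k+1}\mid\mathcal G_k]$ are martingale differences with $L^2$ norm at most $C\eps\,\Xi$. Doob's inequality gives $\E\max_n|M_n|^2\le\sum_k\E|\Delta M_k|^2\le C N_i\eps^2\Xi^2\le C\eps\,\Xi^2$. The boundary terms satisfy $\E\max_k|G_k|^2\le\sum_k\E|G_k|^2\le C\eps\,\Xi^2$. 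The within-block remainder $\sup_k\sup_{r\in J_{i,k}}|\int_{s_{i,k}}^r\mathfrak a|$ is bounded by $\bigl(\sum_k\delta_\eps\int_{J_{i,k}}|\mathfrak a|^2\bigr)^{1/2}$, whose second moment is at most $C\eps\,\Xi^2$ by Lemma~\ref{lem:centered-coefficient-moment}. The backward version \eqref{eq:centered-backward-integral} follows from $\int_r^{t_{i+1}}=\int_{t_i}^{t_{i+1}}-\int_{t_i}^r$.

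\emph{Main obstacle.} The hard step is the decorrelation bound for $\E[\zeta_j\mid\mathcal G_{k+1}]$. The environment changes from block to block, so the semigroup is time-inhomogeneous, and $\widetilde V^i_t$ enters the integrand of $\mathfrak F$. The plan is to exploit that $\widetilde V^i$ is $\mathcal K_i$-measurable and cancels in the Lipschitz difference, so the Lipschitz constant of the integrand in $y$ is $L$ independently of $v$. After that, the composition of block kernels can be controlled by iterating the conditional synchronous-coupling contraction from the proof of Lemma~\ref{lem:conditional-equilibrium-tracking}. In that iteration, two solutions conditioned on the same $\mathbf E_i$ share the same frozen law flow $\widehat\rho$, so only the $\kappa$-contraction is needed.
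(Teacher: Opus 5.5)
Your proposal is correct and follows essentially the paper's proof. The conditional bias $\E[\mathfrak a_t^i\mid\mathcal K_i]$ is bounded by $D_{i,k}(t)$ and handled by Cauchy--Schwarz together with Lemma~\ref{lem:conditional-equilibrium-tracking}. The conditionally centered part is treated by a Gordin martingale--corrector decomposition along the microscopic grid, with decorrelation coming from the $\kappa$-contraction of the conditional time-inhomogeneous kernels that share the deterministic law flow $\widehat\rho_t^{i,k}$; it is this synchronous coupling, not Lemma~\ref{lem:lifted-mixing}, that actually does the work.

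One indexing slip needs fixing. With $\mathcal G_k$ generated by the $W^2$-increments up to $s_{i,k}$, the block integral $\zeta_k$ is only $\mathcal G_{k+1}$-measurable. Your identity with $M_{k+1}-M_k:=G_{k+1}-\E[G_{k+1}\mid\mathcal G_k]$ then reduces to $\zeta_k=\E[\zeta_k\mid\mathcal G_k]$, which does not hold as written. Take instead $\Delta M_{k+1}:=\zeta_k+G_{k+1}-G_k$, as the paper does with $\mathcal C_{i,k}$, or shift the filtration index; the increment still has $L^2$-norm at most $C\eps\,\Xi$. Separately, your ``sharper'' bias bound is of the same order as the Cauchy--Schwarz one, and it would need a fibrewise version of \eqref{eq:block-equilibrium-tracking}, which the paper does not prove.
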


\begin{proof}
Set
\begin{equation*}
 \mu_t^i:=\E[\mathfrak a_t^i\mid\mathcal K_i],
 \qquad
 \zeta_t^i:=\mathfrak a_t^i-\mu_t^i.
\end{equation*}
Fix \(t\in J_{i,k}\) and write
\(e=(y_0,(x(\cdot),u(\cdot)),v)\in\mathsf K_i\).
Let \(\Gamma_{i,t}^e\) be an optimal coupling of
\(\lambda_{i,t}^e\) and
\(\nu^{P_{i,k}(e);\eta_{i,k}}\).

Consider first the case \(\mathfrak a=\mathfrak F\). Conditioning on
\(\mathbf E_i=e\) and using the definition of \(\bar F\), we obtain
\[
\begin{aligned}
\mu_t^i(e)
={}&
\int_{\R^m}
F\bigl(
P_{i,k}(e),y,(v(t),0),\widehat\rho_t^{i,k}
\bigr)
\lambda_{i,t}^e(dy)
\\
&-
\int_{\R^m}
F\bigl(
P_{i,k}(e),z,(v(t),0),\Pi^{\eta_{i,k}}
\bigr)
\nu^{P_{i,k}(e);\eta_{i,k}}(dz)
\\
={}&
\int_{\R^m\times\R^m}
\Big[
F\bigl(
P_{i,k}(e),y,(v(t),0),\widehat\rho_t^{i,k}
\bigr)
\\
&\hspace{35mm}
-
F\bigl(
P_{i,k}(e),z,(v(t),0),\Pi^{\eta_{i,k}}
\bigr)
\Big]\Gamma_{i,t}^e(dy,dz).
\end{aligned}
\]
Hence, by the Lipschitz property of \(F\),
\[
\begin{aligned}
|\mu_t^i(e)|
&\le
L\int_{\R^m\times\R^m}|y-z|\,\Gamma_{i,t}^e(dy,dz)
+
L W_2\bigl(
\widehat\rho_t^{i,k},\Pi^{\eta_{i,k}}
\bigr)
\\
&\le
L W_2\bigl(
\lambda_{i,t}^e,
\nu^{P_{i,k}(e);\eta_{i,k}}
\bigr)
+
L W_2\bigl(
\widehat\rho_t^{i,k},\Pi^{\eta_{i,k}}
\bigr).
\end{aligned}
\]
The case \(\mathfrak a=\mathfrak b\) is identical, with the
\(v\)-argument absent.
Consequently, \eqref{eq:Dik-process-definition} and
\eqref{eq:block-joint-law-tracking} yield
\begin{equation*}
 \E\abs{\mu_t^i}^2
 \le C D_{i,k}(t)^2.
\end{equation*}
Thus, by Cauchy--Schwarz and \eqref{eq:integrated-equilibrium-tracking},
\begin{equation}\label{eq:conditional-bias-maximal}
 \E\sup_{r\in I_i}
 \bigl|
 \int_{t_i}^r\mu_t^i\dd t
 \bigr|^2
 \le
 \ell\int_{I_i}\E\abs{\mu_t^i}^2\dd t
 \le C_T\eps \bigl(1+\E\abs{X_{t_i}^\eps}^2+\E\abs{Y_{t_i}^\eps}^2\bigr).
\end{equation}

It remains to control \(\zeta^i\).  Define the completed filtration
\begin{equation*}
 \Hh_{i,r}
 :=
 \bigl(
 \mathcal K_i\vee
 \sigma(W_t^2-W_{t_i}^2:t_i\le t\le r)
 \bigr)^{\Pp},
 \qquad r\in I_i.
\end{equation*}

Fix \(e=(y_0,(x(\cdot),u(\cdot)),v(\cdot))\in\mathsf K_i\).
For \(t\in J_{i,j}\cap[r,t_{i+1}]\), let \(Y_t^{r,y,e}\) solve
\begin{equation*}
 \left\{
 \begin{aligned}
 dY_t^{r,y,e}
 &=
 \frac1\eps
 h\bigl(
 P_{i,j}(e),Y_t^{r,y,e},\widehat\rho_t^{i,j}
 \bigr)\dd t
 +
 \frac1{\sqrt\eps}
 g\bigl(
 P_{i,j}(e),Y_t^{r,y,e},\widehat\rho_t^{i,j}
 \bigr)\dd W_t^2,
 \\
 Y_r^{r,y,e}&=y .
 \end{aligned}
 \right.
\end{equation*}
Thus the coefficients may change at the microscopic endpoints, while the
law argument is the same deterministic unconditional law flow
\(\widehat\rho_t^{i,j}\).  Define the associated time-inhomogeneous
transition kernel by
\begin{equation*}
 K_{r,s}^e(y,A)
 :=
 \Pp\bigl(
 Y_s^{r,y,e}\in A\mid \mathbf E_i=e
 \bigr),
 \qquad
 K_{r,s}^e\phi(y)
 :=
 \int_{\R^m}\phi(z)K_{r,s}^e(y,\dd z).
\end{equation*}
If \(Y^{r,y,e}\) and \(Y^{r,y',e}\) are driven by the same \(W^2\),
then It\^o's formula and {\rm(H2)} yield
\[
\E\abs{Y_t^{r,y,e}-Y_t^{r,y',e}}^2
\le
\abs{y-y'}^2
-\frac{\kappa}{\eps}
\int_r^t
\E\abs{Y_s^{r,y,e}-Y_s^{r,y',e}}^2\,\dd s,
\qquad t\in[r,s].
\]
Hence,
\begin{equation}\label{eq:conditional-kernel-contraction}
 \E\abs{Y_s^{r,y,e}-Y_s^{r,y',e}}^2
 \le
 \ee^{-\kappa(s-r)/\eps}\abs{y-y'}^2,
 \qquad t_i\le r\le s\le t_{i+1}.
\end{equation}

For \(s\in J_{i,k}\), define \(f_s^e\) according to the choice of
\(\mathfrak a_s^i\) by
\begin{equation*}
 f_s^e(y)
 :=
 \begin{cases}
 b\bigl(
 x(s_{i,k}),y,u(s_{i,k}),\widehat\rho_s^{i,k}
 \bigr)
 -
 \bar b\bigl(
 x(s_{i,k}),u(s_{i,k}),\eta_{i,k}
 \bigr),
 & \mathfrak a_s^i=\mathfrak b_s^{i,k},
 \\[1mm]
 F\bigl(
 x(s_{i,k}),y,u(s_{i,k}),(v(s),0),\widehat\rho_s^{i,k}
 \bigr)
 -
 \bar F\bigl(
 x(s_{i,k}),u(s_{i,k}),v(s),\eta_{i,k}
 \bigr),
 & \mathfrak a_s^i=\mathfrak F_s^{i,k}.
 \end{cases}
\end{equation*}
Then
\[
 \mathfrak a_s^i
 =
 f_s^{\mathbf E_i}(\widehat Y_s^{\eps,i}),
 \qquad
 \abs{f_s^e(y)-f_s^e(y')}
 \le L\abs{y-y'}.
\]
Hence, by \eqref{eq:conditional-kernel-contraction},
\begin{align}
 \abs{K_{r,s}^ef_s^e(y)-K_{r,s}^ef_s^e(y')}
 &\le
 L\E\abs{Y_s^{r,y,e}-Y_s^{r,y',e}}
 \le
 L\ee^{-\kappa(s-r)/(2\eps)}\abs{y-y'}.
 \label{eq:conditional-transition-lipschitz}
\end{align}

Next, for every bounded Borel \(\phi\),
\begin{align*}
 \int_{\R^m}\phi(z)\lambda_{i,s}^e(\dd z)
 &=
 \E\bigl[
 \phi(\widehat Y_s^{\eps,i})\mid\mathbf E_i=e
 \bigr]
 \\
 &=
 \E\bigl[
 K_{r,s}^e\phi(\widehat Y_r^{\eps,i})
 \mid\mathbf E_i=e
 \bigr]
 =
 \int_{\R^m}
 K_{r,s}^e\phi(z)\lambda_{i,r}^e(\dd z),
\end{align*}
and therefore, after choosing consistent versions,
\[
 \lambda_{i,s}^e
 =
 K_{r,s}^{e,*}\lambda_{i,r}^e,
 \qquad
 t_i\le r\le s\le t_{i+1}.
\]
for \(\Lambda_i\)-a.e. \(e\).

Since
\[
 \mu_s^i(e)
 =
 \int_{\R^m} f_s^e(z)\lambda_{i,s}^e(\dd z)
 =
 \int_{\R^m}
 K_{r,s}^e f_s^e(z)\lambda_{i,r}^e(\dd z),
\]
fix \(e\) outside a \(\Lambda_i\)-null set and work under the regular
conditional law given \(\mathbf E_i=e\).  Under this law,
\(\widehat Y^{\eps,i}\) is a time-inhomogeneous Markov process with
transition kernel \(K_{r,s}^e\).  Hence
\[
 \E\!\bigl[
 f_s^e(\widehat Y_s^{\eps,i})
 \,\bigm|\,
 \sigma(W_t^2-W_{t_i}^2:t_i\le t\le r)
 \bigr]
 =
 K_{r,s}^e f_s^e(\widehat Y_r^{\eps,i}).
\]
Since \(\sigma(\mathbf E_i)\subset\Hh_{i,r}\), evaluating the preceding
identity at the realized environment \(e=\mathbf E_i\) gives
\begin{align*}
 \E[\zeta_s^i\mid\Hh_{i,r}]
 &=
 K_{r,s}^{\mathbf E_i}f_s^{\mathbf E_i}
 (\widehat Y_r^{\eps,i})
 -
 \int_{\R^m}
 K_{r,s}^{\mathbf E_i}f_s^{\mathbf E_i}(z)
 \lambda_{i,r}^{\mathbf E_i}(\dd z).
\end{align*}
Thus \eqref{eq:conditional-transition-lipschitz} implies
\begin{align*}
 \norm{\E[\zeta_s^i\mid\Hh_{i,r}]}_{L^2}^2
 &\le
 L^2\ee^{-\kappa(s-r)/\eps}
 \E\bigl[
 \int_{\R^m}
 \abs{\widehat Y_r^{\eps,i}-z}^2
 \lambda_{i,r}^{\mathbf E_i}(\dd z)
 \bigr]
 \\
 &\le
 2L^2\ee^{-\kappa(s-r)/\eps}
 \E\abs{\widehat Y_r^{\eps,i}}^2
 \le
 C\ee^{-\kappa(s-r)/\eps}\bigl(1+\E\abs{X_{t_i}^\eps}^2+\E\abs{Y_{t_i}^\eps}^2\bigr),
\end{align*}
where the last inequality follows from \eqref{eq:Yhat-moment}.  Hence
\begin{equation}\label{eq:global-conditional-mixing}
 \norm{\E[\zeta_s^i\mid\Hh_{i,r}]}_{L^2}
 \le
 C\ee^{-\kappa(s-r)/2\eps}\bigl(1+\E\abs{X_{t_i}^\eps}^2+\E\abs{Y_{t_i}^\eps}^2\bigr)^{1/2},
 \qquad t_i\le r\le s\le t_{i+1}.
\end{equation}
Finally,
\eqref{eq:centered-second-moment} and Jensen inequality yield
\begin{equation}\label{eq:zeta-second-moment}
 \sup_{t\in I_i}\E\abs{\zeta_t^i}^2
 \le C\bigl(1+\E\abs{X_{t_i}^\eps}^2+\E\abs{Y_{t_i}^\eps}^2\bigr).
\end{equation}

For \(0\le k<N_i\), put
\begin{equation*}
 Z_{i,k}:=\int_{s_{i,k}}^{s_{i,k+1}}\zeta_t^i\dd t,
 \qquad
 \mathcal C_{i,k}
 :=
 \E\bigl[
 \sum_{j=k}^{N_i-1}Z_{i,j}
 \mathrel{\big|}\Hh_{i,s_{i,k}}
 \bigr],
 \qquad \mathcal C_{i,N_i}:=0.
\end{equation*}
By Minkowski's inequality and \eqref{eq:global-conditional-mixing},
\begin{align}
 \norm{\mathcal C_{i,k}}_{L^2}
 &\le
 \sum_{j=k}^{N_i-1}
 \int_{J_{i,j}}
 \norm{\E[\zeta_t^i\mid\Hh_{i,s_{i,k}}]}_{L^2}\dd t
 \notag\\
 &\le
 C\bigl(1+\E\abs{X_{t_i}^\eps}^2+\E\abs{Y_{t_i}^\eps}^2\bigr)^{1/2}
 \sum_{j=k}^{N_i-1}
 \int_{J_{i,j}}
 \ee^{-\kappa(t-s_{i,k})/(2\eps)}\dd t
 \notag\\
 &=
 C\bigl(1+\E\abs{X_{t_i}^\eps}^2+\E\abs{Y_{t_i}^\eps}^2\bigr)^{1/2}
 \int_{s_{i,k}}^{t_{i+1}}
 \ee^{-\kappa(t-s_{i,k})/(2\eps)}\dd t
 \notag\\
 &\le
 C\eps
 \bigl(1+\E\abs{X_{t_i}^\eps}^2+\E\abs{Y_{t_i}^\eps}^2\bigr)^{1/2},
 \label{eq:gordin-corrector-bound}
\end{align}
and \eqref{eq:zeta-second-moment} gives
\begin{equation}\label{eq:gordin-block-second-moment}
 \E\abs{Z_{i,k}}^2
 \le
 \delta_\eps\int_{J_{i,k}}\E\abs{\zeta_t^i}^2\dd t
 \le C\eps^2\bigl(1+\E\abs{X_{t_i}^\eps}^2+\E\abs{Y_{t_i}^\eps}^2\bigr).
\end{equation}
Define
\begin{equation*}
 \Delta M_{i,k+1}
 :=Z_{i,k}+\mathcal C_{i,k+1}-\mathcal C_{i,k}.
\end{equation*}
The tower property and the definition of \(\mathcal C_{i,k}\) show that
\[
 \E[\Delta M_{i,k+1}\mid\Hh_{i,s_{i,k}}]=0.
\]
Thus \(M_{i,j}:=\sum_{k=0}^{j-1}\Delta M_{i,k+1}\) is a discrete
martingale.  Equations \eqref{eq:gordin-corrector-bound}--
\eqref{eq:gordin-block-second-moment} and
\eqref{eq:number-blocks} imply
\[
 \sum_{k=0}^{N_i-1}\E\abs{\Delta M_{i,k+1}}^2
 \le C_T\eps \bigl(1+\E\abs{X_{t_i}^\eps}^2+\E\abs{Y_{t_i}^\eps}^2\bigr).
\]
In particular,
\[
 M_{i,N_i}
 =\sum_{k=0}^{N_i-1}Z_{i,k}-\mathcal C_{i,0},
 \qquad
 \E\abs{M_{i,N_i}}^2
 =\sum_{k=0}^{N_i-1}\E\abs{\Delta M_{i,k+1}}^2
 \le C_T\eps \bigl(1+\E\abs{X_{t_i}^\eps}^2+\E\abs{Y_{t_i}^\eps}^2\bigr).
\]
Doob's inequality and the identity
\[
 \sum_{k=0}^{j-1}Z_{i,k}
 =M_{i,j}+\mathcal C_{i,0}-\mathcal C_{i,j}
\]
give
\begin{equation}\label{eq:gordin-grid-maximal}
 \E\max_{0\le j\le N_i}
 \bigl|
 \sum_{k=0}^{j-1}Z_{i,k}
 \bigr|^2
 \le C_T\eps \bigl(1+\E\abs{X_{t_i}^\eps}^2+\E\abs{Y_{t_i}^\eps}^2\bigr),
\end{equation}
where we also used
\[
 \E\max_{0\le k\le N_i}\abs{\mathcal C_{i,k}}^2
 \le
 \sum_{k=0}^{N_i}\E\abs{\mathcal C_{i,k}}^2
 \le C_T\eps \bigl(1+\E\abs{X_{t_i}^\eps}^2+\E\abs{Y_{t_i}^\eps}^2\bigr).
\]
For the part of an integral inside one block,
\begin{align*}
 &\E\max_{0\le k<N_i}
 \sup_{s_{i,k}\le r\le s_{i,k+1}}
 \bigl|
 \int_{s_{i,k}}^r\zeta_t^i\dd t
 \bigr|^2
 \\
 &\quad\le
 \delta_\eps\sum_{k=0}^{N_i-1}
 \int_{J_{i,k}}\E\abs{\zeta_t^i}^2\dd t
 \le C_T\eps \bigl(1+\E\abs{X_{t_i}^\eps}^2+\E\abs{Y_{t_i}^\eps}^2\bigr).
\end{align*}
Combining this estimate with \eqref{eq:gordin-grid-maximal} yields
\begin{equation}\label{eq:centered-fluctuation-maximal}
 \E\sup_{r\in I_i}
 \bigl|
 \int_{t_i}^r\zeta_t^i\dd t
 \bigr|^2
 \le C_T\eps \bigl(1+\E\abs{X_{t_i}^\eps}^2+\E\abs{Y_{t_i}^\eps}^2\bigr).
\end{equation}

Finally, \eqref{eq:conditional-bias-maximal} and
\eqref{eq:centered-fluctuation-maximal} prove
\eqref{eq:centered-forward-integral}.
For backward tails, subtract the forward integral from the integral over
the whole interval.  This gives \eqref{eq:centered-backward-integral}.
\end{proof}

\subsection{Global Strong Averaging Principle}\label{sec:local-to-global}

We now complete the proof of the strong averaging principle. Combining the
centered-integral estimates obtained above with the local stability estimates
for the forward--backward system, we derive coupled error bounds on each
macroscopic interval and propagate them along the partition. We then compare
the restarted averaged flows with the global averaged solution, which yields
the global strong error estimate in Theorem~\ref{thm:main}.

\begin{lemma}
\label{lem:local-forward-backward-comparison}
Suppose that {\rm(H1)--(H3)} hold. Then there exist
\(h_0\in(0,1]\) and constants \(C_0,C_1>0\), independent of \(i\) and
\(\eps\), such that, whenever the macroscopic partition
\eqref{eq:macro-partition} satisfies \(\ell<h_0\),
\begin{align}
 &\E\sup_{t\in I_i}\abs{X_t^\eps-\widetilde X_t^i}^2
 +\E\sup_{t\in I_i}\abs{U_t^\eps-\widetilde U_t^i}^2
 +\E\int_{I_i}\abs{V_t^\eps-(\widetilde V_t^i,0)}^2\dd t
 \notag\\
 &\quad\le
 C_0\E\abs{R_{t_{i+1}}^\eps}^2
 +C_1\eps
 \bigl(
 1+\E\abs{X_{t_i}^\eps}^2+\E\abs{Y_{t_i}^\eps}^2
 \bigr),
 \qquad 0\le i<M.
 \label{eq:local-comparison}
\end{align}
\end{lemma}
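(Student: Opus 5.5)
We compare the original system on \(I_i\) with the restarted averaged flow \((\widetilde X^i,\widetilde U^i,\widetilde V^i)\) through the error processes
\[
\Delta X_t:=X_t^\eps-\widetilde X_t^i,\qquad
\Delta U_t:=U_t^\eps-\widetilde U_t^i,\qquad
\Delta V_t:=V_t^\eps-(\widetilde V_t^i,0).
\]
The initial error vanishes, \(\Delta X_{t_i}=0\), by \eqref{eq:local-initial-match}. For the terminal error we use \eqref{eq:local-restart-flow} at \(t=t_{i+1}\) and write \(U_{t_{i+1}}^\eps=R_{t_{i+1}}^\eps+\bar U_{t_{i+1}}^{\,t_{i+1},X_{t_{i+1}}^\eps}\). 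Then the restart Lipschitz bound \eqref{eq:restart-lipschitz} gives
\[
\norm{\Delta U_{t_{i+1}}}_{L^2}\le \norm{R_{t_{i+1}}^\eps}_{L^2}+L_{\rm rst}\norm{\Delta X_{t_{i+1}}}_{L^2}.
\]
Set \(A:=\E\sup_{I_i}\abs{\Delta X}^2+\E\sup_{I_i}\abs{\Delta U}^2\) and \(\Xi:=1+\E\abs{X_{t_i}^\eps}^2+\E\abs{Y_{t_i}^\eps}^2\). The plan is to bound the forward error by \(C\ell A+C\eps\Xi\), the backward error by \(C\E\abs{R_{t_{i+1}}^\eps}^2+C\E\sup\abs{\Delta X}^2+C\ell A+C\eps\Xi\), and then absorb the \(C\ell A\) terms by taking \(\ell<h_0\) small. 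For \(\ell\) small the backward bound is fed into the forward one, not the reverse. Since the forward bound carries the small factor \(\ell\), this ordering closes the estimate.

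\emph{Forward step.} Write the drift of \(\Delta X\) as
\[
b\bigl(X^\eps_t,Y^\eps_t,U^\eps_t,\Law(X_t^\eps,U^\eps_t,Y^\eps_t)\bigr)-\bar b\bigl(\widetilde X^i_t,\widetilde U^i_t,\Law(\widetilde X^i_t,\widetilde U^i_t)\bigr)
\]
and telescope it in four pieces.
\begin{enumerate}
\item Replace \((X^\eps_t,Y^\eps_t,U^\eps_t)\) and their joint law by \((\widetilde X^i_{\pi_\eps(t)},\widehat Y^{\eps,i}_t,\widetilde U^i_{\pi_\eps(t)})\) and \(\widehat\rho^{i,k}_t\). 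By (H1) the cost is controlled by \(\abs{\Delta X_t}+\abs{\Delta U_t}\), the freezing modulus \eqref{eq:local-freezing-modulus}, and \(\abs{Y^\eps_t-\widehat Y^{\eps,i}_t}\) together with its \(L^2\) norm. The last two are handled by Lemma~\ref{lem:auxiliary-fast-comparison}, which yields \(C\ell A+C\eps\Xi\) after Cauchy--Schwarz in time.
\item What remains is \(\mathfrak b^{i,k}_t\), controlled in sup norm by \eqref{eq:centered-forward-integral}, at cost \(C\eps\Xi\).
\item Next comes \(\bar b(\widetilde X^i_{\pi_\eps},\widetilde U^i_{\pi_\eps},\eta_{i,k})-\bar b(\widetilde X^i_t,\widetilde U^i_t,\Law(\widetilde X^i_t,\widetilde U^i_t))\). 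By Lemma~\ref{lem:averaged-coefficients} and \eqref{eq:local-freezing-modulus} this contributes \(C\eps\Xi\).
\item For the diffusion term, \(\sigma\) does not depend on \(Y\). BDG and \eqref{eq:H1-sigma} give \(C\int_{I_i}\E(\abs{\Delta X}^2+\abs{\Delta U}^2)\,dt\le C\ell A\). The \(W^2\)-part is absent from \(X\), so nothing arises there.
\end{enumerate}
Altogether, \(\E\sup_{I_i}\abs{\Delta X}^2\le C\ell A+C\eps\Xi\).

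\emph{Backward step.} The difference \(\Delta U\) solves a BSDE driven by \((W^1,W^2)\) with integrand \(\Delta V\); the component of \(\Delta V\) on \(W^2\) is \(V^{2,\eps}\). We put it into the form of Lemma~\ref{lem:operator-cancellation} (with \(\widetilde W=(W^1,W^2)\), after trivial zero-padding) as follows.
\begin{itemize}
\item The operator driver is
\[
\Psi_t(p,q)=F\bigl(X^\eps_t,Y^\eps_t,\widetilde U^i_t+p,(\widetilde V^i_t,0)+q,\Law(X^\eps_t,\widetilde U^i_t+p,Y^\eps_t)\bigr)-F\bigl(X^\eps_t,Y^\eps_t,\widetilde U^i_t,(\widetilde V^i_t,0),\Law(X^\eps_t,\widetilde U^i_t,Y^\eps_t)\bigr).
\]
It is \(L^2\)-Lipschitz, by the same coupling argument as in Lemma~\ref{lem:averaged-flow-stability}.
\item The remainder \(r_t\) is the difference between \(F\) evaluated at \(\bigl(X^\eps_t,Y^\eps_t,\widetilde U^i_t,(\widetilde V^i_t,0),\Law(X^\eps_t,\widetilde U^i_t,Y^\eps_t)\bigr)\) and \(\bar F\bigl(\widetilde X^i_t,\widetilde U^i_t,\widetilde V^i_t,\Law(\widetilde X^i_t,\widetilde U^i_t)\bigr)\).
\end{itemize}
We decompose \(r_t\) exactly as in the forward step. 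The essential point is that \(\widetilde V^i_t\) appears identically in \(F\) and in \(\mathfrak F^{i,k}_t\), so no time regularity of \(\widetilde V\) is needed. The pieces are then bounded as follows.
\begin{itemize}
\item The \(\mathfrak F\)-part is bounded by \eqref{eq:centered-backward-integral}.
\item The freezing and fast-comparison parts are bounded in \(\E\sup_r\abs{\int_r^{t_{i+1}}\cdot\,dt}^2\le\ell\int_{I_i}\E\abs{\cdot}^2\,dt\) by \(C\ell A+C\eps\Xi\).
\end{itemize}
Lemma~\ref{lem:operator-cancellation} therefore gives
\[
\E\sup_{I_i}\abs{\Delta U}^2+\E\int_{I_i}\abs{\Delta V}^2\,dt\le C\E\abs{R^\eps_{t_{i+1}}}^2+C\E\abs{\Delta X_{t_{i+1}}}^2+C\ell A+C\eps\Xi .
\]
Its constant is uniform for \(\ell\le 1\).

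\emph{Closing and main obstacle.} Substitute the forward bound into the backward one to get
\[
A\le C\E\abs{R^\eps_{t_{i+1}}}^2+C'\ell A+C\eps\Xi .
\]
Choose \(h_0\) with \(C'h_0\le1/2\) and absorb. This gives \eqref{eq:local-comparison}, including the \(\Delta V\) term. Two points need care.
\begin{itemize}
\item The constants \(C'\) must genuinely not depend on \(\ell\). This holds because the only \(\ell\)-independent contribution from \(\Delta X\) in the backward estimate is \(\E\abs{\Delta X_{t_{i+1}}}^2\), which itself is \(O(\ell A+\eps\Xi)\).
\item The main obstacle is the fast-comparison term. Lemma~\ref{lem:auxiliary-fast-comparison} bounds \(\int_{I_i}\E\abs{Y^\eps-\widehat Y^{\eps,i}}^2\) only through \(\ell A\) and \(\delta_\eps\Xi\). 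We must check that this is exactly the form entering both drivers after Cauchy--Schwarz in time, with the measure arguments handled by \(W_2\le\) the \(L^2\)-coupling.
\end{itemize}
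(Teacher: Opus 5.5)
Your proposal is correct and follows essentially the same route as the paper. You use the same three-term splitting of the drift and driver remainders (a Lipschitz/freezing/fast-comparison part, the centered part $\mathfrak b$ or $\mathfrak F$, and the averaged freezing part), the same $L^2$-operator driver $\Psi^i_t$ with the terminal mismatch written via $R^\eps_{t_{i+1}}$ and restart stability, and the same absorption of the $C\ell$ terms by choosing $h_0$ small. Your overview says the backward bound is fed into the forward one, which is the reverse of what your closing step actually does, but this is harmless: either order closes once both errors are summed into $A$ before absorbing.
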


\begin{proof}
Define, for \(r\in I_i\),
\begin{align*}
 \mathcal R_{i,b}^\eps(r)
 &:=
 \int_{t_i}^r
 \bigl[
 b\bigl(
 X_t^\eps,Y_t^\eps,U_t^\eps,
 \Law((X_t^\eps,U_t^\eps),Y_t^\eps)
 \bigr)
 -\bar b\bigl(
 \widetilde X_t^i,\widetilde U_t^i,
 \Law(\widetilde X_t^i,\widetilde U_t^i)
 \bigr)
 \bigr]\dd t.
\end{align*}
For \(t\in J_{i,k}\), the integrand is the sum of the following three
terms:
\begin{align*}
 B_{i,1}(t)
 &:=
 b\bigl(
 X_t^\eps,Y_t^\eps,U_t^\eps,
 \Law((X_t^\eps,U_t^\eps),Y_t^\eps)
 \bigr)-
 b\bigl(
 \widetilde X_{s_{i,k}}^i,\widehat Y_t^{\eps,i},\widetilde U_{s_{i,k}}^i,
 \widehat\rho_t^{i,k}
 \bigr),
\\
 B_{i,2}(t)&:=\mathfrak b_t^{i,k}, \ \ 
 B_{i,3}(t)
 :=
 \bar b(\widetilde X_{s_{i,k}}^i,\widetilde U_{s_{i,k}}^i,\eta_{i,k})
-
 \bar b\bigl(
 \widetilde X_t^i,\widetilde U_t^i,
 \Law(\widetilde X_t^i,\widetilde U_t^i)
 \bigr).
\end{align*}
By \eqref{eq:H1-bhg},
\begin{align*}
 \E\abs{B_{i,1}(t)}^2
 &\le
 C\E\bigl[
 \abs{X_t^\eps-\widetilde X_{s_{i,k}}^i}^2
 +\abs{U_t^\eps-\widetilde U_{s_{i,k}}^i}^2
 +\abs{Y_t^\eps-\widehat Y_t^{\eps,i}}^2
 \bigr].
\end{align*}
Decompose the first two differences through
\((\widetilde X_t^i,\widetilde U_t^i)\), integrate over \(I_i\), and use
\eqref{eq:local-freezing-modulus} and
\eqref{eq:fast-comparison-integrated}.  This yields
\begin{equation}\label{eq:forward-B1}
 \int_{I_i}\E\abs{B_{i,1}(t)}^2\dd t
 \le
 C\ell\bigl(
 \E\sup_{t\in I_i}\abs{X_t^\eps-\widetilde X_t^i}^2
 +\E\sup_{t\in I_i}\abs{U_t^\eps-\widetilde U_t^i}^2
 \bigr)
 +C\delta_\eps \bigl(1+\E\abs{X_{t_i}^\eps}^2+\E\abs{Y_{t_i}^\eps}^2\bigr).
\end{equation}

For the third term, \eqref{eq:averaged-b-lipschitz} and
\eqref{eq:local-freezing-modulus} give
\begin{equation}\label{eq:forward-B3}
 \int_{I_i}\E\abs{B_{i,3}(t)}^2\dd t
 \le C\delta_\eps \bigl(1+\E\abs{X_{t_i}^\eps}^2+\E\abs{Y_{t_i}^\eps}^2\bigr).
\end{equation}
For \(j=1,3\), Cauchy--Schwarz implies
\[
 \E\sup_{r\in I_i}
 \bigl|\int_{t_i}^rB_{i,j}(t)\dd t\bigr|^2
 \le
 \ell\int_{I_i}\E\abs{B_{i,j}(t)}^2\dd t.
\]
Since \(\ell\le1\), equations
\eqref{eq:forward-B1}--\eqref{eq:forward-B3} and the centered estimate
\eqref{eq:centered-forward-integral} imply
\begin{equation}\label{eq:forward-remainder-estimate}
 \E\sup_{r\in I_i}\abs{\mathcal R_{i,b}^\eps(r)}^2
 \le
 C\eps \bigl(1+\E\abs{X_{t_i}^\eps}^2+\E\abs{Y_{t_i}^\eps}^2\bigr)
 +C\ell\bigl(
 \E\sup_{t\in I_i}\abs{X_t^\eps-\widetilde X_t^i}^2
 +\E\sup_{t\in I_i}\abs{U_t^\eps-\widetilde U_t^i}^2
 \bigr).
\end{equation}

Note that the initial values of \(X^\eps\) and \(\widetilde X^i\) agree at \(t_i\).
Then
\begin{align*}
 X_r^\eps-\widetilde X_r^i
 &=
 \mathcal R_{i,b}^\eps(r)
+
 \int_{t_i}^r
 \bigl[
 \sigma(X_t^\eps,U_t^\eps,\Law(X_t^\eps,U_t^\eps))
 -\sigma(\widetilde X_t^i,\widetilde U_t^i,
 \Law(\widetilde X_t^i,\widetilde U_t^i))
 \bigr]\dd W_t^1.
\end{align*}
The Burkholder--Davis--Gundy inequality,
\eqref{eq:H1-sigma}, and
\eqref{eq:forward-remainder-estimate} consequently give
\begin{equation}\label{eq:local-forward-error}
 \E\sup_{r\in I_i}
 \abs{X_r^\eps-\widetilde X_r^i}^2
 \le
 C\eps \bigl(1+\E\abs{X_{t_i}^\eps}^2+\E\abs{Y_{t_i}^\eps}^2\bigr)
 +C\ell\bigl(
 \E\sup_{t\in I_i}\abs{X_t^\eps-\widetilde X_t^i}^2
 +\E\sup_{t\in I_i}\abs{U_t^\eps-\widetilde U_t^i}^2
 \bigr).
\end{equation}

For \(t\in I_i\), define
\begin{align*}
 r_{i,F}^\eps(t)
 &:=
 F\bigl(
 X_t^\eps,Y_t^\eps,\widetilde U_t^i,
 (\widetilde V_t^i,0),
 \Law((X_t^\eps,\widetilde U_t^i),Y_t^\eps)
 \bigr)
-
 \bar F\bigl(
 \widetilde X_t^i,\widetilde U_t^i,\widetilde V_t^i,
 \Law(\widetilde X_t^i,\widetilde U_t^i)
 \bigr).
\end{align*}
On \(J_{i,k}\), decompose
\[
 r_{i,F}^\eps(t)=F_{i,1}(t)+F_{i,2}(t)+F_{i,3}(t),
\]
where
\begin{align*}
 F_{i,1}(t)
 &:=
 F\bigl(
 X_t^\eps,Y_t^\eps,\widetilde U_t^i,
 (\widetilde V_t^i,0),
 \Law((X_t^\eps,\widetilde U_t^i),Y_t^\eps)
 \bigr)
-
 F\bigl(
 \widetilde X_{s_{i,k}}^i,\widehat Y_t^{\eps,i},\widetilde U_{s_{i,k}}^i,
 (\widetilde V_t^i,0),\widehat\rho_t^{i,k}
 \bigr),
\\
 F_{i,2}(t)&:=\mathfrak F_t^{i,k}, \qquad
 F_{i,3}(t)
 :=
 \bar F(\widetilde X_{s_{i,k}}^i,\widetilde U_{s_{i,k}}^i,
 \widetilde V_t^i,\eta_{i,k})
-
 \bar F\bigl(
 \widetilde X_t^i,\widetilde U_t^i,\widetilde V_t^i,
 \Law(\widetilde X_t^i,\widetilde U_t^i)
 \bigr).
\end{align*}
The law difference in the first term satisfies the explicit coupling bound
\begin{align}
 W_2^2\bigl(
 \Law((X_t^\eps,\widetilde U_t^i),Y_t^\eps),
 \widehat\rho_t^{i,k}
 \bigr)
 &\le
 \E\bigl[
 \abs{X_t^\eps-\widetilde X_{s_{i,k}}^i}^2
 +\abs{\widetilde U_t^i-\widetilde U_{s_{i,k}}^i}^2
 +\abs{Y_t^\eps-\widehat Y_t^{\eps,i}}^2
 \bigr].
 \label{eq:backward-law-coupling}
\end{align}
The same value of \(\widetilde V_t^i\) occurs on both sides of
\(F_{i,1}\).  Thus \eqref{eq:H1-F},
\eqref{eq:backward-law-coupling},
\eqref{eq:local-freezing-modulus}, and
\eqref{eq:fast-comparison-integrated} give
\begin{equation}\label{eq:backward-F1}
 \int_{I_i}\E\abs{F_{i,1}(t)}^2\dd t
 \le
 C\ell\Bigl(
 \E\sup_{t\in I_i}\abs{X_t^\eps-\widetilde X_t^i}^2
 +\E\sup_{t\in I_i}\abs{U_t^\eps-\widetilde U_t^i}^2
 \Bigr)
 +C\delta_\eps \bigl(1+\E\abs{X_{t_i}^\eps}^2+\E\abs{Y_{t_i}^\eps}^2\bigr).
\end{equation}
Consequently, \eqref{eq:averaged-F-lipschitz} and
\eqref{eq:local-freezing-modulus} imply
\begin{equation}\label{eq:backward-F3}
 \int_{I_i}\E\abs{F_{i,3}(t)}^2\dd t
 \le C\delta_\eps \bigl(1+\E\abs{X_{t_i}^\eps}^2+\E\abs{Y_{t_i}^\eps}^2\bigr).
\end{equation}
For \(j=1,3\),
\[
 \E\sup_{r\in I_i}
 \bigl|\int_r^{t_{i+1}}F_{i,j}(t)\dd t\bigr|^2
 \le
 \ell\int_{I_i}\E\abs{F_{i,j}(t)}^2\dd t.
\]
Combining \eqref{eq:backward-F1}--\eqref{eq:backward-F3} with
\eqref{eq:centered-backward-integral} yields
\begin{equation}\label{eq:backward-remainder-estimate}
 \E\sup_{r\in I_i}
 \bigl|\int_r^{t_{i+1}}r_{i,F}^\eps(t)\dd t\bigr|^2
 \le
 C\eps \bigl(1+\E\abs{X_{t_i}^\eps}^2+\E\abs{Y_{t_i}^\eps}^2\bigr)
 +C\ell\Bigl(
 \E\sup_{t\in I_i}\abs{X_t^\eps-\widetilde X_t^i}^2
 +\E\sup_{t\in I_i}\abs{U_t^\eps-\widetilde U_t^i}^2
 \Bigr).
\end{equation}

Put
\[
 \Delta U_t:=U_t^\eps-\widetilde U_t^i,\qquad
 \Delta V_t:=V_t^\eps-(\widetilde V_t^i,0).
\]
For square-integrable
\(p\in L^2(\Ff_t;\R^p)\) and
\(q\in L^2(\Ff_t;\R^{p\times(d_1+d_2)})\), define
\begin{align*}
 \Psi_t^i(p,q)
 &:=
 F\bigl(
 X_t^\eps,Y_t^\eps,\widetilde U_t^i+p,
 (\widetilde V_t^i,0)+q,
 \Law((X_t^\eps,\widetilde U_t^i+p),Y_t^\eps)
 \bigr)
 \\
 &\qquad\qquad\qquad-
 F\bigl(
 X_t^\eps,Y_t^\eps,\widetilde U_t^i,
 (\widetilde V_t^i,0),
 \Law((X_t^\eps,\widetilde U_t^i),Y_t^\eps)
 \bigr).
\end{align*}
Then
\(\Psi_t^i(0,0)=0\).  For \(p,p'\in L^2(\mathcal F_t;\R^p)\),
the synchronous coupling that keeps \(X_t^\eps\),
\(\widetilde U_t^i\), and \(Y_t^\eps\) unchanged gives
\[
 W_2^2\bigl(
 \Law((X_t^\eps,\widetilde U_t^i+p),Y_t^\eps),
 \Law((X_t^\eps,\widetilde U_t^i+p'),Y_t^\eps)
 \bigr)
 \le
 \E\abs{p-p'}^2.
\]
Hence, by \eqref{eq:H1-F},
\[
 \norm{\Psi_t^i(p,q)-\Psi_t^i(p',q')}_{L^2}
 \le
 C\bigl(
 \norm{p-p'}_{L^2}+\norm{q-q'}_{L^2}
 \bigr).
\]
This is the precise \(L^2\)-operator property required by
Lemma~\ref{lem:operator-cancellation}.

The difference of the backward equations satisfies
\begin{align}
 \Delta U_t
 &=
 U_{t_{i+1}}^\eps-\widetilde U_{t_{i+1}}^i
 +\int_t^{t_{i+1}}
 \bigl[
 \Psi_s^i(\Delta U_s,\Delta V_s)
 +r_{i,F}^\eps(s)
 \bigr]\dd s
-
 \int_t^{t_{i+1}}\Delta V_s\dd(W_s^1,W_s^2),
 \quad t\in I_i.
 \label{eq:difference-bsde}
\end{align}
By \eqref{eq:grid-defect} and \eqref{eq:local-restart-flow},
\begin{align*}
 U_{t_{i+1}}^\eps-\widetilde U_{t_{i+1}}^i
 &=
 R_{t_{i+1}}^\eps
 +\bar U_{t_{i+1}}^{\,t_{i+1},X_{t_{i+1}}^\eps}
 -\bar U_{t_{i+1}}^{\,t_{i+1},
 \widetilde X_{t_{i+1}}^i}.
\end{align*}
The restart stability assumption yields
\begin{align}
 \E\abs{U_{t_{i+1}}^\eps-\widetilde U_{t_{i+1}}^i}^2
 &\le
 2\E\abs{R_{t_{i+1}}^\eps}^2
 +2L_{\rm rst}^2
 \E\abs{X_{t_{i+1}}^\eps
 -\widetilde X_{t_{i+1}}^i}^2.
 \label{eq:local-terminal-bound}
\end{align}

Apply Lemma~\ref{lem:operator-cancellation} to
\eqref{eq:difference-bsde}.  Using
\eqref{eq:backward-remainder-estimate} and
\eqref{eq:local-terminal-bound}, we obtain
\begin{align*}
 &\E\sup_{t\in I_i}\abs{U_t^\eps-\widetilde U_t^i}^2
 +\E\int_{I_i}
 \abs{V_t^\eps-(\widetilde V_t^i,0)}^2\dd t
 \\
 &\quad\le
 C\E\abs{R_{t_{i+1}}^\eps}^2
 +C\E\sup_{t\in I_i}
 \abs{X_t^\eps-\widetilde X_t^i}^2+C\eps \bigl(1+\E\abs{X_{t_i}^\eps}^2+\E\abs{Y_{t_i}^\eps}^2\bigr)
 \\
 &\qquad+
 C\ell\Bigl(
 \E\sup_{t\in I_i}\abs{X_t^\eps-\widetilde X_t^i}^2
 +\E\sup_{t\in I_i}\abs{U_t^\eps-\widetilde U_t^i}^2
 +\E\int_{I_i}\abs{V_t^\eps-(\widetilde V_t^i,0)}^2\dd t
 \Bigr).
\end{align*}
Substituting the forward estimate \eqref{eq:local-forward-error}
into the preceding backward estimate and choosing
\(h_0\) in \eqref{eq:macro-partition} sufficiently small,
we obtain \eqref{eq:local-comparison}.
\end{proof}

\paragraph{Global recursions and conclusion.}

At the left endpoint of \(I_i\), \eqref{eq:local-initial-match} gives
\[
 R_{t_i}^\eps
 =
 U_{t_i}^\eps-\widetilde U_{t_i}^i.
\]
Hence
\(\E\abs{R_{t_i}^\eps}^2\le
\E\sup_{t\in I_i}\abs{U_t^\eps-\widetilde U_t^i}^2\).
Therefore \eqref{eq:local-comparison} yields directly
\[
 \E\abs{R_{t_i}^\eps}^2
 \le
 C_0\E\abs{R_{t_{i+1}}^\eps}^2
 +C_1\eps \bigl(1+\E\abs{X_{t_i}^\eps}^2+\E\abs{Y_{t_i}^\eps}^2\bigr),
 \qquad \E\abs{R_T^\eps}^2=0.
\]
Iterating backward over the fixed number \(M\) of intervals gives
\begin{equation}\label{eq:defect-max}
 \max_{0\le i\le M}\E\abs{R_{t_i}^\eps}^2
 \le
 C_T\eps
 \max_{0\le i\le M}
 \bigl(1+\E\abs{X_{t_i}^\eps}^2
 +\E\abs{Y_{t_i}^\eps}^2\bigr).
\end{equation}

We next derive the complementary forward recursion.  By
\eqref{eq:local-averaged-moment},
\eqref{eq:local-comparison}, for sufficiently small
\(\eps\), we have
\begin{align}
 &\E\sup_{t\in I_i}
 \bigl(\abs{X_t^\eps}^2+\abs{U_t^\eps}^2\bigr)
 +\E\int_{I_i}\abs{V_t^\eps}^2\dd t
\le
 C\bigl(1+\E\abs{X_{t_i}^\eps}^2
 +\E\abs{Y_{t_i}^\eps}^2+\E\abs{R_{t_{i+1}}^\eps}^2\bigr).
 \label{eq:original-local-moment}
\end{align}

The similar calculation in Lemma~\ref{lem:frozen-lyapunov}, applied to the
original fast equation and rescaled by \(\eps\), gives
\begin{align*}
 \frac{\dd}{\dd t}\E\abs{Y_t^\eps}^2
 \le
 -\frac{c_0}{\eps}\E\abs{Y_t^\eps}^2
 +\frac C\eps
 \bigl(
 1+\E\abs{X_t^\eps}^2+\E\abs{U_t^\eps}^2
 \bigr).
\end{align*}
Variation of constants on \(I_i\) yields
\begin{align*}
 \E\abs{Y_{t_{i+1}}^\eps}^2
 &\le
 \ee^{-c_0\ell/\eps}\E\abs{Y_{t_i}^\eps}^2
+
 \frac C\eps\int_{t_i}^{t_{i+1}}
 \ee^{-c_0(t_{i+1}-t)/\eps}
 \bigl(
 1+\E\abs{X_t^\eps}^2+\E\abs{U_t^\eps}^2
 \bigr)\dd t.
\end{align*}
Since
\[
 \frac1\eps\int_{t_i}^{t_{i+1}}
 \ee^{-c_0(t_{i+1}-t)/\eps}\dd t\le\frac1{c_0},
\]
\eqref{eq:original-local-moment} gives
\begin{equation*}
 \E\abs{Y_{t_{i+1}}^\eps}^2
 \le C\bigl(1+\E\abs{X_{t_i}^\eps}^2
 +\E\abs{Y_{t_i}^\eps}^2+\E\abs{R_{t_{i+1}}^\eps}^2\bigr).
\end{equation*}
The same bound for \(X_{t_{i+1}}^\eps\) follows directly from
\eqref{eq:original-local-moment}.  Consequently, for constants \(C_2,C_3\),
\begin{equation*}
 1+\E\abs{X_{t_{i+1}}^\eps}^2
 +\E\abs{Y_{t_{i+1}}^\eps}^2
 \le
 C_2\bigl(1+\E\abs{X_{t_i}^\eps}^2
 +\E\abs{Y_{t_i}^\eps}^2\bigr)
 +C_3\E\abs{R_{t_{i+1}}^\eps}^2.
\end{equation*}
Iterating forward over the fixed number of macroscopic intervals gives
\begin{equation}\label{eq:moment-max-before-absorption}
 \max_{0\le i\le M}
 \bigl(1+\E\abs{X_{t_i}^\eps}^2
 +\E\abs{Y_{t_i}^\eps}^2\bigr)
 \le
 C_T\Bigl(
 1+\abs x^2+\abs y^2
 +\max_{0\le i\le M}\E\abs{R_{t_i}^\eps}^2
 \Bigr).
\end{equation}
Substituting \eqref{eq:defect-max} into
\eqref{eq:moment-max-before-absorption},
\begin{align*}
 \max_{0\le i\le M}
 \bigl(1+\E\abs{X_{t_i}^\eps}^2
 +\E\abs{Y_{t_i}^\eps}^2\bigr)
 &\le
 C_T(1+\abs x^2+\abs y^2)
 +C_T\eps\max_{0\le i\le M}
 \bigl(1+\E\abs{X_{t_i}^\eps}^2
 +\E\abs{Y_{t_i}^\eps}^2\bigr).
\end{align*}
Since \(\eps\to0\), there exists \(\eps_T>0\) such that for any $0<\eps\leq\eps_{T}$,
$
 C_T\eps\le\frac12.
$
Then we have
\begin{equation*}
 \max_{0\le i\le M}
 \bigl(1+\E\abs{X_{t_i}^\eps}^2
 +\E\abs{Y_{t_i}^\eps}^2\bigr)
 \le C_T(1+\abs x^2+\abs y^2).
\end{equation*}
Equations \eqref{eq:defect-max} and
\eqref{eq:local-comparison} then imply
\begin{equation}\label{eq:defect-local-final}
\begin{aligned}
 &\max_{0\le i\le M}\E\abs{R_{t_i}^\eps}^2
 +\max_{0\le i<M}\biggl[
 \E\sup_{t\in I_i}\abs{X_t^\eps-\widetilde X_t^i}^2
 +\E\sup_{t\in I_i}\abs{U_t^\eps-\widetilde U_t^i}^2
 +\E\int_{I_i}\abs{V_t^\eps-(\widetilde V_t^i,0)}^2\dd t
 \biggr]
 \\
 &\quad
 \le
 C_T\eps(1+\abs x^2+\abs y^2).
\end{aligned}
\end{equation}

The restriction of the global averaged solution
\((\bar X,\bar U,\bar V)\) to \([t_i,T]\) is the averaged restart flow from
\((t_i,\bar X_{t_i})\).  Lemma~\ref{lem:averaged-flow-stability}, applied
to this flow and the local flow started from \(X_{t_i}^\eps\), gives
\begin{align}
 &\E\sup_{t\in I_i}\abs{\widetilde X_t^i-\bar X_t}^2
 +\E\sup_{t\in I_i}\abs{\widetilde U_t^i-\bar U_t}^2
+
 \E\int_{I_i}\abs{\widetilde V_t^i-\bar V_t}^2\dd t
 \le
 C_T\E\abs{X_{t_i}^\eps-\bar X_{t_i}}^2.
 \label{eq:local-to-global-flow}
\end{align}

At the right endpoint of \(I_i\),
\[
 X_{t_{i+1}}^\eps-\bar X_{t_{i+1}}
 =
 (X_{t_{i+1}}^\eps-\widetilde X_{t_{i+1}}^i)
 +(\widetilde X_{t_{i+1}}^i-\bar X_{t_{i+1}}).
\]
Using \eqref{eq:defect-local-final} and
\eqref{eq:local-to-global-flow}, we obtain
\[
 \E\abs{X_{t_{i+1}}^\eps-\bar X_{t_{i+1}}}^2
 \le C_T\eps(1+\abs x^2+\abs y^2)
 +C_T\E\abs{X_{t_i}^\eps-\bar X_{t_i}}^2.
\]
Since \(X_0^\eps=\bar X_0=x\) and \(M\) is fixed,
\begin{equation}\label{eq:global-grid-X}
 \max_{0\le i\le M}
 \E\abs{X_{t_i}^\eps-\bar X_{t_i}}^2
 \le
 C_T\eps(1+\abs x^2+\abs y^2).
\end{equation}

For \(t\in I_i\), decompose
\[
 X_t^\eps-\bar X_t
 =
 (X_t^\eps-\widetilde X_t^i)
 +(\widetilde X_t^i-\bar X_t).
\]
There are only \(M\) macroscopic intervals.  Summing the supremum estimates
from \eqref{eq:defect-local-final},
\eqref{eq:local-to-global-flow}, and
\eqref{eq:global-grid-X}, we obtain
\begin{equation}\label{eq:global-X-final}
 \E\sup_{0\le t\le T}\abs{X_t^\eps-\bar X_t}^2
 \le
 C_T\eps(1+\abs x^2+\abs y^2).
\end{equation}

The same decomposition gives, on \(I_i\),
\[
 U_t^\eps-\bar U_t
 =
 (U_t^\eps-\widetilde U_t^i)
 +(\widetilde U_t^i-\bar U_t)
\]
and
\[
 V_t^\eps-(\bar V_t,0)
 =
 \bigl[V_t^\eps-(\widetilde V_t^i,0)\bigr]
 +(\widetilde V_t^i-\bar V_t,0).
\]
Using again the fixed number of intervals and
\eqref{eq:defect-local-final},
\eqref{eq:local-to-global-flow}, and
\eqref{eq:global-grid-X}, we conclude that
\begin{align}
 &\E\sup_{0\le t\le T}\abs{U_t^\eps-\bar U_t}^2
 +\E\int_0^T
 \bigl[
 \abs{V_t^{1,\eps}-\bar V_t}^2
 +\abs{V_t^{2,\eps}}^2
 \bigr]\dd t
 \le
 C_T\eps(1+\abs x^2+\abs y^2).
 \label{eq:global-UV-final}
\end{align}
Combining \eqref{eq:global-X-final} and
\eqref{eq:global-UV-final} proves
\eqref{eq:main-estimate}.  This completes the proof of
Theorem~\ref{thm:main}.

\section{Special Cases and Conditions for Global Averaging}
\label{sec:consequences}

This section discusses several important special cases of
Theorem~\ref{thm:main} and clarifies the role of \({\rm(H3)}\).
We first consider the implications for forward fast--slow
McKean--Vlasov equations and for systems whose forward dynamics
is decoupled from the backward variable.
We then show that global-in-time averaging can fail in a stochastic
mean-field example for which the frozen fast motion is exponentially
stable and both systems associated with the prescribed boundary data
are uniquely solvable.
Finally, we give conditions under which the restart stability
in \({\rm(H3)}\) follows from standard well-posedness criteria
for mean-field FBSDEs.

\subsection{Forward and Backward Specializations}
\label{sec:specializations}

We first consider the purely forward case. Let
\begin{equation*}
\left\{
\begin{aligned}
 \dd X_t^\eps
 &=
 b\bigl(X_t^\eps,Y_t^\eps,\Law(X_t^\eps,Y_t^\eps)\bigr)\dd t
 +\sigma\bigl(X_t^\eps,\Law(X_t^\eps)\bigr)\dd W_t^1,
 \\
 \dd Y_t^\eps
 &=
 \frac1\eps
 h\bigl(X_t^\eps,Y_t^\eps,\Law(X_t^\eps,Y_t^\eps)\bigr)\dd t
 +\frac1{\sqrt\eps}
 g\bigl(X_t^\eps,Y_t^\eps,\Law(X_t^\eps,Y_t^\eps)\bigr)\dd W_t^2,
 \\
 X_0^\eps&=x,\qquad Y_0^\eps=y.
\end{aligned}
\right.
\end{equation*}
For \(\eta\in\Ptwo(\R^n)\), freeze a random variable
\(\Theta\sim\eta\) and consider
\[
 \dd Z_t
 =h\bigl(\Theta,Z_t,\Law(\Theta,Z_t)\bigr)\dd t
 +g\bigl(\Theta,Z_t,\Law(\Theta,Z_t)\bigr)\dd B_t.
\]
Under the forward versions of \({\rm(H1)}\)--\({\rm(H2)}\), its invariant
joint law has a disintegration
\[
 \Pi^\eta(\dd x,\dd z)
 =\eta(\dd x)\nu^{x;\eta}(\dd z).
\]
Set
\[
 \bar b(x,\eta)
 :=\int_{\R^m}b(x,z,\Pi^\eta)\nu^{x;\eta}(\dd z),
\]
and let \(\bar X\) solve
\begin{equation*}
 \dd\bar X_t
 =\bar b\bigl(\bar X_t,\Law(\bar X_t)\bigr)\dd t
 +\sigma\bigl(\bar X_t,\Law(\bar X_t)\bigr)\dd W_t^1,
 \qquad \bar X_0=x.
\end{equation*}

\begin{corollary}
\label{cor:forward-mf}
Suppose that the forward versions of \({\rm(H1)}\)--\({\rm(H2)}\) hold.
Then, for every \(T>0\), there exist \(C_T<\infty\) and \(\eps_T>0\)
such that
\begin{equation*}
 \E\sup_{0\le t\le T}\abs{X_t^\eps-\bar X_t}^2
 \le
 C_T(1+\abs x^2+\abs y^2)\eps,
 \qquad 0<\eps\le\eps_T.
\end{equation*}
Moreover, this convergence order is optimal in general.
\end{corollary}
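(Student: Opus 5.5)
The plan is to embed the forward system into the framework of \eqref{eq:original-mf-system} with a trivial backward component and then invoke Theorem~\ref{thm:main}. Take \(p=1\) and let the backward component be identically zero: \(F\equiv0\) and \(\beta\equiv0\). Let \(b,h,g\) depend on \((x,y,u,\rho)\) only through \((x,y)\) and the marginal \((\pr_{x,y})_\#\rho\) of \(\rho\in\Ptwo(\slowspace\times\fastspace)\). Let \(\sigma(x,u,\eta)\) depend only on \(x\) and on the \(x\)-marginal of \(\eta\). Projections are \(1\)-Lipschitz for \(W_2\), so (H1) transfers directly from the forward version. The same holds for (H2): the term \(\abs{u-u'}^2\) only enlarges the right-hand side, and \(W_2\) of the projected laws is bounded by \(W_2(\rho,\rho')\). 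The constants \(\kappa,K_1,K_2\) are unchanged.

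Next I would verify (H3). For the original system, the BSDE with zero driver and zero terminal value has the unique solution \(U^\eps\equiv0\), \(V^\eps\equiv0\). The forward pair \((X^\eps,Y^\eps)\) is then a standard McKean--Vlasov SDE with Lipschitz coefficients, and it is well posed by the Picard argument of Proposition~\ref{prop:frozen-wellposed}. For the averaged system started at \((s,\xi)\), \(\bar F=0\) gives \(\bar U^{s,\xi}\equiv0\) and \(\bar V^{s,\xi}\equiv0\). The forward averaged equation is then Lipschitz by Lemma~\ref{lem:averaged-coefficients}, so it has a unique \((\Ff^{s,1}_t)\)-adapted solution. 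The restart map is identically zero, so \eqref{eq:restart-lipschitz} holds with any \(L_{\rm rst}>0\).

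It remains to check that the averaged drift of the embedding coincides with the \(\bar b(x,\eta)\) of the corollary. Since the data depend on the environment \(\Theta=(\Theta^x,\Theta^u)\) only through \(\Theta^x\), and \(\Theta^u=0\) a.s. on the relevant laws \(\eta=\Law(\bar X_t,0)\), the frozen equation \eqref{eq:frozen-mf-equation} reduces to the forward frozen equation. Its invariant joint law is \(\Pi^{\eta}=\Pi^{\eta_x}\otimes\delta_0\) after rearranging coordinates, where \(\eta_x\) is the \(x\)-marginal. The conditional kernels agree by uniqueness in Lemma~\ref{lem:conditional-invariant}. Hence \eqref{eq:averaged-b-definition} gives the forward \(\bar b\). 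Theorem~\ref{thm:main} now yields the estimate, since its \(X\)-part is exactly the claim. This identification of the averaged coefficients is the only delicate point, and I expect it to be routine bookkeeping with the disintegration.

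For optimality, Example~\ref{ex:optimality} is already a law-independent forward system, because its \(U,V\) vanish. It shows that \(\E\abs{X^\eps_T-\bar X_T}^2=2T\eps+O(\eps^2)\), so the order \(\eps\) cannot be improved.
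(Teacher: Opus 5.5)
Your proposal is correct and follows the route the paper intends; the paper gives no separate proof. The forward system is the special case of Theorem~\ref{thm:main} with trivial backward data $F\equiv0$, $\beta\equiv0$ (equivalently, the decoupled situation of Corollary~\ref{cor:decoupled-mf-bsde}): {\rm(H3)} holds because the restart map vanishes, the embedded averaged drift reduces to the forward $\bar b$ by uniqueness of $\Pi^\eta$ in ${\Ptwo}_\eta$ and of the conditional kernels, and sharpness of the order comes from Example~\ref{ex:optimality}.
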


\begin{remark}
\label{rem:forward-literature}
{\rm We now compare Corollary~\ref{cor:forward-mf} with existing averaging
principles for two-time-scale McKean--Vlasov equations.  R\"ockner, Sun, and
Xie~\cite{RocknerSunXie2021} allow the coefficients to depend on the slow
marginal law, whereas the frozen fast equation does not depend on the law of
the fast variable.  In the time-homogeneous globally Lipschitz setting,
Corollary~\ref{cor:forward-mf} extends their model to coefficients depending
on the full joint law
$
\Law(X_t^\eps,Y_t^\eps).
$
Moreover, \cite{RocknerSunXie2021} obtains a convergence rate of order
\(1/3\).  Corollary~\ref{cor:forward-mf} improves this to the optimal order
\(1/2\) and does not require the spatial and Lions differentiability
assumptions used in the Poisson-equation approach of
\cite{RocknerSunXie2021}.  Hong, Li, Liu, and Sun~\cite{HongLiLiuSun2023}
further studied the central limit theorem for this class of systems.

For infinite-dimensional models, Hong, Li, and Liu~\cite{HongLiLiu2022}
established a strong averaging principle for fast--slow McKean--Vlasov
SPDEs in a variational framework, again with the law dependence entering
through the slow component.  In all the models mentioned above, the fast
equation does not depend on its own distribution.  Consequently, the
underlying arguments remain, to some extent, close to those used in the
classical averaging principle.

When the fast dynamics depends on its own distribution, to the best of our
knowledge, the first systematic study is due to Li, Wu, and
Xie~\cite{LiWuXie2024}.  They developed a Poisson-equation theory on the
Wasserstein space and applied it to the corresponding diffusion approximation
problem.  Their model, however, does not allow the fast equation to depend on
the state of the slow component.  Hong, Hu, Liu, and
Yang~\cite{HongHuLiuYang2026} subsequently studied the large deviation
principle for a related class of models by means of a lifted semigroup
approach.  More recently, Hou, Li, and Xie~\cite{HouLiXie2024} established
an averaging principle for genuinely fully coupled two-time-scale
McKean--Vlasov systems.

Compared with \cite{HouLiXie2024}, our approach does not require the
nondegeneracy condition on the diffusion coefficient and imposes no Lions
differentiability assumptions on the coefficients with respect to the
distribution variables. In addition, our argument works directly with the nonlinear McKean--Vlasov dynamics, without introducing an outer non-autonomous approximation that replaces the mean-field interaction by a sequence of classical SDEs. In this sense, the averaging principle is derived more directly from the intrinsic mixing structure of the nonlinear fast dynamics.  In
particular, it yields the optimal convergence rate of order \(1/2\) without
using a Poisson-equation argument.  To the best of our knowledge, obtaining
the optimal rate by such a method is new even in the classical averaging
framework.  For instance, time-discretization arguments of the type used in
\cite{LiuRocknerSunXie2020} typically yield only a convergence rate of order
\(1/4\).}
\end{remark}

We next consider a specialization in which the forward fast--slow dynamics
forms a closed system and can therefore be studied independently of the
backward equation.  More precisely, suppose that
\eqref{eq:original-mf-system} takes the form
\begin{equation}\label{eq:decoupled-forward-mf-system}
\left\{
\begin{aligned}
\dd X_t^\eps
 &=
 b\bigl(
 X_t^\eps,Y_t^\eps,
 \Law(X_t^\eps,Y_t^\eps)
 \bigr)\dd t
 +
 \sigma\bigl(
 X_t^\eps,\Law(X_t^\eps)
 \bigr)\dd W_t^1,
\\
\dd Y_t^\eps
 &=
 \frac1\eps
 h\bigl(
 X_t^\eps,Y_t^\eps,
 \Law(X_t^\eps,Y_t^\eps)
 \bigr)\dd t
 +
 \frac1{\sqrt\eps}
 g\bigl(
 X_t^\eps,Y_t^\eps,
 \Law(X_t^\eps,Y_t^\eps)
 \bigr)\dd W_t^2,
\\
\dd U_t^\eps
 &=
 -F\bigl(
 X_t^\eps,Y_t^\eps,U_t^\eps,V_t^\eps,
 \Law(X_t^\eps,U_t^\eps,Y_t^\eps)
 \bigr)\dd t
 +
 V_t^{1,\eps}\dd W_t^1
 +
 V_t^{2,\eps}\dd W_t^2,
\\
X_0^\eps&=x,\qquad
Y_0^\eps=y,\qquad
U_T^\eps
=
\beta\bigl(
X_T^\eps,\Law(X_T^\eps)
\bigr).
\end{aligned}
\right.
\end{equation}

\begin{corollary}
\label{cor:decoupled-mf-bsde}
Assume \({\rm(H1)}\)--\({\rm(H2)}\), and suppose that
\eqref{eq:original-mf-system} has the form
\eqref{eq:decoupled-forward-mf-system}.  Then \({\rm(H3)}\) holds on every
finite time interval, and the estimate \eqref{eq:main-estimate} follows.
\end{corollary}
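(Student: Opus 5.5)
The plan is to check the three parts of {\rm(H3)} one by one, using the closed structure of \eqref{eq:decoupled-forward-mf-system}, and then to invoke Theorem~\ref{thm:main}. The key preliminary point is that the averaged drift does not depend on the backward variable. In \eqref{eq:decoupled-forward-mf-system}, \(h,g,b\) depend only on \(x\), \(y\) and the \((x,y)\)-marginal of the joint law, and \(\sigma\) depends only on \(x\) and \(\Law(x)\).

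Hence in the frozen equation \eqref{eq:frozen-mf-equation} the environment enters only through \(\Theta^x\) and through \(\Law(\Theta^x,Y_t)\). This has two consequences.
\begin{enumerate}[label=(\roman*)]
\item Running the frozen equation with environment \(\Theta^x\sim\eta^x:=(\pr_x)_\#\eta\) alone, the fixed-point argument of Proposition~\ref{prop:joint-invariant} identifies \(\Pi^\eta\) with the lift of the \((x,y)\)-invariant law \(\Pi^{\eta^x}\): the slow marginal is \(\eta\), and the conditional law of \(y\) given \((x,u)\) depends only on \(x\).
\item The conditional equation \eqref{eq:conditional-frozen-sde} has coefficients \(h(x,\cdot,\Pi^{\eta^x})\), \(g(x,\cdot,\Pi^{\eta^x})\). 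By uniqueness of its invariant measure (Lemma~\ref{lem:conditional-invariant}), \(\nu^{x,u;\eta}=\nu^{x;\eta^x}\).
\end{enumerate}
Therefore \(\bar b(x,u,\eta)=\bar b(x,\eta^x)\). By Lemma~\ref{lem:averaged-coefficients}, this map is Lipschitz in \((x,\eta^x)\) with respect to \(\abs{\cdot}+W_2\).

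\emph{Well-posedness of the original system.} For fixed \(\eps\), the pair \((X^\eps,Y^\eps)\) solves a closed McKean--Vlasov SDE with globally Lipschitz coefficients (the constants depend on \(\eps\), which is harmless). A Picard argument as in Proposition~\ref{prop:frozen-wellposed} gives a unique solution in \(\mathcal S^2\).

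Given this forward path, the \(U^\eps\)-equation is a mean-field BSDE. Its driver \((p,q)\mapsto F(X_t^\eps,Y_t^\eps,p,q,\Law(X_t^\eps,p,Y_t^\eps))\) is an \(L^2\)-Lipschitz operator in the sense of \eqref{eq:operator-lipschitz}. Its terminal value \(\beta(X_T^\eps,\Law(X_T^\eps))\) lies in \(L^2(\Ff_T)\).
\begin{itemize}
\item Existence follows from a contraction in the weighted norm \(\E\int\ee^{\gamma t}(\abs{U_t}^2+\abs{V_t}^2)\dd t\) with \(\gamma\) large. The energy computation is the one in the proof of Lemma~\ref{lem:operator-cancellation}, combined with martingale representation in \(\mathbb F\), which is valid because \(\Gg_0\) is independent of \((W^1,W^2)\).
\item Uniqueness follows from Lemma~\ref{lem:operator-cancellation} applied to the difference of two solutions, with \(\xi=0\) and \(r=0\).
\end{itemize}

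\emph{Restart solvability.} Fix \(s\in[0,T]\) and \(\xi\in L^2(\Ff_s;\R^n)\). The averaged forward equation \(\dd\bar X=\bar b(\bar X,\Law(\bar X))\dd t+\sigma(\bar X,\Law(\bar X))\dd W^1\) no longer involves \(\bar U\). It is a Lipschitz McKean--Vlasov SDE, so it has a unique \((\Ff_t^{s,1})\)-adapted solution. The backward equation with driver \(\bar F\) is then solved exactly as above, now in the filtration \(\Ff^{s,1}\).

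The one point needing care is martingale representation in \(\Ff^{s,1}=\Ff_s\vee\sigma(W^1_\cdot-W^1_s)\). I would handle it as follows. Conditionally on \(\Ff_s\), the increments of \(W^1\) form a Brownian motion independent of \(\Ff_s\). Representation for square-integrable \(\Ff^{s,1}_T\)-measurable variables then follows from the standard result for an initially enlarged Brownian filtration with an independent initial \(\sigma\)-field. This is the step where I expect the main technical care to be needed, together with the \(u\)-independence of the kernel established above.

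\emph{Uniform restart Lipschitz estimate.} Let \(\xi,\xi'\in L^2(\Ff_s;\R^n)\), with solutions driven by the same \(W^1\).
\begin{enumerate}[label=(\alph*)]
\item Since the forward equation is closed, BDG, the Lipschitz bounds, \(W_2^2(\Law(\bar X_t^{s,\xi}),\Law(\bar X_t^{s,\xi'}))\le\E\abs{\Delta X_t}^2\), and Gronwall give \(\E\sup_{s\le t\le T}\abs{\Delta X_t}^2\le C_T\E\abs{\xi-\xi'}^2\). The constant depends only on \(L\) and \(T\).
\item For the backward difference, I take \(\Psi_t\) and \(r_t\) as in the proof of Lemma~\ref{lem:averaged-flow-stability}. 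This gives \(\norm{\Psi_t(P,Q)-\Psi_t(P',Q')}_{L^2}\le C(\norm{P-P'}_{L^2}+\norm{Q-Q'}_{L^2})\) and \(\E\abs{r_t}^2\le C\E\abs{\Delta X_t}^2\). The terminal difference satisfies \(\E\abs{\Delta U_T}^2\le 4L^2\E\abs{\Delta X_T}^2\).
\item Lemma~\ref{lem:operator-cancellation} on \([s,T]\) then yields \(\E\abs{\bar U_s^{s,\xi}-\bar U_s^{s,\xi'}}^2\le C_T\E\abs{\xi-\xi'}^2\). The constant is uniform in \(s\), because \(\sup_{b-a\le T}C_{L,b-a}<\infty\), and it does not depend on the laws of \(\xi,\xi'\).
\end{enumerate}
This gives \eqref{eq:restart-lipschitz} with \(L_{\rm rst}=C_T^{1/2}\). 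Since {\rm(H1)}--{\rm(H3)} now hold, Theorem~\ref{thm:main} yields \eqref{eq:main-estimate}.
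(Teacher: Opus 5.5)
Your proposal is correct and follows the same route as the paper: solve the closed forward McKean--Vlasov system first, then the Lipschitz mean-field BSDE, and obtain the uniform restart bound from forward stability combined with BSDE stability. You add details that the paper only asserts or cites: the verification, via uniqueness of the reduced frozen invariant law, that \(\nu^{x,u;\eta}\) and hence \(\bar b\) do not depend on \(u\); and the use of Lemma~\ref{lem:operator-cancellation} in place of the cited standard mean-field BSDE well-posedness and stability results.
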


\begin{proof}
The pair \((X^\eps,Y^\eps)\) is a closed fast--slow McKean--Vlasov system
and is uniquely solvable for each fixed \(\eps>0\).  Once
\((X^\eps,Y^\eps)\) is determined, the equation for
\((U^\eps,V^\eps)\) is a Lipschitz mean-field BSDE and is uniquely solvable
on every finite time interval; see, for instance,
\cite[Theorem~A.1]{Li2018}.

The same decoupling is inherited by the averaged system.  In particular,
the averaged forward equation is independent of the backward component and
can therefore be solved first.  Given the averaged forward process, the
backward equation is again a Lipschitz mean-field BSDE.  Standard stability
estimates for McKean--Vlasov SDEs and mean-field BSDEs then yield, uniformly
in \(s\),
\[
\norm{\bar U_s^{s,\xi}-\bar U_s^{s,\xi'}}_{L^2}
\le
C_T\norm{\xi-\xi'}_{L^2}.
\]
Hence \({\rm(H3)}\) holds, and Theorem~\ref{thm:main} applies.
\end{proof}

If all measure arguments are suppressed, Theorem~\ref{thm:main} becomes a
global-in-time averaging theorem for a classical coupled fast--slow FBSDE.
In particular, after the corresponding coefficient restrictions, the systems
considered in
\cite{JiLiu2026Potential,JiLiu2026Optimal,ShengWuYinZong2026,XuLian2023}
are law-independent special cases of the present formulation.  Therefore,
our main result may be viewed as an extension of the results in the above
references to the mean-field setting.

\subsection{Failure of Global-in-Time Averaging without Restart Stability}
\label{sec:counterexample-restart}

In this subsection, we give an example showing that the averaging
principle for coupled fast--slow mean-field forward--backward
systems need not hold on arbitrary finite time intervals.
The example extends the deterministic construction in
\cite[Example~4.1]{SWY_GlobalFBSDE} to the mean-field setting.

\begin{example}
\label{ex:critical-horizon}
{\rm Fix \(0<T\leq1\). Consider the mean-field
forward-backward stochastic system
\begin{equation}
\label{eq:counter-original}
\begin{cases}
 \mathrm dX_t^\varepsilon
 =
 \displaystyle\frac12Y_t^\varepsilon\,\mathrm dt
 +\mathrm dW_t^1,
 \\[1mm]
 \mathrm dY_t^\varepsilon
 =
 \displaystyle\frac1\varepsilon
 \bigl(
 1+U_t^\varepsilon-Y_t^\varepsilon
 +\tfrac12\mathbb EY_t^\varepsilon
 \bigr)\,\mathrm dt
 +\sqrt{\frac2\varepsilon}\,\mathrm dW_t^2,
 \\[1mm]
 \mathrm dU_t^\varepsilon
 =
 V_t^{1,\varepsilon}\,\mathrm dW_t^1
 +V_t^{2,\varepsilon}\,\mathrm dW_t^2,
 \\[1mm]
 X_0^\varepsilon=-1,\qquad Y_0^\varepsilon=0,
 \\[1mm]
 U_T^\varepsilon
 =
 -X_T^\varepsilon
 +2\mathbb EX_T^\varepsilon
 -\rho\bigl(\mathbb EX_T^\varepsilon\bigr),
\end{cases}
\end{equation}
in which \(\rho(x)=\lvert x\rvert/(1+\lvert x\rvert)\), and
\(W^1,W^2\) are independent one-dimensional Brownian motions.
All coefficients are globally Lipschitz, and (H2) holds with
\(\kappa=1\), \(K_1=2\), and \(K_2=1/2\).

Define
\(\Delta_{\varepsilon,t}
=2\varepsilon\bigl(1-e^{-t/(2\varepsilon)}\bigr)\).
Since \(1-e^{-x}<x\) for \(x>0\),
\(0<\Delta_{\varepsilon,T}<T\).
For any square-integrable adapted solution,
\(u^\varepsilon:=\mathbb EU_t^\varepsilon\) is independent of \(t\).
Writing \(m_t^\varepsilon:=\mathbb EX_t^\varepsilon\), we have
\[
 \mathbb EY_t^\varepsilon
 =
 2(1+u^\varepsilon)
 \bigl(1-e^{-t/(2\varepsilon)}\bigr),
 \qquad
 m_t^\varepsilon
 =
 -1+\bigl(t-\Delta_{\varepsilon,t}\bigr)
 (1+u^\varepsilon).
\]
The terminal condition therefore gives
\begin{equation}
\label{eq:counter-original-terminal}
 \bigl(1-T+\Delta_{\varepsilon,T}\bigr)
 (1+u^\varepsilon)
 +\rho(m_T^\varepsilon)
 =0.
\end{equation}
Since \(1-T+\Delta_{\varepsilon,T}>0\), necessarily
\(u^\varepsilon<-1\). Set
\(q^\varepsilon:=-(1+u^\varepsilon)>0\). Then
\[
 m_T^\varepsilon
 =
 -1-\bigl(T-\Delta_{\varepsilon,T}\bigr)q^\varepsilon<0.
\]
Substituting this expression into
\eqref{eq:counter-original-terminal} yields
\begin{align}
 &\bigl(1-T+\Delta_{\varepsilon,T}\bigr)
 \bigl(T-\Delta_{\varepsilon,T}\bigr)
 (q^\varepsilon)^2
 +\bigl(2-3T+3\Delta_{\varepsilon,T}\bigr)q^\varepsilon
 -1
 =0.
\label{eq:counter-original-polynomial}
\end{align}
Clearly, \eqref{eq:counter-original-polynomial} has exactly one
positive root. Thus the means are uniquely determined.

For the centered components, write
\[
 \widehat X_t^\varepsilon
 :=X_t^\varepsilon-m_t^\varepsilon,
 \qquad
 \widehat Y_t^\varepsilon
 :=Y_t^\varepsilon-\mathbb EY_t^\varepsilon,
 \qquad
 \widehat U_t^\varepsilon
 :=U_t^\varepsilon-u^\varepsilon,
\]
and define
\[
 K_\varepsilon(r):=1-e^{-r/\varepsilon},
 \qquad
 D_\varepsilon(t)
 :=
 1+\frac12
 \bigl(T-t-\varepsilon K_\varepsilon(T-t)\bigr)
 \geq1.
\]
The centered terminal condition is
\(\widehat U_T^\varepsilon=-\widehat X_T^\varepsilon\).
Since
\(\widehat U_t^\varepsilon
=-\mathbb E[\widehat X_T^\varepsilon\mid\mathcal F_t]\),
a direct computation gives
\begin{equation}
\label{eq:counter-centered-relation}
 D_\varepsilon(t)\widehat U_t^\varepsilon
 =
 -\widehat X_t^\varepsilon
 -\frac{\varepsilon}{2}
 K_\varepsilon(T-t)\widehat Y_t^\varepsilon.
\end{equation}
Substitution into the centered forward equations gives a linear
SDE with a unique square-integrable solution.
It\^o's formula verifies the backward equation, with
\begin{equation}
\label{eq:counter-martingale-integrands}
 \widehat U_0^\varepsilon=0,
 \qquad
 V_t^{1,\varepsilon}
 =
 -\frac1{D_\varepsilon(t)},
 \qquad
 V_t^{2,\varepsilon}
 =
 -\sqrt{\frac{\varepsilon}{2}}\,
 \frac{K_\varepsilon(T-t)}{D_\varepsilon(t)}.
\end{equation}
It follows that \eqref{eq:counter-original} has a unique
square-integrable adapted solution for every \(0<T\leq1\) and
every \(\varepsilon>0\).

\medskip
\noindent
{\rm\textbf{The averaged system.}}
For a frozen slow pair
\(\Theta=(\Theta^x,\Theta^u)\) with law \(\eta\), the frozen fast
equation is
\[
 \mathrm dZ_t
 =
 \bigl(
 1+\Theta^u-Z_t+\tfrac12\mathbb EZ_t
 \bigr)\,\mathrm dt
 +\sqrt2\,\mathrm dB_t,
\]
where \(B\) is a Brownian motion independent of \((\Theta,Z_0)\).
Writing
\(m_u(\eta):=\int_{\mathbb R^2}u\,\eta(\mathrm d(x,u))\),
its unique invariant joint law is
\[
 \Pi^\eta(\mathrm d(x,u),\mathrm dz)
 =
 \eta(\mathrm d(x,u))\nu^{x,u;\eta}(\mathrm dz),
 \qquad
 \nu^{x,u;\eta}
 =
 \mathcal N\bigl(2+u+m_u(\eta),1\bigr).
\]
Moreover, two solutions with the same frozen environment and
Brownian motion satisfy
\[
 \mathbb E\lvert Z_t-Z_t'\rvert^2
 \leq
 e^{-t}\mathbb E\lvert Z_0-Z_0'\rvert^2.
\]
Consequently,
\[
 \bar b(x,u,\eta)
 =
 \frac12\int_{\mathbb R}z\,\nu^{x,u;\eta}(\mathrm dz)
 =
 1+\frac12u+\frac12m_u(\eta).
\]
It follows that the limiting forward-backward system can be
written as
\begin{equation}
\label{eq:counter-averaged}
\begin{cases}
 \mathrm d\bar X_t
 =
 \bigl(
 1+\tfrac12\bar U_t+\tfrac12\mathbb E\bar U_t
 \bigr)\,\mathrm dt
 +\mathrm dW_t^1,
 \\[1mm]
 \mathrm d\bar U_t
 =
 \bar V_t\,\mathrm dW_t^1,
 \\[1mm]
 \bar X_0=-1,
 \\[1mm]
 \bar U_T
 =
 -\bar X_T
 +2\mathbb E\bar X_T
 -\rho\bigl(\mathbb E\bar X_T\bigr).
\end{cases}
\end{equation}
Let us write
\(\bar u:=\mathbb E\bar U_t\) and
\(\bar m_t:=\mathbb E\bar X_t=-1+t(1+\bar u)\).
By the terminal condition,
\[
 (1-T)(1+\bar u)+\rho(\bar m_T)=0.
\]
If \(T<1\), then necessarily \(\bar u<-1\). Setting
\(\bar q:=-(1+\bar u)>0\), we obtain
\(\bar m_T=-1-T\bar q<0\) and hence
\begin{equation}
\label{eq:counter-averaged-polynomial}
 T(1-T)\bar q^2+(2-3T)\bar q-1=0.
\end{equation}
Then it is easy to verify that
\eqref{eq:counter-averaged-polynomial} has exactly one positive
root. Thus the means are uniquely determined for every \(T<1\).

If \(T=1\), then \(\bar m_1=\bar u\), and the terminal condition
gives \(\rho(\bar u)=0\). Consequently,
\begin{equation}
\label{eq:counter-averaged-critical}
 \bar u=0,
 \qquad
 \bar m_t=t-1.
\end{equation}
For every \(0<T\leq1\), define
\(D_0(t):=1+\frac12(T-t)\).
The centered components are uniquely determined by
\[
 \bar U_t-\bar u
 =
 -\int_0^t\frac1{D_0(s)}\,\mathrm dW_s^1,
 \qquad
 \bar X_t-\bar m_t
 =
 -D_0(t)(\bar U_t-\bar u),
 \qquad
 \bar V_t=-\frac1{D_0(t)}.
\]
Therefore, the averaged system is uniquely solvable for every
\(0<T\leq1\).

\medskip
\noindent
\rm{\textbf{Averaging on $[0,1)$.}}
Fix \(T<1\). Since
\(\Delta_{\varepsilon,T}\to0\), the coefficients in
\eqref{eq:counter-original-polynomial} converge to those in
\eqref{eq:counter-averaged-polynomial}.
Therefore, a direct computation yields
\[
 \lvert q^\varepsilon-\bar q\rvert
 \leq C_T\Delta_{\varepsilon,T}
 \leq C_T\varepsilon.
\]
Using
\[
 u^\varepsilon=-1-q^\varepsilon,
 \qquad
 m_t^\varepsilon
 =
 -1-\bigl(t-\Delta_{\varepsilon,t}\bigr)q^\varepsilon,
\]
together with
\[
 \bar u=-1-\bar q,
 \qquad
 \bar m_t=-1-t\bar q,
\]
we obtain
\[
 \lvert u^\varepsilon-\bar u\rvert
 +
 \sup_{0\leq t\leq T}
 \lvert m_t^\varepsilon-\bar m_t\rvert
 \leq C_T\varepsilon.
\]
Moreover,
\[
 \sup_{0\leq t\leq T}
 \lvert D_\varepsilon(t)-D_0(t)\rvert
 \leq\frac{\varepsilon}{2},
 \qquad
 D_\varepsilon(t)\geq1,
 \qquad
 D_0(t)\geq1.
\]
By \eqref{eq:counter-martingale-integrands}, It\^o's isometry,
and Doob's inequality,
\[
 \mathbb E\sup_{0\leq t\leq T}
 \lvert U_t^\varepsilon-\bar U_t\rvert^2
 +
 \mathbb E\int_0^T
 \bigl(
 \lvert V_t^{1,\varepsilon}-\bar V_t\rvert^2
 +\lvert V_t^{2,\varepsilon}\rvert^2
 \bigr)\,\mathrm dt
 \leq C_T\varepsilon.
\]
The centered fast equation gives
\[
 \widehat Y_t^\varepsilon
 =
 \frac1\varepsilon\int_0^t
 e^{-(t-s)/\varepsilon}
 \widehat U_s^\varepsilon\,\mathrm ds
 +
 \sqrt{\frac2\varepsilon}
 \int_0^t e^{-(t-s)/\varepsilon}\,\mathrm dW_s^2.
\]
Integration by parts and Doob's inequality therefore yield
\[
 \mathbb E\sup_{0\leq t\leq T}
 \lvert\widehat U_t^\varepsilon\rvert^2
 +
 \varepsilon\mathbb E\sup_{0\leq t\leq T}
 \lvert\widehat Y_t^\varepsilon\rvert^2
 \leq C_T,
 \qquad 0<\varepsilon\leq1.
\]
Using these estimates together with
\eqref{eq:counter-centered-relation} and the representation of
\(\bar X\), we obtain
\begin{align*}
 &\mathbb E\sup_{0\leq t\leq T}
 \lvert X_t^\varepsilon-\bar X_t\rvert^2
 +
 \mathbb E\sup_{0\leq t\leq T}
 \lvert U_t^\varepsilon-\bar U_t\rvert^2
 +
 \mathbb E\int_0^T
 \bigl(
 \lvert V_t^{1,\varepsilon}-\bar V_t\rvert^2
 +\lvert V_t^{2,\varepsilon}\rvert^2
 \bigr)\,\mathrm dt
 \leq C_T\varepsilon.
\end{align*}
Thus averaging holds on every fixed interval with \(T<1\).

\medskip
\noindent
\rm{\textbf{Failure at $T=1$.}}
Let \(T=1\) and write
\(\Delta_\varepsilon=\Delta_{\varepsilon,1}\).
Equation \eqref{eq:counter-original-polynomial} becomes
\[
 \Delta_\varepsilon(1-\Delta_\varepsilon)
 (q^\varepsilon)^2
 +(3\Delta_\varepsilon-1)q^\varepsilon-1=0.
\]
Its positive root satisfies
\[
 \Delta_\varepsilon q^\varepsilon
 =
 \frac{
  1-3\Delta_\varepsilon
  +\sqrt{1-2\Delta_\varepsilon+5\Delta_\varepsilon^2}
 }{
  2(1-\Delta_\varepsilon)
 }
 \longrightarrow1
\]
as \(\varepsilon\to0\).
Since \(\Delta_\varepsilon\sim2\varepsilon\),
\[
 q^\varepsilon\sim\frac1{2\varepsilon},
 \qquad
 \mathbb EU_t^\varepsilon
 =
 -1-q^\varepsilon
 \longrightarrow-\infty.
\]
In particular, \(U_0^\varepsilon=-1-q^\varepsilon\), whereas
\(\bar U_0=0\) by \eqref{eq:counter-averaged-critical}.
Hence
\[
 \mathbb E\lvert U_0^\varepsilon-\bar U_0\rvert^2
 =
 (1+q^\varepsilon)^2
 \sim\frac1{4\varepsilon^2}
 \longrightarrow\infty.
\]
Thus averaging fails at \(T=1\), although both the original and
averaged systems remain uniquely solvable.}
\end{example}

\begin{remark}\label{rem:failure of averaging}
{\rm The essential mechanism behind this counterexample is the failure of
Assumption~{\rm (H3)}. Indeed, restart the averaged system on $[s,1]$
from $\bar X_s=\eta\in\mathbb R$:
\[
\begin{cases}
 \dd\bar X_t
 =
 \bigl(1+\tfrac12\bar U_t+\tfrac12\mathbb E\bar U_t\bigr)\dd t
 +\dd W_t^1,
 \\[1mm]
 \dd\bar U_t=\bar V_t\dd W_t^1,
 \\[1mm]
 \bar U_1
 =
 -\bar X_1+2\mathbb E\bar X_1
 -\rho\bigl(\mathbb E\bar X_1\bigr).
\end{cases}
\]
Writing $y:=\mathbb E\bar U_t$ and $z:=\mathbb E\bar X_1$ gives
\[
 z=\eta+(1-s)(1+y),
 \qquad
 y=z-\rho(z),
\]
and hence
\[
 sz+(1-s)\rho(z)=\eta+1-s.
\]
For $s=0$ and $\eta=-\frac12$, this reduces to
$\rho(z)=\frac12$, which has the two solutions $z=1$ and $z=-1$.
Define
\[
 D_0(t):=1+\frac12(1-t),
 \qquad
 M_t:=\int_0^t\frac1{D_0(r)}\dd W_r^1.
\]
Accordingly, the averaged system admits the two distinct solutions
\[
\begin{aligned}
 \bigl(\bar X_t^1,\bar U_t^1\bigr)
 =
 \big(-\frac12+\frac32t+D_0(t)M_t,\frac12-M_t\big),
 \bigl(\bar X_t^2,\bar U_t^2\bigr)
 =
 \big(-\frac12-\frac12t+D_0(t)M_t,-\frac32-M_t\big),
\end{aligned}
\]
with $\bar V_t^1=\bar V_t^2=-D_0(t)^{-1}$.
The restarted averaged system is therefore not globally well posed.
In particular, the same restarted initial condition gives rise to
the two distinct backward values $\bar U_0^1=\frac12$ and
$\bar U_0^2=-\frac32$, so the restart map
\(\xi\mapsto\bar U_s^{s,\xi}\) is not even single valued.
Thus, the uniform restart stability required in \({\rm(H3)}\)
cannot hold.}
\end{remark}

\subsection{Sufficient Conditions}
\label{sec:H3-sufficient}

Assumption \({\rm(H3)}\) has two logically distinct parts: well-posedness of
the original \(\eps\)-system and uniform stability of all restarts of the
averaged system.  The counterexample above shows that the second part cannot
be replaced by well-posedness for only the single initial condition appearing
in the main theorem.  We next give several ways to verify the restart part.
Throughout this subsection, well-posedness of the original system for every
sufficiently small fixed \(\eps>0\) is assumed unless it is obtained from the
stated structure, as in Corollary~\ref{cor:decoupled-mf-bsde}.

The first criterion is convenient when a regular mean-field decoupling field
is available.

\begin{proposition}
\label{prop:master-field-H3}
Assume that the original system is uniquely solvable for every sufficiently
small fixed \(\eps>0\).  Suppose that the averaged equation is uniquely solvable for every
\(s\in[0,T]\) and \(\xi\in L^2(\Ff_s;\R^n)\).  Assume that there exist a
measurable function
\[
 \mathcal U:[0,T]\times\R^n\times\Ptwo(\R^n)\longrightarrow\R^p
\]
and \(L_{\mathcal U}>0\) such that
\begin{equation}\label{decouplingfield0}
     \bar U_s^{s,\xi}
 =\mathcal U\bigl(s,\xi,\Law(\xi)\bigr)
\end{equation}
and
\begin{equation}\label{eq:master-field-lipschitz}
 \abs{\mathcal U(t,x,\mu)-\mathcal U(t,x',\mu')}
 \le
 L_{\mathcal U}\bigl(\abs{x-x'}+W_2(\mu,\mu')\bigr).
\end{equation}
Then \({\rm(H3)}\) holds with
\(L_{\rm rst}=2L_{\mathcal U}\).
\end{proposition}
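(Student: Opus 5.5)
The argument is a direct pointwise estimate followed by an integration. The well-posedness parts of (H3) are assumed verbatim: unique solvability of the original system and of every restarted averaged system. So only the Lipschitz bound \eqref{eq:restart-lipschitz} needs proof, with a constant independent of \(s\) and of the laws of the initial data.

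Fix \(s\in[0,T]\) and \(\xi,\xi'\in L^2(\Ff_s;\R^n)\). By \eqref{decouplingfield0},
\[
 \bar U_s^{s,\xi}-\bar U_s^{s,\xi'}
 =\mathcal U\bigl(s,\xi,\Law(\xi)\bigr)-\mathcal U\bigl(s,\xi',\Law(\xi')\bigr)
 \quad\text{a.s.}
\]
Applying \eqref{eq:master-field-lipschitz} pointwise in \(\omega\) at \((x,\mu)=(\xi(\omega),\Law(\xi))\) and \((x',\mu')=(\xi'(\omega),\Law(\xi'))\) gives
\[
 \abs{\bar U_s^{s,\xi}-\bar U_s^{s,\xi'}}
 \le L_{\mathcal U}\bigl(\abs{\xi-\xi'}+W_2(\Law(\xi),\Law(\xi'))\bigr).
\]
Here the measure term is deterministic. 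Measurability of the left-hand side is not an issue, since both restart values are random variables by (H3)'s solvability.

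Next take the \(L^2\) norm and use Minkowski's inequality:
\[
 \norm{\bar U_s^{s,\xi}-\bar U_s^{s,\xi'}}_{L^2}
 \le L_{\mathcal U}\bigl(\norm{\xi-\xi'}_{L^2}+W_2(\Law(\xi),\Law(\xi'))\bigr).
\]
Since \(\xi\) and \(\xi'\) live on the same probability space, \((\xi,\xi')\) is a coupling of their laws. The standing inequality \(W_2^2(\Law(Z),\Law(Z'))\le\E\abs{Z-Z'}^2\) then bounds the last term by \(\norm{\xi-\xi'}_{L^2}\). The result is \eqref{eq:restart-lipschitz} with \(L_{\rm rst}=2L_{\mathcal U}\). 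This constant depends neither on \(s\) (the Lipschitz bound \eqref{eq:master-field-lipschitz} is uniform in \(t\)) nor on the laws of \(\xi,\xi'\).

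There is no real obstacle. The only point needing care is that \(\mathcal U\) is evaluated at a random state but a deterministic law, so the measure contribution must be handled by coupling rather than pointwise. The same-space coupling handles it directly.
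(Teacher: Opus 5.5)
Your proof is correct and follows essentially the same route as the paper. Like the paper, you apply the Lipschitz bound \eqref{eq:master-field-lipschitz} to the representation \eqref{decouplingfield0}, take $L^2$ norms, and bound $W_2(\Law(\xi),\Law(\xi'))\le\norm{\xi-\xi'}_{L^2}$ via the same-space coupling to obtain $L_{\rm rst}=2L_{\mathcal U}$.
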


\begin{proof}
For \(\mu=\Law(\xi)\) and \(\mu'=\Law(\xi')\),
\eqref{eq:master-field-lipschitz} and
\(W_2(\mu,\mu')\le\norm{\xi-\xi'}_{L^2}\) give
\begin{align*}
 \norm{\bar U_s^{s,\xi}-\bar U_s^{s,\xi'}}_{L^2}
 &\le
 L_{\mathcal U}
 \bigl(\norm{\xi-\xi'}_{L^2}+W_2(\mu,\mu')\bigr)\le2L_{\mathcal U}\norm{\xi-\xi'}_{L^2}.
\end{align*}
This is \eqref{eq:restart-lipschitz}.
\end{proof}

\begin{remark}\label{rem:decoupling}
    {\rm This condition is satisfied, for instance, on sufficiently short time
intervals under the regularity assumptions of
\cite{ChassagneuxCrisanDelarue2022}.  More precisely, if the coefficients
of the averaged mean-field FBSDE are globally Lipschitz in the Euclidean
and measure variables, then, on a sufficiently short time interval, the
associated decoupling field exists and is uniformly Lipschitz in both the
state and measure variables.  We emphasize, however, that extending such a regular decoupling
field to an arbitrary time horizon generally requires additional structural
conditions; smoothness of the coefficients alone is not sufficient for the
available continuation arguments.}
\end{remark}

Even when a decoupling field is not constructed, stochastic stability of the
restarted equations can be checked directly.  This is the stability criterion for the solution map
\(\xi\mapsto \bar U_s^{s,\xi}\) underlying
\cite[Proposition~2.2]{ReisingerStockingerZhang2020}, and it is exactly the
formulation adopted in \({\rm(H3)}\). The following
two consequences of that theory are particularly useful here.

\begin{proposition}
\label{prop:short-time-H3}
Assume that the original system is uniquely solvable for every sufficiently
small fixed \(\eps>0\).  There exists \(T_0>0\), depending only on the
Lipschitz constants of the averaged coefficients and the terminal condition,
such that \({\rm(H3)}\) holds whenever \(T\le T_0\).
\end{proposition}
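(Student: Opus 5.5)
The plan is a short-time contraction argument for the restarted averaged mean-field FBSDE, carried out in $L^2$. By Lemma~\ref{lem:averaged-coefficients}, the averaged coefficients $\bar b$ and $\bar F$ are globally Lipschitz in $(x,u,v,\eta)$; $\sigma$ and $\beta$ are Lipschitz by (H1). Let $\bar L$ be a common Lipschitz constant. Fix $s\in[0,T]$ and $\xi\in L^2(\Ff_s;\R^n)$.

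\textbf{Existence and uniqueness.} On $\mathcal S^2(s,T;\R^p)$, consider the map $\Phi$ defined as follows.
\begin{enumerate}
\item Given $\bar U$, solve the forward McKean--Vlasov SDE for $\bar X$ with $\bar U$ frozen in its coefficients. This is a standard Lipschitz McKean--Vlasov SDE, and the law argument $\Law(\bar X_t,\bar U_t)$ is handled by the coupling bound $W_2^2\le\E\abs{\cdot}^2$.
\item Then solve the mean-field BSDE with driver $\bar F(\bar X_t,\cdot,\cdot,\Law(\bar X_t,\cdot))$ and terminal value $\beta(\bar X_T,\Law(\bar X_T))$. Its solvability and a priori estimate come from Lemma~\ref{lem:operator-cancellation}, applied as in the proof of Lemma~\ref{lem:averaged-moment}.
\end{enumerate}
For two inputs $\bar U,\bar U'$, BDG and Gronwall give
\[
\E\sup_{t}\abs{\Delta X_t}^2\le C(T-s)\,\E\sup_t\abs{\Delta U_t}^2 .
\]
Lemma~\ref{lem:operator-cancellation}, applied with $r_t$ the $x$-difference of the drivers, gives
\[
\E\sup_t\abs{\Delta\Phi}^2\le C_{\bar L,T-s}\bigl(\E\abs{\Delta X_T}^2+(T-s)\textstyle\int\E\abs{\Delta X_t}^2\dd t\bigr)\le C\,(T-s)\,\E\sup\abs{\Delta U}^2 .
\]
Since $C_{\bar L,T-s}$ is bounded for $T-s\le1$, there is $T_0$, depending only on $\bar L$, such that $\Phi$ is a strict contraction whenever $T\le T_0$. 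This yields a unique solution, and it is adapted to $(\Ff^{s,1}_t)$ because each Picard step preserves this adaptedness.

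\textbf{Restart Lipschitz bound.} Let $\xi,\xi'$ be two initial data and take the difference of the two solutions. The forward estimate gives
\[
\E\sup\abs{\Delta X}^2\le C\E\abs{\xi-\xi'}^2+C(T-s)\E\sup\abs{\Delta U}^2 .
\]
Lemma~\ref{lem:operator-cancellation} applied to the backward difference gives
\[
\E\sup\abs{\Delta U}^2\le C\,\E\sup\abs{\Delta X}^2 ,
\]
where the terminal term contributes at most $4\bar L^2\E\abs{\Delta X_T}^2$, exactly as in Lemma~\ref{lem:averaged-flow-stability}. Substituting the first inequality into the second and absorbing the $C(T-s)$ term on the left (possible after shrinking $T_0$) gives $\norm{\bar U^{s,\xi}_s-\bar U^{s,\xi'}_s}_{L^2}\le L_{\rm rst}\norm{\xi-\xi'}_{L^2}$. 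The constant $L_{\rm rst}$ depends only on $\bar L$ and $T_0$, so it is uniform in $s$ and in the laws of the initial data. Well-posedness of the original $\eps$-system is assumed in the statement, so (H3) follows.

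\textbf{Main obstacle.} The delicate point is controlling the terminal coupling constant: the final Lipschitz bound comes out as roughly $C_{L,T-s}\cdot 4\bar L^2$ times a Gronwall factor. This constant does not blow up, but closing the contraction requires the smallness factor $(T-s)$ to multiply the entire loop. I will check that every constant in Lemma~\ref{lem:operator-cancellation} stays bounded as $T-s\to0$; the lemma's final assertion guarantees exactly this.
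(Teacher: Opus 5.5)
Your argument is correct, but it takes a different route from the paper. The paper's proof is essentially a citation. It notes that the averaged system is a coupled mean-field FBSDE with Lipschitz coefficients whose diffusion does not depend on the martingale integrand, so the compatibility condition $L_z^\sigma L^g<1$ of \cite[Corollary~2.3]{ReisingerStockingerZhang2020} holds trivially with $L_z^\sigma=0$. It then reads off short-time well-posedness and the $L^2$-stability of $\xi\mapsto\bar U_s^{s,\xi}$, uniformly in $s$, from that result.

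You instead run the Picard contraction yourself on $\mathcal S^2(s,T;\R^p)$, using only Lemma~\ref{lem:averaged-coefficients} and the $L^2$-operator estimate of Lemma~\ref{lem:operator-cancellation}. The key mechanism in your loop is right: the terminal contribution $4\bar L^2\E\abs{\Delta X_T}^2$ inherits the factor $T-s$ from the forward estimate, so the whole map carries that factor, and the restart bound follows by the same absorption. Your version buys self-containment. $T_0$ and $L_{\rm rst}$ are explicit in terms of $\bar L$, and the joint-law arguments $\Law(\bar X_t,\bar U_t)$, the restart filtration $(\Ff_t^{s,1})_{s\le t\le T}$, and the uniformity in $s$ and in the initial laws are handled directly rather than by checking that they fit the cited framework. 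The citation buys brevity and greater generality, for instance diffusions depending on the martingale integrand.

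One attribution needs fixing. Lemma~\ref{lem:operator-cancellation} is only an a priori estimate for a solution assumed to exist, so it does not give solvability of the inner mean-field BSDE in your step 2. Nor does Lemma~\ref{lem:averaged-moment}, which presupposes (H3). Existence there is standard, e.g.\ \cite[Theorem~A.1]{Li2018}, or a Picard iteration in an exponentially weighted norm in which each step is a BSDE with a given driver, solved by martingale representation in $(\Ff_t^{s,1})$. Alternatively, define your map on pairs $(\bar U,\bar V)\in\mathcal S^2\times\mathcal H^2$ and freeze both in the driver. Step 2 then reduces to a martingale representation, and the extra term $\E\int_s^T\abs{\Delta V_t}^2\dd t$ also enters with the factor $T-s$, so the contraction still closes.
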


\begin{proof}
The averaged system is a coupled mean-field FBSDE with globally
Lipschitz coefficients.  Moreover, its forward diffusion coefficient does
not depend on the backward martingale integrand, so that the compatibility
condition \(L_z^\sigma L^g<1\) in
\cite[Corollary~2.3]{ReisingerStockingerZhang2020} is automatically
satisfied with \(L_z^\sigma=0\).  Applying that result on each interval
\([s,T]\), with \(T-s\le T_0\), yields a constant \(C\), independent of
\(s\), such that
$
\norm{\bar U_s^{s,\xi}-\bar U_s^{s,\xi'}}_{L^2}
\le
C\norm{\xi-\xi'}_{L^2}.
$
Hence \({\rm(H3)}\) holds.
\end{proof}

For arbitrary finite time horizons, we use the generalized monotonicity
criterion of
\cite[Corollary~2.4]{ReisingerStockingerZhang2020}.  We state their
conditions below for the present averaged coefficients.  For \(i=1,2\), let
\[
 \Theta_i=(\xi_i,\upsilon_i,\zeta_i)
 \in L^2\bigl(\Omega;
 \R^n\times\R^p\times\R^{p\times d_1}\bigr),
\]
and define
\[
\begin{aligned}
 B_i&:=\bar b\bigl(\xi_i,\upsilon_i,\Law(\xi_i,\upsilon_i)\bigr),
 &\qquad
 \Sigma_i&:=\sigma\bigl(\xi_i,\upsilon_i,\Law(\xi_i,\upsilon_i)\bigr),\\
 F_i&:=\bar F\bigl(\xi_i,\upsilon_i,\zeta_i,
                   \Law(\xi_i,\upsilon_i)\bigr),
 &
 \Gamma_i&:=\beta\bigl(\xi_i,\Law(\xi_i)\bigr).
\end{aligned}
\]

\begin{proposition}[Generalized monotonicity]
\label{prop:monotonicity-H3}
Assume that the original system is uniquely solvable for every sufficiently
small fixed \(\eps>0\).  Suppose that there exist a matrix
\(G\in\R^{p\times n}\), constants
\(\alpha_1,\beta_1,\beta_2,L_\phi\ge0\), and nonnegative functionals
\(\phi_1(\xi_1,\xi_2)\) and \(\phi_2(\Theta_1,\Theta_2)\) such that
\begin{align*}
 &\E\ip{B_1-B_2}{G^\top(\upsilon_1-\upsilon_2)}
 +\E\ip{\Sigma_1-\Sigma_2}{G^\top(\zeta_1-\zeta_2)}
 \\[-1mm]
 &\qquad
 -\E\ip{F_1-F_2}{G(\xi_1-\xi_2)}
 \le
 -\beta_1\phi_1(\xi_1,\xi_2)
 -\beta_2\phi_2(\Theta_1,\Theta_2),
 \\
 &\E\ip{\Gamma_1-\Gamma_2}{G(\xi_1-\xi_2)}
 \ge
 \alpha_1\phi_1(\xi_1,\xi_2).
\end{align*}
Assume, in addition, that one of the following alternatives holds:
\begin{enumerate}
 \item[\rm(i)] \(\beta_2>0\) and
 \begin{equation*}
  \norm{B_1-B_2}_{L^2}^2
  +\norm{\Sigma_1-\Sigma_2}_{L^2}^2
  \le
  L_\phi\Bigl(
   \norm{\xi_1-\xi_2}_{L^2}^2
   +\phi_2(\Theta_1,\Theta_2)
  \Bigr).
 \end{equation*}

 \item[\rm(ii)] \(\alpha_1,\beta_1>0\).  Set
 \[
  \widehat F_i
  :=
  \bar F\bigl(
   \xi_i,\upsilon_2,\zeta_2,
   \Law(\xi_i,\upsilon_2)
  \bigr).
 \]
 Then
 \begin{equation*}
  \norm{\widehat F_1-\widehat F_2}_{L^2}^2
  +\norm{\Gamma_1-\Gamma_2}_{L^2}^2
  \le
  L_\phi\phi_1(\xi_1,\xi_2).
 \end{equation*}
\end{enumerate}
Then \({\rm(H3)}\) holds on every finite time interval.
\end{proposition}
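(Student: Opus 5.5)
The plan is to show that the restarted averaged system on each interval \([s,T]\) falls under the generalized monotonicity framework of \cite[Corollary~2.4]{ReisingerStockingerZhang2020}, and then to read off (H3) from the stability estimate that corollary supplies. We first record what is already known about the coefficients. Lemma~\ref{lem:averaged-coefficients} shows that \(\bar b\) and \(\bar F\) are globally Lipschitz in the Euclidean and \(W_2\) variables, and \eqref{eq:H1-sigma} and \eqref{eq:H1-beta} give the same for \(\sigma\) and \(\beta\). Since all law arguments are unconditional laws of square-integrable random variables, \(W_2(\Law(\xi_1,\upsilon_1),\Law(\xi_2,\upsilon_2))\le\norm{(\xi_1-\xi_2,\upsilon_1-\upsilon_2)}_{L^2}\). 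Hence every coefficient, viewed as a map on \(L^2\), is \(L^2\)-Lipschitz in exactly the sense required there. As in the proof of Proposition~\ref{prop:short-time-H3}, \(\sigma\) does not depend on \(\bar V\), so the compatibility constant \(L_z^\sigma\) vanishes.

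Next we fix \(s\in[0,T]\) and restart from \(\xi\in L^2(\Ff_s;\R^n)\) on \([s,T]\) with the filtration \((\Ff_t^{s,1})\). The two displayed monotonicity inequalities are stated pointwise in \(L^2(\Omega)\) for arbitrary \(\Theta_1,\Theta_2\). They therefore hold for every \(t\in[s,T]\) and every admissible pair of processes, uniformly in \(s\), which matches the hypotheses of that corollary on \([s,T]\). Under alternative (i) or (ii), the corollary then gives existence and uniqueness of an adapted solution \((\bar X^{s,\xi},\bar U^{s,\xi},\bar V^{s,\xi})\) in \(\mathcal S^2\times\mathcal S^2\times\mathcal H^2\). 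It also gives the stability bound
\[
\E\sup_{s\le t\le T}\abs{\bar U_t^{s,\xi}-\bar U_t^{s,\xi'}}^2\le C\,\E\abs{\xi-\xi'}^2 .
\]

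The key step, and the main point to check, is that the constant \(C\) depends only on \(T\), on \(\alpha_1,\beta_1,\beta_2,L_\phi\), on \(\abs G\), and on the Lipschitz constants, and not on \(s\) or on \(\Law(\xi)\). The route is to examine the standard proof by continuation in a parameter \(\lambda\in[0,1]\). There the step size and the a priori estimate come from applying Itô's formula to \(\ip{G\Delta X_t}{\Delta U_t}\) on \([s,T]\), combined with the monotonicity inequalities and Gronwall on an interval of length \(T-s\le T\). Each of these ingredients is monotone in the interval length, so the bounds are uniform in \(s\). They are also unchanged by the law of the data, because only differences enter through the \(L^2\) structure.

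Evaluating the stability bound at \(t=s\) yields \eqref{eq:restart-lipschitz} with \(L_{\rm rst}=C^{1/2}\). Adaptedness to \(\Ff^{s,1}\) follows because the data \(\xi\) is \(\Ff_s\)-measurable and only \(W^1\)-increments enter. Together with the assumed solvability of the original system, this gives (H3).
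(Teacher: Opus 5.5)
Your proposal is correct and matches the paper's argument. The paper likewise notes that the stated conditions are exactly the generalized monotonicity hypotheses of \cite[Corollary~2.4]{ReisingerStockingerZhang2020} for the averaged coefficients, and from that result it obtains unique solvability of every restart and the $L^2$-Lipschitz bound on $\xi\mapsto\bar U_s^{s,\xi}$, uniformly in $s$; your extra remarks on independence of the constant from $s$ and from $\Law(\xi)$, and on adaptedness to $(\Ff_t^{s,1})$, spell out details the paper leaves implicit.
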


\begin{proof}
The assumptions above are precisely the generalized monotonicity conditions
of \cite[Corollary~2.4]{ReisingerStockingerZhang2020}, specialized to the
present averaged coefficients.  Their continuation argument and stability
estimate yield unique solvability for every restarted averaged system and,
uniformly in \(s\),
\[
 \norm{\bar U_s^{s,\xi}-\bar U_s^{s,\xi'}}_{L^2}
 \le
 C_T\norm{\xi-\xi'}_{L^2}.
\]
Hence \({\rm(H3)}\) follows.
\end{proof}

The classical \(G\)-monotonicity condition for coupled
mean-field FBSDEs, as in
\cite[Assumption~(A.1)]{BensoussanYamZhang2015}, is recovered by taking
\(G\) of full rank and
\[
 \phi_1(\xi_1,\xi_2)
 =
 \norm{G(\xi_1-\xi_2)}_{L^2}^2,
 \qquad
 \phi_2(\Theta_1,\Theta_2)
 =
 \norm{G^\top(\upsilon_1-\upsilon_2)}_{L^2}^2
 +
 \norm{G^\top(\zeta_1-\zeta_2)}_{L^2}^2;
\]
see also \cite[Remark~2.4]{ReisingerStockingerZhang2020}.

\begin{remark}
{\rm
At the end of this section, we further explain why the present approach is
particularly useful for the study of two-scale mean-field FBSDEs. One might
naturally attempt to establish the averaging principle by means of the
Poisson equation method. For instance, in \cite{BriandHu1999}, an averaging
principle was obtained for coupled forward-backward stochastic differential
equations, with a coupling structure similar to the one considered here but
without distribution dependence, by exploiting a Poisson equation argument.
Their analysis, however, relies essentially on regularity properties of the
quasilinear PDE associated with the coupled FBSDE; see, in particular,
Lemma 3.1 therein. In the present setting, this amounts to requiring suitable
regularity of the decoupling field in \eqref{decouplingfield0}. As pointed
out in Remark \ref{rem:decoupling}, such regularity is in general difficult
to establish and does not follow merely from smoothness of the coefficients.
Consequently, the Poisson equation approaches on Wasserstein space developed
in \cite{HouLiXie2024,LiWuXie2024} do not appear to extend directly to the
averaging problem for two-scale mean-field FBSDEs. The present method avoids
this regularity requirement. We finally emphasize that one of the main
advantages of the Poisson equation approach is its ability to yield the
optimal convergence rate. The method developed here retains this advantage
and leads to the same optimal rate.
}
\end{remark}

\section{Applications in Mean-Field Control}
\label{sec:mf-control}

We apply Theorem~\ref{thm:main} to a class of mean-field stochastic
control problems by replacing the implicit mean-field optimality
condition with a fast dissipative relaxation.
This extends the fast-relaxation construction in
\cite[Section~5]{SWY_GlobalFBSDE} to the mean-field setting.
Under the assumptions below, the resulting fast--slow system is
globally well posed, and the averaged Pontryagin system satisfies
the restart stability required in \({\rm(H3)}\).
We also present a nonquadratic numerical example to illustrate
the approximation of the optimal control by the fast relaxation.

\subsection{Mean-Field Control and Fast Pontryagin Relaxation}
\label{sec:mf-control-problem}

In this section, the backward variable plays the role of the adjoint
process associated with the state variable.  We therefore take \(p=n\).
For any admissible filtration \(\mathbb G\), define the corresponding
Hilbert space of controls by
\[
 \mathcal A_{\mathbb G}(s,T)
 :=
 \mathcal H_{\mathbb G}^2(s,T;\R^m).
\]
In particular, we write
\[
 \mathcal A:=\mathcal A_{\mathbb F}(0,T),
 \qquad
 \mathbb F^{s,1}:=(\Ff_t^{s,1})_{s\le t\le T}.
\]
The former will be used for the original fast--slow system, whereas the
latter is the natural filtration for the restarted averaged control
problem.

Let
\[
 A,\widehat A\in\R^{n\times n},
 \qquad
 B,\widehat B\in\R^{n\times m},
 \qquad
 c\in\R^n,
\]
and let \(S_j\in\R^{n\times n}\) and \(s_j\in\R^n\),
\(1\le j\le d_1\), be fixed.  For
\(\mu\in\Ptwo(\R^n\times\R^m)\), define the mean state and control by
\[
 m_x(\mu):=\int_{\R^{n+m}}x\,\mu(\dd x,\dd a),
 \qquad
 m_a(\mu):=\int_{\R^{n+m}}a\,\mu(\dd x,\dd a).
\]
We then define the affine drift and diffusion coefficients by
\begin{align}
 &\qquad \qquad \qquad b(x,a,\mu)
 :=
 Ax+Ba+\widehat A\,m_x(\mu)+\widehat B\,m_a(\mu)+c,
 \label{eq:mf-control-drift}\\
 &\sigma^j(x)
 :=
 S_jx+s_j,
 \qquad 1\le j\le d_1, \qquad  \sigma(x)
 :=
 \bigl(
 \sigma^1(x),\ldots,\sigma^{d_1}(x)
 \bigr)
 \in\R^{n\times d_1}.
 \notag
\end{align}
For \(\alpha\in\mathcal A\), consider
\begin{equation}
 \dd X_t^\alpha
 =
 b\bigl(
 X_t^\alpha,\alpha_t,\Law(X_t^\alpha,\alpha_t)
 \bigr)\dd t
 +\sigma(X_t^\alpha)\dd W_t^1,
 \qquad
 X_0^\alpha=x.
 \label{eq:mf-controlled-state}
\end{equation}
Let
\[
 \ell:\R^n\times\R^m\times\Ptwo(\R^{n+m})\longrightarrow\R,
 \qquad
 \varphi:\R^n\times\Ptwo(\R^n)\longrightarrow\R,
\]
and define
\begin{equation}
 J_x(\alpha)
 :=
 \E\bigl[
 \varphi\bigl(X_T^\alpha,\Law(X_T^\alpha)\bigr)
 +\int_0^T
 \ell\bigl(
 X_t^\alpha,\alpha_t,\Law(X_t^\alpha,\alpha_t)
 \bigr)\dd t
 \bigr],
 \qquad
 \mathcal V(x):=\inf_{\alpha\in\mathcal A}J_x(\alpha).
 \label{eq:mf-control-cost}
\end{equation}

We use the Lions derivative convention; see, for example,
\cite{CarmonaDelarue2018}. For
\(\mu\in\Ptwo(\R^n\times\R^m)\), the Lions derivative of
\(\ell(x,a,\cdot)\) is an \(\R^{n+m}\)-valued map.  We decompose it
according to the state and control coordinates as
\[
 \partial_\mu\ell
 =
 \bigl(
 \partial_\mu^x\ell,
 \partial_\mu^a\ell
 \bigr),
\]
where
\(\partial_\mu^x\ell\in\R^n\) and
\(\partial_\mu^a\ell\in\R^m\).
We then define the complete lifted derivatives by
\begin{align}
 L_x(x,a,\mu)
 &:=
 \ell_x(x,a,\mu)
 +\int_{\R^{n+m}}
 \partial_\mu^x\ell(x',a',\mu)(x,a)\,
 \mu(\dd x',\dd a'),
 \label{eq:mf-control-Lx}\\
 L_a(x,a,\mu)
 &:=
 \ell_a(x,a,\mu)
 +\int_{\R^{n+m}}
 \partial_\mu^a\ell(x',a',\mu)(x,a)\,
 \mu(\dd x',\dd a'),
 \notag\\
 \Psi(x,\nu)
 &:=
 \varphi_x(x,\nu)
 +\int_{\R^n}
 \partial_\nu\varphi(x',\nu)(x)\,\nu(\dd x').
 \notag
\end{align}
Thus, for square-integrable random variables \((X,\alpha)\),
the Fr\'echet gradient of
\[
 {\boldsymbol L}(X,\alpha)
 :=
 \E\bigl[
 \ell(X,\alpha,\Law(X,\alpha))
 \bigr]
\]
is
\[
 D{\boldsymbol L}(X,\alpha)
 =
 \bigl(
 L_x(X,\alpha,\Law(X,\alpha)),
 L_a(X,\alpha,\Law(X,\alpha))
 \bigr).
\]
Similarly, the gradient of
\(
 {\boldsymbol\Phi}(X)
 :=
 \E[\varphi(X,\Law(X))]
\)
is \(\Psi(X,\Law(X))\).

\vspace{0.3cm}

We impose the following two assumptions.

\medskip
\noindent
\textbf{(C1) Lifted regularity and convexity.}
The functions \(\ell\) and \(\varphi\) are continuously differentiable
in their Euclidean variables and Lions differentiable in their measure
variables.  There exists a constant \(C>0\) such that, for all
\(x,x'\in\R^n\), \(a,a'\in\R^m\), and
\(\mu,\mu'\in\Ptwo(\R^{n+m})\),
\begin{align*}
 &\abs{L_x(x,a,\mu)-L_x(x',a',\mu')}
 +\abs{L_a(x,a,\mu)-L_a(x',a',\mu')}
 \le
 C\bigl(
 \abs{x-x'}+\abs{a-a'}+W_2(\mu,\mu')
 \bigr),
\end{align*}
and, for all \(x,x'\in\R^n\) and
\(\nu,\nu'\in\Ptwo(\R^n)\),
\begin{equation*}
 \abs{\Psi(x,\nu)-\Psi(x',\nu')}
 \le
 C\bigl(
 \abs{x-x'}+W_2(\nu,\nu')
 \bigr).
\end{equation*}
Moreover,
\begin{align*}
 \abs{L_x(x,a,\mu)}
 +\abs{L_a(x,a,\mu)}
 &\le
 C\Bigl(
 1+\abs{x}+\abs{a}
 +
 \bigl[
 \int_{\R^{n+m}}
 \bigl(\abs{x'}^2+\abs{a'}^2\bigr)
 \mu(\dd x',\dd a')
 \bigr]^{1/2}
 \Bigr),
 \\
 \abs{\Psi(x,\nu)}
 &\le
 C\bigl(
 1+\abs{x}
 +
 \bigl[
 \int_{\R^n}
 \abs{x'}^2\,\nu(\dd x')
 \bigr]^{1/2}
 \bigr).
\end{align*}

There exists \(\lambda>0\) such that, on every rich probability space
and for all square-integrable \((X_i,\alpha_i)\), \(i=1,2\),
\begin{align}
 &\E\bigl\langle
 L_x\bigl(
 X_1,\alpha_1,\Law(X_1,\alpha_1)
 \bigr)
 -
 L_x\bigl(
 X_2,\alpha_2,\Law(X_2,\alpha_2)
 \bigr),
 X_1-X_2
 \bigr\rangle
 \notag\\
 &\quad+
 \E\bigl\langle
 L_a\bigl(
 X_1,\alpha_1,\Law(X_1,\alpha_1)
 \bigr)
 -
 L_a\bigl(
 X_2,\alpha_2,\Law(X_2,\alpha_2)
 \bigr),
 \alpha_1-\alpha_2
 \bigr\rangle
 \notag\\
 &\qquad\ge
 \lambda\E\abs{\alpha_1-\alpha_2}^2,
 \label{eq:mf-control-lifted-strong-convexity}
\end{align}
and
\begin{equation}
 \E\bigl\langle
 \Psi\bigl(X_1,\Law(X_1)\bigr)
 -
 \Psi\bigl(X_2,\Law(X_2)\bigr),
 X_1-X_2
 \bigr\rangle
 \ge0.
 \label{eq:mf-control-terminal-convexity}
\end{equation}
Thus \({\boldsymbol L}\) is convex and uniformly strongly convex in the
control direction, whereas \({\boldsymbol\Phi}\) is convex. Equivalently, these conditions correspond to \(L\)-convexity of the
underlying mean-field cost functionals, with uniform strong convexity in
the control direction for the running cost; see, e.g.,
\cite[Section~5.5]{CarmonaDelarue2018}.

For
\(\rho\in\Ptwo(\R^n\times\R^n\times\R^m)\), whose coordinates are ordered as
\((x,u,a)\), let
\(\pr_{x,a}(x,u,a):=(x,a)\) and put
\[
 \mu_\rho:=(\pr_{x,a})_\#\rho,
 \qquad
 \overline u_\rho
 :=
 \int_{\R^{2n+m}}u'\,\rho(\dd x',\dd u',\dd a').
\]
For
\[
 v=(v^1,v^2)
 \in\R^{n\times d_1}\times\R^{n\times d_2},
 \qquad
 v^1=(v^{1,1},\ldots,v^{1,d_1}),
\]
define the Pontryagin residual and adjoint generator by
\begin{align}
 \mathcal R(x,a,u,\rho)
 &:=
 L_a(x,a,\mu_\rho)
 +B^\top u+\widehat B^\top\overline u_\rho,
 \label{eq:mf-control-residual}\\
 \mathcal G(x,a,u,v,\rho)
 &:=
 L_x(x,a,\mu_\rho)
 +A^\top u+\widehat A^\top\overline u_\rho
 +\sum_{j=1}^{d_1}S_j^\top v^{1,j}.
 \label{eq:mf-control-adjoint-generator}
\end{align}
Here the integral contributions contained in \(L_x\) and \(L_a\) arise
from the dependence of the cost on the joint law. The terms involving
\(\widehat A\) and \(\widehat B\) come from the dependence of the drift on
the mean state and control.

\medskip
\noindent
\textbf{(C2) Strong monotonicity of the Pontryagin residual.}
There exist constants
\[
 \kappa>K_2\ge0,
 \qquad K_1\ge0,
\]
such that
\begin{align}
 &2\bigl\langle a-a',
 \mathcal R(x,a,u,\rho)
 -\mathcal R(x',a',u',\rho')
 \bigr\rangle
 \notag\\
 &\qquad\ge
 \kappa\abs{a-a'}^2
 -K_1\bigl(\abs{x-x'}^2+\abs{u-u'}^2\bigr)
 -K_2W_2^2(\rho,\rho').
 \label{eq:mf-control-residual-gap}
\end{align}
Condition \eqref{eq:mf-control-residual-gap} requires the strong
monotonicity of \(\mathcal R\) in the control variable to dominate the
variations caused by its dependence on \(x\), \(u\), and the law variable
\(\rho\). In particular, the strict inequality \(\kappa>K_2\) ensures
that a positive monotonicity margin remains after the contribution of the
law dependence is taken into account. For instance,
\eqref{eq:mf-control-residual-gap} follows if \(\mathcal R\) is uniformly
strongly monotone in \(a\) and Lipschitz in \((x,u,\rho)\), with the
monotonicity in \(a\) sufficiently strong relative to its dependence on
\(\rho\), after applying Young's inequality.

For a given control \(\alpha\), let \((U^\alpha,V^\alpha)\) be the adjoint
pair:
\begin{equation}
\left\{
\begin{aligned}
 \dd U_t^\alpha
 &=
 -\mathcal G\bigl(
 X_t^\alpha,\alpha_t,U_t^\alpha,V_t^\alpha,
 \Law(X_t^\alpha,U_t^\alpha,\alpha_t)
 \bigr)\dd t
 +V_t^{1,\alpha}\dd W_t^1+V_t^{2,\alpha}\dd W_t^2,
\\
 U_T^\alpha
 &=
 \Psi\bigl(X_T^\alpha,\Law(X_T^\alpha)\bigr).
\end{aligned}
\right.
\label{eq:mf-control-adjoint}
\end{equation}
The affine structure of \eqref{eq:mf-control-drift} and the definitions
\eqref{eq:mf-control-Lx}--\eqref{eq:mf-control-adjoint-generator} yield the
first variation
\begin{equation}
 DJ_x(\alpha)[\gamma]
 =
 \E\int_0^T
 \bigl\langle
 \mathcal R\bigl(
 X_t^\alpha,\alpha_t,U_t^\alpha,
 \Law(X_t^\alpha,U_t^\alpha,\alpha_t)
 \bigr),
 \gamma_t
 \bigr\rangle\dd t,
 \qquad \gamma\in\mathcal A.
 \label{eq:mf-control-first-variation}
\end{equation}
Indeed, differentiating the lifted running and terminal costs gives
\(L_x\), \(L_a\), and \(\Psi\), and the standard adjoint argument yields
\eqref{eq:mf-control-first-variation}. Hence, if \(\alpha^*\) is optimal,
the first-order condition \(DJ_x(\alpha^*)=0\) is equivalent to
\begin{equation}
 \mathcal R\bigl(
 X_t^{\alpha^*},\alpha_t^*,U_t^{\alpha^*},
 \Law(X_t^{\alpha^*},U_t^{\alpha^*},\alpha_t^*)
 \bigr)
 =0,
 \qquad
 \dd t\otimes\dd\mathbb{P}\text{-a.e.}
 \label{eq:mf-control-stationarity}
\end{equation}
This is the stationarity condition in the mean-field stochastic maximum
principle; see, for example, \cite[Section~4.1]{CarmonaDelarue2015} and
\cite[Section~6.3.1]{CarmonaDelarue2018}.

Instead of solving the implicit, self-consistent stationarity equation in
\eqref{eq:mf-control-stationarity} at every time, we introduce
\begin{equation}
\left\{
\begin{aligned}
 \dd X_t^\eps
 &=
 \bigl[
 A X_t^\eps
 +B Y_t^\eps
 +\widehat A\,\E X_t^\eps
 +\widehat B\,\E Y_t^\eps
 +c
 \bigr]\dd t
 +\sum_{j=1}^{d_1}
 \bigl(S_jX_t^\eps+s_j\bigr)\dd W_t^{1,j},
\\
 \dd Y_t^\eps
 &=
 -\frac1\eps
 \bigl[
 L_a\bigl(
 X_t^\eps,Y_t^\eps,\Law(X_t^\eps,Y_t^\eps)
 \bigr)
 +B^\top U_t^\eps
 +\widehat B^\top\E U_t^\eps
 \bigr]\dd t,
\\
 \dd U_t^\eps
 &=
 -\bigl[
 L_x\bigl(
 X_t^\eps,Y_t^\eps,\Law(X_t^\eps,Y_t^\eps)
 \bigr)
 +A^\top U_t^\eps
 +\widehat A^\top\E U_t^\eps
 +\sum_{j=1}^{d_1}
 S_j^\top V_t^{1,\eps,j}
 \bigr]\dd t
 \\
 &\hspace{7cm}
 +V_t^{1,\eps}\dd W_t^1
 +V_t^{2,\eps}\dd W_t^2,
\\
 X_0^\eps
 &=x,
 \qquad
 Y_0^\eps=y,
 \qquad
 U_T^\eps
 =
 \Psi\bigl(
 X_T^\eps,\Law(X_T^\eps)
 \bigr).
\end{aligned}
\right.
\label{eq:mf-control-relaxation}
\end{equation}
Every adapted solution \(Y^\eps\) belongs to \(\mathcal A\), and
\((U^\eps,V^\eps)\) is the adjoint pair associated with the control
\(Y^\eps\). By \eqref{eq:mf-control-first-variation}, the corresponding
Pontryagin residual represents the gradient \(DJ_x(Y^\eps)\) in
\(\mathcal A\). Hence the fast equation can be written as
\begin{equation}
 \eps\dot Y^\eps+DJ_x(Y^\eps)=0,
 \qquad Y_0^\eps=y.
 \label{eq:mf-control-gradient-flow}
\end{equation}
Thus the fast equation is driven by the gradient \(DJ_x\) of the
mean-field control cost.

We next identify its frozen equilibrium. For
\(\eta\in\Ptwo(\R^n\times\R^n)\), there exists a unique measurable map
\[
 \Gamma(\,\cdot\,,\,\cdot\,,\eta):
 \R^n\times\R^n\longrightarrow\R^m
\]
such that, if \((X,U)\sim\eta\), then
\begin{equation*}
 \Pi^\eta
 :=
 \Law\bigl(
 X,U,\Gamma(X,U,\eta)
 \bigr)
 =
 \bigl(
 (x,u)\mapsto
 (x,u,\Gamma(x,u,\eta))
 \bigr)_\#\eta,
\end{equation*}
and
\begin{equation}
 \mathcal R\bigl(
 x,\Gamma(x,u,\eta),u,\Pi^\eta
 \bigr)
 =0
 \quad\text{for }\eta\text{-a.e. }(x,u).
 \label{eq:mf-control-self-consistent-stationarity}
\end{equation}
To justify this construction, fix
\(\eta\in\Ptwo(\R^n\times\R^n)\). For a probability measure
\(\rho\) with \((x,u)\)-marginal \(\eta\), let
\(\gamma_\rho(x,u)\) denote the unique solution of
\[
 \mathcal R\bigl(x,\gamma_\rho(x,u),u,\rho\bigr)=0.
\]
Indeed, by \eqref{eq:mf-control-residual-gap}, the map
\(a\mapsto\mathcal R(x,a,u,\rho)\) is strongly monotone, so that this
zero is unique. Define
\[
 \mathcal T_\eta(\rho)
 :=
 \Law\bigl(
 X,U,\gamma_\rho(X,U)
 \bigr),
 \qquad (X,U)\sim\eta.
\]
If \(\rho\) and \(\rho'\) have the same \((x,u)\)-marginal \(\eta\),
then \eqref{eq:mf-control-residual-gap}, applied to
\(\gamma_\rho(x,u)\) and \(\gamma_{\rho'}(x,u)\), gives
\[
 \bigl|
 \gamma_\rho(x,u)-\gamma_{\rho'}(x,u)
 \bigr|^2
 \le
 \frac{K_2}{\kappa}W_2^2(\rho,\rho').
\]
Using the coupling induced by the same random variable
\((X,U)\sim\eta\), we obtain
\[
 W_2^2\bigl(
 \mathcal T_\eta(\rho),
 \mathcal T_\eta(\rho')
 \bigr)
 \le
 \frac{K_2}{\kappa}W_2^2(\rho,\rho').
\]
Since \(K_2/\kappa<1\), the map \(\mathcal T_\eta\) is a contraction.
Denote its unique fixed point by \(\Pi^\eta\), and define
\[
 \Gamma(x,u,\eta)
 :=
 \gamma_{\Pi^\eta}(x,u).
\]
By the fixed-point property,
\[
 \Pi^\eta
 =
 \mathcal T_\eta(\Pi^\eta)
 =
 \Law\bigl(
 X,U,\Gamma(X,U,\eta)
 \bigr),
\]
and hence
\[
 \mathcal R\bigl(
 x,\Gamma(x,u,\eta),u,\Pi^\eta
 \bigr)=0
 \quad\text{for }\eta\text{-a.e. }(x,u).
\]
Since the frozen fast flow is contractive and has the unique equilibrium
\(\Gamma(x,u,\eta)\), its unique invariant probability measure is the
Dirac mass at this equilibrium. Hence its conditional invariant kernel is
\begin{equation}
 \nu^{x,u;\eta}
 =
 \delta_{\Gamma(x,u,\eta)}.
 \label{eq:mf-control-dirac-kernel}
\end{equation}
Moreover, Lemma~\ref{lem:conditional-invariant-stability} and
\eqref{eq:mf-control-dirac-kernel} yield
\begin{equation*}
 \bigl|
 \Gamma(x,u,\eta)
 -
 \Gamma(x',u',\eta')
 \bigr|
 \le
 C\bigl(
 \abs{x-x'}
 +\abs{u-u'}
 +W_2(\eta,\eta')
 \bigr).
\end{equation*}

Set
\[
 \eta_t^*:=\Law(\bar X_t,\bar U_t),
 \qquad
 \alpha_t^*:=\Gamma(\bar X_t,\bar U_t,\eta_t^*).
\]
Then
\[
 \Pi^{\eta_t^*}
 =
 \Law(\bar X_t,\bar U_t,\alpha_t^*),
\]
and the averaged equation associated with
\eqref{eq:mf-control-relaxation} is precisely
\begin{equation}
\left\{
\begin{aligned}
 \dd\bar X_t
 &=
 \bigl[
 A\bar X_t
 +B\alpha_t^*
 +\widehat A\,\E\bar X_t
 +\widehat B\,\E\alpha_t^*
 +c
 \bigr]\dd t
 +\sum_{j=1}^{d_1}
 \bigl(S_j\bar X_t+s_j\bigr)\dd W_t^{1,j},
\\
 \dd\bar U_t
 &=
 -\bigl[
 L_x\bigl(
 \bar X_t,\alpha_t^*,\Law(\bar X_t,\alpha_t^*)
 \bigr)
 +A^\top\bar U_t
 +\widehat A^\top\E\bar U_t
 +\sum_{j=1}^{d_1}
 S_j^\top\bar V_t^j
 \bigr]\dd t
 +\bar V_t\dd W_t^1,
\\
 L_a&\bigl(
 \bar X_t,\alpha_t^*,\Law(\bar X_t,\alpha_t^*)
 \bigr)
 +B^\top\bar U_t
 +\widehat B^\top\E\bar U_t=0,
\\
 \bar X_0&=x,
 \qquad
 \bar U_T=\Psi(\bar X_T,\Law(\bar X_T)).
\end{aligned}
\right.
\label{eq:mf-control-pontryagin}
\end{equation}
Thus, in the averaging limit, the fast residual dynamics is replaced by
the mean-field Pontryagin stationarity condition
\[
 \mathcal R(
 \bar X_t,\alpha_t^*,\bar U_t,\Pi^{\eta_t^*}
 )=0.
\]

\subsection{Well-Posedness, Averaging, and Near-Optimality}
\label{sec:mf-control-H3}

We next establish the well-posedness of the mean-field control system and its averaged counterpart. The argument is based on the structural monotonicity of the control problem.

\begin{proposition}
\label{prop:mf-control-well-posedness}
Assume \({\rm(C1)}\)--\({\rm(C2)}\).  Then, for every \(T>0\) and every
\(\eps>0\), system \eqref{eq:mf-control-relaxation} has a unique
square-integrable adapted solution.  For every \(s\in[0,T]\) and
\(\xi\in L^2(\Ff_s;\R^n)\), the restarted version of
\eqref{eq:mf-control-pontryagin}, adapted to
\((\Ff_t^{s,1})_{s\le t\le T}\) and satisfying \(\bar X_s=\xi\), has a
unique solution.  Moreover, uniformly in \(s\),
\begin{equation}
 \norm{\bar U_s^{s,\xi}-\bar U_s^{s,\xi'}}_{L^2}
 \le C_T\norm{\xi-\xi'}_{L^2}.
 \label{eq:mf-control-restart-estimate}
\end{equation}
Consequently, \({\rm(H3)}\) holds for
\eqref{eq:mf-control-relaxation}.  Finally, the control
\(\alpha^*\) in \eqref{eq:mf-control-pontryagin} is the unique minimizer
of \eqref{eq:mf-control-cost}.
\end{proposition}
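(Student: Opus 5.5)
Our plan is to treat all three statements, well-posedness for fixed \(\eps\), restart well-posedness with the estimate \eqref{eq:mf-control-restart-estimate}, and optimality of \(\alpha^*\), through one object: the convex lifted cost functional on the Hilbert space of controls. The key inputs are the first variation \eqref{eq:mf-control-first-variation} and the convexity conditions in (C1).

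\textbf{Step 1: strong convexity of the cost.} For the restarted problem on \([s,T]\) with state \(X_s=\xi\) and controls in \(\mathcal A_{\mathbb F^{s,1}}(s,T)\), the state is affine in \((\xi,\alpha)\) by \eqref{eq:mf-controlled-state}. Combining the standard duality (It\^o) identity between \(X^{\alpha_1}-X^{\alpha_2}\) and \(U^{\alpha_1}-U^{\alpha_2}\) with \eqref{eq:mf-control-lifted-strong-convexity}--\eqref{eq:mf-control-terminal-convexity}, we will show
\[
\bigl\langle DJ(\alpha_1)-DJ(\alpha_2),\alpha_1-\alpha_2\bigr\rangle_{\mathcal A}\ge\lambda\norm{\alpha_1-\alpha_2}^2_{\mathcal H^2}.
\]
Hence \(J\) is strongly convex and coercive, so it has a unique minimizer. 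By \eqref{eq:mf-control-first-variation}, \(\alpha\) is the minimizer iff the residual vanishes. The residual equation is equivalent to \(\alpha_t=\Gamma(X_t,U_t,\Law(X_t,U_t))\), by the uniqueness in the fixed-point construction of \(\Pi^\eta\). Therefore solutions of the restarted system \eqref{eq:mf-control-pontryagin} correspond bijectively to minimizers, which gives existence and uniqueness. At \(s=0\), \(\xi=x\), the same correspondence shows that \(\alpha^*\) is the unique minimizer.

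\textbf{Step 2: the restart estimate.} Take optimal controls \(\alpha,\alpha'\) for initial data \(\xi,\xi'\). We will apply the duality identity to the difference of the two Pontryagin systems, starting at \(s\) with \(X_s-X'_s=\xi-\xi'\). Since both residuals vanish, monotonicity yields
\[
\lambda\norm{\alpha-\alpha'}_{\mathcal H^2}^2\le\E\langle \bar U_s-\bar U'_s,\xi-\xi'\rangle .
\]
Forward stability of the affine SDE gives \(\norm{X-X'}_{\mathcal S^2}\le C_T(\norm{\xi-\xi'}+\norm{\alpha-\alpha'})\). The BSDE estimate of Lemma~\ref{lem:operator-cancellation} gives \(\norm{\bar U_s-\bar U'_s}\le C_T(\norm{\xi-\xi'}+\norm{\alpha-\alpha'})\). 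Young's inequality then closes the bound:
\[
\norm{\alpha-\alpha'}\le C\norm{\xi-\xi'},\qquad\text{hence}\qquad \norm{\bar U_s-\bar U'_s}\le C_T\norm{\xi-\xi'}.
\]
The constants depend only on \(T-s\le T\) and the structural constants, so the bound is uniform in \(s\).

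\textbf{Step 3: the \(\eps\)-system, the main obstacle.} For fixed \(\eps\), system \eqref{eq:mf-control-relaxation} is a gradient flow \(\eps\dot Y=-DJ_x(Y)\), and \((X,U)\) are determined by \(Y\). The strong convexity from Step 1 would suggest a monotone-operator existence argument in \(\mathcal H^2\). However, \(Y\) appears as the state of an ODE in time, and \(DJ_x(Y)_t\) depends on the whole path of \(Y\), including the future through \(U\).

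What we will try first is the generalized monotonicity method of Proposition~\ref{prop:monotonicity-H3}. We view \((X,Y)\) as forward states and \(U\) as the backward state, and take the weight matrix \(G=\mathrm{diag}(I,\eps\,\cdot)\), i.e.\ we pair the \(Y\)-equation with \(Y-Y'\) itself. With this weight, the duality computation gives
\[
\tfrac{\eps}{2}\tfrac{\dd}{\dd t}\E|\Delta Y|^2+\E\langle \Delta\mathcal R,\Delta Y\rangle+\text{(duality terms)}\le0 .
\]
The resulting dissipation is \(\lambda\E|\Delta Y|^2\) from (C1), with the terminal term nonnegative. This is the monotonicity needed for the continuation method in \cite[Corollary~2.4]{ReisingerStockingerZhang2020}, and it yields unique solvability for each \(\eps\).

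The delicate point is checking the Lipschitz alternative (i) with \(\phi_2=\E|\Delta Y|^2\) when the law terms are included; we expect to absorb these using the \(L^2\)-operator viewpoint of Lemma~\ref{lem:operator-cancellation}. Combining Steps 1--3 verifies (H3), since its well-posedness part comes from Step 3 and its restart part from Steps 1--2.
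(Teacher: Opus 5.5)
Your Steps~1 and~2 are correct. For the restarted averaged system they are a different and more self-contained route than the paper's. The paper checks the generalized monotonicity hypotheses of Proposition~\ref{prop:monotonicity-H3} with $G=I_n$ and $\phi_2=\E\abs{a_1-a_2}^2$, and then cites \cite[Corollary~2.4]{ReisingerStockingerZhang2020}. You instead get existence and uniqueness from the strongly convex restarted cost on $\mathcal A_{\mathbb F^{s,1}}(s,T)$ together with the fixed-point characterization of $\Gamma$. You get the restart bound directly from the duality inequality $\lambda\norm{\alpha-\alpha'}^2\le\E\ip{\bar U_s-\bar U_s'}{\xi-\xi'}$ and Young's inequality. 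One small point on the final claim: run the gradient argument in the full space $\mathcal A$, where \eqref{eq:mf-control-cost} is posed. The $\mathbb F$-adjoint of $\alpha^*$ coincides with its $\mathbb F^{0,1}$-adjoint (with $V^2=0$), so $DJ_x(\alpha^*)=0$ in $\mathcal A$.

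\textbf{The gap is Step~3}, existence for the $\eps$-system. Both the statement and the well-posedness part of (H3) need it.

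The hypotheses of Proposition~\ref{prop:monotonicity-H3}, i.e.\ of \cite[Corollary~2.4]{ReisingerStockingerZhang2020}, concern a forward--backward pairing $\E\ip{G\Delta\mathbf X}{\Delta U}$, and they must hold for arbitrary independent test variables. With $\mathbf X=(X,Y)$ and $U$ the only backward variable, $G\in\R^{n\times(n+m)}$, so $\mathrm{diag}(I,\eps\,\cdot\,)$ is not an admissible weight. The $Y$-drift $-\mathcal R/\eps$ is then paired only with $G^\top\Delta U$, never with $\Delta Y$. Hence the term $\E\ip{\Delta L_a}{\Delta Y}$, which is what lets you invoke the lifted convexity in (C1), never appears.

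Concretely, for $G=(I_n,0)$ the left-hand side of the monotonicity condition reduces to $\E\ip{B^\top\Delta u+\widehat B^\top\E\Delta u}{\Delta y}-\E\ip{\Delta L_x}{\Delta x}$. This is unbounded above in the independent arguments $(\Delta u,\Delta y)$. In the averaged system this cross term is removed by the algebraic constraint $\mathcal R=0$. In the $\eps$-system the constraint has become an ODE, and the cancellation happens only along trajectories, through $\frac\eps2\frac{\dd}{\dd t}\E\abs{\Delta Y}^2$.

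So your computation proves uniqueness, but the cited continuation theorem does not give existence. The Lipschitz alternative~(i) you flag is not the real obstruction.

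\textbf{The missing idea} is a way to turn that trajectory-wise cancellation into an existence result. The paper does this with exactly the monotone-operator route you set aside. Write $z=Y^\eps-y$; the system becomes $\eps\mathcal Dz+DJ_x(y+z)=0$ in $\mathcal A$, where $\mathcal D$ is the time derivative on processes vanishing at $0$.
\begin{itemize}
\item Monotonicity of $\mathcal D$ is $\ip{\mathcal Dz_1-\mathcal Dz_2}{z_1-z_2}_{\mathcal A}=\frac12\E\abs{z_{1,T}-z_{2,T}}^2$, which is exactly your $\frac\eps2\E\abs{\Delta Y_T}^2$ term.
\item The resolvent $(I+\rho\mathcal D)^{-1}f=\frac1\rho\int_0^{\cdot}\ee^{-(\cdot-r)/\rho}f_r\dd r$ is everywhere defined, firmly nonexpansive and preserves adaptedness, so $\mathcal D$ is maximally monotone.
\item The map $z\mapsto DJ_x(y+z)$ is Lipschitz and $\lambda$-strongly monotone with full domain.
\end{itemize}
Hence $\eps\mathcal D+DJ_x(y+\cdot)$ is maximally and strongly monotone and has a unique zero.

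Your objection that $DJ_x(Y)_t$ depends on the future of $Y$ through $U$ does not matter at this Hilbert-space level; only adaptedness of the output matters. Alternatively, you could write out a bespoke continuation argument built on the augmented functional $\E\ip{\Delta U}{\Delta X}+\frac\eps2\E\abs{\Delta Y}^2$. That argument has to be proved, though; it cannot be cited from \cite{ReisingerStockingerZhang2020}.
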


\begin{proof}

Fix \(s\in[0,T]\) and an initial state
\(\xi\in L^2(\Ff_s;\R^n)\).
For \(\alpha\in\mathcal A_{\mathbb F^{s,1}}(s,T)\), let
\(X^{s,\xi,\alpha}\) denote the corresponding state process on \([s,T]\)
with \(X_s^{s,\xi,\alpha}=\xi\), and define the reduced cost
\[
J_{s,\xi}(\alpha)
:=
\E\left[
\varphi\bigl(
X_T^{s,\xi,\alpha},
\Law(X_T^{s,\xi,\alpha})
\bigr)
+
\int_s^T
\ell\bigl(
X_t^{s,\xi,\alpha},
\alpha_t,
\Law(X_t^{s,\xi,\alpha},\alpha_t)
\bigr)\dd t
\right].
\]
Set
\[
 \Delta X:=X^{s,\xi,\alpha}-X^{s,\xi',\alpha'},
 \qquad
 \Delta U:=U^{s,\xi,\alpha}-U^{s,\xi',\alpha'},
 \qquad
 \Delta V:=V^{s,\xi,\alpha}-V^{s,\xi',\alpha'}.
\]
Standard stability estimates for the affine state equation give
\[
 \norm{\Delta X}_{\mathcal S^2(s,T)}
 \le
 C_T\bigl(
 \norm{\xi-\xi'}_{L^2}
 +\norm{\alpha-\alpha'}_{\mathcal A_{\mathbb F^{s,1}}(s,T)}
 \bigr).
\]
Using the Lipschitz continuity of the derivatives of the running and
terminal costs, the corresponding linear mean-field adjoint BSDE satisfies
\[
 \norm{\Delta U}_{\mathcal S^2(s,T)}
 +\norm{\Delta V}_{\mathcal H^2(s,T)}
 \le
 C_T\bigl(
 \norm{\xi-\xi'}_{L^2}
 +\norm{\alpha-\alpha'}_{\mathcal A_{\mathbb F^{s,1}}(s,T)}
 \bigr).
\]
Applying the restarted version of
\eqref{eq:mf-control-first-variation} and using the Lipschitz continuity
of the Pontryagin residual, together with the preceding state and adjoint
estimates, yields
\begin{equation*}
 \norm{DJ_{s,\xi}(\alpha)-DJ_{s,\xi'}(\alpha')}
 _{\mathcal A_{\mathbb F^{s,1}}(s,T)}
 \le
 C_T\bigl(
 \norm{\xi-\xi'}_{L^2}
 +\norm{\alpha-\alpha'}_{\mathcal A_{\mathbb F^{s,1}}(s,T)}
 \bigr).
\end{equation*}

Let \(\alpha^1,\alpha^2\in\mathcal A_{\mathbb F^{s,1}}(s,T)\), and denote
their associated state processes by \(X^1,X^2\), respectively.
Applying It\^o's formula to the corresponding state--adjoint duality
pairings and using \eqref{eq:mf-control-residual} and
\eqref{eq:mf-control-adjoint-generator}, we obtain
\begin{align*}
 &\bigl\langle
 DJ_{s,\xi}(\alpha^1)-DJ_{s,\xi}(\alpha^2),
 \alpha^1-\alpha^2
 \bigr\rangle_{\mathcal A_{\mathbb F^{s,1}}(s,T)}
 \\
 &\quad=
 \E\bigl\langle
 \Psi(X_T^1,\Law(X_T^1))-\Psi(X_T^2,\Law(X_T^2)),
 X_T^1-X_T^2
 \bigr\rangle
 \\
 &\qquad+
 \int_s^T
 \E\bigl\langle
 D{\boldsymbol L}(X_t^1,\alpha_t^1)
 -D{\boldsymbol L}(X_t^2,\alpha_t^2),
 (X_t^1-X_t^2,\alpha_t^1-\alpha_t^2)
 \bigr\rangle\dd t
 \\
 &\quad\ge
 \lambda
 \norm{\alpha^1-\alpha^2}_{\mathcal A_{\mathbb F^{s,1}}(s,T)}^2.
\end{align*}
Thus \(DJ_{s,\xi}\) is globally Lipschitz and strongly monotone on
\(\mathcal A_{\mathbb F^{s,1}}(s,T)\).

We first solve the original system. We use the
monotone-operator argument similar to
\cite[Appendix~A]{SWY_GlobalFBSDE}. Applying the same state--adjoint stability and duality arguments with
\(s=0\) and the full filtration \(\mathbb F\), we obtain, for all
\(\alpha,\alpha'\in\mathcal A\),
\begin{align}
 \norm{DJ_x(\alpha)-DJ_x(\alpha')}_{\mathcal A}
 &\le
 C_T\norm{\alpha-\alpha'}_{\mathcal A},
 \label{eq:mf-control-full-gradient-stability}\\
 \bigl\langle
 DJ_x(\alpha)-DJ_x(\alpha'),
 \alpha-&\alpha'
 \bigr\rangle_{\mathcal A}
 \ge
 \lambda\norm{\alpha-\alpha'}_{\mathcal A}^2.
 \label{eq:mf-control-full-gradient-monotonicity}
\end{align}

Define the time-derivative operator
\(\mathcal D:\operatorname{Dom}(\mathcal D)\subset\mathcal A
\to\mathcal A\) by
\[
 \operatorname{Dom}(\mathcal D)
 =
 \Bigl\{
 z\in\mathcal A:
 z_t=\int_0^t\nu_r\dd r
 \text{ for some }\nu\in\mathcal A
 \Bigr\},
 \qquad
 \mathcal Dz=\nu.
\]
For \(z_1,z_2\in\operatorname{Dom}(\mathcal D)\),
\begin{equation}
 \bigl\langle
 \mathcal Dz_1-\mathcal Dz_2,
 z_1-z_2
 \bigr\rangle_{\mathcal A}
 =
 \frac12\E\abs{z_{1,T}-z_{2,T}}^2
 \ge0.
 \label{eq:mf-control-derivative-monotonicity}
\end{equation}
Moreover, for every \(\rho>0\) and \(f\in\mathcal A\), define
\[
 \bigl(R_\rho f\bigr)_t
 :=
 \frac1\rho
 \int_0^t
 \ee^{-(t-r)/\rho}f_r\dd r.
\]
Then
\[
 R_\rho f\in\operatorname{Dom}(\mathcal D),
 \qquad
 \mathcal D R_\rho f
 =
 \frac1\rho\bigl(f-R_\rho f\bigr).
\]
Thus, it is straightforward to verify that
\[
 R_\rho
 =
 (I+\rho\mathcal D)^{-1}
 :
 \mathcal A\to\mathcal A.
\]
Since
\[
 f-g
 =
 R_\rho f-R_\rho g
 +
 \rho\bigl(
 \mathcal D R_\rho f-\mathcal D R_\rho g
 \bigr),
\]
\eqref{eq:mf-control-derivative-monotonicity} gives
\begin{align*}
 \bigl\langle
 R_\rho f-R_\rho g,
 f-g
 \bigr\rangle_{\mathcal A}
 &=
 \norm{R_\rho f-R_\rho g}_{\mathcal A}^2
 +
 \frac{\rho}{2}
 \E\abs{
 (R_\rho f)_T-(R_\rho g)_T
 }^2
 \\
 &\ge
 \norm{R_\rho f-R_\rho g}_{\mathcal A}^2.
\end{align*}
Thus \(R_\rho\) is firmly nonexpansive
\cite[Proposition~4.4(iv)]{BauschkeCombettes2017} and has full domain.
By \cite[Proposition~23.8(iii)]{BauschkeCombettes2017},
\(\rho\mathcal D\) is maximally monotone on \(\mathcal A\) for every
\(\rho>0\).

Viewing \(y\in\R^m\) as a constant element of \(\mathcal A\), define
\[
 \mathcal B_y:\mathcal A\to\mathcal A,
 \qquad
 \mathcal B_y(z):=DJ_x(y+z).
\]
By \eqref{eq:mf-control-full-gradient-stability} and
\eqref{eq:mf-control-full-gradient-monotonicity},
\(\mathcal B_y\) is continuous and monotone with full domain.  Hence
\cite[Corollary~20.28]{BauschkeCombettes2017} implies that
\(\mathcal B_y\) is maximally monotone.  The maximal-monotone sum theorem
\cite[Corollary~25.5(i)]{BauschkeCombettes2017} therefore shows that
\[
 \mathcal M_\eps
 :=
 \eps\mathcal D+\mathcal B_y,
 \qquad
 \operatorname{Dom}(\mathcal M_\eps)
 =
 \operatorname{Dom}(\mathcal D),
\]
is maximally monotone on \(\mathcal A\).  In addition,
\eqref{eq:mf-control-full-gradient-monotonicity} and
\eqref{eq:mf-control-derivative-monotonicity} yield
\begin{align*}
 &\bigl\langle
 \mathcal M_\eps z_1-\mathcal M_\eps z_2,
 z_1-z_2
 \bigr\rangle_{\mathcal A}
 \\
 &\qquad=
 \frac{\eps}{2}
 \E\abs{z_{1,T}-z_{2,T}}^2
 +
 \bigl\langle
 DJ_x(y+z_1)-DJ_x(y+z_2),
 z_1-z_2
 \bigr\rangle_{\mathcal A}
 \\
 &\qquad\ge
 \lambda\norm{z_1-z_2}_{\mathcal A}^2.
\end{align*}
Thus \(\mathcal M_\eps\) is also strongly monotone.  By
\cite[Proposition~22.11(ii)]{BauschkeCombettes2017},
\(\mathcal M_\eps\) is surjective.  Since it is maximally and strongly
monotone, \cite[Proposition~23.35]{BauschkeCombettes2017} implies that
\(\mathcal M_\eps\) has a unique zero
\(z^\eps\in\operatorname{Dom}(\mathcal D)\).

Set
\[
 Y^\eps=y+z^\eps.
\]
Then \(Y_0^\eps=y\), and
\(\mathcal M_\eps z^\eps=0\) gives
\[
 \eps\dot Y_t^\eps
 +
 \bigl(DJ_x(Y^\eps)\bigr)_t
 =
 0.
\]
Let \(X^\eps\) be the state process
\eqref{eq:mf-controlled-state} associated with the control \(Y^\eps\),
and let \((U^\eps,V^\eps)\) be the corresponding adjoint pair
\eqref{eq:mf-control-adjoint}.  By
\eqref{eq:mf-control-first-variation},
\[
 \bigl(DJ_x(Y^\eps)\bigr)_t
 =
 \mathcal R\bigl(
 X_t^\eps,Y_t^\eps,U_t^\eps,
 \Law(X_t^\eps,U_t^\eps,Y_t^\eps)
 \bigr),
 \qquad
 \dd t\otimes\dd\mathbb P\text{-a.e.}
\]
Together with the definition
\eqref{eq:mf-control-residual}, this shows that
\((X^\eps,Y^\eps,U^\eps,V^\eps)\) solves
\eqref{eq:mf-control-relaxation}.  Conversely, if
\((X,Y,U,V)\) solves \eqref{eq:mf-control-relaxation}, then
\(z=Y-y\in\operatorname{Dom}(\mathcal D)\), and
\eqref{eq:mf-control-first-variation} gives
\(\mathcal M_\eps z=0\).  The uniqueness of this zero, together with
the uniqueness of \eqref{eq:mf-controlled-state} and
\eqref{eq:mf-control-adjoint} for a given control, yields the uniqueness
of the square-integrable adapted solution.

We next consider the restarted averaged system.  Set
\[
 a_i=\Gamma(\xi_i,\upsilon_i,\eta_i),
 \qquad
 \eta_i=\Law(\xi_i,\upsilon_i),
 \qquad
 \mu_i=\Law(\xi_i,a_i),
 \qquad i=1,2,
\]
and denote the corresponding coefficients of
\eqref{eq:mf-control-pontryagin} by
\(\bar b_i,\sigma_i,\bar F_i\).  The stationarity condition
\eqref{eq:mf-control-self-consistent-stationarity} gives
\begin{align*}
 &\E\langle\bar b_1-\bar b_2,\upsilon_1-\upsilon_2\rangle
 +\E\langle\sigma_1-\sigma_2,\zeta_1-\zeta_2\rangle
 -\E\langle\bar F_1-\bar F_2,\xi_1-\xi_2\rangle
 \\
 &\quad=
 -\E\bigl\langle
 L_x(\xi_1,a_1,\mu_1)-L_x(\xi_2,a_2,\mu_2),
 \xi_1-\xi_2
 \bigr\rangle
 \\
 &\qquad
 -\E\bigl\langle
 L_a(\xi_1,a_1,\mu_1)-L_a(\xi_2,a_2,\mu_2),
 a_1-a_2
 \bigr\rangle
 \le
 -\lambda\E\abs{a_1-a_2}^2,
\end{align*}
where the last inequality follows from
\eqref{eq:mf-control-lifted-strong-convexity}.  Furthermore, the affine
form of the forward coefficients gives
\[
 \norm{\bar b_1-\bar b_2}_{L^2}^2
 +
 \norm{\sigma_1-\sigma_2}_{L^2}^2
 \le
 C\left(
 \norm{\xi_1-\xi_2}_{L^2}^2
 +
 \E\abs{a_1-a_2}^2
 \right).
\]
Together with \eqref{eq:mf-control-terminal-convexity}, the generalized
monotonicity conditions of Proposition~\ref{prop:monotonicity-H3} therefore
hold with \(G=I_n\) and
\[
 \phi_2(\Theta_1,\Theta_2)=\E\abs{a_1-a_2}^2.
\]
Since the original system has already been shown to be uniquely solvable
for every \(\eps>0\), Proposition~\ref{prop:monotonicity-H3} yields unique
solvability of every restarted averaged system and, uniformly in \(s\),
\[
 \norm{\bar U_s^{s,\xi}-\bar U_s^{s,\xi'}}_{L^2}
 \le
 C_T\norm{\xi-\xi'}_{L^2}.
\]
Thus \eqref{eq:mf-control-restart-estimate} holds, and hence \({\rm(H3)}\).

Finally, the stationarity equation in
\eqref{eq:mf-control-pontryagin}, together with
\eqref{eq:mf-control-first-variation}, yields
\[
 DJ_x(\alpha^*)=0
 \qquad\text{in }\mathcal A.
\]
Since \eqref{eq:mf-control-full-gradient-monotonicity} implies that
\(J_x\) is strongly convex on \(\mathcal A\), the control
\(\alpha^*\) is the unique global minimizer of \(J_x\).
\end{proof}

\begin{theorem}
\label{thm:mf-control-averaging}
Assume \({\rm(C1)}\)--\({\rm(C2)}\).  Let
\((X^\eps,Y^\eps,U^\eps,V^\eps)\) solve
\eqref{eq:mf-control-relaxation}, and let
\((\bar X,\alpha^*,\bar U,\bar V)\) solve
\eqref{eq:mf-control-pontryagin}.  There exist
\(C_T<\infty\) and \(\eps_T>0\) such that, for
\(0<\eps\le\eps_T\),
\begin{align}
 &\E\sup_{0\le t\le T}\abs{X_t^\eps-\bar X_t}^2
 +\E\sup_{0\le t\le T}\abs{U_t^\eps-\bar U_t}^2
 \notag\\
 &\quad+
 \E\int_0^T
 \bigl(
 \abs{V_t^{1,\eps}-\bar V_t}^2
 +\abs{V_t^{2,\eps}}^2
 \bigr)\dd t
 \le
 C_T(1+\abs x^2+\abs y^2)\eps,
 \label{eq:mf-control-slow-rate}\\
 &\E\int_0^T\abs{Y_t^\eps-\alpha_t^*}^2\dd t
 \le
 C_T(1+\abs x^2+\abs y^2)\eps.
 \label{eq:mf-control-fast-rate}
\end{align}
In particular, the admissible control
\(\alpha^\eps:=Y^\eps\) is near optimal:
\begin{equation}
 0\le
 J_x(\alpha^\eps)-\mathcal V(x)
 \le
 C_T(1+\abs x^2+\abs y^2)\eps.
 \label{eq:mf-control-value-rate}
\end{equation}
\end{theorem}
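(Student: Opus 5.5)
The first estimate \eqref{eq:mf-control-slow-rate} will be obtained by applying Theorem~\ref{thm:main}, so the first task is to check (H1)--(H3) for \eqref{eq:mf-control-relaxation}.

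\textbf{Checking (H1)--(H3).}
\begin{itemize}
\item We identify the coefficients as
\[
h=-\mathcal R,\qquad g=0,\qquad b(x,y,u,\rho)=Ax+By+\widehat A m_x+\widehat B m_a+c,\qquad F=\mathcal G,\qquad \beta=\Psi .
\]
\item Global Lipschitz continuity (H1) follows from (C1) and the affine structure.
\item With $g=0$, the joint dissipativity (H2) is exactly (C2), read through $h=-\mathcal R$.
\item Proposition~\ref{prop:mf-control-well-posedness} gives well-posedness of the original system for every $\eps$, unique solvability of all restarted averaged systems, and the restart estimate \eqref{eq:mf-control-restart-estimate}. This is (H3).
\item We must also confirm that the abstract averaged system \eqref{eq:averaged-mf-system} coincides with \eqref{eq:mf-control-pontryagin}. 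Since $\nu^{x,u;\eta}=\delta_{\Gamma(x,u,\eta)}$ by \eqref{eq:mf-control-dirac-kernel}, integrating $b$ and $F$ against the kernel amounts to substituting $a=\Gamma$ and $\rho=\Pi^{\eta}$. The $(x,a)$-marginal of $\Pi^{\eta_t^*}$ is $\Law(\bar X_t,\alpha^*_t)$, so the averaged coefficients reproduce the drift and generator in \eqref{eq:mf-control-pontryagin}.
\end{itemize}
Theorem~\ref{thm:main} then yields \eqref{eq:mf-control-slow-rate}.

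\textbf{Fast estimate \eqref{eq:mf-control-fast-rate}.} This is not part of Theorem~\ref{thm:main} and is the main obstacle; I would use the monotone gradient-flow structure.
\begin{itemize}
\item Set $D_t=Y^\eps_t-\alpha^*_t$. Since $\alpha^*$ is not differentiable in $t$, I would not differentiate it. Instead, compare $Y^\eps$ with the fast flow frozen at the averaged slow data.
\item Write the fast equation as $\eps\dot Y^\eps=-\mathcal R(X^\eps,Y^\eps,U^\eps,\Law(\cdot))$ and note that $\mathcal R(\bar X,\alpha^*,\bar U,\Pi^{\eta^*})=0$.
\item By (C2),
\[
\eps\frac{\dd}{\dd t}\E|Y^\eps_t-a|^2\le -\alpha\,\E|Y^\eps_t-a|^2+C\bigl(\E|X^\eps_t-\bar X_t|^2+\E|U^\eps_t-\bar U_t|^2\bigr)
\]
for the frozen target $a$ taken blockwise. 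Concretely, on microscopic blocks of length $\eps$ I would replace $\alpha^*_t$ by $\alpha^*_{\pi_\eps(t)}$. The error from this replacement is controlled by the Lipschitz bound on $\Gamma$ together with the integrated regularity of $(\bar X,\bar U)$ from Lemma~\ref{lem:integrated-regularity}, giving a contribution $C\eps$.
\item A subtlety is that the target jumps at block endpoints. I would handle this as in Lemma~\ref{lem:conditional-equilibrium-tracking}: form a recursion across blocks and sum the squared jumps of the target, which is again $O(1)$ by \eqref{eq:environment-endpoint-energy}.
\item The outcome is $\int_0^T\E|D_t|^2\,\dd t\le C\eps\sum_k(\text{block errors})+C\int_0^T\E(|X^\eps-\bar X|^2+|U^\eps-\bar U|^2)\,\dd t$. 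The initial-layer term contributes $\eps\,\E|y-\alpha^*_0|^2$, and the last term is $O(\eps)$ by \eqref{eq:mf-control-slow-rate}.
\item The law-dependence term $K_2W_2^2$ must be absorbed using $\kappa>K_2$, via the coupling $W_2^2\le \E|X^\eps-\bar X|^2+\E|U^\eps-\bar U|^2+\E|D|^2$.
\end{itemize}

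\textbf{Near-optimality \eqref{eq:mf-control-value-rate}.} By Proposition~\ref{prop:mf-control-well-posedness}, $\mathcal V(x)=J_x(\alpha^*)$ and $DJ_x(\alpha^*)=0$. The gradient $DJ_x$ is Lipschitz by \eqref{eq:mf-control-full-gradient-stability}, and $J_x$ is convex, so a Taylor expansion of $J_x$ around $\alpha^*$ gives
\[
0\le J_x(Y^\eps)-J_x(\alpha^*)\le \tfrac{C_T}{2}\,\|Y^\eps-\alpha^*\|_{\mathcal A}^2 .
\]
Combining this with \eqref{eq:mf-control-fast-rate} gives the $O(\eps)$ value gap.
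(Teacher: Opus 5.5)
Your verification of (H1)--(H3) and your value-gap argument are the same as the paper's. For \eqref{eq:mf-control-fast-rate} you take a different route, and it is valid.

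The paper reuses the machinery of Section~\ref{sec:proof}. Because the kernel is a Dirac mass, \(D_{i,k}(t)^2=\E|\widehat Y^{\eps,i}_t-\Gamma(\widetilde X^i_{s_{i,k}},\widetilde U^i_{s_{i,k}},\eta_{i,k})|^2\). Lemmas~\ref{lem:conditional-equilibrium-tracking} and~\ref{lem:auxiliary-fast-comparison}, together with \eqref{eq:local-freezing-modulus} and \eqref{eq:defect-local-final}, then give \(\E\int_0^T|Y^\eps_t-\Gamma(X^\eps_t,U^\eps_t,\Law(X^\eps_t,U^\eps_t))|^2\dd t=O(\eps)\). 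Lipschitz continuity of \(\Gamma\) and the slow rate finish the proof.

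You instead compare \(Y^\eps\) directly with the frozen target \(a_k=\alpha^*_{s_k}\) on \(\eps\)-blocks. You use the exact identity \(\mathcal R(\bar X_{s_k},a_k,\bar U_{s_k},\Pi^{\eta^*_{s_k}})=0\) as the reference point in (C2). Since all processes live on one probability space, the \(K_2W_2^2\) term is absorbed through the identity coupling. This gives
\(\eps\frac{\dd}{\dd t}\E|Y^\eps_t-a_k|^2\le-\alpha\E|Y^\eps_t-a_k|^2+C\E(|X^\eps_t-\bar X_{s_k}|^2+|U^\eps_t-\bar U_{s_k}|^2)\).

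Two small corrections are needed.
\begin{enumerate}
\item The forcing involves \((\bar X_{s_k},\bar U_{s_k})\), not \((\bar X_t,\bar U_t)\) as you wrote. This is harmless: split through \((\bar X_t,\bar U_t)\) and use the slow rate together with Lemma~\ref{lem:integrated-regularity} (with \(s=0\)). The \(1/\eps\) in the forcing is cancelled by the factor \(\eps/\alpha\) that comes from integrating the exponential, both within a block and in the recursion.
\item \eqref{eq:environment-endpoint-energy} is stated for the local restart flows. The \(O(1)\) bound on the summed squared target jumps \(\E|a_{k+1}-a_k|^2\) should therefore be rederived for the global flow \((\bar X,\bar U,\bar V)\) using Lemma~\ref{lem:averaged-moment}. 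It only needs to be \(O(1)\), since it enters multiplied by \(\eps\).
\end{enumerate}

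Your route is self-contained once \eqref{eq:mf-control-slow-rate} is known: it needs no conditional kernels, no auxiliary process \(\widehat Y^{\eps,i}\), and no local restart flows. It relies, however, on \(g=0\) and on the exact stationarity of \(\alpha^*\), so it is specific to this deterministic relaxation. The paper's argument is the general averaging machinery evaluated at a Dirac kernel.
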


\begin{proof}
To apply Theorem~\ref{thm:main}, note that the system \eqref{eq:mf-control-relaxation} is a special case of
\eqref{eq:original-mf-system}, with
\[
 h=-\mathcal R,\qquad
 g=0,\qquad
 F=\mathcal G,\qquad
 \beta=\Psi,
\]
and with the slow coefficients given by the first equation of
\eqref{eq:mf-control-relaxation}.
Assumption \({\rm(C1)}\) gives \({\rm(H1)}\), and
\eqref{eq:mf-control-residual-gap} is exactly \({\rm(H2)}\).
Proposition~\ref{prop:mf-control-well-posedness} gives \({\rm(H3)}\).
The invariant kernel is \eqref{eq:mf-control-dirac-kernel}, so the averaged
coefficients in \eqref{eq:averaged-b-definition} and
\eqref{eq:averaged-F-definition} give exactly
\eqref{eq:mf-control-pontryagin}.  Estimate
\eqref{eq:mf-control-slow-rate} therefore follows directly from
Theorem~\ref{thm:main}.

It remains to estimate the fast control.  On each microscopic block used in
Section~\ref{sec:proof}, the conditional invariant kernel is a Dirac mass.
Consequently, the conditional Wasserstein discrepancy in
\eqref{eq:integrated-equilibrium-tracking} satisfies, for
\(t\in J_{i,k}\),
\[
 D_{i,k}(t)^2
 =
 \E\bigl|
 \widehat Y_t^{\eps,i}
 -\Gamma\bigl(
 \widetilde X_{s_{i,k}}^i,
 \widetilde U_{s_{i,k}}^i,
 \eta_{i,k}
 \bigr)
 \bigr|^2.
\]
For such \(t\), the error is decomposed through
\(\widehat Y_t^{\eps,i}\) and
\(\Gamma(\widetilde X_{s_{i,k}}^i,
\widetilde U_{s_{i,k}}^i,\eta_{i,k})\).
By \eqref{eq:conditional-invariant-stability}, the difference
$
 \Gamma(\widetilde X_{s_{i,k}}^i,
 \widetilde U_{s_{i,k}}^i,\eta_{i,k})
 -
 \Gamma(X_t^\eps,U_t^\eps,\Law(X_t^\eps,U_t^\eps))
$
is controlled by the corresponding differences of the slow environment
and its law.  Hence
\eqref{eq:integrated-equilibrium-tracking},
\eqref{eq:fast-comparison-integrated},
\eqref{eq:local-freezing-modulus}, and
\eqref{eq:defect-local-final}, summed over the finitely many macroscopic
intervals, yield
\begin{equation*}
 \E\int_0^T
 \bigl|
 Y_t^\eps-
 \Gamma\bigl(
 X_t^\eps,U_t^\eps,\Law(X_t^\eps,U_t^\eps)
 \bigr)
 \bigr|^2\dd t
 \le
 C_T(1+\abs x^2+\abs y^2)\eps.
\end{equation*}
By \eqref{eq:conditional-invariant-stability} and the Dirac form of the
conditional invariant kernel, together with
\eqref{eq:mf-control-slow-rate}, we obtain
\[
 \E\int_0^T
 \bigl|
 \Gamma\bigl(
 X_t^\eps,U_t^\eps,\Law(X_t^\eps,U_t^\eps)
 \bigr)
 -\Gamma(\bar X_t,\bar U_t,\eta_t^*)
 \bigr|^2\dd t
 \le
 C_T(1+\abs x^2+\abs y^2)\eps.
\]
Since
\(\alpha_t^*=\Gamma(\bar X_t,\bar U_t,\eta_t^*)\),
the preceding two estimates together yield
\eqref{eq:mf-control-fast-rate}.

Finally, \({\rm(C1)}\), the state estimate, and the adjoint estimate imply
that \(DJ_x:\mathcal A\to\mathcal A\) is globally Lipschitz with Lipschitz
constant \(L_J\).  Since \(DJ_x(\alpha^*)=0\), by
\cite[Theorem~18.15(iii)]{BauschkeCombettes2017},
\[
 0\le J_x(Y^\eps)-J_x(\alpha^*)
 \le
 \frac{L_J}{2}
 \norm{Y^\eps-\alpha^*}_{\mathcal A}^2.
\]
Equation \eqref{eq:mf-control-fast-rate} proves
\eqref{eq:mf-control-value-rate}.
\end{proof}

\begin{remark}
\label{rem:mf-control-affine-scope}
{\rm
The affine mean-field state equation is useful here because
it makes the reduced cost strongly convex and turns the original relaxation
into the monotone evolution equation
\eqref{eq:mf-control-gradient-flow}.  Together with the convexity assumptions,
the affine structure also yields the generalized monotonicity of the averaged
Pontryagin system, which permits a direct verification of \({\rm(H3)}\).
For a general nonlinear drift \(b(x,a,\Law(X,a))\), the abstract averaging
theorem remains applicable, but the corresponding verification of
\({\rm(H3)}\) requires additional arguments.
}
\end{remark}

\begin{remark}
\label{rem:mf-control-initial-layer}
{\rm
The convergence in \eqref{eq:mf-control-fast-rate} is necessarily
formulated in an integrated sense.  Indeed, if
\(y\ne\alpha_0^*\), then
$
 \E\sup_{0\le t\le T}\abs{Y_t^\eps-\alpha_t^*}^2
 \ge
 \E\abs{y-\alpha_0^*}^2,
$
and hence uniform-in-time convergence cannot hold as \(\eps\to0\).
The mismatch at the initial time produces a fast transient, namely the
usual initial layer of the relaxation.  This behavior is also visible in
Figure~\ref{fig:numerical-mf-paths} of the numerical experiment below,
where the control error is largest near \(t=0\) and rapidly decreases
after the initial transient.
}
\end{remark}

\subsection{Numerical Experiment}
\label{sec:mf-control-numerics}

We now turn to a numerical example comparing the direct solution of the
implicit mean-field optimality condition with its two-time-scale relaxation.
The example extends the benchmark in
\cite[Section~5.2]{SWY_GlobalFBSDE} by introducing dependence on
the mean state and control in the dynamics and costs.
The control problem considered here is nonquadratic, since a quadratic
control cost would make the optimal control explicit and therefore favor
the direct method.
To obtain a genuinely implicit and nonseparable problem while retaining
global regularity, we take
\(n=m=d_1=4\), \(T=1\), and
\[
 A=-0.35I_4,
 \qquad
 \widehat A=0.08I_4,
 \qquad
 B=I_4,
 \qquad
 \widehat B=0.05I_4,
 \qquad
 c=0,
\]
with \(S_j=0\), \(s_j=0.28e_j\),
\[
 x=(1.2,-0.8,0.9,-1.1)^\top,
 \qquad y=0.
\]
For \(\mu\in\Ptwo(\R^4\times\R^4)\), write
\(m_x=m_x(\mu)\) and \(m_a=m_a(\mu)\), and define
\begin{align*}
 \ell(x,a,\mu)
 &:=\frac q2\abs{x}^2+\frac r2\abs{a}^2
 +\eta\sum_{i=1}^4\log\cosh\bigl((Ca)_i\bigr)
 +\frac{\bar q}{2}\abs{m_x}^2+\frac{\bar r}{2}\abs{m_a}^2,
 \\
 \varphi(x,\nu)
 &:=\frac{q_T}{2}\abs{x}^2
 +\frac{\bar q_T}{2}
 \abs{\int_{\R^4}x'\,\nu(\dd x')}^2,
\end{align*}
where
\[
 q=1,
 \quad \bar q=0.20,
 \quad q_T=1.5,
 \quad \bar q_T=0.30,
 \quad r=0.18,
 \quad \bar r=0.06,
 \quad \eta=0.80,
\]
and
\[
 C=
 \begin{pmatrix}
  1    &0.22&0   &0.12\\
  0.16 &1   &0.20&0   \\
  0    &0.18&1   &0.22\\
  0.20 &0   &0.14&1
 \end{pmatrix}.
\]
The off-diagonal entries of \(C\) couple the control coordinates in the
minimization.

We first verify that this example lies within the preceding theory.  The
complete lifted derivatives are
\begin{align*}
 L_x(x,a,\mu)&=qx+\bar q\,m_x(\mu),\qquad
 L_a(x,a,\mu)=ra+\eta C^\top\tanh(Ca)+\bar r\,m_a(\mu),\\
 \Psi(x,\nu)&=q_Tx+\bar q_T\int_{\R^4}x'\,\nu(\dd x').
\end{align*}
The Lipschitz and growth conditions, as well as the lifted convexity
inequalities in \({\rm(C1)}\), follow directly from these expressions and
the monotonicity of \(z\mapsto\tanh z\).  Thus \({\rm(C1)}\) holds with
\(\lambda=r\).  To check
\({\rm(C2)}\), put
\[
 L_\rho:=\sqrt{\bar r^2+0.05^2}=\sqrt{0.0061}.
\]
Using the monotonicity of \(a\mapsto C^\top\tanh(Ca)\), the bound of the
mean differences by \(W_2(\rho,\rho')\), and Young's inequality, we obtain,
for arbitrary \(\theta_1,\theta_2>0\),
\begin{align*}
 2\bigl\langle a-a',
 \mathcal R(x,a,u,\rho)-\mathcal R(x',a',u',\rho')
 \bigr\rangle
 &\ge
 (2r-\theta_1-\theta_2)\abs{a-a'}^2
 -\frac1{\theta_1}\abs{u-u'}^2
 -\frac{L_\rho^2}{\theta_2}W_2^2(\rho,\rho').
\end{align*}
Taking \(\theta_1=0.02\) and \(\theta_2=0.08\) yields
\[
 \kappa=0.26,
 \qquad K_1=50,
 \qquad K_2=0.07625<\kappa.
\]
Hence both the direct Pontryagin system and the fast relaxation are globally
well posed by Proposition~\ref{prop:mf-control-well-posedness}.

In this example, the direct feedback is the unique solution
of
\begin{equation}
 r\alpha_t^*+\eta C^\top\tanh(C\alpha_t^*)
 +\bar r\,\E\alpha_t^*
 +\bar U_t+0.05\,\E\bar U_t=0.
 \label{eq:numerical-mf-implicit-feedback}
\end{equation}
At each time step, the empirical mean of the control enters the optimality condition for every particle, so the controls of all particles must be determined simultaneously by solving a nonlinear system. The fast equation replaces this solve
by the residual evaluation
\begin{equation*}
 \dd Y_t^\eps
 =-\frac1\eps
 \bigl[
 rY_t^\eps+\eta C^\top\tanh(CY_t^\eps)
 +\bar r\,\E Y_t^\eps+U_t^\eps+0.05\,\E U_t^\eps
 \bigr]\dd t.
\end{equation*}

We use \(N=160\) uniform time steps with
$
 t_k=k\Delta t,\Delta t=T/N, k=0,\ldots,N,
$
together with \(M_{\rm train}=5000\) training paths and
\(M_{\rm test}=20000\) independent test paths.  The forward equations are
discretized by Euler--Maruyama.  We denote the resulting state approximations
by \(X_k^{\rm dir}\) for the direct system and \(X_k^\eps\) for the relaxed
system.  Conditional expectations in the backward step are estimated by
least-squares Monte Carlo, and the coupled forward-backward systems are solved
by a damped Markovian iteration as in
\cite{BenderZhang2008,GobetLemorWarin2005}.  For the least-squares regression,
we use cubic polynomial basis functions applied separately to each coordinate
of \(X_k^{\rm dir}\) in the direct system and to each coordinate of
\((X_k^\eps,Y_k^\eps)\) in the relaxed system.

For either system, write \(X_k\) and \(U_k\) for the corresponding discrete
state and adjoint variables, \(\widehat{\E}\) for the training-particle mean,
\(m_k^X:=\widehat{\E}X_k\), and \(\widehat{\mathbb E}_k\) for the least-squares approximation of the
conditional expectation at time \(t_k\).  The backward update is
\begin{align*}
 m_k^U
 &=
 \frac{
 \widehat{\mathbb E}[\,\widehat{\mathbb E}_k U_{k+1}\,]
 +(q+\bar q)\Delta t\,m_k^X
 }{
 1+(0.35-0.08)\Delta t
 },
 \qquad
 U_k
 =
 \frac{
 \widehat{\mathbb E}_k U_{k+1}
 +\Delta t(qX_k+\bar q\,m_k^X)
 +0.08\Delta t\,m_k^U
 }{
 1+0.35\Delta t
 }.
\end{align*}
The Picard coefficients are damped by \(0.35\), and the iteration is stopped
when their relative change is below \(3\times10^{-4}\).  In all six
computations reported below, the final relative change is below
\(2\times10^{-4}\).

The direct benchmark solves
\eqref{eq:numerical-mf-implicit-feedback} by iterating the empirical control
mean, with five Newton steps for each pathwise nonlinear solve.  For the
relaxed system, we use a semi-implicit discretization in which only the
linear term \(rY^\eps\) is evaluated at the new time level:
\begin{equation}
 Y_{k+1}^\eps
 =
 \frac{Y_k^\eps-\dfrac{\Delta t}{\eps}
 \bigl[
 \eta C^\top\tanh(CY_k^\eps)+U_k^\eps
 +0.05\,\widehat{\E}U_k^\eps
 +\bar r\,\widehat{\E}Y_k^\eps
 \bigr]
 }{
 1+r\Delta t/\eps}.
 \label{eq:numerical-mf-semi-implicit}
\end{equation}
The direct and relaxed systems use common Brownian increments.

For \(j=1,\ldots,M_{\rm test}\), let the superscript \(j\) denote the
\(j\)-th test path.  Define
\begin{align*}
 \mathcal E_\alpha(\eps)
 &:=\frac1{M_{\rm test}}
 \sum_{j=1}^{M_{\rm test}}
 \sum_{k=0}^{N-1}
 \abs{Y_k^{\eps,j}-\alpha_k^{{\rm dir},j}}^2\Delta t,\\
 \mathcal E_X(\eps)
 &:=\frac1{M_{\rm test}}
 \sum_{j=1}^{M_{\rm test}}
 \max_{0\le k\le N}
 \abs{X_k^{\eps,j}-X_k^{{\rm dir},j}}^2.
\end{align*}
Table~\ref{tab:numerical-mf-comparison} gives the discrete errors, the
empirical cost, and the paired cost gap.  The standard error of the direct
cost is \(1.46\times10^{-3}\), whereas the standard errors of the paired
cost gaps range from \(4.40\times10^{-5}\) to \(4.65\times10^{-4}\).
The paired cost gap measures the empirical cost of the two-time-scale
approximation relative to the direct numerical benchmark.

\begin{table}[htbp]
 \centering
 \small
 \begin{tabular}{c|c|c|c|c|c}
 Method
 & \(\eps\)
 & empirical cost
 & paired gap
 & \(\mathcal E_\alpha(\eps)\)
 & \(\mathcal E_X(\eps)\)
 \\ \hline
 Direct
 & --
 & \(1.56079\)
 & --
 & --
 & --
 \\
 Two-scale
 & \(0.2\)
 & \(1.82440\)
 & \(2.6361\times10^{-1}\)
 & \(8.8490\times10^{-1}\)
 & \(1.6057\times10^{-1}\)
 \\
 Two-scale
 & \(0.1\)
 & \(1.72225\)
 & \(1.6146\times10^{-1}\)
 & \(6.0002\times10^{-1}\)
 & \(8.2709\times10^{-2}\)
 \\
 Two-scale
 & \(0.05\)
 & \(1.65369\)
 & \(9.2901\times10^{-2}\)
 & \(3.7362\times10^{-1}\)
 & \(3.6447\times10^{-2}\)
 \\
 Two-scale
 & \(0.025\)
 & \(1.61322\)
 & \(5.2430\times10^{-2}\)
 & \(2.2223\times10^{-1}\)
 & \(1.4266\times10^{-2}\)
 \\
 Two-scale
 & \(0.0125\)
 & \(1.59100\)
 & \(3.0203\times10^{-2}\)
 & \(1.3140\times10^{-1}\)
 & \(5.1440\times10^{-3}\)
 \end{tabular}
 \caption{Comparison with the direct numerical benchmark.}
 \label{tab:numerical-mf-comparison}
\end{table}

\begin{figure}[H]
 \centering
 \includegraphics[width=0.8\textwidth]{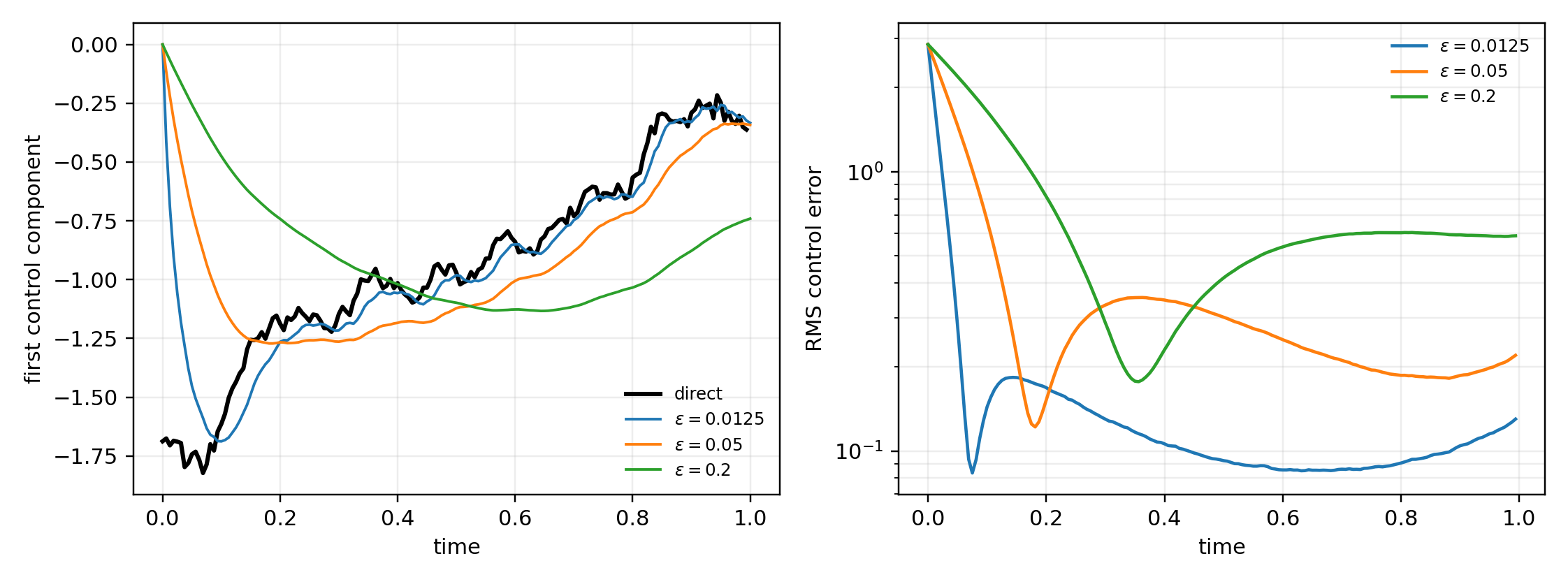}
 \caption{Left: the first components of the direct benchmark and two-time-scale
 controls along a representative test path.  Right: the root-mean-square
 control error over the test paths at each time.  Since
 \(Y_0^\eps=0\neq\alpha_0^{\rm dir}\), the tracking error exhibits an initial
 layer that narrows as \(\eps\) decreases.}
 \label{fig:numerical-mf-paths}
\end{figure}

\begin{figure}[H]
 \centering
 \includegraphics[width=0.7\textwidth]{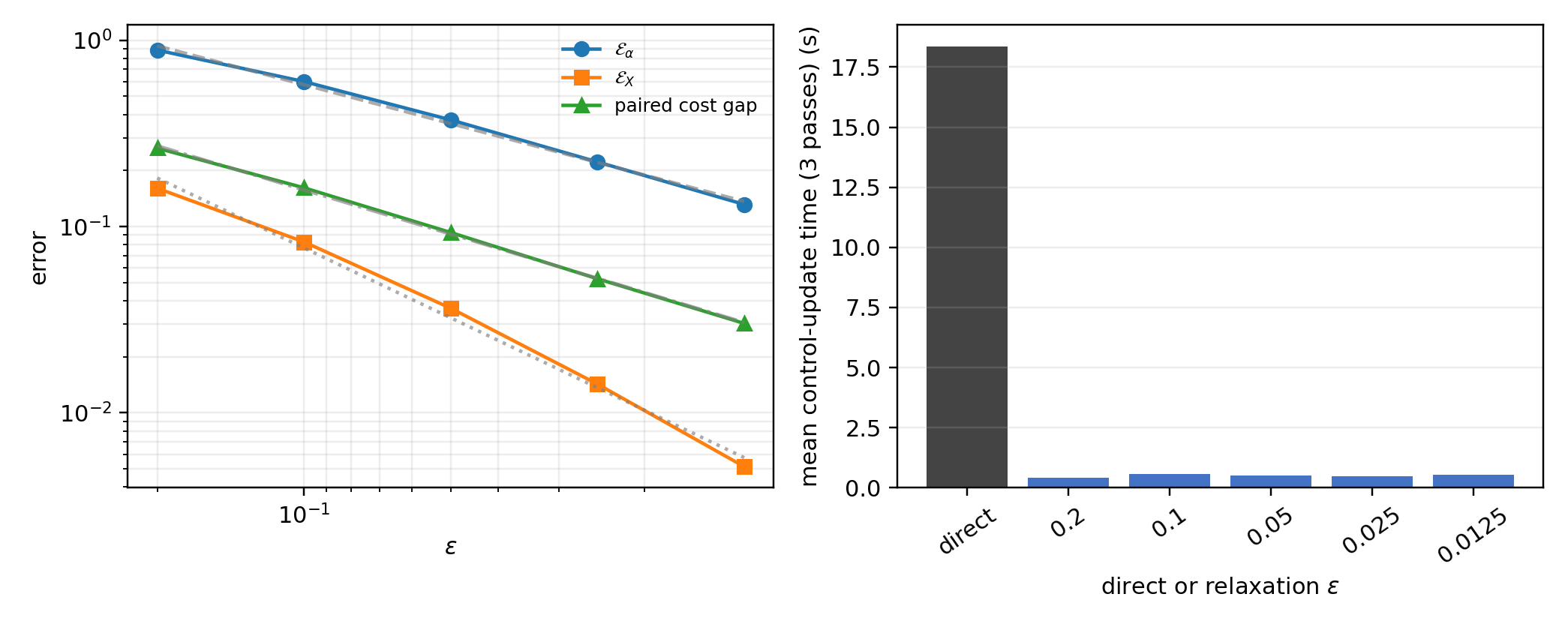}
 \caption{Strong control and state errors, the paired cost gap, and the average time
spent on the control update over three complete simulation runs. The dotted lines
represent least-squares power-law fits, with empirical exponents \(0.69\) for
\(\mathcal E_\alpha(\eps)\), \(1.25\) for \(\mathcal E_X(\eps)\), and \(0.79\)
for the paired cost gap.}
 \label{fig:numerical-mf-errors}
\end{figure}

Both strong errors and the paired cost gap decrease monotonically as
\(\eps\downarrow0\). On the fixed grid used here, averaging three complete simulation runs, the control update requires \(18.35\) seconds for
the direct method, whereas the two-time-scale method requires between
\(0.43\) and \(0.58\) seconds. This corresponds to a speedup factor between
\(32\) and \(43\). The saving results from replacing the repeated Newton
iterations for the implicit optimality condition with the semi-implicit
residual update in \eqref{eq:numerical-mf-semi-implicit}. Overall, the
numerical results show that, for the given grid and implementation, the
two-time-scale method substantially reduces the computational cost while
its approximation errors decrease as \(\eps\) becomes smaller.

\end{document}